\numberwithin{equation}{section}
\newcommand{\margnote}[1]{
\ifthenelse{\boolean{shownotes}}%
{\marginpar{\raggedright\tiny\texttt{#1}}}%
{}%
}
\newcommand{\hole}[1]{
\ifthenelse{\boolean{shownotes}}%
{\begin{center} \fbox{ \rule {.25cm}{0cm}
\rule[-.1cm]{0cm}{.4cm} \parbox{.85\textwidth}{\begin{center}
\texttt{#1}\end{center}} \rule {.25cm}{0cm}}\end{center}}
{}
}
\newcommand\xqed[1]{%
  \leavevmode\unskip\penalty9999 \hbox{}\nobreak\hfill
  \quad\hbox{#1}}
\newcommand\myendrmk{\xqed{$\triangleright$}}
\theoremstyle{plain}
\newtheorem{lemma}{Lemma}[section]
\newtheorem{theorem}[lemma]{Theorem}
\newtheorem{proposition}[lemma]{Proposition}
\newtheorem{corollary}[lemma]{Corollary}
\theoremstyle{definition}
\newtheorem{remark}[lemma]{$\triangleleft$ Remark}
\newtheorem{definition}[lemma]{Definition}
\newtheorem{assumption}[lemma]{Assumption}
\theoremstyle{remark}
\newcommand{\Id}{\mathbb{I}}
\newcommand{\br}{\mathbf{r}}
\newcommand{\bw}{\mathbf{w}}
\newcommand{\by}{\mathbf{y}}
\newcommand{\bz}{\mathbf{z}}
\newcommand{\bA}{\mathbf{A}}
\newcommand{\bM}{\mathbf{M}}
\newcommand{\bF}{\mathbf{F}}
\newcommand{\bG}{\mathbf{G}}
\newcommand{\bQ}{\mathbf{Q}}
\newcommand{\bB}{\mathbf{B}}
\newcommand{\bT}{\mathbf{T}}
\newcommand{\bX}{\mathbf{X}}
\newcommand{\bU}{\mathbf{U}}
\newcommand{\R}{\mathbb{R}}
\newcommand{\C}{\mathbb{C}}
\newcommand{\Z}{\mathbb{Z}}
\newcommand{\cJ}{{\mathcal{J}}}
\newcommand{\cT}{{\mathcal{T}}}
\newcommand{\cL}{{\mathcal{L}}}
\newcommand{\cS}{{\mathcal{S}}}
\newcommand{\cH}{{\mathcal{H}}}
\newcommand{\region}{{\mathbb{G}}}
\renewcommand{\Re}{\mathrm{Re}\,} 
\renewcommand{\Im}{\mathrm{Im}\,}
\newcommand{\tr}{\mathrm{tr}\,}
\newcommand{\sgn}{\mathrm{sgn}\,}
\newcommand{\ess}{\sigma_\mathrm{\tiny{ess}}}
\newcommand{\ptsp}{\sigma_\mathrm{\tiny{pt}}}
\newcommand{\<}{\langle}
\renewcommand{\>}{\rangle}
\begin{document}

\title[Spectral and Modulational Stability of Klein-Gordon Wavetrains]{Spectral and Modulational Stability of Periodic Wavetrains for the Nonlinear Klein-Gordon Equation}

\author[C.K.R.T. Jones]{Christopher K. R. T. Jones}

\address{{\rm (C. K. R. T. Jones)} Department of Mathematics\\University of North Carolina\\Chapel Hill, NC 27599 (U.S.A.)}

\email{ckrtj@amath.unc.edu}

\author[R. Marangell]{Robert Marangell}

\address{{\rm (R. Marangell)} School of Mathematics and Statistics F07\\University of Sydney\\Sydney NSW 2006 (Australia)}

\email{robert.marangell@sydney.edu.au}

\author[P.D. Miller]{Peter D. Miller}

\address{{\rm (P. D. Miller)} Department of Mathematics\\University of Michigan\\Ann Arbor, MI 48109 (U.S.A.)}

\email{millerpd@umich.edu}

\author[R.G. Plaza]{Ram\'on G. Plaza}

\address{{\rm (R. G. Plaza)} Departamento de Matem\'aticas y Mec\'anica\\Instituto de Investigaciones en Matem\'aticas Aplicadas y en Sistemas\\Universidad Nacional Aut\'onoma de M\'exico\\Apdo. Postal 20-726, C.P. 01000 M\'exico D.F. (Mexico)}

\email{plaza@mym.iimas.unam.mx}

\begin{abstract}
This paper is a detailed and self-contained study of the stability properties of periodic traveling wave solutions of the nonlinear Klein-Gordon equation $u_{tt}-u_{xx}+V'(u)=0$, where $u$ is a scalar-valued function of $x$ and $t$, and the potential $V(u)$ is of class $C^2$ and periodic.  Stability is considered both from the point of view of spectral analysis of the linearized problem (spectral stability analysis) and from the point of view of wave modulation theory (the strongly nonlinear theory due to Whitham as well as the weakly nonlinear theory of wave packets). The aim is to develop and present new spectral stability results for periodic traveling waves, and to make a solid connection between these results and predictions of the (formal) modulation theory, which has been developed by others but which we review for completeness.  
\end{abstract}

\maketitle

\setcounter{tocdepth}{1}

\tableofcontents

\section{Introduction}
Consider the nonlinear Klein-Gordon equation
\begin{equation}
\label{eqnlKG}
u_{tt} - u_{xx} + V'(u) = 0, 
\end{equation}
where $u$ is a scalar function of $(x,t) \in \R \times [0,+\infty)$ and the potential $V$ is a real periodic function. Such potentials are intended as a generalization of the case 
$V(u) = - \cos(u)$, for which equation \eqref{eqnlKG} becomes the well-known sine-Gordon equation \cite{Sco2} in laboratory coordinates,
\begin{equation}
\label{eqsineG}
u_{tt} - u_{xx} + \sin(u) = 0.
\end{equation}

From a different point of view, however,
the nonlinear Klein-Gordon equation \eqref{eqnlKG} generalizes the linear relativistic equation for a charged particle in an electromagnetic field derived by Klein \cite{Klein27} and Gordon \cite{Gordon26}. The sine-Gordon equation \eqref{eqsineG}, in contrast, did not first appear in the context of nonlinear wave propagation, but rather in the study of the geometry of surfaces with negative Gaussian curvature \cite{Eis0}. Given the form of the nonlinear term, its name was coined as an inevitable pun. Thereafter, the sine-Gordon equation has appeared in many physical applications such as the study of elementary particles \cite{PeSky}, the propagation of crystal dislocations \cite{FreKo}, the dynamics of fermions in the Thirring model \cite{Cole75}, the propagation of magnetic flux on a Josephson line \cite{Sco2,SCR}, the dynamics of DNA strands \cite{GaetaRPD94}, and the oscillations of a series of rigid pendula attached to a stretched rubber band \cite{Dra0}, among many others. To sum up, the general form of equation \eqref{eqnlKG} constitutes one of the simplest and most widely applied prototypes of nonlinear wave equations in mathematical physics (an abridged bibliographic literature list includes \cite{BEMS,BuM2,SCM,Wh} and the references therein).

In the context of wave propagation problems, the study of periodic solutions representing regular trains of waves is of fundamental interest.  The existence and variety of such solutions can be studied by reducing the nonlinear Klein-Gordon equation \eqref{eqnlKG} to an appropriate ordinary differential equation.  However, the question of stability, that is, the dynamics of solutions initially close to a regular train of waves, demands the study of \eqref{eqnlKG} as an infinite-dimensional dynamical system.

There are two common approaches to the stability question that address ostensibly different aspects of the problem.  Firstly, one can analyze the nonlinear initial-value problem governing the difference between an arbitrary solution of the nonlinear Klein-Gordon equation \eqref{eqnlKG} and a given exact solution representing a train of waves.  In the first approximation one typically assumes that the difference is small and linearizes.  The resulting linear equation can in turn be studied in an appropriate frame of reference by a spectral approach.  To our knowledge, the linearized spectral approach was first taken by Scott \cite{Sco1} in the special case of the sine-Gordon equation.  It turns out that there are serious logical flaws in the reasoning of \cite{Sco1}, but we were recently able to give a completely rigorous spectral stability analysis of regular trains of waves for the sine-Gordon equation \cite{JMMP1}.  In part we obtained these results with the help of an exponential substitution introduced by Scott, but we used the substitution in a new way.  While Scott's prediction of details of the spectrum proved to be incorrect, we showed that the basic result of which types of waves are spectrally stable and unstable is indeed as written in Scott's paper \cite{Sco1}.  Thus we have given the first correct confirmation of a fact long accepted in the nonlinear wave propagation literature.  

A second approach to stability of nonlinear wave trains is based on the idea that such trains come in families parametrized by constants such as the wave speed and amplitude.  Another way to formulate the stability question is to consider an initial wave form that locally resembles a regular wave train near each point, but for which the parameters of the wave vary slowly compared with the wave length.  The study of the dynamics of such slowly modulated wave trains leads to an asymptotic reduction of the original nonlinear equation \eqref{eqnlKG} to a quasilinear model system of equations, whose suitability (well-posedness) when formulated with initial conditions leads to a prediction of stability or instability.  This approach was originally developed by Whitham \cite{Wh1,Wh} and was also used by  Scott \cite{Sco2} to study the general nonlinear Klein-Gordon equation \eqref{eqnlKG}.  See also 
\cite{EFMc1,FoMc1,FoMc2,Mura1,Prk1}.   A different form of modulation theory arises if one assumes that the amplitude of the modulated waves is small; such weakly nonlinear analysis leads to models for  the dynamics that are equations of nonlinear Schr\"odinger type for which the focusing/defocusing dichotomy purports to determine stability.

%
%

The goal of this paper is to marry together the two types of stability analysis described above in the context of regular wave trains for the general nonlinear Klein-Gordon equation \eqref{eqnlKG}.
On the one hand, this requires that we generalize the spectral stability analysis we developed in \cite{JMMP1} for the special case of the sine-Gordon equation to more general potentials $V$.
While the methods we introduced in \cite{JMMP1} do not rely heavily on the complete integrability of the sine-Gordon equation, they also do not all immediately generalize for $V(u)\neq -\cos(u)$.  For this reason as well as for completeness and pedagogical clarity, we will give all details of the spectral analysis for general $V$, which the reader will find to be refreshingly classical in nature.  On the other hand, to explain the approach of wave modulation, we will review the calculations originally carried out by Whitham and Scott, and we will also explain the details of the weakly nonlinear expansion method.  Then, to connect the two approaches, we apply Evans function techniques to analyze the spectrum near the origin in the complex plane, and we find a common link with modulation theory by computing the terms in the Taylor expansion of the (Floquet) monodromy matrix for the spectral stability problem.  The main result is the introduction of a modulational stability index which determines exactly whether the spectral curves are tangent to the imaginary axis near the origin, or whether they are tangent to two lines making a nonzero angle with the imaginary axis, and consequently invading the unstable complex half plane. The latter situation corresponds to a particular type of linear instability that
turns out to correlate precisely with the formal modulational instability calculation of Whitham (ellipticity of the modulation equations) as well as to the weakly nonlinear analysis of waves near equilibrium (focusing instability).

There have been several papers recently on the subject of spectral stability properties of traveling wave solutions of nonlinear equations that, like the Klein-Gordon equation \eqref{eqnlKG}, are second-order in time.  As the reader will soon see, such problems require the analysis of eigenvalue problems in which the eigenvalue parameter does not appear in the usual linear way, and that cannot be reduced to selfadjoint form.  In particular, we draw the reader's attention to the works of Stanislavova and Stefanov \cite{StanislavovaS12,StanislavovaS13} in this regard.  One way that our paper differs from these works is that due to the periodic nature of the traveling waves we consider and our interest in determining the stability of these waves to localized perturbations (perturbations constructed as superpositions of modes with no predetermined fixed wavelength), we must deal with a spectrum that is fundamentally continuous rather than discrete.  The spectral stability properties of periodic traveling waves in first-order systems (e.g., the Korteweg-de Vries equation or the nonlinear Schr\"odinger equation) have also been studied recently by H\v{a}r\v{a}gu\c{s} and Kapitula \cite{HaKa1}.

The paper is organized as follows. In \S\ref{secstructure} we study the periodic traveling wave solutions to \eqref{eqnlKG}, which come in four types organized in terms of the energy parameter $E$ and the wave speed $c$.  A natural classification follows, and we analyze the dependence of the period on the energy $E$, as this turns out to relate to the connection between spectral stability analysis on the one hand and wave modulation theory on the other.   The next several sections of our paper concern the linearized spectral stability analysis of these traveling waves.
In \S\ref{secspectralproblem} we establish the linearized problem, we define its spectrum and show that it can be computed from a certain monodromy matrix.  Then we establish elementary properties of the spectrum and interpret it in the context of dynamical stability theory for localized perturbations.  It turns out to be useful to compare the spectrum with that of a related Hill's equation (the approach introduced by Scott \cite{Sco1}),  details of which are presented in \S\ref{secHill}. In \S\ref{sec:monodromy} we analyze the monodromy matrix in a neighborhood of the origin.  Based on these calculations, 
in \S\ref{section:stability-indices} we define two different stability indices that are capable of detecting (unstable) spectrum in the right half-plane under a certain non-degeneracy condition. One of these indices detects a special type of spectral instability that we call a \emph{modulational instability}.
 In \S\ref{secfurther} we complete the stability analysis for periodic traveling waves by showing spectral stability and instability in two remaining cases where the instability indices prove to be inconclusive.  A brief summary statement of our main results on spectral stability of periodic traveling waves for the nonlinear Klein-Gordon equation \eqref{eqnlKG} is formulated in \S\ref{section:summary}.  With the spectral stability analysis complete, in \S\ref{sec:modulation} we turn to a review of wave modulation theory both in the strongly nonlinear sense originally developed by Whitham and in the weakly nonlinear sense.  
 In \S\ref{secWhitham} we show that the analytic type of the system of quasilinear modulation equations, which is the key ingredient in Whitham's fully nonlinear theory of modulated waves and that does not appear to be directly connected with linearized (spectral) stability,  is in fact determined by the modulational instability index. Then, in \S\ref{sec:NLS} we show that the focusing/defocusing type of a nonlinear Schr\"odinger equation arising in the weakly nonlinear modulation theory of
near-equilibrium waves is also determined by the same modulational instability index. Finally, in \S\ref{sec:extensions}, we discuss how some of our methods and results can be extended to other potentials more general than those that we consider in detail. 

\subsection*{On notation}  We denote complex conjugation with an asterisk (e.g., $\lambda^*$) and denote the real and imaginary parts of a complex number $\lambda$ by $\Re\lambda$ and $\Im\lambda$ respectively.
We use lowercase boldface roman font to indicate column vectors (e.g., $\bw$), and with the exception of the identity matrix $\Id$ and the Pauli matrix $\sigma_-$ we use upper case boldface roman font to indicate square matrices (e.g., $\bM$).  Elements of a matrix $\bM$ are denoted $M_{ij}$. Linear operators acting on infinite-dimensional spaces are indicated with calligraphic letters (e.g., $\cL$, $\cT$, and $\cH$). 

\section{Structure of periodic wavetrains}
\label{secstructure}
\subsection{Basic assumptions on $V$}
To facilitate a classification of different types of traveling waves in the nonlinear Klein-Gordon equation, it is convenient to make the following assumptions on the periodic potential function $V$.

\begin{assumption}
\label{assumptionsV}
The potential function $V$ satisfies the following:
\begin{enumerate}[(a)]
\item \label{assumea}$V : \R \to \R$ is a periodic function of class $C^2$; 
\item \label{assumeb} $V$ has exactly two non-degenerate critical points per period.
\end{enumerate} 
\end{assumption}
The Klein-Gordon equation \eqref{eqnlKG} involves $V'$ and not $V$ directly, so $V$ may be
augmented by an additive constant at no cost.  Moreover, by independent scalings $u\mapsto \alpha u$ and $(x,t)\mapsto (\beta x,\beta t)$ by positive constants $\alpha>0$ and $\beta>0$, we may freely modify the period and amplitude of $V$.  Therefore, \emph{without loss of generality} we will make the following additional assumptions.
\begin{assumption}
\label{assumptionsnormalize}
The potential $V$ has fundamental period $2\pi$, and 
\begin{equation}
\min_{u\in\mathbb{R}}V(u)=-1\quad\text{while}\quad\max_{u\in\mathbb{R}}V(u)=1.
\end{equation}
\end{assumption}
The maximum and minimum are therefore the only two critical values of $V$, and they are each achieved at precisely one non-degenerate critical point per period $2\pi$.

\begin{remark}
The sine-Gordon potential $V(u):=-\cos(u)$ clearly satisfies all of the above hypotheses.
\myendrmk
\end{remark}

While the above assumptions on $V$ allow for an easier exposition, many of our results also hold for more general periodic $V$, and even the assumption of periodicity of $V$ can be dropped in some cases.  Some generalizations along these lines are discussed in \S\ref{sec:extensions}.

Equation \eqref{eqnlKG} has traveling wave solutions of the form
\begin{equation}
\label{tws}
u(x,t) = f(z),\quad z:=x-ct 
\end{equation}
where $c \in \R$ is the wave speed. In what follows we shall assume that $c \neq \pm 1$. Substituting into \eqref{eqnlKG} we readily see that the profile function $f : \R \to \R$ satisfies the nonlinear ordinary differential equation
\begin{equation}
\label{eqnlp}
(c^2 -1) f_{zz} + V'(f) = 0. 
\end{equation}
One of the implications of the simple assumptions in force on $V$ is that in the phase portrait of
\eqref{eqnlp} in the $(f,f_z)$-plane, the separatrix composed of the unstable fixed points (all necessarily of saddle type) and their corresponding stable and unstable manifolds will be a (multiply) connected set.  Indeed, there are no isolated components of the separatrix, which qualitatively resembles that of the simple pendulum, consisting of a $2\pi$-periodic array of unstable fixed points on the $f_z=0$ axis and the heteroclinic orbits connecting them in pairs.  See Figure~\ref{fig:PhasePortraitPlot1}.
\begin{figure}[h]
\begin{center}
\includegraphics{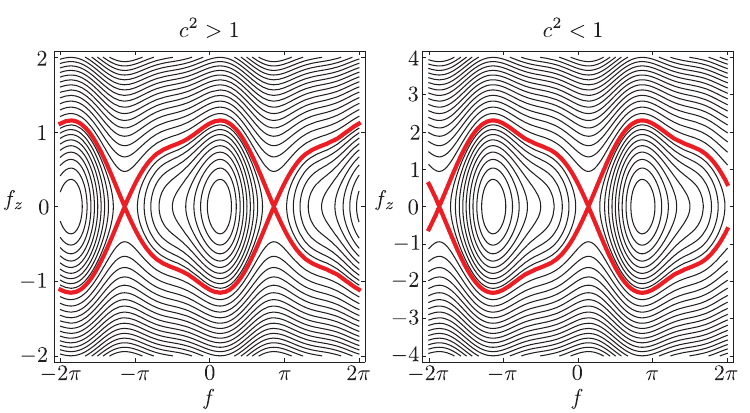}
\end{center}
\caption{Phase portraits of equation \eqref{eqnlp} for $c=2$ (left) and $c=1/2$ (right), where the potential is $V(u)=-0.861[\cos(u) +\tfrac{1}{3}\sin(2u)]$, which satisfies Assumptions~\ref{assumptionsV} and \ref{assumptionsnormalize}.  The separatrices are the thicker red curves. (Color online.)}
\label{fig:PhasePortraitPlot1}
\end{figure}

\subsection{Types of periodic traveling waves}

All non-equilibrium solutions to \eqref{eqnlp}, with the exception of 
the heteroclinic orbits in the separatrix,
are such that $f_z$ is a non-constant periodic function with some finite (fundamental) period $T > 0$; thus, we are interested in solutions $f$ to \eqref{eqnlp} satisfying $f(z + T) = f(z)$ or $f(z+T) = f(z) \pm 2\pi$. There are four distinct types of solutions to \eqref{eqnlp} for which $f_z$ is periodic. 
We will refer to all such solutions of \eqref{eqnlKG} as \emph{periodic traveling waves}.

To begin classifying the periodic traveling waves, notice that the phase portrait of equation \eqref{eqnlp} is (qualitatively) shifted according to the sign of $c^2 -1 $; indeed a change of sign of $c^2-1$ induces an exchange between the stable and unstable fixed points. Therefore, the first dichotomy concerns the wave speed.
\begin{definition}[subluminal and superluminal periodic traveling waves]
Each periodic traveling wave $f$ is of exactly one of the following two types.
\begin{itemize}
\item[(i)]
If $c^2 < 1$ then $f$ is called a \textit{subluminal} periodic traveling wave.
\item[(ii)] 
If $c^2 > 1$ then $f$ is called a \textit{superluminal} periodic traveling wave.
\end{itemize}
\end{definition}

The second dichotomy applies to both super- and subluminal waves and concerns the nature of the orbit of $f$ in the phase portrait.
\begin{definition}[librational and rotational periodic traveling waves]
Each periodic traveling wave $f$ is of exactly one of the following two types.
\begin{itemize} 
\item[(i)] If $f(z+T) = f(z)$ for all $z \in \R$ then $f$ is called a \textit{librational} wavetrain.  Its orbit in the phase plane is a closed trajectory surrounding a single critical point of $V(u)$ and it is enclosed by the separatrix.  In this case $f_z$ is a function of exactly two zeroes per fundamental period.
\item[(ii)] If $f(z+T) = f(z) \pm 2\pi$ for all $z \in \R$ then $f$ is called a \textit{rotational} wavetrain.  Its orbit in the phase plane is an open trajectory lying outside of the separatrix.  In this case $f_z$ is a function of fixed sign.  These waves are also called \textit{kink trains} (or \emph{antikink trains}, depending on the sign of $f_z$) in the literature.
\end{itemize}
\end{definition}

\begin{remark}
In the classical mechanics literature (see Goldstein \cite{Golds2ed}; see also \cite{BuM2}) the term \textit{rotation} (sometimes designated as \textit{circulation} or \textit{revolution}) is used to characterize the kind of periodic motion in which the position is not bounded, but the momentum is periodic. Increments by a period in the position produce no essential change in the state of the system. Examples of rotations are the highly-energetic motions of the simple pendulum in which the pendulum mass perpetually spins about its pivot point.
The name \textit{libration} is a term borrowed from the astronomical literature describing periodic motions in which both the position and the momentum are periodic functions with same frequency. Examples of librations are the relatively small-amplitude oscillations of the simple pendulum about
its gravitationally-stable downward equilibrium.
\myendrmk
\end{remark}

Equation \eqref{eqnlp} can be integrated once to obtain
\begin{equation}
\label{eq:waveeq}
\tfrac{1}{2}(c^2 -1) f_z^2 = E - V(f), 
\end{equation}
where $E$ is an integration constant with the interpretation of total (kinetic plus potential) energy. Given the value of $c$, it is the energy parameter $E$ that distinguishes between waves of librational and rotational types.  

\subsubsection{Superluminal librational waves} Suppose $c^2 > 1$ and $|E| < 1$. These parameter values correspond to librational motion because according to \eqref{eq:waveeq}, $f$ must be confined to a bounded interval when $|E|<1$ in order that the effective potential $\tilde{V}(f) := (V(f) - E)/(c^2 -1)$ be non-positive. 
There is exactly one such maximal interval per $2\pi$-period of $V$.  Let $[f^-,f^+]$ denote one such interval (well-defined modulo $2\pi$).  This interval contains exactly one critical point of $V$, a minimizer where $V=-1$ corresponding to a stable equilibrium of \eqref{eqnlp}, necessarily in its interior.  
We thus have a closed periodic orbit of \eqref{eqnlp} in the $(f,f_z)$-plane, an orbit that encloses a single stable equilibrium point and that crosses the $f_z=0$ axis exactly at the two points $f=f^\pm$.  Each of the closed orbits in the left-hand panel of Figure~\ref{fig:PhasePortraitPlot1} is of this type.  The amplitude $f^+-f^-$ of this solution is strictly less than $2\pi$.

The orbit is symmetric with respect to reflection in the $f_z=0$ axis, and the part of the orbit in the upper half phase plane is given by the graph
\begin{equation}\label{eq:fzsuperlib}
f_z=\frac{\sqrt{2}}{\sqrt{c^2-1}}\sqrt{E-V(f)},\quad f^-\le f\le f^+.
\end{equation}
Integrating $dz/df = (f_z)^{-1}$, with $f_z$ given in terms of $f$ by \eqref{eq:fzsuperlib}, over the interval $f^-\le f\le f^+$ gives half the value of the fundamental period $T$ of the motion.  Therefore,
\begin{equation}
\label{periodsuplib}
T = \sqrt{2} \sqrt{c^2-1} \int_{f^-}^{f^+} \frac{d\eta}{\sqrt{E-V(\eta)}}.
\end{equation}
Note that $f^-$ and $f^+$ depend on $E\in (-1,1)$ but not on $c$ with $c^2>1$.
For example, in the sine-Gordon case with the potential $V(u) = - \cos(u)$, the interval of oscillation is $[f^-,f^+]=[-\arccos(-E),\arccos(E)]\pmod{2\pi}$ and the contained stable equilibrium occurs at $f=0\pmod{2\pi}$.  

\subsubsection{Subluminal librational waves} Likewise, when $c^2 < 1$ and $|E|<1$, we have  a system of maximal intervals (complementary to the system $[f^-,f^+]\pmod{2\pi}$) of the form 
$[f^+,f^-+2\pi]\pmod{2\pi}$ on which $E \leq V(f)$ and hence effective potential $\tilde{V}(f):=(V(f)-E)/(c^2-1)$ is again non-positive, with the orbit oscillating in the interval $f^+\le f\le f^-+2\pi$ closing about a different equilibrium point, now a maximizer of $V$ where $V=1$. This is also a librational motion, and the closed orbits illustrated in the right-hand panel of Figure~\ref{fig:PhasePortraitPlot1} are of this type. The part of the orbit in the upper half phase plane is given by the graph
\begin{equation}
 f_z = \frac{\sqrt{2}}{\sqrt{1-c^2}} \sqrt{V(f)-E},\quad f^+\le f\le f^-+2\pi,
\end{equation}
and by a similar argument the fundamental period of the librational motion is given by the formula
\begin{equation}
\label{periodsublib}
T =  \sqrt{2} \sqrt{1-c^2} \int_{f^+}^{f^-+2\pi} \frac{d\eta}{\sqrt{V(\eta)-E}}.
\end{equation}
In the sine-Gordon example with $V(u)=-\cos(u)$, we have $[f^+,f^-+2\pi]=[\arccos(-E),2\pi-\arccos(-E)]\pmod{2\pi}$ and the enclosed stable equilibrium occurs at $f=\pi \pmod{2\pi}$.  

We stress that in either of the two librational wave cases, there is exactly one critical point of $V$ in the interval of oscillation of $f$. Moreover, as a fundamental period $T$ can be interpreted as the total length traversed by the parameter $z$ as $f$ goes from the initial point of this interval to the final point and back again, the critical point (equilibrium position) will be passed exactly twice per period, i.e.\@ $V'(f(z)) = 0$ exactly twice per period $T$.  The orbit of each librational wave in the $(f,f_z)-$plane of the phase portrait of equation \eqref{eqnlp} is symmetric with respect to the $f$-axis. Lastly, 
$f_z = 0$ exactly when $f$ coincides with one of the endpoints of its interval of oscillation.
That is, the orbit of the wave in the phase plane crosses the $f$-axis at exactly these two points.

\subsubsection{Superluminal rotational waves} When $c^2 > 1$ and $E > 1$, the effective potential $\tilde{V}(f):=(V(f)-E)/(c^2-1)$ is strictly negative for all $f\in\mathbb{R}$.
Hence, according to \eqref{eq:waveeq}, $f$ is not confined to any bounded interval, and $f_z$ has a fixed sign.  These features show that the corresponding motion is of rotational type, and such motions correspond to the open orbits in the left-hand panel of Figure~\ref{fig:PhasePortraitPlot1}.  The ``period'' of this motion is defined as the smallest positive value of $T$ for which $f(z+T)=f(z)\pm 2\pi$ holds for all $z\in\mathbb{R}$ (the choice of sign corresponds to the fixed sign of $f_z$).
Integrating $dz/df$ with respect to $f$ over the representative interval $0\le f\le 2\pi$ then gives
the period as
\begin{equation}
\label{periodsuprot}
T = \frac{\sqrt{c^2-1}}{\sqrt{2}} \int_0^{2\pi} \frac{d\eta}{\sqrt{E-V(\eta)}}. 
\end{equation}

\subsubsection{Subluminal rotational waves} If $c^2 < 1$ and $E < -1$ the effective potential $\tilde{V}(f):=(V(f)-E)/(c^2-1)$ is again always strictly negative for all $f\in\mathbb{R}$, so again
$f_z$ never vanishes and hence has a fixed sign corresponding to rotational motion in which $f$ changes by $2\pi$ over a fundamental ``period''. The period in this case is given by
\begin{equation}
\label{periodsubrot}
T = \frac{\sqrt{1-c^2}}{\sqrt{2}} \int_0^{2\pi} \frac{d\eta}{\sqrt{V(\eta) - E}}. 
\end{equation}
The open orbits illustrated in the right-hand panel of Figure~\ref{fig:PhasePortraitPlot1} correspond to subluminal rotational waves.

\subsubsection{Classification scheme}
We present the following parametric classification of periodic traveling wave solutions of \eqref{eqnlKG}.
\begin{definition}
\label{defregionsR}
The open set of $(E,c)$ such that equation \eqref{eqnlKG} has at least one periodic traveling wave solution of one of the four types described above is denoted $\region \subset \R^2$. We distinguish the following four disjoint open subsets:
\begin{align*}
\region_<^\mathrm{lib} &= \{ c^2 < 1, \;|E|<1\}, & \text{(subluminal librational)},\\
\region_<^\mathrm{rot} &= \{ c^2 < 1, \; E < -1 \}, & \text{(subluminal rotational)},\\
\region_>^\mathrm{lib} &= \{ c^2 > 1, \; |E|<1 \}, & \text{(superluminal librational)},\\
\region_>^\mathrm{rot} &= \{ c^2 > 1, \; E > 1 \}, & \text{(superluminal rotational)},
\end{align*}
corresponding to each of the four types of periodic waves and such that 
$\region=\region_<^\mathrm{lib}\cup\region_<^\mathrm{rot}\cup\region_>^\mathrm{lib}\cup\region_>^\mathrm{rot}$.
\end{definition}
Due to translation invariance of the Klein-Gordon equation \eqref{eqnlKG}, i.e., invariance under $x\mapsto x-x_0$, any translate in $z$ of a periodic traveling wave solution is again a periodic traveling wave solution corresponding to exactly the same values of the parameters $(E,c)$.  Another symmetry of the periodic traveling wave solutions corresponding to given values of $(E,c)$ is the involution $z\mapsto -z$.  For librational  waves, the involution group is a discrete subgroup of the translation group, but for rotational waves it is an independent symmetry.  There are no other symmetries of the periodic traveling waves for fixed $(E,c)$.  

It will be convenient for us to single out a unique solution $f(z)$ of the differential equation \eqref{eq:waveeq} for each $(E,c)\in\region$.  We do this by fixing the initial conditions at $z=0$ as follows.  Fix a minimizer $\underline{u}$ and a maximizer $\overline{u}$ of $V$.  Then for subluminal waves, we determine $f$ given $(E,c)\in\region$ by
\begin{equation}
f(0)=\overline{u},\quad f_z(0)>0,\quad (E,c)\in \region_<^\mathrm{lib}\cup\region_<^\mathrm{rot},
\label{eq:sub-normalize}
\end{equation}
while for superluminal waves,
\begin{equation}
f(0)=\underline{u},\quad f_z(0)>0,\quad (E,c)\in\region_>^\mathrm{lib}\cup\region_>^\mathrm{rot}.
\label{eq:super-normalize}
\end{equation}
With these conditions, the periodic traveling wave $f(z)=f(z;E,c)$ is well-defined for $(E,c)\in\region$.
In the case of rotational waves, we are selecting a particular kink train (antikink trains are obtained by the involution symmetry).

\begin{lemma}
\label{lemfC2E}
For each $z \in \R$, $f(z;E,c)$ is a function of class $C^2$ of $(E,c) \in \region$. 
\end{lemma}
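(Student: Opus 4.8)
The plan is to represent the profile $f(\cdot\,;E,c)$ as the solution of an initial value problem whose right-hand side and initial data depend as smoothly as $V$ permits on $(E,c)$, and then to invoke the standard theorem on the dependence of solutions of an ordinary differential equation on parameters and initial conditions; the one real difficulty, explained below, is the degeneracy of the first integral \eqref{eq:waveeq} at the turning points of the librational waves. To begin, I would remove the wave speed from the differential equation by rescaling $z$: for superluminal waves set $\zeta:=z/\sqrt{c^2-1}$ and $g(\zeta):=f(z;E,c)$, so that \eqref{eqnlp} becomes $g_{\zeta\zeta}+V'(g)=0$ with first integral $\tfrac12 g_\zeta^2=E-V(g)$ and, by \eqref{eq:super-normalize}, initial data $g(0)=\underline{u}$, $g_\zeta(0)=\sqrt{2(E+1)}>0$; for subluminal waves set $\zeta:=z/\sqrt{1-c^2}$, which gives $g_{\zeta\zeta}-V'(g)=0$, $\tfrac12 g_\zeta^2=V(g)-E$, and, by \eqref{eq:sub-normalize}, $g(0)=\overline{u}$, $g_\zeta(0)=\sqrt{2(1-E)}>0$. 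Since $(E,c)\mapsto z/\sqrt{|c^2-1|}$ is real-analytic on $\{c^2\neq1\}$ and, on each of the relevant ranges of $E$, the radicand in $g_\zeta(0)$ is strictly positive (so that $\zeta=0$ is never a turning point), it will suffice to prove, in each of the four cases, that $g(\zeta;E)$ is of class $C^2$ jointly in $(\zeta,E)$; then $f(z;E,c)=g\big(z/\sqrt{|c^2-1|};E\big)$ is $C^2$ in $(E,c)$ by composition.

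For the rotational waves ($|E|>1$) this is immediate: $g_\zeta$ has constant sign, fixed positive by our normalization, and $|E-V(g)|\ge|E|-1>0$ uniformly in $g$ because $\pm1$ are the extreme values of $V$; hence $g$ solves the scalar autonomous equation $g_\zeta=\sqrt{2(E-V(g))}$ (respectively $g_\zeta=\sqrt{2(V(g)-E)}$), whose right-hand side is --- exactly because $V\in C^2$ and the radicand stays away from zero --- a $C^2$ function of $(g,E)$, so the smooth-dependence theorem applies with the constant initial datum. The same argument handles the librational waves ($|E|<1$) along the interior of the arc of oscillation: its two endpoints, the turning points, each satisfy $V=E\notin\{-1,1\}$ and hence are non-critical points of $V$, so by the implicit function theorem each turning point is a $C^2$ function of $E$, while on any $\zeta$-interval along which $g$ stays in a fixed compact subset of the open arc the same scalar equation still has $C^2$ right-hand side. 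This already shows that $g(\zeta;E)$ is $C^2$ near every $(\zeta_0,E_0)$ at which $g(\zeta_0;E_0)$ is not a turning point.

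What remains --- and this is the hard part --- is $C^2$-dependence near a turning point $\zeta_0$ of a librational wave, where $g(\zeta_0;E_0)=f_0(E_0)$ with $f_0(E)$ a $C^2$ turning point. There $g_\zeta(\zeta_0)=0$ while $|g_{\zeta\zeta}(\zeta_0)|=|V'(f_0(E_0))|>0$, so $g_\zeta$ is strictly monotone in $\zeta$ near $\zeta_0$ and may serve as the local independent variable; moreover the first integral exhibits $g$ directly as the $C^2$ function $g=\Theta\big(E\mp\tfrac12 g_\zeta^2\big)$, where $\Theta$ is the local $C^2$ inverse of $V$ on the monotone arc through $f_0(E_0)$. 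The concrete way to transfer this into a statement about $g$ as a function of $\zeta$ is to desingularize the quadrature for $\zeta$ (obtained from $d\zeta=dg/g_\zeta$, cf. \eqref{periodsuplib}, \eqref{periodsublib}) by the substitution $\eta=f_0(E)\pm s^2$, the sign chosen so that $\eta$ lies in the arc: the relevant non-negative quantity $|E-V(\eta)|$ then factors as $s^2W(s^2,E)$ with $W(0,E)=|V'(f_0(E))|>0$ and $W$ built from an average of $V'$, hence of class $C^1$, so that the quadrature becomes the non-singular integral $\sqrt2\int_0^{s}W(\sigma^2,E)^{-1/2}\,d\sigma$, which has non-vanishing $s$-derivative at $s=0$; inverting it for $s=s(\zeta;E)$ and substituting into $g=f_0(E)\pm s^2$ represents $g$ in a full neighborhood of $(\zeta_0,E_0)$, and patching with the regularity already obtained on the adjacent arcs --- being $C^2$ is a local condition --- completes the argument. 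I expect this turning-point step to require the most care: one must match the parametrization near $\zeta_0$ (by $g_\zeta$, equivalently by $s$) to the parametrization by $\zeta$ on the adjacent arcs, and verify that the desingularized quadrature carries enough regularity in $E$ --- which is exactly where the smoothness hypothesis on $V$ is felt at full strength.
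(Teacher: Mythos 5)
Your overall strategy --- rescaling $z$ by $\sqrt{|c^2-1|}$ to remove $c$ from the profile equation, reducing the statement to joint $C^2$-regularity of the rescaled profile $g(\zeta;E)$, obtaining that regularity on monotone arcs from the first-integral quadrature, and desingularizing at the turning points by the substitution $\eta=f_0(E)\pm s^2$ --- is the same approach as the paper's, but carried out with considerably more care precisely where the paper's own proof is thinnest. The rotational and monotone-librational cases are treated identically in both (inverting the period-integral relation $z=\Phi(f,E,c)$ by the Implicit Function Theorem). At turning points, however, the paper's argument is a single sentence --- ``for such $z$ we have $V(f)=E$ while $V'(f)\neq 0$ and we see by the Implicit Function Theorem that $f$ is a $C^2$ function of $E$ (and is independent of $c$)'' --- and that sentence, read literally, does not hold up: for a fixed $z_0$ that is a turning point at the base parameters $(E_0,c_0)$, the relation $V(f(z_0;E,c))=E$ holds only at $(E_0,c_0)$ (the identity valid for nearby $(E,c)$ is $V(f)=E-\tfrac12(c^2-1)f_z^2$, with a nonvanishing correction), and $f(z_0;E,c)$ is certainly $c$-dependent, because the turning-point location itself scales like $\sqrt{|c^2-1|}$. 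Your rescaling step correctly accounts for the $c$-dependence as composition with an analytic map, so that the residual question is genuinely one of joint $(\zeta,E)$-regularity of $g$; your desingularization is of the same type as the device the paper employs in the proof of Proposition~\ref{proposition-near-equilibrium} (cf.\ \eqref{eq:TsuplibChicone}--\eqref{eq:superluminal-Chicone}). In substance, yours is the more complete argument.

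The soft spot you flag at the end is, nevertheless, genuine, and it is shared by the paper. With $V$ only of class $C^2$, your factor $W$ (an average of $V'$) is only $C^1$ in $(s,E)$; the desingularized quadrature $\Psi(s,E)=\sqrt2\int_0^s W(\sigma^2,E)^{-1/2}\,d\sigma$ then has continuous $\Psi_s$, $\Psi_{ss}$, $\Psi_{sE}$, but $\Psi_{EE}$ appears to require $W_{EE}$ and hence $V'''$, so that inverting for $s(\zeta;E)$ and substituting into $g=f_0(E)\pm s^2$ visibly delivers $C^1$ in $E$, not automatically $C^2$. A naive count therefore loses one derivative relative to the claim. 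It is plausible that a cancellation saves the day --- a direct computation of $g_{EE}$ on a monotone arc from the inverted quadrature shows that the individually divergent contributions to $g_{EE}$ as $\zeta$ approaches the turning time cancel at leading order, and one might also try to exploit the evenness $g(\zeta^*+\tau;E)=g(\zeta^*-\tau;E)$ about the turning time --- but that cancellation is not established by what you have written, and the paper's own proof offers no guidance here since its turning-point step is flawed as noted above. You should either exhibit the cancellation explicitly, or strengthen the hypothesis to $V\in C^3$ near the turning values, or observe that what is actually used downstream (forming the variational solution $\bw^E$ in Lemma~\ref{lem:solat0}) needs only $C^1$-dependence together with the variational equation, which your argument and the standard ODE theorem already secure.
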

\begin{proof}
First, suppose that $f$ is a rotational wave, in which case $f=f(z;E,c)$
may be obtained for all $z\in\mathbb{R}$  by inverting the relation
\begin{equation}
 z = \frac{\sqrt{|c^2 -1|}}{\sqrt{2}} \int_{f(0)}^{f} \frac{d \eta}{\sqrt{|E - V(\eta)|}}
\end{equation}
where either $f(0)=\underline{u}$ or $f(0)=\overline{u}$.  The right-hand side is a $C^3$ function of $(f,E,c)$,
and its derivative with respect to $f$ is strictly positive.  Hence it follows by the Implicit Function Theorem that $f$ can be solved for uniquely as a function of class $C^3$ in $(E,c)$.

For librational waves, we modify the argument as follows.  There is a maximal open interval containing $z=0$ on which $f$ is strictly increasing, and for $z$ in this interval exactly the same argument given above for rotational waves applies.  Adding half the period $T$ produces another open interval of $z$-values for which $f$ is strictly decreasing, a case again handled by a simple variation of the same argument.  By periodicity of $f$ with period $T$ it remains to consider the values of $z$ for which $f_z=0$.  But for such $z$ we have $V(f)=E$ while $V'(f)\neq 0$ and we see by the Implicit Function Theorem that $f$ is a $C^2$ function of $E$ (and is independent of $c$).  This completes the proof of the Lemma.
\end{proof}

Note that given the assumption that $V$ is twice continuously differentiable, the only obstruction to $f$ being a  class $C^3$ function of $(E,c)\in\region$ occurs for librational waves and comes from the points $z$ where $f_z=0$.

\begin{remark}
\label{remark:infinite-speed}
A degenerate case of periodic traveling waves that is also of physical importance is
the limiting case of waves of infinite velocity, that is, solutions $u=f$ of the Klein-Gordon equation \eqref{eqnlKG}
that are independent of $x$ and periodic (modulo $2\pi$) functions of $t$.  Substituting $u(x,t)=f(t)$ into \eqref{eqnlKG}, multiplying by $f_t$ and integrating yields the energy conservation law:
\begin{equation}
\tfrac{1}{2}f_t^2=E-V(f).
\label{eq:pendulum}
\end{equation}
Comparing with \eqref{eq:waveeq}, the solutions of \eqref{eq:pendulum} coincide with the superluminal waves for $c^2=2$, but considered as functions of $t$ rather than the galilean variable $z=x-ct$.  Both librational ($|E| <1$) and rotational ($E>1$) motions are possible.  
\myendrmk
\end{remark}

\subsection{Monotonicity of the period $T$ with respect to energy $E$}
The period $T$ of each type of periodic traveling wave solution of \eqref{eqnlKG} depends on
both $c$ and $E$.  The dependence on $c$ is always via multiplication by the factor $\sqrt{|c^2-1|}$, but the dependence on $E$ is nontrivial.  By straightforward differentiation of \eqref{periodsuprot} and \eqref{periodsubrot},
\begin{equation}
T_E=\frac{\partial T}{\partial E}=\frac{1-c^2}{2\sqrt{2}\sqrt{|c^2-1|}}\int_0^{2\pi}\frac{d\eta}{|V(\eta)-E|^{3/2}},\quad \text{for $f$ rotational},
\end{equation}
which proves the following:
\begin{proposition}
\label{prop:rotational-monotone}
For rotational waves, the period $T$ is a strictly monotone function of the energy, and
$(c^2-1)T_E<0$.
\end{proposition}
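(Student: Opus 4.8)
The plan is to observe that the formula for $T_E$ displayed immediately above the statement already contains the proposition; what remains is to justify that computation and then to read off a sign. It is convenient to treat the superluminal case ($c^2>1$, $E>1$, period \eqref{periodsuprot}) and the subluminal case ($c^2<1$, $E<-1$, period \eqref{periodsubrot}) together, since --- as the notation $|V(\eta)-E|$ and $\sqrt{|c^2-1|}$ anticipates --- they yield one and the same expression for $T_E$.

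First I would justify differentiation under the integral sign. For a rotational wave the energy obeys $|E|>1$, whereas Assumption~\ref{assumptionsnormalize} forces $V(\eta)\in[-1,1]$ for every $\eta$; hence $|V(\eta)-E|\ge|E|-1>0$ uniformly on $[0,2\pi]$. Consequently the integrand of \eqref{periodsuprot} (resp.\ of \eqref{periodsubrot}) is a $C^1$ function of $(\eta,E)$, with its $E$-derivative continuous in $(\eta,E)$ and locally bounded in $E$ uniformly in $\eta$, so the standard theorem on differentiating a parameter integral applies. Carrying out the differentiation and using $E-V(\eta)=|V(\eta)-E|$ together with $-\sqrt{c^2-1}=(1-c^2)/\sqrt{|c^2-1|}$ in the superluminal case (and $V(\eta)-E=|V(\eta)-E|$ together with $\sqrt{1-c^2}=(1-c^2)/\sqrt{|c^2-1|}$ in the subluminal case) gives in both cases
\[
T_E=\frac{1-c^2}{2\sqrt{2}\,\sqrt{|c^2-1|}}\int_0^{2\pi}\frac{d\eta}{|V(\eta)-E|^{3/2}}.
\]

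Finally, the integral on the right is the integral of a strictly positive continuous function over an interval of positive length, hence is a finite strictly positive number; the prefactor $(1-c^2)/(2\sqrt{2}\,\sqrt{|c^2-1|})$ is nonzero with the sign of $1-c^2$. Therefore $\sgn(T_E)=\sgn(1-c^2)$, i.e.\ $(c^2-1)T_E<0$; in particular $T_E\ne0$ throughout $\region_<^{\mathrm{rot}}$ and $\region_>^{\mathrm{rot}}$, so for fixed $c$ the period $T$ is a strictly monotone function of $E$. The only step that is not pure algebra is the interchange of $\partial_E$ with the integral, and here it is essentially free: unlike the librational case, a rotational orbit never reaches a turning point, so $|V(\eta)-E|$ stays bounded away from zero and no change of variables to tame an endpoint singularity is needed. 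I would expect that to be the ``main obstacle'' only in the nominal sense.
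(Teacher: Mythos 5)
Your proof is correct and follows the same route as the paper: differentiate the period formulas \eqref{periodsuprot}--\eqref{periodsubrot} under the integral sign to obtain the displayed expression for $T_E$, and then read off the sign. The only addition you make is to spell out why differentiation under the integral is legitimate (the integrand stays bounded away from its singularity because $|E|>1$ while $|V|\le 1$), which the paper leaves implicit in the word ``straightforward.''
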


In the case of librational waves, the limits of integration in the formulae \eqref{periodsuplib} and \eqref{periodsublib} for $T$ (the endpoints of the interval of oscillation for $f$) depend on $E$,  
and it would appear natural to try to calculate $T_E$ via Leibniz' rule; however the integrand is singular at the endpoints and hence the rule does not apply in this context.  The problem, however, is not merely technical; for librational waves the assumptions in force on $V$ are simply insufficient to deduce the sign of $T_E$.  In general, it is possible for the sign of $T_E$ to change one or more times within the librational energy interval $|E|<1$.

We now give a condition for which the generic case $T_E \neq 0$ holds. In \cite{Chi1}, Chicone studied classical Hamiltonian flows in the plane for which a family of closed orbits surrounds an equilibrium point, and obtained a condition on the potential sufficient to ensure monotonicity of the period with respect to the energy for the orbit family.   More precisely,
he studied energy conservation laws of the form 
\begin{equation}
\frac{1}{2}\left(\frac{dx}{dt}\right)^2 + \tilde{V}(x)=\tilde{E}
\label{eq:ChiconeODE}
\end{equation}
where $\tilde{V}$ is a potential function having a non-degenerate local minimum at some point $x^0\in\mathbb{R}$.  Consider a periodic solution $x=x(t)$ close enough to the stable equilibrium $x=x^0$ that there are no other critical points of $\tilde{V}$ besides $x^0$ for $x$ in the interval $[x^-,x^+]$ over which $x(t)$ oscillates.  Chicone proved that if the function 
\begin{multline}
N(x):=6[\tilde{V}(x)-\tilde{V}(x^0)]\tilde{V}''(x)^2-3\tilde{V}'(x)^2\tilde{V}''(x)\\
{}-2[\tilde{V}(x)-\tilde{V}(x^0)]\tilde{V}'(x)\tilde{V}'''(x)
\end{multline}
is not identically zero and sign semi-definite for all $x\in [x^-,x^+]$, then the fundamental period $T$ of the periodic orbit $x(t)$ is a monotone function of $\tilde{E}$, and $T_{\tilde{E}}$ has the same sign as $N$.  Chicone's condition is equivalent to the ratio $[\tilde{V}(x)-\tilde{V}(x^0)]/V'(x)^2$ being either semi-concave or semi-convex on the interval $[x^-,x^+]$.

Comparing \eqref{eq:ChiconeODE} with \eqref{eq:waveeq} and applying Chicone's criterion to our problem, we may obtain sufficient conditions on the Klein-Gordon potential $V$ to guarantee that $T_E$ is nonzero for all librational waves.  Since
$V$ has two critical values, each of which can correspond to a family of stable equilibria depending on the sign of $c^2-1$, we require two different conditions on $V$.  Define
\begin{equation}
N^\pm(f):=6[V(f)\pm 1]V''(f)^2-3V'(f)^2V''(f)-2[V(f)\pm 1]V'(f)V'''(f).
\end{equation}
Then Chicone's theory implies the following:  
\begin{proposition}[Chicone's criterion]
Suppose that $V:\mathbb{R}\to\mathbb{R}$ satisfies the conditions of Assumptions~\ref{assumptionsV} and \ref{assumptionsnormalize}.  If also $V$ is
of class $C^3$ and the functions $N^\pm:\R\to\R$ are both not identically zero and 
semidefinite, then $T_E\neq 0$ holds for all librational waves, and the sign of $T_E$ coincides with that of $N^+$ (resp., $N^-$) for superluminal (resp., subluminal) waves.
\label{prop:Chicone}
\end{proposition}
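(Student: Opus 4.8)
The plan is to realize each of the two librational cases as a particular instance of Chicone's energy law \eqref{eq:ChiconeODE} after an affine rescaling of the potential and the energy, to verify that Chicone's hypotheses transfer, to expand Chicone's function $N$ explicitly in terms of $V$ and recognize it as a nonzero constant multiple of $\pm N^\pm$, and finally to chase signs in order to pass from $T_{\tilde E}$ back to $T_E$. Since $V\in C^3$, each rescaled potential below is $C^3$ as well, so Chicone's criterion is applicable.

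\emph{Superluminal librational waves} ($c^2>1$, $|E|<1$). Dividing \eqref{eq:waveeq} by $c^2-1>0$ casts the profile equation in exactly the form \eqref{eq:ChiconeODE}, with the galilean variable $z$ in the role of $t$, effective potential $\tilde V(f):=V(f)/(c^2-1)$, and energy $\tilde E:=E/(c^2-1)$. The critical point of $V$ contained in the oscillation interval $[f^-,f^+]$ is the minimizer with $V=-1$; since $c^2-1>0$ it is a non-degenerate local minimum of $\tilde V$ (Assumption~\ref{assumptionsV}), and, as stressed after \eqref{periodsublib}, it is the only critical point of $\tilde V$ on $[f^-,f^+]$ (the endpoints, where $V=E\ne\pm1$, are not critical points of $V$). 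Hence Chicone's criterion applies to $\tilde V$. Substituting $\tilde V=V/(c^2-1)$ and $\tilde V(x^0)=-1/(c^2-1)$ into his expression for $N$ and factoring out the common denominator gives $N(f)=(c^2-1)^{-3}N^+(f)$. Since $(c^2-1)^{-3}>0$, the semidefiniteness and non-triviality of $N^+$ deliver Chicone's hypothesis for $N$, so his theorem yields $T_{\tilde E}\ne0$ with the sign of $N$, i.e.\ of $N^+$. Finally $\tilde E=E/(c^2-1)$ with $c^2-1>0$ gives $T_E=(c^2-1)^{-1}T_{\tilde E}$, so $T_E$ inherits the sign of $N^+$; a one-line check that $\sqrt2\int_{f^-}^{f^+}(\tilde E-\tilde V(\eta))^{-1/2}\,d\eta$ reproduces \eqref{periodsuplib} confirms that Chicone's period function is indeed our $T$.

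\emph{Subluminal librational waves} ($c^2<1$, $|E|<1$). Here one rewrites \eqref{eq:waveeq} as $\tfrac12 f_z^2=(V(f)-E)/(1-c^2)$, again of the form \eqref{eq:ChiconeODE}, now with $\tilde V(f):=-V(f)/(1-c^2)$ and $\tilde E:=-E/(1-c^2)$. The critical point of $V$ inside the oscillation interval $[f^+,f^-+2\pi]$ is this time the maximizer with $V=1$, which is a non-degenerate local minimum of $\tilde V$ because $1-c^2>0$, and it remains the unique critical point of $\tilde V$ on the interval, so Chicone's criterion again applies. Substituting $\tilde V=-V/(1-c^2)$ and $\tilde V(x^0)=-1/(1-c^2)$ into $N$ and simplifying now produces $N(f)=-(1-c^2)^{-3}N^-(f)$, whence $T_{\tilde E}$ has the sign of $-N^-$. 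But now $\tilde E=-E/(1-c^2)$ with $1-c^2>0$, so $T_E=-(1-c^2)^{-1}T_{\tilde E}$ introduces a \emph{second} sign reversal and $T_E$ ends up with the sign of $N^-$. Equivalence of the non-triviality and semidefiniteness of $N$ and of $N^-$ is immediate, and $\sqrt2\int_{f^+}^{f^-+2\pi}(\tilde E-\tilde V(\eta))^{-1/2}\,d\eta$ matches \eqref{periodsublib}.

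The substance of the proof is thus the sign bookkeeping: the placement of the factor $c^2-1$ (superluminal) versus $1-c^2$ (subluminal) in both $\tilde V$ and $\tilde E$, together with the fact that in the subluminal case both $V\mapsto\tilde V$ and $E\mapsto\tilde E$ are orientation-reversing, conspire so that $T_E$ comes out with the sign of $N^+$, respectively $N^-$, rather than with an extra minus sign. I expect the explicit expansion of Chicone's $N$ in terms of $V,V',V'',V'''$ and the constant $c^2-1$ (or $1-c^2$) to be the most error-prone step, but it is a routine, if slightly tedious, differentiation; the one other place that warrants a remark is that Chicone's non-triviality hypothesis concerns the finite oscillation interval only, so one must note it is delivered by the stated semidefiniteness and non-triviality of $N^\pm$ on $\R$. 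Everything else is immediate.
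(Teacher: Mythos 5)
Your proof is correct and follows the same route as the paper, which simply invokes Chicone's theorem after comparing \eqref{eq:ChiconeODE} with \eqref{eq:waveeq} and leaves the computation to the reader. Your sign bookkeeping is the substance the paper omits: the identification $N=(c^2-1)^{-3}N^+$ in the superluminal case, and the compensating double sign reversal in the subluminal case (from $\tilde V=-V/(1-c^2)$ \emph{and} $\tilde E=-E/(1-c^2)$) yielding $N=-(1-c^2)^{-3}N^-$ while $T_E=-(1-c^2)^{-1}T_{\tilde E}$, are both correct and together give $\mathrm{sgn}(T_E)=\mathrm{sgn}(N^\mp)$ as claimed.
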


\begin{remark}
\label{remark:Chicone-sine-Gordon}
For the sine-Gordon potential, $V(u)=-\cos(u)$,
\begin{equation}
N^+(f)=4(2-\cos f)\sin^4(\tfrac{1}{2}f)\quad\text{and}\quad N^-(f)=-4(2+\cos f)\cos^4(\tfrac{1}{2}f)
\end{equation}
so $N^+(f)\ge 0$ and $N^-(f)\le 0$ for all $f\in\mathbb{R}$.  Therefore, by Chicone's criterion, $T$ is strictly increasing (resp., decreasing) in $E$ for superluminal (resp., subluminal) librational waves, i.e., $(c^2-1)T_E>0$ holds for all librational waves.
\myendrmk
\end{remark}

While Chicone's criterion is sufficient for monotonicity, we make no claim that it is necessary.  Another condition that is \emph{equivalent} to monotonicity of $T$ with respect to $E$ and that can be checked given just the wave profile $f$ itself will be developed in \S\ref{sec:alternate} (cf.\@ equations \eqref{eq:delta-analytic}, \eqref{defDeltah}, and \eqref{eq:delta-identity}).  

\begin{remark}
The period $T$ blows up to $+ \infty$ as $E \to \sgn(c^2-1)$ (the value of the separatrix). This blowup is consistent with  
Proposition~\ref{prop:rotational-monotone} for the rotational orbits on one side of the separatrix.
However, it also guarantees that there always exist librational orbits close to the separatrix for which $T$ is indeed monotone in $E$ and moreover, for which $(c^2-1)T_E>0$.  
As will be seen, the stability analysis for librational waves is most conclusive in the case when $(c^2-1)T_E>0$.
\label{remark:TE-near-separatrix}
\myendrmk
\end{remark}

It will also be convenient later for us to have available information about the sign of $T_E$ for librational waves in the limiting case of near-equilibrium oscillations.  In this direction, we have the following:
\begin{proposition}
\label{proposition-near-equilibrium}
For a family of librational periodic traveling wave solutions of the Klein-Gordon equation \eqref{eqnlKG} for which $f$ oscillates about a non-degenerate equilibrium point $u^0$ near which $V$ has four continuous derivatives, 
\begin{equation}
\sgn\left((c^2-1)\left.T_E\right|_{E=V(u^0)}\right)=\sgn\left(5V'''(u^0)^2-3V''(u^0)V^{(4)}(u^0)\right).
\label{eq:TE-near-equilibrium}
\end{equation}
\end{proposition}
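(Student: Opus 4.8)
The plan is to deduce \eqref{eq:TE-near-equilibrium} from the classical small-amplitude (Lindstedt-type) expansion of the period of a nonlinear oscillator about a non-degenerate minimum of its potential energy, reading the sign off the first correction term.

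First I would unify the two cases. In both, the orbit oscillates about a critical point $u^0$ of $V$, and dividing \eqref{eq:waveeq} by $c^2-1$ produces an effective potential with a non-degenerate \emph{minimum} at $u^0$, because $V''(u^0)/(c^2-1)>0$ always ($V''(u^0)>0$ with $c^2>1$ in the superluminal case, $V''(u^0)<0$ with $c^2<1$ in the subluminal case). Introducing the rescaled energy $\epsilon$ by $E=V(u^0)+(c^2-1)\epsilon$, we have $\epsilon\downarrow 0$ precisely as the orbit collapses to $u^0$, and $(c^2-1)T_E=\partial T/\partial\epsilon$ by the chain rule; so it suffices to show that $\partial T/\partial\epsilon|_{\epsilon=0^+}$ has the sign of $5V'''(u^0)^2-3V''(u^0)V^{(4)}(u^0)$. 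Writing $g:=f-u^0$ and $\widehat W(g):=\pm(V(u^0+g)-V(u^0))$ with the sign making $\widehat W\ge 0$ near $0$ (so $\widehat W\in C^4$ near $0$, with a non-degenerate minimum there, $\widehat W''(0)=|V''(u^0)|$ and $\widehat W^{(k)}(0)=\pm V^{(k)}(u^0)$ for $k\ge 1$, and no other critical point in the oscillation interval once $\epsilon$ is small), the formula \eqref{periodsuplib} (respectively \eqref{periodsublib}) becomes $T=\sqrt{2|c^2-1|}\int(|c^2-1|\epsilon-\widehat W(g))^{-1/2}\,dg$ over the oscillation interval.

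Second I would run the usual ``harmonic-coordinate'' substitution: define $y=y(g)$ (with $\sgn y=\sgn g$) by $\widehat W(g)=\tfrac12|V''(u^0)|y^2$, then $y=\pm y_0\sin\theta$ with $\tfrac12|V''(u^0)|y_0^2=|c^2-1|\epsilon$. This turns the period into $T=\frac{2\sqrt{|c^2-1|}}{\sqrt{|V''(u^0)|}}\int_{-\pi/2}^{\pi/2}\frac{dg}{dy}(y_0\sin\theta)\,d\theta$. Reverting the Taylor series of $g\mapsto y$ at $0$ gives the expansion of $dg/dy$ near $y=0$; its odd-in-$y$ part integrates to zero over $[-\pi/2,\pi/2]$, while the quadratic coefficient of its even part, after the elementary integral of $\sin^2\theta$, yields
\[
\partial T/\partial\epsilon\big|_{\epsilon=0^+}=\frac{\pi\,|c^2-1|^{3/2}}{12\,|V''(u^0)|^{7/2}}\bigl(5V'''(u^0)^2-3V''(u^0)V^{(4)}(u^0)\bigr),
\]
whose prefactor is positive; this gives \eqref{eq:TE-near-equilibrium}. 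As a sanity check I would verify that for the sine-Gordon potential this bracket equals $3>0$ at both $u^0=0$ and $u^0=\pi$, consistently with Remark~\ref{remark:Chicone-sine-Gordon}.

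The main obstacle will be the regularity accounting, since only $V\in C^4$ near $u^0$ is available. The change of variables $g\mapsto y$ is then only $C^1$ on a full neighbourhood of $0$ (morally, $y^2$ equals $\widehat W$ divided by $g^2$, which costs two derivatives), so one cannot extract the quadratic term of $dg/dy$ from a neighbourhood-$C^2$ hypothesis. The fix is that $dg/dy$ is nevertheless twice differentiable \emph{at the point} $y=0$ (equivalently $g\mapsto y$ is thrice differentiable at $g=0$, which does follow from $V\in C^4$ upon dividing the fourth-order Taylor remainder of $\widehat W$ by $g^2$), so $dg/dy$ has a second-order Taylor expansion with Peano remainder; since $|y_0\sin\theta|\le y_0$, that remainder is $o(y_0^2)=o(\epsilon)$ uniformly in $\theta$, which is exactly what is needed to differentiate the expansion of $T(\epsilon)$ at $\epsilon=0$. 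The rest of the argument — the series reversion and collecting the coefficients into the combination $5V'''(u^0)^2-3V''(u^0)V^{(4)}(u^0)$ with a manifestly positive prefactor — is routine bookkeeping. (For the generic case $5V'''(u^0)^2-3V''(u^0)V^{(4)}(u^0)\ne 0$ one could instead invoke Chicone's criterion, Proposition~\ref{prop:Chicone}, on oscillation intervals shrinking to $u^0$, after checking that Chicone's function $N$ vanishes to fourth order at $u^0$ with leading coefficient proportional to $V''(u^0)(5V'''(u^0)^2-3V''(u^0)V^{(4)}(u^0))$ and hence is sign-definite on a small enough interval; but that route gives no information in the degenerate case, which is why the direct expansion is preferable.)
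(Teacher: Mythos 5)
Your argument is correct and uses essentially the same substitution as the paper: Chicone's change of variable $V(\underline{m})-V(u^0)=s^2$ is, up to a constant rescaling of the new variable and the further substitution $w=\sin\theta$, precisely your harmonic coordinate $\widehat{W}(g)=\tfrac12|V''(u^0)|\,y^2$ followed by $y=y_0\sin\theta$, and your final prefactor agrees with what the paper obtains via $\underline{m}'''(0)$. One place where your write-up is actually slightly stronger: the paper's proof opens by assuming $V$ analytic near $u^0$ ``for simplicity,'' whereas the proposition only posits $C^4$ regularity, and your observation that $g\mapsto y$ has a pointwise third-order Taylor expansion at $0$ (with Peano remainder $o(y_0^2)$ uniform in $\theta$) even though it is only $C^1$ on a neighbourhood is the correct way to close that gap.
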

In the setting of potentials $V$ satisfying Assumptions~\ref{assumptionsV} and \ref{assumptionsnormalize} we may choose $u^0=\underline{u}$ with $V(u^0)=-1$ for $c^2>1$ and $u^0=\overline{u}$ with $V(u^0)=1$ for $c^2<1$.  
\begin{proof}
We suppose for simplicity that $V$ is analytic near an equilibrium corresponding to one of the non-degenerate critical points of $V$.  Consider first the superluminal case, in which case $f$ oscillates about a critical point $u^0=\underline{u}$ for which $V'(\underline{u})=0$ but $V''(\underline{u})>0$ ($\underline{u}$ is a local minimizer of $V$).  We use Chicone's substitution \cite{Chi1} $\eta=\underline{m}(s)$ to write the formula \eqref{periodsuplib} in the form
\begin{equation}
T=\sqrt{2}\sqrt{c^2-1}\int_{-1}^1\frac{\underline{m}'(w\sqrt{E-V(\underline{u})})\,dw}{\sqrt{1-w^2}},
\label{eq:TsuplibChicone}
\end{equation}
where $\underline{m}$ is the analytic and monotone increasing function of $s$ determined uniquely from the relation
\begin{equation}
V(\underline{m})-V(\underline{u})=s^2.
\label{eq:superluminal-Chicone}
\end{equation}
Note that the function $\underline{m}$ is independent of $E$, but $\underline{m}'$ is evaluated at the scaled argument
$s=w\sqrt{E-V(\underline{u})}$ in \eqref{eq:TsuplibChicone}.  Now if $E-V(\underline{u})$ is sufficiently small, the Taylor series for $\underline{m}'(w\sqrt{E-V(\underline{u})})$ about $w=0$ converges uniformly for $|w|\le 1$, so it may be integrated term-by-term to yield an infinite-series formula for $T$:
\begin{equation}
\begin{split}
T&=\sqrt{2}\sqrt{c^2-1}\sum_{n=0}^\infty\left[\frac{\underline{m}^{(2n+1)}(0)}{(2n)!}\int_{-1}^1\frac{w^{2n}\,dw}{\sqrt{1-w^2}}\right](E-V(\underline{u}))^n \\&= \pi\sqrt{2}\sqrt{c^2-1}\left[\underline{m}'(0)+\sum_{n=1}^\infty
\frac{(2n-1)!\underline{m}^{(2n+1)}(0)}{2^{2n-1}n!(n-1)!(2n)!}(E-V(\underline{u}))^n\right].
\end{split}
\end{equation}
Evidently, $T$ is an analytic function of $E$ at $E=V(\underline{u})$.  Its derivative at the equilibrium is therefore 
\begin{equation}
\left.T_E\right|_{E=V(\underline{u})}=\frac{\pi}{4}\sqrt{2}\sqrt{c^2-1}\underline{m}'''(0).
\end{equation}
It remains to calculate $\underline{m}'''(0)$, a task easily accomplished by repeated implicit differentiation of \eqref{eq:superluminal-Chicone} with respect to $s$, and then setting $s=0$ and $\underline{m}=\underline{u}$.  In this way, we obtain
\begin{equation}
\underline{m}'''(0)=\sqrt{\frac{V''(\underline{u})}{2}}\frac{5V'''(\underline{u})^2-3V''(\underline{u})V^{(4)}(\underline{u})}{3V''(\underline{u})^4},
\end{equation}
from which \eqref{eq:TE-near-equilibrium} is obvious in the superluminal case $c^2>1$.

In the subluminal case, one considers instead oscillations $f$ about a critical point $u^0=\overline{u}$ for which $V'(\overline{u})=0$ but $V''(\overline{u})<0$ (a local maximizer of $V$).  Now letting $\overline{m}$ be the analytic and monotone increasing function of $s$ near $s=0$ defined by the equation
$
V(\overline{u})-V(\overline{m})=s^2,
$
the relevant formula \eqref{periodsublib} becomes
\begin{equation}
T=\sqrt{2}\sqrt{1-c^2}\int_{-1}^1\frac{\overline{m}'(w\sqrt{V(\overline{u})-E})\,dw}{\sqrt{1-w^2}}.
\end{equation}
Following the same procedure as in the superluminal case 
establishes \eqref{eq:TE-near-equilibrium} in the subluminal case as well.
\end{proof}

\section{The spectral problem}
\label{secspectralproblem}

\subsection{Definition of resolvent set and spectrum}

Let us now consider how a perturbation of the periodic traveling wave $f = f(z)$ evolves under the Klein-Gordon equation \eqref{eqnlKG}. Substituting $u=f+v$ into the Klein-Gordon equation \eqref{eqnlKG} written in the galilean frame associated with the independent variables $(z=x-ct,t)$ and using the equation \eqref{eqnlp} satisfied by $f$, one finds that the perturbation $v$ necessarily satisfies the nonlinear equation
\begin{equation}
 v_{tt} - 2 c v_{zt} + (c^2 -1) v_{zz} + V'(f(z)+v) - V'(f(z)) = 0.
 \label{eq:perturbexact}
\end{equation}
As a leading approximation for small perturbations, we replace \eqref{eq:perturbexact} by its linearization about $v=0$ and hence obtain the linear equation
\begin{equation}
\label{linearperturb}
v_{tt} - 2cv_{zt} + (c^2 -1) v_{zz} + V''(f(z))v = 0.
\end{equation}

Since $f$ depends on $z$ but not $t$, the equation \eqref{linearperturb} admits treatment by separation of variables, which leads naturally to a spectral problem.  Seeking particular solutions of  \eqref{linearperturb}  of the form $v(z,t) = w(z)e^{\lambda t}$, where $\lambda \in \C$, $w$ satisfies the linear ordinary differential equation
\begin{equation}
\label{eq:spectral}
(c^2 -1) w_{zz} - 2c\lambda w_z + (\lambda^2 + V''(f(z))) w = 0, 
\end{equation}
in which the complex growth rate $\lambda$ appears as a (spectral) parameter. Equation \eqref{eq:spectral} will only have a nonzero solution $w$ in a given Banach space $X$ for certain $\lambda\in\mathbb{C}$, and roughly speaking, these values of $\lambda$ constitute the spectrum for the linearized problem \eqref{linearperturb}.  A necessary condition for the stability of $f$ is
that there are no points of spectrum with $\Re\lambda>0$ (which would imply the existence of
a solution $v$ of \eqref{linearperturb} that lies in $X$ as a function of $z$ and grows exponentially in time). These notions will be made precise shortly.

Following Alexander, Gardner and Jones \cite{AGJ}, the spectral problem \eqref{eq:spectral} with $w \in X$ can be equivalently regarded as a first order system of the form
\begin{equation}
\label{eq:firstorder}
\bw_z = \bA(z,\lambda) \bw,
\end{equation}
where
$\bw := (
w, w_z )^\top \in Y
$
($Y$ is a Banach space related to $X$), and 
\begin{equation}
\label{eq:coeffA}
\bA(z,\lambda) := \left(\begin{matrix}
0 & 1 \\ & \\ - \displaystyle{\frac{(\lambda^2 + V''(f(z)))}{c^2 -1 }} & \displaystyle{\frac{2c\lambda}{c^2 - 1}}
\end{matrix}\right).
\end{equation}
Note that the coefficient matrix $\bA$ is periodic in $z$ with period $T$.

To interpret the problem \eqref{eq:firstorder} in a more functional analytic setting, we consider the closed, densely defined operators $\cT(\lambda) : D \subset Y \to Y$ whose
action is defined by
\begin{equation}
\label{eq:defofT}
\cT(\lambda)\bw := \bw_z - \bA(z,\lambda) \bw
\end{equation}
on a domain $D$ dense in $Y$.
The family of operators is parametrized by $\lambda \in \C$, but the domain $D$ is taken to be independent of $\lambda\in\C$. The resolvent set and spectrum associated with $\cT$ are then defined as follows \cite{San,SS6}.

\begin{definition}[resolvent set and spectrum of $\cT$]\label{defspect}
We define the following subsets of the complex $\lambda$-plane:
\begin{itemize}
\item[(i)]
the \emph{resolvent set} $\zeta$ is defined by
\[
\zeta := \{ \lambda \in \C\,: \;\text{$\cT(\lambda)$ is
one-to-one and onto, and $\cT(\lambda)^{-1}$ is
bounded} \};
\]
\item[(ii)]
the \emph{point
spectrum} $\ptsp$ is defined by
\[
\ptsp := \{ \lambda \in \C\,:
\; \text{$\cT(\lambda)$ is Fredholm with zero index and has a
non-trivial kernel} \};
\]
\item[(iii)]
the \emph{essential spectrum} $\ess$ is defined by
\[
\ess := \{
\lambda \in \C\,: \; \text{$\cT(\lambda)$ is either not Fredholm or has index different from zero} \}.
\]
\end{itemize}
The \emph{spectrum} $\sigma$ of $\cT$ is the (disjoint) union of the essential and point spectra, $\sigma = \ess \cup \ptsp$. Note
that since $\cT(\lambda)$ is closed for each $\lambda\in\C$, then $\zeta = \C
\backslash \sigma$ (see, e.g., \cite{Kat1}).
\end{definition}
As it is well-known, the definition of spectra and resolvent associated with periodic waves depends upon the choice of the function space $Y$. Motivated by the fact that the sine-Gordon equation is well-posed in $L^p$ spaces \cite{BuM1}, here we shall consider
\begin{equation}
D = H^1(\R;\C^2) \quad \text{and}\quad Y = L^2(\R;\C^2), 
\end{equation}
which corresponds to studying spectral stability of periodic waves with respect to \emph{spatially localized perturbations} in the galilean frame in which the waves are stationary.

\begin{remark}
Observe that the parameter $\lambda$ appears in \eqref{eq:spectral} both quadratically and linearly, making this spectral problem a non-standard one (that is, it does not have the form $\cL w = \lambda w$, where $\cL$ is a differential operator in a Banach space). Instead, the family of operators $\cT(\lambda)$ constitutes a so-called 
\emph{quadratic pencil} in the spectral parameter $\lambda$.
By introducing auxiliary fields it is possible to reformulate the spectral problem $\cT(\lambda)\bw=0$ as a spectral problem of standard form on an appropriate tensor product space, but not in such a way that the latter problem is selfadjoint (we will not take this approach).  
While it is clear that Definition~\ref{defspect} reduces in the special case
when $\cT(\lambda)$ has the form $\cL-\lambda$ to a more standard definition (in particular $\ptsp$ is the set of eigenvalues of $\cL$), it is sufficiently general to handle the quadratic pencil defined by \eqref{eq:defofT} of interest here.
\myendrmk
\end{remark}

It is well-known (\cite{Grd1}, see also \S\ref{section-Hill-spectrum}) that the $L^2$ spectrum of a differential operator with periodic coefficients contains no eigenvalues.  This fact persists for the quadratic pencil $\cT$ under the generalized definition of spectra considered here.   Indeed, we have the following.
\begin{lemma}
\label{lem:allcontinuous}
All $L^2$ spectrum of $\cT$ defined by \eqref{eq:defofT} is purely essential, that is, $\sigma = \ess$ and $\ptsp$ is empty. 
\end{lemma}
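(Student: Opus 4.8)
The plan is to show that for every $\lambda\in\mathbb{C}$, the operator $\cT(\lambda):H^1(\R;\C^2)\to L^2(\R;\C^2)$ never has a nontrivial kernel, which by Definition~\ref{defspect} immediately forces $\ptsp=\emptyset$ and hence $\sigma=\ess$. The key observation is that a nontrivial kernel element $\bw\in H^1(\R;\C^2)$ of $\cT(\lambda)$ is, by \eqref{eq:defofT}, precisely a nonzero $L^2$ solution of the first-order periodic-coefficient system $\bw_z=\bA(z,\lambda)\bw$ with $\bA(\cdot,\lambda)$ of period $T$. I would then invoke Floquet theory: every solution of such a system is a linear combination of Floquet solutions of the form $\bw(z)=e^{\mu z}\bp(z)$ with $\bp$ of period $T$, where $e^{\mu T}$ runs over the eigenvalues of the monodromy matrix. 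A solution of this type lies in $L^2(\R)$ only if it is identically zero: if $\Re\mu\ne 0$ it blows up exponentially at $+\infty$ or $-\infty$, and if $\Re\mu=0$ then $|\bw(z)|$ is a nonconstant-or-constant periodic function bounded below away from zero along the lattice $z\in T\Z$ (unless $\bp\equiv 0$), so in any case $\|\bw\|_{L^2(\R)}=\infty$. Therefore the kernel of $\cT(\lambda)$ in $Y=L^2$ is trivial.

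The one genuinely careful point is that a general solution need not itself be a pure Floquet solution — when the monodromy matrix is not diagonalizable one can have a secular (polynomial-times-exponential-times-periodic) solution. I would dispatch this by the same growth argument: a solution of the form $e^{\mu z}(\bp_0(z)+z\,\bp_1(z))$ with $\bp_1\not\equiv 0$ also fails to be in $L^2(\R)$, since along either $z\to+\infty$ or $z\to-\infty$ it grows at least linearly (after possibly factoring the exponential, which only makes things worse unless $\Re\mu=0$, in which case the linear factor alone already destroys square-integrability). More cleanly, it suffices to note that $\bw\in L^2(\R)\cap H^1(\R)$ implies $\bw(z)\to 0$ as $|z|\to\infty$; but the monodromy map sends $\bw(0)\mapsto\bw(T)\mapsto\bw(2T)\mapsto\cdots$, and if $\bw(nT)\to 0$ as $n\to+\infty$ and $\bw(-nT)\to 0$ as $n\to+\infty$ then $\bw(0)$ lies in the intersection of the (generalized) eigenspaces of the monodromy matrix $\bM$ for eigenvalues of modulus $<1$ and of modulus $>1$ respectively — but these two subspaces intersect trivially, so $\bw(0)=0$ and hence $\bw\equiv 0$ by uniqueness for the ODE.

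I would organize the write-up as: (1) recall that $\ker\cT(\lambda)$ in $Y$ corresponds to $L^2$ solutions of \eqref{eq:firstorder}; (2) set up the monodromy matrix $\bM(\lambda)$ of the $T$-periodic system and recall that $\bw(z+T)$ is obtained from $\bw(z)$ by applying $\bM(\lambda)$; (3) observe $\bw\in H^1(\R;\C^2)$ forces $\bw(nT)\to 0$ as $n\to\pm\infty$; (4) conclude $\bw(0)$ lies simultaneously in the span of generalized eigenvectors of $\bM(\lambda)$ with eigenvalue modulus $<1$ and those with modulus $>1$, an intersection that is $\{0\}$, hence $\bw\equiv 0$; (5) deduce $\ptsp=\emptyset$, so $\sigma=\ess$. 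The main obstacle — really the only subtlety — is making step (3)$\Rightarrow$(4) airtight in the non-semisimple case, i.e. correctly handling Jordan blocks of $\bM(\lambda)$ with eigenvalue on the unit circle (polynomial growth still prevents decay of $\bw(nT)$) and eigenvalue of modulus $1$ with trivial block (bounded but nonzero, again no decay). Once one notes that decay of $\bw(nT)$ as $n\to+\infty$ already forces $\bw(0)$ into the strictly-contracting generalized eigenspace of $\bM(\lambda)$ and decay as $n\to-\infty$ forces it into the strictly-expanding one, the contradiction is immediate since $\C^2$ is the direct sum of these together with the (here necessarily relevant) unit-circle part, and the two strict spaces meet only at $0$.
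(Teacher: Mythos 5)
Your proof is correct, and it follows a genuinely different route from the paper's. The paper starts from the hypothesis $\lambda \in \ptsp$, so that Fredholmness guarantees $N := \ker\cT(\lambda)$ is \emph{finite-dimensional}; it then observes that the $T$-shift operator $\cS$ commutes with $\cT(\lambda)$ and so restricts to a \emph{unitary} map on $N$, which (being on a finite-dimensional space) must have an eigenvalue $\alpha$ with $|\alpha| = 1$; the corresponding eigenvector $\bw^0 \in N$ satisfies $|\bw^0(z+T)| = |\bw^0(z)|$, so $|\bw^0|^2$ is $T$-periodic and hence $\bw^0 \notin L^2$, a contradiction. You instead argue directly at the level of the ODE: a kernel element $\bw \in H^1$ must vanish at $\pm\infty$, and since the orbit $\{\bw(nT)\}_{n\in\Z} = \{\bM(\lambda)^n\bw(0)\}$ is a linear dynamical system on $\C^2$, decay as $n\to+\infty$ confines $\bw(0)$ to the stable generalized eigenspace $E^s$ (the center part of $\C^2$ would give bounded-below or polynomially growing orbits), while decay as $n\to-\infty$ confines it to $E^u$, and $E^s \cap E^u = \{0\}$ forces $\bw \equiv 0$. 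Your handling of the center and non-semisimple cases is correct and is precisely the point that needs care. Each approach has merit: your dynamical argument is more elementary, bypasses the Fredholm hypothesis entirely (you prove $\ker\cT(\lambda) = \{0\}$ for \emph{every} $\lambda$, not just $\lambda \in \ptsp$), and exploits the finite-dimensionality of the phase space of the ODE; the paper's argument via the unitary shift is a softer functional-analytic device that would work for periodic-coefficient operators whose "phase space'' is not finite-dimensional, but in exchange it needs the Fredholm property as input to secure finite-dimensionality of the kernel before it can invoke the spectral theorem.
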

\begin{proof}
Let $\lambda \in \ptsp$. Then by definition, $\cT(\lambda)$ is Fredholm with zero index and has a non-trivial kernel. This implies that $N := \ker \cT(\lambda) \subset H^1(\R;\C^2)$ is a finite dimensional Hilbert space. Let us denote $\cS : L^2 \to L^2$ as the (unitary) shift operator with period $T$, defined as $\cS \bw(z) := \bw(z+T)$. Since the coefficient matrix $\bA(z,\lambda)$ is periodic with period $T$, there holds $\cS\cT(\lambda) = \cT(\lambda)\cS$ in $L^2$, making $N$ an invariant subspace of $\cS$. Let us define $\hat \cS$ as the restriction of $\cS$ to $N$. Then $\hat\cS :N \to N$ is a unitary map in a finite-dimensional Hilbert space. Therefore, $\hat{\cS}$ must have an eigenvalue $\alpha \in \C$ such that $\cS \bw^0 = \alpha \bw^0$ for some $\bw^0 \in N\subset L^2$, $\bw^0 \neq 0$. Since $\hat{\cS}$ is unitary, we have that $|\alpha| = 1$, whence 
\begin{equation}
 |\bw^0(z+T)| = |(\hat{\cS}\bw^0 )(z)| =|\alpha \bw^0(z)| = |\bw^0(z)|
\end{equation}
(here $|\cdot|$ means the Euclidean norm in $\C^2$), that is, $|\bw^0(z)|^2$ is $T$-periodic. But since $\bw^0\neq 0$, this is a contradiction with $\bw^0 \in L^2$. Thus, $\ptsp$ is empty and $\sigma=\ess$.
\end{proof}

\subsection{Floquet characterization of the spectrum.  The periodic Evans function}
\label{secEvans}
Let $\bF(z,\lambda)$ denote the $2\times 2$ identity-normalized fundamental solution matrix for the differential equation \eqref{eq:firstorder}, i.e., the unique solution of
\begin{equation}
\bF_z(z,\lambda)=\bA(z,\lambda)\bF(z,\lambda),\quad\text{with initial condition $\bF(0,\lambda)=\mathbb{I}$, $\forall\lambda\in\C$}.
\end{equation}
The $T$-periodicity in $z$ of the coefficient matrix $\bA$ then implies that
\begin{equation}
\bF(z+T,\lambda)=\bF(z,\lambda)\bM(\lambda),\quad\forall z\in\R, \quad \text{where} \quad \bM(\lambda) := \bF(T,\lambda). 
\label{eq:monodromy}
\end{equation}
The matrix $\bM(\lambda)$ is called the \emph{monodromy matrix} for the first-order system \eqref{eq:firstorder}.
The monodromy matrix is really a representation of the linear mapping taking a given solution 
$\bw(z,\lambda)$ evaluated for $z=0\pmod{T}$ to its value one period later.
Since the elements of the coefficient matrix $\bA$ are entire functions of $\lambda$, and since the Picard iterates for $\bF(z,\lambda)$ converge uniformly for bounded $z$, the elements of the monodromy matrix $\bM(\lambda)$ are also entire functions of $\lambda\in\C$.
Let $\mu(\lambda)$ denote an eigenvalue of $\bM(\lambda)$, and let $\bw^0(\lambda)\in\C^2$ denote
a corresponding (nonzero) eigenvector:  $\bM(\lambda)\bw^0(\lambda)=\mu(\lambda)\bw^0(\lambda)$.  Then $\bw(z,\lambda):=\bF(z,\lambda)\bw^0(\lambda)$ is a nontrivial solution of the first-order system \eqref{eq:firstorder} that satisfies
\begin{equation}
\begin{split}
\bw(z+T,\lambda) &=\bF(z+T,\lambda)\bw^0(\lambda) 
=\bF(z,\lambda)\bM(\lambda)\bw^0(\lambda)\quad\text{(by \eqref{eq:monodromy})}\\
&=\mu(\lambda)\bF(z,\lambda)\bw^0(\lambda) =\mu(\lambda)\bw(z,\lambda),\quad\forall z\in\mathbb{R}.
\end{split}
\end{equation}
Thus $\bw(z,\lambda)$ is a particular solution that goes into a multiple of itself upon translation by a period in $z$.  After G.\@ Floquet, such solutions are called \emph{Floquet solutions}, and the eigenvalue $\mu(\lambda)$ of the monodromy matrix $\bM(\lambda)$ is called a \emph{Floquet multiplier}.  If $R(\lambda)$ denotes any number (determined modulo $2\pi i$) for which $e^{R(\lambda)}=\mu(\lambda)$, it is evident that $e^{-R(\lambda)z/T}\bw(z,\lambda)$ is a $T$-periodic function of $z$,
or, equivalently (Bloch's Theorem) $\bw(z,\lambda)$ can be written in the form
\begin{equation}
\bw(z,\lambda)=e^{R(\lambda)z/T}\bz(z,\lambda),\quad\text{where $\bz(z+T,\lambda)=\bz(z,\lambda)$, $\forall z\in\R$}.
\label{eq:Bloch-form}
\end{equation}
The quantity $R(\lambda)$ is sometimes called a \emph{Floquet exponent}.  A further consequence of Floquet theory is that if the first-order system \eqref{eq:firstorder} has a nontrivial solution in $L^\infty(\mathbb{R},\mathbb{C}^2)$, it is necessarily a linear combination of solutions having Bloch form \eqref{eq:Bloch-form} with $R(\lambda)$ purely imaginary, that is, it is a superposition of Floquet solutions corresponding to Floquet multipliers $\mu(\lambda)$ with $|\mu(\lambda)|=1$.

The $L^2(\mathbb{R},\mathbb{C}^2)$ spectrum of $\cT$ given by \eqref{eq:defofT} is characterized in terms of the monodromy matrix as follows.
\begin{proposition}\label{prop:deteq0}
$\lambda \in \sigma$ if and only if there exists $\mu \in \C$ with $|\mu| = 1$ such that
\begin{equation}
 \label{eq:det0}
D(\lambda,\mu) := \det (\bM(\lambda) - \mu \Id) = 0,
\end{equation}
that is, at least one of the Floquet multipliers lies on the unit circle.
\end{proposition}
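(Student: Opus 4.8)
The plan is to prove the equivalent statement about the resolvent set, namely that $\lambda\in\zeta$ if and only if the monodromy matrix $\bM(\lambda)$ has \emph{no} eigenvalue on the unit circle; negating both sides and using $\zeta=\C\setminus\sigma$ gives exactly the Proposition (``$D(\lambda,\mu)=0$ for some $|\mu|=1$'' is precisely the statement that $\bM(\lambda)$ has a unit-modulus eigenvalue). Throughout I would invoke Lemma~\ref{lem:allcontinuous}, so that $\ptsp$ is empty and $\sigma=\ess$; hence it suffices to detect whether $\cT(\lambda)$ fails to be Fredholm.

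For the implication ``$\bM(\lambda)$ hyperbolic $\Rightarrow\lambda\in\zeta$'', I would first note, by Liouville's formula, that $\det\bM(\lambda)=\exp\big(\int_0^T\tr\bA(z,\lambda)\,dz\big)=\exp\big(2c\lambda T/(c^2-1)\big)\neq0$, so both Floquet multipliers are nonzero; if moreover neither has unit modulus, $\bM(\lambda)$ is hyperbolic and the $T$-periodic system \eqref{eq:firstorder} has an exponential dichotomy on all of $\R$. Splitting $\C^2$ into the generalized eigenspaces of $\bM(\lambda)$ for eigenvalues inside and outside the unit circle, with spectral projections $\Pi^{\mathrm s}$ and $\Pi^{\mathrm u}$, I would build the Green's function equal to $\bF(z,\lambda)\Pi^{\mathrm s}\bF(y,\lambda)^{-1}$ for $z>y$ and to $-\bF(z,\lambda)\Pi^{\mathrm u}\bF(y,\lambda)^{-1}$ for $z<y$, and establish a uniform bound $Ke^{-\alpha|z-y|}$ for its norm with suitable constants $K\geq1,\ \alpha>0$. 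The associated integral operator is then bounded from $Y=L^2(\R;\C^2)$ into $D=H^1(\R;\C^2)$ by Young's inequality, and a direct computation identifies it as the two-sided inverse of $\cT(\lambda)$; hence $\lambda\in\zeta$.

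For the converse, ``some Floquet multiplier has $|\mu|=1\Rightarrow\lambda\in\sigma$'', I would take the Floquet (Bloch) solution supplied by the discussion in \S\ref{secEvans}, namely $\bw_\star(z)=e^{i\theta z/T}\bz_\star(z)$ with $\mu=e^{i\theta}$, $\theta\in\R$, and $\bz_\star$ continuous, $T$-periodic, not identically zero; this $\bw_\star$ is bounded but not in $L^2$, and $|\bw_\star(z)|^2=|\bz_\star(z)|^2$ is $T$-periodic with strictly positive mean $\overline m$. I would then manufacture a Weyl singular sequence by cutting $\bw_\star$ off on a widening window that escapes to $+\infty$: choosing $\chi\in C_c^\infty(\R)$ with $\mathrm{supp}\,\chi\subset[1,2]$, set $\bw_n(z):=\chi(z/n)\bw_\star(z)\in D$, supported in $[n,2n]$. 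Since $\bw_\star$ solves \eqref{eq:firstorder}, $\cT(\lambda)\bw_n=n^{-1}\chi'(\cdot/n)\bw_\star$; rescaling $z=ns$ and using weak-$*$ convergence of the rapid oscillation $|\bw_\star(ns)|^2$ to its mean gives $\|\bw_n\|_Y^2\sim\overline m\,n\,\|\chi\|_{L^2}^2$ while $\|\cT(\lambda)\bw_n\|_Y^2\sim\overline m\,n^{-1}\|\chi'\|_{L^2}^2$. The normalized sequence $\hat\bw_n:=\bw_n/\|\bw_n\|_Y$ therefore has unit norm, satisfies $\cT(\lambda)\hat\bw_n\to0$ in $Y$, and converges weakly to $0$ (its support escapes to $+\infty$). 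A standard argument closes the loop: a Fredholm operator is bounded below on the orthogonal complement of its finite-dimensional kernel, and the kernel projection is compact, so such a singular sequence is incompatible with $\cT(\lambda)$ being Fredholm; hence $\cT(\lambda)$ is not Fredholm and $\lambda\in\ess=\sigma$.

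I expect the main obstacle to lie in the first implication: turning hyperbolicity of $\bM(\lambda)$ into honest $L^2$-boundedness of $\cT(\lambda)^{-1}$ needs dichotomy estimates that are uniform in $z$, which one obtains by exploiting the $T$-periodicity of $\bA$ to reduce everything to powers $\bM(\lambda)^{\pm n}\Pi^{\mathrm s/\mathrm u}$ and to control of $\bF(\cdot,\lambda)$ and its inverse over a single period. Alternatively, both implications can be quoted wholesale from the Fredholm theory of exponential dichotomies (see \cite{San,SS6}): $\partial_z-\bA(\cdot,\lambda)$ is Fredholm on $L^2(\R)$ exactly when the system has exponential dichotomies on both half-lines, which for periodic coefficients is equivalent to $\bM(\lambda)$ being hyperbolic, hence to a dichotomy on all of $\R$, hence to invertibility. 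The Weyl-sequence direction is more hands-on; its only delicate point is the averaging step certifying that $\|\bw_n\|_Y$ grows precisely like $n^{1/2}$.
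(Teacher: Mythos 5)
Your proposal is correct and, at the level of underlying mathematics, is the same as the paper's: the dichotomy between $\bM(\lambda)$ hyperbolic (no unit-modulus Floquet multiplier) and non-hyperbolic is what decides whether $\lambda$ is in the resolvent set or the essential spectrum. The paper, however, handles the two implications by citation — it invokes Henry's lecture notes for the equivalence ``$\lambda\in\ess$ iff \eqref{eq:firstorder} has a nontrivial uniformly bounded solution'' and Gardner's Proposition~2.1 for the boundedness of $\cT(\lambda)^{-1}$ when all Floquet exponents have nonzero real part — whereas you unpack both citations into explicit arguments: the exponential-dichotomy Green's function $\bF(z,\lambda)\Pi^{\mathrm s}\bF(y,\lambda)^{-1}$ / $-\bF(z,\lambda)\Pi^{\mathrm u}\bF(y,\lambda)^{-1}$ with the Schur/Young bound on the convolution-type kernel to build the bounded two-sided inverse, and the Weyl singular sequence $\hat\bw_n$ obtained by cutting the Bloch solution off on a window escaping to infinity, together with the scaling estimates $\|\bw_n\|_Y\sim(\overline m\,n)^{1/2}\|\chi\|_{L^2}$, $\|\cT(\lambda)\bw_n\|_Y\sim(\overline m/n)^{1/2}\|\chi'\|_{L^2}$ and the finite-dimensional-kernel compactness argument to exclude the Fredholm property. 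What your version buys is self-containedness and transparency: every estimate is visible, the role of the $T$-periodicity (via reduction to $\bM(\lambda)^{\pm n}$ and a single-period bound on $\bF$) and of the dichotomy constant $\alpha$ is explicit, and the singular-sequence half directly certifies ``not Fredholm'' rather than passing through Henry's abstract characterization. The cost is length — the paper's two-sentence citation handles what you spend three paragraphs constructing — and you should state explicitly, as the paper does via Lemma~\ref{lem:allcontinuous}, that Floquet theory lets one assume the bounded solution is a genuine Floquet solution $e^{i\theta z/T}\bz_\star(z)$ with $\bz_\star$ $T$-periodic (i.e., corresponding to an eigenvector, not a generalized eigenvector), which is what makes the strictly positive mean $\overline m$ available for the Weyl-sequence asymptotics.
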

\begin{proof}
According to Lemma \ref{lem:allcontinuous}, $\sigma$ consists entirely of essential spectrum. Moreover, $\lambda \in \ess$ if and only if the system \eqref{eq:firstorder} admits a non-trivial, uniformly bounded solution \cite[pgs.\@ 138--140]{He}.  Any such solution is necessarily a linear combination of Floquet solutions with multipliers $\mu$ satisfying $|\mu|=1$. The condition \eqref{eq:det0} simply expresses that $\mu$ is a Floquet multiplier of the system \eqref{eq:firstorder}.  
It is not difficult to verify that $\cT(\lambda)$ has a bounded inverse provided all Floquet exponents have non-zero real part \cite[Proposition 2.1]{Grd1}.  Hence $\lambda \in \sigma = \ess$ if and only if there exists a eigenvalue of $\bM(\lambda)$, i.e., a solution of the quadratic equation \eqref{eq:det0}, of the form $\mu = e^{i\theta}$ with $\theta \in \R$.
\end{proof}
Following the foundational work of R.\ A.\ Gardner on stability of periodic waves \cite{Grd1,Grd3,Grd2} we make the following definition.
\begin{definition}[periodic Evans function]\label{def:gar}
The \textit{periodic Evans function} is the restriction of $D(\lambda,\mu)$ to the unit circle $S^1\subset\mathbb{C}$ in the second argument, which is to be regarded as a unitary parameter in this context.  Thus, for each  $\theta\in\mathbb{R}\pmod{2\pi}$, $D(\lambda,e^{i\theta})$ is an entire function of $\lambda\in\mathbb{C}$ whose (isolated) zeros are particular points of the spectrum $\sigma$.
\end{definition}

The obvious parametrization of the spectrum according to values of $\mu=e^{i\theta}\in S^1$, or equivalently $\theta\in\mathbb{R}\pmod{2\pi}$ can be made even clearer by introducing the set $\sigma_\theta$ of complex numbers $\lambda$ for which there exists a nontrivial solution of the boundary-value problem consisting of \eqref{eq:spectral} with the boundary condition
\begin{equation}\label{eq:periodicode}
\begin{pmatrix} w(T) \\ w_z(T) \end{pmatrix} =e^{i\theta} \begin{pmatrix} w(0) \\ w_z(0) \end{pmatrix} ,\quad\theta\in\mathbb{R}.
\end{equation}
Obviously the sets $\sigma_\theta$ and $\sigma_{\theta+2\pi n}$ coincide for all $n\in\Z$.
It follows from Proposition~\ref{prop:deteq0} that $\sigma$ may be written as a union of these \emph{partial spectra} as follows:
\begin{equation}
\sigma=\bigcup_{-\pi<\theta\le\pi}\sigma_\theta.
\end{equation}
Furthermore, it is clear that the set $\sigma_\theta$ is characterized as the zero set of the (entire in $\lambda$) periodic Evans function $D(\lambda,e^{i\theta})$ and hence is purely discrete.  The discrete partial spectrum $\sigma_\theta$ can therefore be detected for fixed $\theta\in\mathbb{R}$ by standard techniques based on the use of the Argument Principle.  However, the study of localized perturbations requires considering all of the partial spectra $\sigma_\theta$ at once.  

\begin{remark}
\label{remark:periodic}
Note that, in particular, the equation $D(\lambda,1)=0$ characterizes the part of the spectrum corresponding to perturbations that are $T$-periodic, and hence $\sigma_0$ is the \emph{periodic partial spectrum}.  Likewise, the equation $D(\lambda,-1)=0$ determines the \emph{antiperiodic partial spectrum} $\sigma_{\pi}$ corresponding to perturbations that change sign after translation by $T$ in $z$ (and hence that have fundamental period $2T$).  The points of $\sigma_0$ (resp., of $\sigma_\pi$) are frequently called \emph{periodic eigenvalues} (resp., \emph{antiperiodic eigenvalues}) although we stress that in neither case do the corresponding nontrivial solutions of \eqref{eq:firstorder} belong to $L^2(\R)$.
\myendrmk
\end{remark}

The real angle parameter $\theta$ is typically a local coordinate for the spectrum $\sigma$ as a
real subvariety of the complex $\lambda$-plane.  This explains the intuition that the $L^2$ spectrum is purely ``continuous'', and gives rise to the notion of \emph{curves of spectrum}:
\begin{proposition}
Suppose that $\lambda_0\in\sigma$ corresponding to a Floquet multiplier $\mu_0\in S^1$, and suppose that $D_\lambda(\lambda_0,\mu_0)\neq 0$ and $D_\mu(\lambda_0,\mu_0)\neq 0$.
Then there is a complex neighborhood $U$ of $\lambda_0$ such that $\sigma\cap U$ is
a smooth curve through $\lambda_0$.
\label{prop:curves}
\end{proposition}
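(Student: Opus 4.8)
We have, by Proposition~\ref{prop:deteq0}, that $\sigma$ is the set of $\lambda$ for which $D(\lambda,\mu)=0$ has a solution $\mu$ with $|\mu|=1$; here $D$ is a polynomial in $\mu$ (of degree $2$, since $\bM(\lambda)$ is $2\times 2$) whose coefficients are entire in $\lambda$. At the point $(\lambda_0,\mu_0)$ we are given $D(\lambda_0,\mu_0)=0$ together with $D_\mu(\lambda_0,\mu_0)\neq 0$, so the holomorphic Implicit Function Theorem (in two complex variables) yields a complex neighborhood $V$ of $\lambda_0$ and a holomorphic function $\mu=g(\lambda)$ on $V$ with $g(\lambda_0)=\mu_0$ and $D(\lambda,g(\lambda))\equiv 0$, and moreover this $g$ accounts for \emph{all} zeros of $D(\cdot,\cdot)$ in a neighborhood of $(\lambda_0,\mu_0)$ (shrinking $V$ if necessary, using that the zero $\mu_0$ of the polynomial $D(\lambda_0,\cdot)$ is simple by the hypothesis $D_\mu\neq 0$). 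Consequently, for $\lambda\in V$ we have $\lambda\in\sigma$ if and only if $|g(\lambda)|=1$.

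**Next I would convert the condition $|g(\lambda)|=1$ into a real-analytic level-set description and compute its gradient.** Write $\lambda=\lambda_0+\xi$ with $\xi\in\mathbb{C}$ near $0$, and set $\Phi(\xi):=|g(\lambda_0+\xi)|^2-1$, a real-analytic function of $(\Re\xi,\Im\xi)$ vanishing at $\xi=0$. Then $\sigma\cap V=\{\lambda_0+\xi:\Phi(\xi)=0\}$. To invoke the real Implicit Function Theorem (equivalently, the regular level set theorem) and conclude that this zero set is a smooth curve through $\lambda_0$, it suffices to show that the real gradient $\nabla\Phi(0)\neq 0$. Writing $g$ in terms of its real and imaginary parts, or more efficiently using Wirtinger calculus, one computes
\begin{equation}
\frac{\partial}{\partial\xi}\left(g\,g^*\right)\Big|_{\xi=0}=g^*(\lambda_0)\,g'(\lambda_0)=\mu_0^*\,g'(\lambda_0),
\end{equation}
so $\nabla\Phi(0)$ vanishes precisely when $g'(\lambda_0)=0$. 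It therefore remains to rule this out. Differentiating $D(\lambda,g(\lambda))\equiv 0$ gives $D_\lambda(\lambda_0,\mu_0)+D_\mu(\lambda_0,\mu_0)\,g'(\lambda_0)=0$, whence
\begin{equation}
g'(\lambda_0)=-\frac{D_\lambda(\lambda_0,\mu_0)}{D_\mu(\lambda_0,\mu_0)},
\end{equation}
which is nonzero exactly because both hypotheses $D_\lambda(\lambda_0,\mu_0)\neq 0$ and $D_\mu(\lambda_0,\mu_0)\neq 0$ are in force. Hence $\nabla\Phi(0)\neq 0$, and the regular level set theorem gives that $\sigma\cap U$ is a smooth (indeed real-analytic) curve through $\lambda_0$ for a suitably small neighborhood $U\subseteq V$.

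**The one point requiring genuine care — and the step I expect to be the main obstacle — is the claim that $g$ captures \emph{all} of $\sigma$ near $\lambda_0$**, i.e. that no other branch of the multiplier crosses the unit circle arbitrarily close to $\lambda_0$. This is where the simplicity hypothesis $D_\mu(\lambda_0,\mu_0)\neq 0$ does its work: the other root $\tilde\mu(\lambda)$ of the quadratic $D(\lambda,\cdot)$ near $\lambda_0$ is continuous and satisfies $\tilde\mu(\lambda_0)\neq\mu_0$; since $\mu_0$ is the only root of $D(\lambda_0,\cdot)$ on $S^1$ in a neighborhood — again because a double root is excluded — continuity of $\tilde\mu$ forces $|\tilde\mu(\lambda)|\neq 1$ for $\lambda$ in a possibly smaller neighborhood, so that branch contributes nothing to $\sigma$ locally. (Here one uses $\det\bM(\lambda)\neq 0$, which holds since $\bF(z,\lambda)$ is a fundamental matrix, to see that $\mu=0$ is never a root and the quadratic genuinely has two roots counted with multiplicity.) Assembling these observations: on the neighborhood $U$ we have $\sigma\cap U=\Phi^{-1}(0)$ with $\Phi$ real-analytic and $\nabla\Phi(0)\neq 0$, so $\sigma\cap U$ is a smooth curve through $\lambda_0$, completing the proof.
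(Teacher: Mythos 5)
Your argument takes a route dual to the paper's: the paper uses $D_\lambda\neq 0$ to solve $D(\lambda,\mu)=0$ for $\lambda=l(\mu)$, restricts to $\mu=e^{i\theta}\in S^1$, and then uses $D_\mu\neq 0$ to show the parametrization $\theta\mapsto l(e^{i\theta})$ is regular, whereas you use $D_\mu\neq 0$ to solve for $\mu=g(\lambda)$, cast the spectral condition as the real-analytic level set $|g(\lambda)|^2=1$, and use $D_\lambda\neq0$ (via $g'(\lambda_0)=-D_\lambda/D_\mu\neq0$) to show the gradient is nonvanishing. Both are legitimate implicit-function-theorem arguments making essential use of both hypotheses, and your first two paragraphs are correct.

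Your third paragraph has a genuine flaw. You argue that because the root $\mu_0$ is simple (since $D_\mu(\lambda_0,\mu_0)\neq0$), the second Floquet multiplier $\tilde\mu(\lambda_0)$ cannot have unit modulus. That inference is a non-sequitur: simplicity gives only $\tilde\mu(\lambda_0)\neq\mu_0$, not $\tilde\mu(\lambda_0)\notin S^1$; the second root can perfectly well lie elsewhere on the unit circle. Indeed, by Abel's identity \eqref{eq:Abels-Identity} the product of the two multipliers equals $e^{2q\lambda_0}$, which has unit modulus whenever $\Re\lambda_0=0$, so for $\lambda_0\in i\R$ and $c\neq 0$ the second multiplier is \emph{forced} to lie on $S^1$ as well (cf.\@ Proposition~\ref{prop:imag}). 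Your claim does hold when $\Re\lambda_0\neq 0$, but the correct justification is Abel's identity --- precisely the content of Proposition~\ref{prop:pabel} --- not simplicity of $\mu_0$. It is worth noting that the paper's own proof simply asserts that the spectrum near $\lambda_0$ is the image of $l$ restricted to the unit circle near $\mu_0$, so the possible contribution of the other branch is elided there too; the proposition is most safely read as describing the branch of spectrum carried by the chosen multiplier $\mu_0$.
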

\begin{proof}
Since $D_\lambda(\lambda_0,\mu_0)\neq 0$, it follows from the Analytic Implicit Function Theorem that
the characteristic equation $D(\lambda,\mu)=0$ may be solved locally for $\lambda$ as an analytic function $\lambda=l(\mu)$ of $\mu\in\mathbb{C}$ near $\mu=\mu_0=e^{i\theta_0}$ with $l(\mu_0)=\lambda_0$. The spectrum near $\lambda_0$ is therefore the image of the map $l$ restricted to the unit circle near $\mu_0$, that is, $\lambda=l(e^{i\theta})$ for $\theta\in\R$ near $\theta_0$.  But then $D_\mu(\lambda_0,\mu_0)\neq 0$ implies that $dl(e^{i\theta})/d\theta\neq 0$
at $\theta=\theta_0$, which shows that the parametrization is regular, i.e., the image is a smooth curve (in fact, an analytic arc) passing through the point $\lambda_0$.
\end{proof}

\begin{remark}
\label{remark:normal-forms}
Points of the spectrum $\sigma$ where at least one of the two first-order partial derivatives of $D(\lambda,\mu)$ vanishes correspond to singularities of the system of spectral arcs.  The nature of a given singularity can be characterized by a normal form obtained from the germ of $D(\lambda,\mu)$ near the singularity.  For example, if in suitable local coordinates $\tilde{\lambda}\in\mathbb{C}$ and $\tilde{\theta}\in\mathbb{R}$, the equation $D(\lambda,e^{i\theta})=0$ takes the normal form $\tilde{\lambda}^p=\tilde{\theta}$, 
 then $\sigma\cap U$ will consist of exactly $p$ analytic arcs crossing at $\lambda_0$ separated by equal angles.  
 It turns out that the origin $\lambda=0$ is a point of $\sigma$ that requires such specialized analysis, and we will carry this out in detail in \S\ref{secmodinst}.  (See in particular Remark~\ref{remark:one-curve}.)
\myendrmk
\end{remark}

\subsection{Basic properties of the spectrum}
\subsubsection{Spectral symmetries}
The Klein-Gordon equation \eqref{eqnlKG} is a real Hamiltonian system, and this forces certain
elementary symmetries on the spectrum $\sigma$.  
\begin{proposition}
The spectrum $\sigma$ is symmetric with respect to reflection in the real and imaginary axes, i.e., if $\lambda\in\sigma$, then also $\lambda^*\in\sigma$ and $-\lambda\in\sigma$ (and hence also $-\lambda^*\in\sigma$).  
\label{prop:spectral-symmetry}
\end{proposition}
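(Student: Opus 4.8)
The plan is to reason entirely at the level of the monodromy matrix $\bM(\lambda)$, using Proposition~\ref{prop:deteq0}: $\lambda\in\sigma$ precisely when $\bM(\lambda)$ has a Floquet multiplier on the unit circle $S^1$. The reflection $\lambda\mapsto\lambda^*$ comes from reality. The entries of $\bA(z,\lambda)$ in \eqref{eq:coeffA} are rational in $\lambda$ with real coefficients (because $c$ and $V''(f(z))$ are real), so $\overline{\bA(z,\lambda)}=\bA(z,\lambda^*)$ entrywise; the Picard iterates defining $\bF(z,\lambda)$ inherit this identity and converge uniformly on bounded $z$-intervals, whence $\overline{\bF(z,\lambda)}=\bF(z,\lambda^*)$ and therefore $\overline{\bM(\lambda)}=\bM(\lambda^*)$. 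Thus if $\bM(\lambda)\bw^0=\mu\bw^0$ with $|\mu|=1$, then $\bM(\lambda^*)\overline{\bw^0}=\mu^*\overline{\bw^0}$ with $|\mu^*|=1$, so $\lambda^*\in\sigma$ by Proposition~\ref{prop:deteq0}.

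For the reflection $\lambda\mapsto-\lambda$ the point is to show that the Floquet multipliers of $\bM(-\lambda)$ are the reciprocals of those of $\bM(\lambda)$; since $\mu\mapsto\mu^{-1}$ preserves $S^1$, this gives $-\lambda\in\sigma$ whenever $\lambda\in\sigma$. Abel's formula together with $\tr\bA(z,\lambda)=2c\lambda/(c^2-1)$ gives $\det\bM(\lambda)=e^{2c\lambda T/(c^2-1)}$, which is nonzero, so it is enough to show that $\bM(-\lambda)$ and $\bM(\lambda)^{-1}$ have the same characteristic polynomial; as the determinants already agree ($\det\bM(-\lambda)=e^{-2c\lambda T/(c^2-1)}=\det\bM(\lambda)^{-1}$) and $\tr(\bM(\lambda)^{-1})=\tr\bM(\lambda)/\det\bM(\lambda)$ for $2\times2$ matrices, this reduces to the trace identity $\tr\bM(-\lambda)=e^{-2c\lambda T/(c^2-1)}\tr\bM(\lambda)$. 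To prove it, use Scott's exponential substitution $w=e^{c\lambda z/(c^2-1)}\psi$ (cf.\@ \S\ref{secHill}), which turns \eqref{eq:spectral} into the Hill-type equation $(c^2-1)\psi_{zz}+(V''(f(z))-\lambda^2/(c^2-1))\psi=0$. As a first-order system for $(\psi,\psi_z)^\top$ this has traceless coefficient matrix depending on $\lambda$ only through $\lambda^2$, so its identity-normalized monodromy matrix $\hat\bM(\lambda)$ obeys $\det\hat\bM(\lambda)=1$ and $\hat\bM(-\lambda)=\hat\bM(\lambda)$. Since $(w,w_z)^\top=e^{c\lambda z/(c^2-1)}\bQ(\lambda)(\psi,\psi_z)^\top$ with the $z$-independent invertible matrix $\bQ(\lambda):=\bigl(\begin{smallmatrix}1&0\\ c\lambda/(c^2-1)&1\end{smallmatrix}\bigr)$, it follows that $\bM(\lambda)=e^{c\lambda T/(c^2-1)}\bQ(\lambda)\hat\bM(\lambda)\bQ(\lambda)^{-1}$, hence $\tr\bM(\lambda)=e^{c\lambda T/(c^2-1)}\tr\hat\bM(\lambda)$; replacing $\lambda$ by $-\lambda$ and invoking $\hat\bM(-\lambda)=\hat\bM(\lambda)$ gives the identity. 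Composing the two reflections yields $-\lambda^*\in\sigma$ as well.

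The one step that is not routine bookkeeping is the reciprocity of the Floquet multipliers under $\lambda\mapsto-\lambda$: the first-order system \eqref{eq:firstorder} is not traceless, so this property is not visible directly from \eqref{eq:coeffA} and requires structural input. I use Scott's substitution because it is elementary and is developed in \S\ref{secHill} in any case; an equivalent argument notes that the formal adjoint of the operator in \eqref{eq:spectral} is exactly the operator obtained by sending $\lambda\mapsto-\lambda$, and that the associated Lagrange bilinear concomitant, being constant in $z$ along solutions, forces adjoint Floquet multipliers to be reciprocal. A minor point worth checking is that the scalar prefactor $e^{c\lambda z/(c^2-1)}$ and the matrix $\bQ(\lambda)$ are both independent of $z$, so the displayed relation between $\bM(\lambda)$ and $\hat\bM(\lambda)$ is a genuine similarity up to a scalar and the multipliers transform exactly as claimed.
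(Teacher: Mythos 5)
Your proof is correct and takes essentially the same route as the paper's: for $\lambda\mapsto\lambda^*$ the paper also appeals to reality of the coefficients (via the boundary-value problem rather than the monodromy matrix, but these are equivalent), and for $\lambda\mapsto-\lambda$ the paper relies on exactly the exponential-substitution similarity you use (Lemma~\ref{lem:floquetrelation}) combined with Abel's identity. The only cosmetic difference is that the paper applies the substitution with $\alpha(\lambda)=2c\lambda/(c^2-1)$ to pass directly from the $\lambda$-equation to the $(-\lambda)$-equation (Remark~\ref{rem:negsim}), whereas you factor through Hill's equation with Scott's $\alpha(\lambda)=c\lambda/(c^2-1)$ and exploit the manifest dependence on $\lambda^2$ there; both yield the same multiplier relation $\mu(-\lambda)=e^{-2q\lambda}\mu(\lambda)$, which together with $\det\bM(\lambda)=e^{2q\lambda}$ is precisely the reciprocity you state.
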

\begin{proof}
Let $\lambda\in\sigma$.  Then there exists $\theta\in\mathbb{R}$ for which $\lambda\in\sigma_\theta$, that is, there is a nonzero solution $w(z)$ of the boundary-value problem \eqref{eq:spectral} with \eqref{eq:periodicode}.  Since $V''(f(z))$ is a real-valued function, it follows by taking complex conjugates that $w(z)^*$ is a nonzero solution of the same boundary-value problem but with $e^{i\theta}$ replaced by $e^{-i\theta}$ and $\lambda$ replaced by $\lambda^*$.  It follows that $\lambda^*\in\sigma_{-\theta}\subset\sigma$.  The fact that $-\lambda\in\sigma$ will follow from Lemma~\ref{lem:floquetrelation} and Remark~\ref{rem:negsim} below.  
\end{proof}
The reflection symmetry of $\sigma$ through the imaginary axis is the one that comes from the Hamiltonian nature of the Klein-Gordon equation, and it implies that exponentially growing perturbations are always paired with exponentially decaying ones, an infinite-dimensional analogue of the fact that all unstable equilibria of a planar Hamiltonian system are saddle points.  The reflection symmetry of $\sigma$ through the real axis is a consequence of the reality of the Klein-Gordon equation and its linearization.  Sometimes systems with the four-fold symmetry of the spectrum as in this case are said to have \emph{full Hamiltonian symmetry}.  

\subsubsection{Spectral bounds}
We continue by establishing bounds on the part of the spectrum $\sigma$ disjoint from the imaginary axis.  
\begin{lemma} \label{lem:bound} 
There exists a constant $C > 0$ depending only on the wave speed $c \neq \pm 1$ and the potential $V$ such that for each $\lambda \in \sigma$ with $\Re \lambda \neq 0$,  we have:
\begin{itemize}
\item[(i)] $|\lambda| \leq C$ if $c^2<1$ (subluminal case), and
\item[(ii)] $|\Re \lambda| \leq C$ if $c^2>1$ (superluminal case).
\end{itemize}
\end{lemma}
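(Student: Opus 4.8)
The plan is to obtain the bounds from an energy (multiplier) estimate applied to the scalar equation \eqref{eq:spectral}. Suppose $\lambda\in\sigma$ with $\Re\lambda\neq 0$. By Proposition~\ref{prop:deteq0} there is a nontrivial bounded solution $w$ of \eqref{eq:spectral}, and by Floquet theory it may be taken of Bloch form $w(z)=e^{i\nu z}p(z)$ with $\nu\in\R$ and $p$ a $T$-periodic function; equivalently, after renaming, $w$ is a nontrivial solution of \eqref{eq:spectral} satisfying the boundary condition \eqref{eq:periodicode} for some $\theta\in\R$. First I would multiply \eqref{eq:spectral} by $w^*$ and integrate over one period $[0,T]$. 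Integration by parts using the quasiperiodic boundary condition \eqref{eq:periodicode} (the boundary terms cancel because both $w$ and $w_z$ pick up the same unimodular factor $e^{i\theta}$) yields the identity
\begin{equation}
-(c^2-1)\int_0^T |w_z|^2\,dz - 2c\lambda\int_0^T w^* w_z\,dz + \lambda^2\int_0^T |w|^2\,dz + \int_0^T V''(f(z))|w|^2\,dz = 0.
\label{eq:energyid}
\end{equation}
Here the only term that is not manifestly real or purely imaginary is the cross term $\int_0^T w^*w_z\,dz$, which I would control with Cauchy--Schwarz.

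Next I would split \eqref{eq:energyid} into real and imaginary parts. Writing $\lambda=a+ib$ with $a=\Re\lambda\neq 0$ and abbreviating $P:=\int_0^T|w_z|^2\,dz>0$, $Q:=\int_0^T|w|^2\,dz>0$, and $\xi:=\int_0^T w^*w_z\,dz$, the imaginary part of \eqref{eq:energyid} reads $-2c\,\Im(\lambda\xi)+2ab\,Q=0$, which (since $a\neq0$) gives $bQ = c\,\Im(\lambda\xi)/a$; and the real part reads $-(c^2-1)P - 2c\,\Re(\lambda\xi) + (a^2-b^2)Q + \int_0^T V''(f)|w|^2\,dz = 0$. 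The key elementary bound is $|\xi|\le \sqrt{P}\,\sqrt{Q}$ by Cauchy--Schwarz, hence $|\lambda\xi|\le |\lambda|\sqrt{PQ}\le \tfrac12|\lambda|^2 Q + \tfrac12 P$ (or a suitably weighted Young's inequality, to be tuned to the sign of $c^2-1$). Also $|\int_0^T V''(f)|w|^2| \le M Q$ where $M:=\|V''\|_{L^\infty}<\infty$ by Assumption~\ref{assumptionsV}(a) and periodicity. Combining these, and then eliminating the troublesome cross term, one arrives at an inequality of the schematic form $|c^2-1|\,P \le (\text{const depending on }c,M)\,(|\lambda|^2 + 1)\,Q$ together with a reverse control $P \gtrsim$ (something involving $|\lambda|^2$) $Q$, and comparing the two forces a bound on $|\lambda|$ in the subluminal case and on $|\Re\lambda|$ in the superluminal case. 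I expect the cleanest route is to isolate the two genuinely different regimes: when $|c^2-1|<1$ the coefficient of $P$ cannot absorb the cross term uniformly in $|\lambda|$, which is exactly why in the superluminal case only $|\Re\lambda|$, not $|\lambda|$, can be bounded — the imaginary direction is the "elliptic" one and admits arbitrarily large spectrum along the imaginary axis.

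The main obstacle is handling the non-self-adjoint, indefinite cross term $-2c\lambda\int w^*w_z$ correctly so as to extract the right quantity in each case — naively one only gets a bound on $|\lambda|^2$ in terms of $P/Q$, and one must use the Rayleigh-quotient structure to show that, precisely, $P/Q$ is comparable to $|\lambda|^2$ in the subluminal case (yielding $|\lambda|\le C$) but only that $\Re\lambda$ is constrained in the superluminal case. Concretely, I would treat $c^2<1$ by noting $(c^2-1)<0$ so the $-(c^2-1)P$ term has a favorable sign and can be used as a coercive quadratic form in $w$ in $H^1$; then \eqref{eq:energyid} exhibits $\lambda$ as (roughly) an eigenvalue of a relatively bounded perturbation of a fixed operator with compact resolvent on the period interval, whose spectrum is bounded once $\Re\lambda\neq0$ excludes the imaginary axis. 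For $c^2>1$ the sign of $(c^2-1)P$ is unfavorable, but subtracting $a$ times the imaginary part from the real part of \eqref{eq:energyid} cancels the $b$-dependence optimally and leaves an inequality of the form $2|a|\,|c|\,\sqrt{P}\sqrt{Q} \ge$ (positive multiple of $(c^2-1)P$) $-\,MQ$, from which, using $\sqrt{P}\sqrt{Q}\le\tfrac{\epsilon}{2}P + \tfrac{1}{2\epsilon}Q$ with $\epsilon$ small, one reads off $|a|^2 \le $ const, i.e.\ $|\Re\lambda|\le C$. In both cases the constant $C$ depends only on $c$ (through $|c^2-1|$ and $|c|$) and on $V$ (through $M=\|V''\|_\infty$), as claimed.
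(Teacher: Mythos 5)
Your setup is the same as the paper's: multiply \eqref{eq:spectral} by $w^*$, integrate over a period with the quasiperiodic boundary condition \eqref{eq:periodicode}, and split into real and imaginary parts. The cleanest version of your idea for part (i) is to observe that since $\langle w, w_{zz}\rangle = -\|w_z\|^2$ is real and $\langle w,w_z\rangle$ is purely imaginary, taking the imaginary part of your identity (and using $\Re\lambda\neq 0$) yields $(\Im\lambda)\|w\|^2 = c\,\Im\langle w,w_z\rangle$, and substituting this back into the real part \emph{cancels} the cross term exactly and produces
\begin{equation*}
(1-c^2)\|w_z\|^2 + |\lambda|^2\|w\|^2 + \langle w, V''(f)w\rangle = 0,
\end{equation*}
from which $|\lambda|\le\sqrt{M}$ for $c^2<1$ follows immediately by dropping the nonnegative $(1-c^2)\|w_z\|^2$ term. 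Your detour through ``$\lambda$ is an eigenvalue of a relatively bounded perturbation of an operator with compact resolvent, whose spectrum is bounded once $\Re\lambda\neq 0$'' is not a substitute for this cancellation: a quadratic pencil on $[0,T]$ with quasiperiodic boundary conditions typically has unbounded discrete spectrum, and the lemma is precisely the assertion that the non-imaginary part is bounded; the boundedness must come from the explicit algebraic structure, not from compactness.

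For part (ii) there is a genuine gap. The step ``subtracting $a$ times the imaginary part from the real part of \eqref{eq:energyid}'' does not produce a new inequality: both the real and imaginary parts of your identity are exact equalities $(\,=0\,)$, so any linear combination of them is also $=0$ and carries no additional information beyond the pair of equalities you already wrote. The only way the imaginary part enters is via substitution of $\Im\langle w,w_z\rangle = (\Im\lambda)\|w\|^2/c$ into the real part, and (as just noted) this recombines $\Re\lambda$ and $\Im\lambda$ into $|\lambda|^2$, not into a quantity that isolates $\Re\lambda$. With only the $w^*$-multiplier you can also derive, via Cauchy--Schwarz, $(\Im\lambda)^2\|w\|^2\le c^2\|w_z\|^2$; but combining this with the displayed identity in the superluminal regime $c^2>1$ yields only $-(\Im\lambda)^2 \le c^2(\Re\lambda)^2 + c^2\langle w, V''(f)w\rangle/\|w\|^2$, which is vacuous and gives no bound on $\Re\lambda$. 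The paper closes this gap by introducing a \emph{second} multiplier: it also tests \eqref{eq:spectral} against $w_z^*$, and the real part of that second identity (in which $\Re\langle w_z,w_{zz}\rangle=0$) produces the relation
\begin{equation*}
2(\Re\lambda)\bigl(c^2\|w_z\|^2 - (\Im\lambda)^2\|w\|^2\bigr) = c\,\Re\langle w_z, V''(f)w\rangle,
\end{equation*}
which, together with the bounds on $\|w_z\|^2$ in terms of $|\lambda|^2\|w\|^2$ coming from the first identity, finally yields a bound on $|\Re\lambda|$. Without an independent second identity of this kind your plan cannot be completed in the superluminal case.
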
 
\begin{proof}
Suppose that $\lambda \in \sigma$, i.e., there exists $\theta=\theta(\lambda)\in\mathbb{R}$ for which there is a nontrivial solution $w(z) = w(z,\lambda)$ of the boundary-value problem \eqref{eq:spectral} with
\eqref{eq:periodicode}.
Let us denote
\begin{equation}\label{eq:innerprod}
 \langle u,v \rangle := \int_0^T u(z)^* v(z) \, dz, \qquad \|u\|^2 = \< u,u \>\ge 0,
\end{equation}
and let $M>0$ be defined by
\begin{equation}
M:=\max_{u\in\mathbb{R}}|V''(u)|.
\label{eq:M-Vpp-define}
\end{equation}

Multiplying the differential equation in \eqref{eq:spectral} 
by $w(z)^*$ and integrating over the fundamental period interval $[0,T]$ gives
\begin{equation}
\label{dos}
(c^2 -1) \<w,w_{zz}\> - 2c \lambda \<w, w_{z}\> + \lambda^2 \|w\|^2 + \<w, V''(f) w \> = 0. 
\end{equation}
Integrating by parts, taking into account the boundary conditions \eqref{eq:periodicode},
we observe that
\begin{equation}
 \<w, w_{zz}\> = - \<w_{z},w_{z}\> + w^*(T,\lambda) w_{z}(T,\lambda) - w^*(0,\lambda) w_{z}(0,\lambda) = - \|w_z\|^2 \in \R.
 \label{eq:ip-w-wzz}
\end{equation}
Moreover, $\<w, w_{z}\>$ is purely imaginary:
\begin{equation}
\begin{split}
 \Re \<w, w_z\> = \tfrac{1}{2} \int_0^T (w^* w_z+ w_z^* w) \, dz &= \tfrac{1}{2} \int_0^T \frac{d}{dz} |w|^2 \, dz \\ &= \tfrac{1}{2}(|w(T,\lambda)|^2 - |w(0,\lambda)|^2) = 0.
 \end{split}
 \label{eq:ip-w-wz-imag}
\end{equation}
Therefore, taking the imaginary part of \eqref{dos} using $\Im \lambda^2 = 2 (\Re \lambda)(\Im \lambda)$, and recalling that $\Re \lambda \neq 0$ by assumption, we obtain
\begin{equation}
\label{cuatro}
(\Im \lambda) \|w\|^2 = c\, \Im \<w, w_z\>. 
\end{equation}
Observe also that upon applying the Cauchy-Schwarz inequality to \eqref{cuatro} we get $|\Im \lambda| \|w\|^2 \leq |c| \|w\|\|w_z\|$ yielding, in turn,
\begin{equation}
 \label{cinco}
(\Im \lambda)^2 \|w\|^2 \leq c^2 \|w_z\|^2,
\end{equation}
because $\|w\|^2>0$.
Using the identities \eqref{eq:ip-w-wzz} and \eqref{eq:ip-w-wz-imag}, the real part of equation \eqref{dos} is
\begin{equation}
 \label{seis}
(1 - c^2) \|w_z\|^2 + |\lambda|^2 \|w\|^2 + \< w, V''(f) w \> = 0,
\end{equation}
where we have taken into account \eqref{cuatro} and used $\Re \lambda^2 = (\Re \lambda)^2 - (\Im \lambda)^2$.
On the other hand, we can also multiply
the differential equation \eqref{eq:spectral} by $w_z^*$ and integrate over the fundamental period $[0,T]$, which yields
\begin{equation}
\label{siete}
(c^2 -1) \< w_z, w_{zz}\> - 2c\lambda \|w_z\|^2 + \lambda^2 \<w_z,w\> + \< w_z, V''( f) w \> = 0. 
\end{equation}
According to \eqref{eq:ip-w-wz-imag}, we have $\<w_z,w\> = i \Im \<w_z,w\> = -i \Im \<w, w_z\>$,
and by a virtually identical calculation, it also holds that
$\Re \<w_z, w_{zz}\> = 0$.
Thus, taking the real part of \eqref{siete} and noticing that 
$
 \Re(\lambda^2 \<w_z,w\>) = - (\Im \lambda^2)\Im \<w_z,w\> = 2(\Re \lambda)(\Im \lambda) \Im \<w, w_z\>
$,
we obtain
\begin{equation}
2(\Re \lambda) \big( c^2 \|w_z\|^2 - (\Im \lambda)^2 \|w \|^2 \big) = c\, \Re \< w_z, V''( f) w \>,
\end{equation}
where we have multiplied by $c$ and substituted from \eqref{cuatro}.
Taking absolute values, and using the inequality \eqref{cinco}, we then obtain
\begin{equation}
\label{nueve}
2|\Re \lambda| \big( c^2 \|w_z\|^2 - (\Im \lambda)^2\|w\|^2 \big) = |c| |\Re \< w_z, V''( f) w \> |. 
\end{equation}
From the Cauchy-Schwarz inequality and $ab\le \tfrac{1}{2}a^2+\tfrac{1}{2}b^2$, an upper bound for the right hand side of \eqref{nueve} is 
\begin{equation}
\begin{split}
 |c| |\Re \< w_z, V''( f) w \> |& = |c| |\Re \< \sqrt{|c|} w_z, V''( f) \frac{w}{\sqrt{|c|}} \> | \\
& \leq |c|\cdot\left(\sqrt{|c|}\|w_z\|\right)\left(\frac{M}{\sqrt{|c|}}\|w\|\right)\\
&\leq \tfrac{1}{2}c^2 \|w_z\|^2 + \tfrac{1}{2}M \|w \|^2,
\end{split}
\end{equation}
where $M$ is defined by \eqref{eq:M-Vpp-define}.
Therefore,  \eqref{nueve} implies the inequality
\begin{equation}
\label{diez}
2|\Re \lambda| c^2 \|w_z\|^2 \leq \tfrac{1}{2}c^2 \|w_z\|^2  + \left( 2 |\Re \lambda|(\Im \lambda)^2 +\tfrac{1}{2} M\right) \|w\|^2. 
\end{equation}

To prove statement (i), assume that $c^2<1$ (subluminal case).  Then
equation \eqref{seis} implies that
\begin{equation}
 |\lambda|^2 \|w\|^2 \leq - \< w, V''(f) w \> \leq M \|w\|^2.
\end{equation}
Since $\|w\|>0$, this yields $|\lambda| \leq \sqrt{M}$, which completes the proof of statement (i).

To prove statement (ii), assume now that $c^2>1$ (superluminal case).  Solving \eqref{seis} for $\|w_z\|^2$ gives
\begin{equation}
\label{once}
c^2 \|w_z\|^2 =\left( \frac{c^2}{c^2 -1}\right) \Big( |\lambda|^2 \|w\|^2 + \<w, V''( f) w \> \Big),
\end{equation}
and since $c^2>1$, this equation immediately implies two inequalities:
\begin{equation}
c^2\|w_z\|^2\ge\frac{c^2}{c^2-1}\left(|\lambda|^2-M\right)\|w\|^2\quad\text{and}\quad
c^2\|w_z\|^2\le\frac{c^2}{c^2-1}\left(|\lambda|^2+M\right)\|w\|^2.
\end{equation}
Using the first of these inequalities to give a lower bound  for the left-hand side of  \eqref{diez}, 
and using the second to give an upper bound for the right-hand side of \eqref{diez}, yields an inequality 
which, upon dividing by $\|w\|^2>0$, involves $\lambda$ but not $w$ and can be written in the form
\begin{equation}
2|\Re\lambda|\frac{c^2}{c^2-1}\left(|\lambda|^2-M-\frac{c^2-1}{c^2}(\Im\lambda)^2\right)
\le \tfrac{1}{2}\frac{c^2}{c^2-1}\left(|\lambda|^2+M\right)+\tfrac{1}{2}M.
\label{eq:superluminal-inequality}
\end{equation}
Since for $c^2>1$ we have
\begin{equation}
|\lambda|^2-M-\frac{c^2-1}{c^2}(\Im\lambda)^2 = (\Re\lambda)^2+\frac{1}{c^2}(\Im\lambda)^2-M
\ge \frac{1}{c^2}|\lambda|^2-M,
\end{equation}
the inequality \eqref{eq:superluminal-inequality} implies also 
\begin{equation}
|\Re\lambda|\le\tfrac{1}{4}\left(c^2+\frac{Mc^4+Mc^2+(c^2-1)M}{|\lambda|^2-Mc^2}\right).
\label{eq:superluminal-inequality-II}
\end{equation}
Now, either $|\lambda|^2<2Mc^2$, in which case $|\Re\lambda|\le \sqrt{2M}|c|$, or
$|\lambda|^2\ge 2Mc^2$, in which case \eqref{eq:superluminal-inequality-II} implies
\begin{equation}
|\Re\lambda|\le\tfrac{1}{4}\left(2c^2+1+\frac{c^2-1}{c^2}\right).
\end{equation}
Hence if $c^2>1$ we always have that
\begin{equation}
|\Re\lambda|\le \max\left\{\sqrt{2M}c,\tfrac{1}{4}\left(2c^2+1+\frac{c^2-1}{c^2}\right)\right\}
\end{equation}
which completes the proof of statement (ii).
\end{proof}

\subsection{Dynamical interpretation of the spectrum}
\label{section:dynamics}
The linearized Klein-Gordon equation \eqref{linearperturb} can be treated by Laplace transforms because it has been written in a galilean frame that makes the traveling wave $f$ stationary.  The Laplace transform pair is:
\begin{equation}
\mathscr{V}(\lambda)=\int_0^\infty v(t)e^{-\lambda t}\,dt\quad\text{and}\quad v(t)=\frac{1}{2\pi i}\int_B\mathscr{V}(\lambda)e^{\lambda t}\,d\lambda
\end{equation}
where $B$ is a contour (a \emph{Bromwich path}) in the complex $\lambda$-plane that is an upward-oriented vertical path lying to the right of all singularities of $\mathscr{V}$.  Applying the Laplace transform to \eqref{linearperturb} formulated with localized initial conditions $v(\cdot,0)\in H^1(\R)$ and $v_t(\cdot,0)\in L^2(\R)$, we obtain
\begin{equation}
(c^2-1)\mathscr{V}_{zz}-2c\lambda\mathscr{V}_z + (\lambda^2+V''(f(z)))\mathscr{V}=
\lambda v(z,0)+v_t(z,0)-2cv_z(z,0).
\end{equation}
Recalling the operator $\cT(\lambda):D\subset Y\to Y$ defined by \eqref{eq:coeffA} and \eqref{eq:defofT}, this can be written as the first-order system
\begin{equation}
\cT(\lambda)\begin{pmatrix}\mathscr{V}\\\mathscr{V}_z\end{pmatrix}=\frac{1}{c^2-1}\begin{pmatrix}0\\ \lambda v(\cdot,0)+v_t(\cdot,0)-2cv_z(\cdot,0)\end{pmatrix}.
\end{equation}
The given forcing term on the right hand side lies in the space $Y=L^2(\R,\C^2)$, and therefore if $\lambda$ lies in the resolvent set $\zeta$ (see Definition~\ref{defspect}), we may solve for $\mathscr{V}$ with the help of the bounded inverse $\cT(\lambda)^{-1}$:
\begin{equation}
\begin{pmatrix}\mathscr{V}\\\mathscr{V}_z\end{pmatrix}=\frac{1}{c^2-1}\cT(\lambda)^{-1}
\begin{pmatrix}0\\\lambda v(\cdot,0) + v_t(\cdot,0)-2cv_z(\cdot,0)\end{pmatrix},\quad\lambda\in\zeta\subset\C.
\end{equation}
Since Lemma~\ref{lem:bound} shows that the spectrum has a bounded real part for all periodic traveling waves $f$, we may then recover the solution of the initial-value problem $v(z,t)$ by 
applying the inverse Laplace transform formula, because the resolvent set $\zeta$ is guaranteed to contain an appropriate Bromwich path $B$.  It is easy to show by analyzing the contour integral over $B$ that this yields a solution $v$ lying in $H^1(\R)$ for each $t>0$, that is, a spatially localized perturbation remains localized for all time.

The inverse $\cT(\lambda)^{-1}$ is not only bounded when $\lambda\in\zeta$, but also it is an analytic mapping from $\zeta\subset\C$ into the Banach space of bounded operators on $Y$ \cite{Kat1}.  Cauchy's Theorem therefore allows the Bromwich path to be deformed arbitrarily within the resolvent set $\zeta$.    To analyze the behavior of the solution $v$ for large time to determine stability, one deforms $B$ to its left, toward the imaginary axis, as far as possible.  The only obstruction is the spectrum $\sigma\subset\C$, i.e., the complement of $\zeta$.  Generically, points of $\sigma$ with $\Re\lambda>0$ will give contributions to $v$ proportional to $e^{\lambda t}$, implying exponential growth of $v$ and hence dynamical instability.  The only way such instabilities can be avoided in general is if $\sigma$ is confined to the imaginary axis. 
This discussion suggests the utility of the following definition:
\begin{definition}[spectral stability and instability]
A periodic traveling wave solution $f$ of the Klein-Gordon equation \eqref{eqnlKG} is said to be \emph{spectrally stable} if $\sigma\subset i\R$.  Otherwise (i.e., if $\sigma$ contains points $\lambda$ with $\Re\lambda \neq 0$, and hence contains points $\lambda$ with $\Re\lambda>0$ by Proposition~\ref{prop:spectral-symmetry}) $f$ is \emph{spectrally unstable}.
\label{def:spectral-instability}
\end{definition}
It is clear that spectral instability is closely related to the exponential growth of localized solutions of the linearized Klein-Gordon equation \eqref{linearperturb}.  As usual, the issue of determining whether spectral stability implies dynamical stability of the periodic traveling wave $f$ under the fully nonlinear perturbation equation \eqref{eq:perturbexact} is more subtle and will not be treated here.

\subsection{Linearization about periodic traveling waves of infinite speed}
\label{section-linearization-infinite-speed}
Recall the discussion in Remark~\ref{remark:infinite-speed} of $t$-periodic solutions (modulo $2\pi$) of the Klein-Gordon equation \eqref{eqnlKG} that are independent of $x$.  Although one can think of these solutions as arising from those parametrized by $(E,c)\in\region$ in the limit $|c|\to\infty$, this approach is not well-suited to stability analysis as it has been developed above, because the co-propagating galilean frame in which we have formulated the stability problem becomes meaningless in the limit.  

On the other hand, one can study the stability properties of the infinite-velocity solutions in a stationary frame.  Indeed, if $f=f(t)$ is such an exact solution, then the substitution of $u=f+v$ into
the Klein-Gordon equation \eqref{eqnlKG} yields the nonlinear equation
\begin{equation}
v_{tt}-v_{xx} + V'(f(t)+v)-V'(f(t))=0
\end{equation}
governing the perturbation $v$.  Linearizing about $v=0$ yields
\begin{equation}
v_{tt}-v_{xx}+V''(f(t))v=0.
\label{eq:linearized-pde-infinite-speed}
\end{equation}
Seeking a solution in $L^2(\R,\C)$ as a function of $x$ for each $t$ (for localized perturbations) leads, in view of the fact that the non-constant coefficient $V''(f(t))$ is independent of $x$, to a treatment of \eqref{eq:linearized-pde-infinite-speed} by Fourier transforms.  Letting 
\begin{equation}
\hat{v}(k)=\frac{1}{\sqrt{2\pi}}\int_\R v(x)e^{-ikx}\,dx\quad\text{and}\quad
v(x)=\frac{1}{\sqrt{2\pi}}\int_\R \hat{v}(k)e^{ikx}\,dk
\end{equation}
denote the Fourier transform pair, taking the transform in $x$ of \eqref{eq:linearized-pde-infinite-speed} yields
\begin{equation}
\hat{v}_{tt} + V''(f(t))\hat{v}=-k^2\hat{v}.
\label{eq:Hill-infinite-speed}
\end{equation}
This ordinary differential equation is an example of Hill's equation, about which more will be said in the next section.  For now, we simply formulate a definition of spectral stability for this limiting case of periodic traveling waves that is an analogue of Definition~\ref{def:spectral-instability}.
\begin{definition}[spectral stability and instability for infinite speed waves]
\label{def:stable-infinite-speed}
A nontrivial $t$-periodic solution $f$ of the Klein-Gordon equation \eqref{eqnlKG} that is independent of $x$ is \emph{spectrally stable} if all solutions of \eqref{eq:Hill-infinite-speed} are polynomially bounded in $t$ for every $k\in\R$.  Otherwise, $f$ is \emph{spectrally unstable}.
\end{definition}

\section{A related Hill's equation} 
\label{secHill}
In this section, we describe the spectrum of a related Hill's equation that plays an important role in the analysis of $\sigma$.  The method of arriving at this auxiliary equation is simple:  
if $w$ is a solution to the differential equation \eqref{eq:spectral},
then setting (Scott's transformation, \cite{Sco1})
\begin{equation}\label{eq:transform}
 y(z) = e^{-c\lambda z /(c^2-1)} w(z)
 \end{equation}
one obtains a solution $y$ of the related equation:
\begin{equation}\label{eq:hill} 
y_{zz} + P(z) y = \nu y,\quad P(z):=\frac{V''(f(z))}{c^2-1},\quad \nu=\nu(\lambda):=\left( \frac{\lambda}{c^2-1} \right)^2. 
\end{equation}
One apparent advantage is that this equation can be interpreted as an eigenvalue problem of standard form with eigenvalue $\nu$, whereas the differential equation \eqref{eq:spectral} is a quadratic pencil in $\lambda$.
Moreover, since $f(z)$ is periodic modulo $2\pi$ with period $T$, we have $P(z+T)=P(z)$ for all $z\in\R$, and hence equation \eqref{eq:hill} is an instance of \emph{Hill's equation} with potential $P$ of period $T$.  The latter equation is well-studied, hence motivating an attempt to glean information about the spectrum $\sigma$ of $\cT$ related to \eqref{eq:spectral} from knowledge of the solutions of Hill's equation \eqref{eq:hill}.

\subsection{The Floquet spectrum of Hill's equation}  
\label{section-Hill-spectrum}
A general reference for the material in this section is the book by Magnus and Winkler \cite{MW66} (see also \cite{McKMoer75}).  Let $\cH$ be the formal differential (Hill's) operator
\begin{equation}\label{eq:defH}
\cH:=\frac{d^2}{dz^2}+P(z), 
\end{equation}
Considering $\cH$ acting on a suitable dense domain in $L^2(\mathbb{R},\mathbb{C})$, Definition~\ref{defspect} applies to the associated linear pencil $\cT^\mathrm{H}(\nu):=\cH-\nu$ to define the spectrum as a subset of the complex $\nu$-plane, here denoted $\Sigma^\mathrm{H}$.  As a consequence of the fact that $\cH$ is essentially selfadjoint with respect to the $L^2(\mathbb{R},\mathbb{C})$ inner product, $\Sigma^\mathrm{H}\subset\mathbb{R}$.  By the same arguments as applied to the quadratic pencil $\cT$ (see Lemma~\ref{lem:allcontinuous} and \S\ref{secEvans}), the spectrum $\Sigma^\mathrm{H}$ contains no ($L^2$) eigenvalues and coincides with the Floquet spectrum obtained as the union of discrete partial spectra $\Sigma^\mathrm{H}_\theta$ parametrized by $\theta\in\mathbb{R}\pmod{2\pi}$.  The partial spectrum $\Sigma^\mathrm{H}_\theta$ is the set of complex numbers $\nu=\nu^{(\theta)}$ for which there exists a nontrivial solution of the boundary-value problem
\begin{equation}
\cH y(z)=\nu^{(\theta)} y(z),\quad \begin{pmatrix} y(T) \\ y_z(T) \end{pmatrix} =e^{i\theta} \begin{pmatrix} y(0) \\ y_z(0) \end{pmatrix} ,\quad\theta\in\mathbb{R}.
\label{eq:HillEV}
\end{equation}
Again, $\Sigma_\theta=\Sigma_{\theta+2\pi n}$ for all $n\in\Z$.
Since $P(z)$ is real, it is clear that $\Sigma_{-\theta}^\mathrm{H}=\Sigma_\theta^\mathrm{H}$. The spectrum $\Sigma^\mathrm{H}$ is then the union
\begin{equation}
\Sigma^\mathrm{H}:=\bigcup_{-\pi<\theta\le\pi}\Sigma_\theta^\mathrm{H} = \bigcup_{0\le\theta\le\pi}\Sigma_\theta^\mathrm{H}.
\end{equation}
The numbers $\nu^{(0)}\in\Sigma_0^\mathrm{H}$ (resp., $\nu^{(\pi)}\in\Sigma_\pi^\mathrm{H}$) are typically called the \emph{periodic eigenvalues} (resp., \emph{antiperiodic eigenvalues}) of Hill's equation. 

Although $T$ has been defined as the smallest number for which $f_z(z+T)=f_z(z)$, it need only be an integer multiple of the fundamental period of $P$.  For example, in the sine-Gordon case where $V(u)=-\cos(u)$, $T$ is equal to the fundamental period of $P$ for rotational waves, but it is \emph{twice} the fundamental period of $P$ for librational waves (and the same is true whenever some translate of $V$ is an even function).  Note, however, that if $T_0$ is the fundamental period of $P$ and $T=NT_0$ for $N\in\mathbb{Z}_+$, then the partial spectra corresponding to taking the period to be $T_0$ rather than $T$ in \eqref{eq:HillEV}, denoted $\Sigma_\theta^{\mathrm{H}0}$ for $\theta\in\mathbb{R}$, are related to the partial spectra defined by \eqref{eq:HillEV} for $T=NT_0$ by
$\Sigma_\theta^{\mathrm{H}0}=\Sigma^\mathrm{H}_{N\theta}$.
It follows upon taking unions over $\theta\in\mathbb{R}$ that 
$\Sigma^{\mathrm{H}0}=\Sigma^\mathrm{H}$.

\begin{remark}
In the study of the spectrum $\sigma$ of $\cT$ for the linearized Klein-Gordon equation, it turns out to be a useful idea to pull back the Hill's spectrum $\Sigma^\mathrm{H}$ from the complex $\nu$-plane to the complex $\lambda$-plane via the relation between these two spectral variables defined in \eqref{eq:hill}.  That is, we introduce the subset $\sigma^\mathrm{H}$ of the complex $\lambda$-plane defined by
\begin{equation}
\lambda\in\sigma^\mathrm{H} \quad\Leftrightarrow\quad \nu=\nu(\lambda):=\left(\frac{\lambda}{c^2-1}\right)^2\in\Sigma^\mathrm{H}.
\label{eq:Hill-spectrum-pull-back}
\end{equation}
Since $\lambda=0$ corresponds to $\nu=0$, and since the relationship \eqref{eq:transform} between $w$ and $y$ degenerates to the identity for $\lambda=0$, it holds that $0\in\sigma$ if and only if $0\in\sigma^\mathrm{H}$.  The two spectra $\sigma$ and $\sigma^\mathrm{H}$ are, however, certainly \emph{not} equal, although it turns out that there are relations between them that we will develop shortly.  The discrepancy between these two spectra in spite of the explicit relation \eqref{eq:transform} between \eqref{eq:spectral} and \eqref{eq:hill} appears because the mediating factor $e^{-c\lambda z/(c^2-1)}$ fails to have modulus one for $|\Re\lambda|\neq 0$ and hence can convert uniformly bounded solutions into exponentially growing or decaying solutions and vice-versa.  
The incorrect identification of $\sigma$ with $\sigma^\mathrm{H}$ was the key logical flaw in \cite{Sco1}.
\label{remark:Hill-spectrum-pull-back}
\myendrmk
\end{remark}

The set $\Sigma^\mathrm{H}\subset\mathbb{R}$ is bounded above. It consists of the union of closed intervals
\begin{equation}
\Sigma^\mathrm{H}=\bigcup_{n=0}^\infty  [\nu^{(0)}_{2n+1},\nu_{2n+2}^{(\pi)}]\cup[\nu^{(\pi)}_{2n+1},\nu_{2n}^{(0)}]
\end{equation}
where the sequences $\Sigma_0^\mathrm{H}:=\{\nu^{(0)}_{j}\}_{j=0}^\infty$ and $\Sigma_\pi^\mathrm{H}:=\{\nu^{(\pi)}_{j}\}_{j=1}^\infty$ decrease to $-\infty$ and satisfy the inequalities
\begin{equation}\label{eq:oscillate}
\cdots<\nu_4^{(\pi)}\le\nu_3^{(\pi)}<\nu_2^{(0)}\le\nu_1^{(0)}<\nu_2^{(\pi)}\le\nu_1^{(\pi)}<\nu_0^{(0)}.
\end{equation}
\begin{remark}
Each of the inequalities $\nu_{j+1}^{(0)}\le\nu_j^{(0)}$ or $\nu_{j+1}^{(\pi)}\le \nu_j^{(\pi)}$ that holds strictly indicates the presence of a \emph{gap} in the Hill's spectrum.  The generic situation is that all of the inequalities hold strictly, and hence there are an infinite number of gaps.  The theory of so-called \emph{finite-gap potentials} shows that there are deep connections with the subject of algebraic geometry that arise when one considers periodic potentials $P$ for which there are only a finite number of gaps in $\Sigma^\mathrm{H}$.  The most elementary result is that the only periodic potentials $P$ for which there are no gaps in the spectrum $\Sigma^\mathrm{H}$ are the constant potentials \cite[Theorem 7.12]{MW66}.  Moreover, if $\Sigma^\mathrm{H}$ has exactly one gap, then $P$ is necessarily a non-constant (Weierstra\ss) elliptic function, i.e., $P'(z)^2$ is a cubic polynomial in $P(z)$ (Hochstadt's Theorem \cite[Theorem 7.13]{MW66}).  In the one-gap case, Hill's equation is therefore a special case of  \emph{Lam\'e's equation}, and the elliptic function $P$ corresponds to a Riemann surface of genus $g=1$ (an elliptic curve).  More generally, Dubrovin has shown \cite{DB75} that if $\Sigma^\mathrm{H}$ has exactly $n$ gaps, then $P$ is necessarily a hyperelliptic function of genus $g=n$, and $P'(z)^2$ is a polynomial in $P(z)$ of degree exactly $2g+1$.  
\label{remark:Hill-gaps}
\myendrmk
\end{remark}

Differentiating with respect to $z$ the equation \eqref{eqnlp} satisfied by $f$, one easily verifies that the function $y(z):=f_z(z)$ is a nontrivial $T$-periodic solution of Hill's equation with $\nu=0$.  That is, $y(z)$ is a nontrivial solution of the boundary-value problem \eqref{eq:HillEV} with $\theta=0$ and $\nu=0$.
Hence one of the periodic eigenvalues $\nu^{(0)}_{j}$ coincides with $\nu=0$, and the value of $j$ is determined by oscillation theory (see \cite[Theorem 8.3.1]{CL55} or  \cite[Theorem 2.14 (Haupt's Theorem)]{MW66}\footnote{The statement of Haupt's theorem in \cite{MW66} contains some typographical errors; the second sentence of Theorem 2.14 should be corrected to read ``if $\lambda=\lambda_{2n-1}'$ or $\lambda=\lambda_{2n}'$, then $y$ has exactly $2n-1$ zeros in the half-open interval $0\le x<\pi$.''}), i.e., by the number of roots of $y(z)=f_z(z)$ per period. 
If $f$ is a librational wave, then $f_z$ has exactly two zeros per period and hence either $\nu^{(0)}_{1}=0$ or $\nu^{(0)}_2=0$.  On the other hand, if $f$ is rotational , then $f_z$ has no zeros at all and hence $\nu^{(0)}_{0}=0$. 
Therefore, we have the following dichotomy for the Hill's spectrum $\Sigma^\mathrm{H}$.
\begin{itemize}
\item 
For rotational $f$,  $\Sigma^\mathrm{H}$ is a subset of the closed negative half-line, and $\nu^{(0)}_0=0$ belongs to the spectrum.
\item
For librational $f$, the positive part of  $\Sigma^\mathrm{H}$ consists of the intervals $[\nu^{(0)}_1,\nu^{(\pi)}_2]\cup[\nu^{(\pi)}_1,\nu_0^{(0)}]$ (which may merge into a single interval if $\nu^{(\pi)}_1=\nu^{(\pi)}_2$).  It is possible that $\nu^{(0)}_1=0$, but not necessary; otherwise $\nu^{(0)}_2=0$ and $\nu^{(0)}_1>0$.  
\end{itemize}
In both cases the negative part of the Hill's spectrum is unbounded.

\begin{remark}\label{rem:specH} It turns out that for librational waves $f$, the following facts hold
true.
\begin{itemize}
\item If $(c^2-1)T_E>0$, then $\nu^{(0)}_1=0$ while $\nu^{(0)}_2<0$.  Hence an
an interval of the Hill's spectrum $\Sigma^\mathrm{H}$ abuts the origin $\nu=0$ on the right and there is a gap for small negative $\nu$.  This is the case for the sine-Gordon equation, cf.\@ Remark~\ref{remark:Chicone-sine-Gordon}.
\item If $(c^2-1)T_E<0$, then $\nu^{(0)}_2=0$ while $\nu^{(0)}_1>0$.  Hence an interval of the Hill's spectrum $\Sigma^\mathrm{H}$ abuts the origin $\nu=0$ on the left and there is a gap for small positive $\nu$.
\item If $T_E=0$, then $\nu^{(0)}_1=\nu^{(0)}_2=0$, and there is no gap in the Hill's spectrum near $\nu=0$.
\end{itemize}
The reason for this is that the sign of the slope of the \emph{Hill 
discriminant} (i.e., the trace of the monodromy matrix corresponding to \eqref{eq:hill}) at $\nu=0$ can be computed 
in terms of the product $(c^2-1)T_E$
(cf.\@ 
Remark~\ref{rem:mu0}). Letting $\Delta^\mathrm{H}(\nu)$ denote 
the Hill discriminant, the periodic partial spectrum $\Sigma_0^\mathrm{H}$ is characterized as follows:  $\Delta^\mathrm{H}(\nu) = 2$ if and only if $\nu = 
\nu_j^{(0)}$ for some $j$, and $\Delta_\nu^{\mathrm{H}}(\nu_j^{(0)}) = 0$ if and only if $\nu_{j+1}^{(0)} = \nu_j^{(0)}
$ (i.e., if and only if $\nu_j^{(0)}$ is a periodic eigenvalue of geometric multiplicity 2). 
Moreover, $-2\le\Delta^\mathrm{H}(\nu)\le 2$ is the condition defining $\Sigma^\mathrm{H}$.  Thus, the sign of the derivative $\Delta_\nu^{\mathrm{H}}(\nu_j^{(0)})$ determines on which end of a spectral gap $\nu_j^{(0)}$ must lie (or whether in fact there is no gap there at all). Taken together with  
the oscillation theory informed by the number of zeros of $f_z$ over a period, this describes precisely where the periodic eigenvalue $\nu=0$ falls in the chain of inequalities \eqref{eq:oscillate}.  See Figure~\ref{fig:hillspec}.
\myendrmk
\end{remark}
\begin{figure}[h]
\begin{center}
\includegraphics{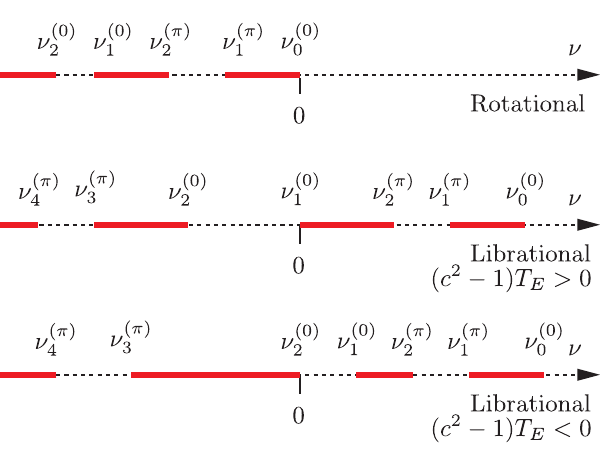}
\end{center}
\caption{A qualitative sketch of the spectrum $\Sigma^\mathrm{H}$.}
\label{fig:hillspec}
\end{figure}

As the boundary-value problem \eqref{eq:HillEV} is selfadjoint for all real $\theta$ and since the resolvent is compact (Green's function $g_\theta(z,\xi)=g_\theta(\xi,z)^*$ is a continuous and hence Hilbert-Schmidt kernel on $[0,T]^2$), it follows from the Spectral Theorem that for each $\theta\in\mathbb{R}$ the nontrivial solutions of \eqref{eq:HillEV} associated with the values of $\nu$ in the partial spectrum $\Sigma_\theta^\mathrm{H}$ form an orthogonal basis of $L^2(0,T)$.  Moreover the corresponding generalized Fourier expansion of a smooth function $u(z)$ satisfying the condition $u(z+T)=e^{i\theta}u(z)$ is uniformly convergent.  From these facts it follows that
\begin{equation}
 \langle u,\cH u\rangle\le
\|u\|^2\max\Sigma_\theta^\mathrm{H},\quad\forall u\in C^{2}(\mathbb{R}),\quad u(z+T)=e^{i\theta}u(z),
\end{equation}
where the inner product is defined in \eqref{eq:innerprod}. 
Therefore, whenever $f$ is a rotational wave,
\begin{equation}
\langle u,\cH u\rangle\le 0\quad\forall u\in C^{2}(\mathbb{R}),\quad u(z+T)=e^{i\theta}u(z),
\label{eq:Lposdef}
\end{equation}
because $\Sigma_\theta^\mathrm{H}\subset\Sigma^\mathrm{H}\subset\mathbb{R}_-$. We summarize the results of this discussion in the following: 
\begin{proposition}\label{cor:hnegdef}
Hill's differential operator $\cH$ is negative semidefinite in the case that $f$ is a rotational traveling wave.
For librational traveling waves $f$, $\cH$ is indefinite.
\end{proposition}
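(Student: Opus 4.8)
The plan is to obtain both assertions as essentially immediate corollaries of the description of the Floquet spectrum $\Sigma^\mathrm{H}$ given above, together with the fact that $\cH$ is essentially selfadjoint on $L^2(\R,\C)$. For such an operator the quadratic form $u\mapsto\langle u,\cH u\rangle$ on the natural form domain $H^1(\R)$ satisfies $\sup\{\langle u,\cH u\rangle:\|u\|=1\}=\sup\Sigma^\mathrm{H}$ and $\inf\{\langle u,\cH u\rangle:\|u\|=1\}=\inf\Sigma^\mathrm{H}$, where $\langle\cdot,\cdot\rangle$ and $\|\cdot\|$ now denote the $L^2(\R,\C)$ pairing and norm (explicitly, $\langle u,\cH u\rangle=-\|u_z\|^2+\int_\R P(z)|u(z)|^2\,dz$). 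So it suffices to locate $\sup\Sigma^\mathrm{H}$ and to note whether $\inf\Sigma^\mathrm{H}=-\infty$.

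In the rotational case, $f_z$ is of one sign and hence has no zeros per period, so the oscillation theory recalled above gives $\nu_0^{(0)}=0$ and therefore $\Sigma^\mathrm{H}\subset(-\infty,0]$; thus $\sup\Sigma^\mathrm{H}=0$, so $\langle u,\cH u\rangle\le 0$ for every $u\in H^1(\R)$, i.e.\@ $\cH$ is negative semidefinite. (This is just the $L^2(\R)$ form of the bound \eqref{eq:Lposdef}.)

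In the librational case I must produce test functions realizing both signs. Negativity is automatic since $\Sigma^\mathrm{H}$ is unbounded below; concretely, fixing a nonzero real $\chi\in C_c^\infty(\R)$ and taking $u(z):=e^{iNz}\chi(z)$ one computes $\langle u,\cH u\rangle=-N^2\|\chi\|^2-\|\chi_z\|^2+\int_\R P\chi^2$, which is negative for $N$ large. For positivity, the oscillation theory gives that $\nu=0$ equals $\nu_1^{(0)}$ or $\nu_2^{(0)}$ (since $f_z$ has exactly two zeros per period), and in either case the ordering \eqref{eq:oscillate} forces $\nu_0^{(0)}>\nu_1^{(\pi)}\ge\nu_2^{(\pi)}>\nu_1^{(0)}\ge 0$, so that $\sup\Sigma^\mathrm{H}=\nu_0^{(0)}>0$; since $\sup\Sigma^\mathrm{H}$ is the supremum of the Rayleigh quotient over $H^1(\R)$, there is some $u\in H^1(\R)$ with $\langle u,\cH u\rangle>0$. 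Hence $\cH$ is indefinite.

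The proof is short, and the only point that really needs care is the passage between the fiberwise bound \eqref{eq:Lposdef} — which is stated for smooth functions obeying a Bloch condition over a single period $[0,T]$ — and the corresponding statements for $\cH$ on the whole line. This can be handled either via the Floquet--Bloch direct-integral decomposition of $L^2(\R,\C)$, which expresses $\langle u,\cH u\rangle$ as an average over the Bloch parameter $\theta$ of the fiber forms controlled by \eqref{eq:Lposdef}, or — as I would prefer here — simply by invoking the already-established identification of $\Sigma^\mathrm{H}$ with the $L^2(\R)$-spectrum of the selfadjoint operator $\cH$ and the standard variational characterizations of $\sup\Sigma^\mathrm{H}$ and $\inf\Sigma^\mathrm{H}$. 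If one prefers to avoid the variational characterization altogether, the positivity claim in the librational case can be obtained instead by exhibiting a Weyl sequence $(u_n)\subset H^1(\R)$ with $\|u_n\|=1$ and $\|(\cH-\nu_0^{(0)})u_n\|\to 0$ — which exists since every point of $\Sigma^\mathrm{H}$ is essential spectrum — so that $\langle u_n,\cH u_n\rangle\to\nu_0^{(0)}>0$.
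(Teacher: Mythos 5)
Your argument is correct, and the conclusions (negative semidefinite for rotational, indefinite for librational) are the right ones, but you prove a different statement than the one the paper actually needs, and you prove it by a genuinely different route. The paper's Proposition summarizes the discussion immediately preceding it, which uses the Spectral Theorem for the \emph{compact-resolvent boundary-value problems} \eqref{eq:HillEV} on $[0,T]$ to establish \eqref{eq:Lposdef} fiberwise: for each real $\theta$, $\langle u,\cH u\rangle\le\|u\|^2\max\Sigma_\theta^\mathrm{H}$ with $\langle\cdot,\cdot\rangle$ the $L^2(0,T)$ pairing of \eqref{eq:innerprod} and $u$ satisfying the Bloch condition $u(z+T)=e^{i\theta}u(z)$. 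That fiberwise inequality is exactly what gets invoked later, in the proof of Theorem~\ref{th:rotational}(ii) in \S\ref{sec:sub-rotation-stable}, where $w$ is a Bloch-periodic function over one period and $\langle w,\cH w\rangle\le 0$ is used under the $L^2(0,T)$ pairing. You instead prove negativity of the quadratic form over $H^1(\R)$ via the variational characterization of $\sup\Sigma^\mathrm{H}$ as the $L^2(\R)$-spectrum of the essentially selfadjoint $\cH$, and indefiniteness via an explicit oscillating test function plus a Weyl sequence at $\nu_0^{(0)}>0$. This is elementary, correct, and makes the librational half of the Proposition explicit in a way the paper leaves implicit; but the $L^2(\R)$ bound does not by itself deliver $\langle w,\cH w\rangle\le 0$ for Bloch-periodic $w$ over one period, which is the form actually used downstream. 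You flag this passage yourself and offer the Floquet--Bloch direct-integral decomposition as the bridge, which is correct; just note that your preferred route (the $L^2(\R)$ variational characterization alone) does not by itself close that loop, whereas the paper's fiberwise argument is tailored so that no such extra step is needed.
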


Finally, we may easily use the theory of Hill's equation described above to prove the following result.  Recall the notion of spectral stability for traveling waves of infinite speed described in \S\ref{section-linearization-infinite-speed}, and in particular Definition~\ref{def:stable-infinite-speed}.
\begin{theorem}[instability criterion for infinite speed waves]
A time-periodic solution $f$ of the Klein-Gordon equation \eqref{eqnlKG} that is independent of $x$ is spectrally stable if and only if $\Sigma^\mathrm{H}\cap\R_-=\R_-$, that is, if the part of $\Sigma^\mathrm{H}$ on the negative half-axis has no gaps.
\label{theorem:infinite-speed}
\end{theorem}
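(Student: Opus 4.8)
The plan is to reduce the statement to classical Floquet theory for Hill's equation, using the Fourier reduction already carried out in \S\ref{section-linearization-infinite-speed}. First I would observe that for an infinite-speed wave the relevant value of the speed parameter is $c^2=2$ (cf.\ Remark~\ref{remark:infinite-speed}), so that $c^2-1=1$ and the periodic potential $P$ appearing in \eqref{eq:hill} is simply $V''\circ f$, with the periodic variable now being $t$ rather than $z$. Taking the Fourier transform in $x$ turns the linearized equation \eqref{eq:linearized-pde-infinite-speed} into \eqref{eq:Hill-infinite-speed}, which is precisely the Hill eigenvalue equation $\cH\hat v=\nu\hat v$ with $\cH=d^2/dt^2+V''(f(t))$ and $\nu=-k^2$. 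Hence, by Definition~\ref{def:stable-infinite-speed}, $f$ is spectrally stable if and only if for every $\nu\in\R_-$ --- these being exactly the values $\nu=-k^2$ as $k$ ranges over $\R$ --- every solution of $\cH y=\nu y$ is polynomially bounded on $\R$.

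Next I would invoke the standard boundedness trichotomy for Hill's equation $\cH y=\nu y$, expressed through the Hill discriminant $\Delta^\mathrm{H}(\nu)$ (see \cite{MW66}): if $|\Delta^\mathrm{H}(\nu)|<2$ the two Floquet multipliers are distinct and lie on the unit circle, so every solution is bounded; if $|\Delta^\mathrm{H}(\nu)|>2$ the multipliers form a real reciprocal pair off the unit circle, so there is a solution growing exponentially and hence not polynomially bounded; and if $|\Delta^\mathrm{H}(\nu)|=2$, then either the monodromy matrix of \eqref{eq:hill} equals $\pm\Id$, so that every solution is $T$-periodic or $2T$-periodic and therefore bounded, or it has a nontrivial Jordan block, in which case there is a solution growing linearly in $t$, which is still polynomially bounded. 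Combining the three cases, every solution of $\cH y=\nu y$ is polynomially bounded if and only if $|\Delta^\mathrm{H}(\nu)|\le 2$, and this is exactly the condition $\nu\in\Sigma^\mathrm{H}$ recalled in \S\ref{section-Hill-spectrum}.

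It then remains only to assemble the equivalence. If $\R_-\subseteq\Sigma^\mathrm{H}$, then for every $k\in\R$ we have $\nu=-k^2\in\Sigma^\mathrm{H}$, hence $|\Delta^\mathrm{H}(\nu)|\le 2$ and all solutions of \eqref{eq:Hill-infinite-speed} are polynomially bounded in $t$, so $f$ is spectrally stable; and since $\Sigma^\mathrm{H}\subset\R$ the inclusion $\R_-\subseteq\Sigma^\mathrm{H}$ is the same as $\Sigma^\mathrm{H}\cap\R_-=\R_-$. Conversely, if $\Sigma^\mathrm{H}\cap\R_-\subsetneq\R_-$, pick $\nu_*\in\R_-\setminus\Sigma^\mathrm{H}$; since $y=f_t$ is a nontrivial $T$-periodic solution of $\cH y=0$ (differentiate the profile equation $f_{tt}+V'(f)=0$), we have $0\in\Sigma^\mathrm{H}$, so in particular $\nu_*<0$ and $|\Delta^\mathrm{H}(\nu_*)|>2$. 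Taking $k:=\sqrt{-\nu_*}\in\R$ then produces a solution of \eqref{eq:Hill-infinite-speed} that grows exponentially in $t$, so $f$ is spectrally unstable. This establishes the theorem.

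The argument is elementary; the one point requiring a little care --- and hence the main, if minor, obstacle --- is the band-edge case $|\Delta^\mathrm{H}(\nu)|=2$, where one must confirm that the marginal solutions grow only linearly and not exponentially, so that the closed spectral bands, endpoints included, all count as stable values of $\nu$. It is precisely this fact (read off from the Jordan form of the monodromy matrix at $\nu=\nu_j^{(0)}$ and $\nu=\nu_j^{(\pi)}$, exactly as in the classical theory summarized in \S\ref{section-Hill-spectrum}) that makes the closed-set statement $\Sigma^\mathrm{H}\cap\R_-=\R_-$, rather than its interior, the right condition for spectral stability.
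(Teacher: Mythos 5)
Your proof is correct and takes essentially the same route as the paper's: both reduce via Fourier transform in $x$ to the Hill eigenvalue equation $\cH\hat v=\nu\hat v$ with $\nu=-k^2$, then invoke the classical dichotomy that solutions grow exponentially if and only if $\nu\not\in\Sigma^\mathrm{H}$, with the band-edge case $|\Delta^\mathrm{H}(\nu)|=2$ producing only linear (hence polynomially bounded) growth. The paper's proof is terser but relies on exactly the same facts; your explicit verification that $0\in\Sigma^\mathrm{H}$ via the $T$-periodic solution $y=f_t$ is a nice touch that the paper leaves implicit (having established it earlier in \S\ref{section-Hill-spectrum}).
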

\begin{proof}
Referring to the equation \eqref{eq:Hill-infinite-speed}, we see that $k\in\R$ corresponds to $\nu=-k^2\le 0$.  If such a value of $\nu$ lies in $\Sigma^\mathrm{H}$, then either it is a periodic or antiperiodic eigenvalue, in which case the solutions of \eqref{eq:Hill-infinite-speed} are linearly bounded, or the general solution of \eqref{eq:Hill-infinite-speed} is uniformly bounded.  On the other hand, if $\nu$ does not lie in the spectrum $\Sigma^\mathrm{H}$, then there exists a solution of \eqref{eq:Hill-infinite-speed} that grows exponentially in time.  Therefore, to have spectral stability in the sense of Definition~\ref{def:stable-infinite-speed} it is necessary that the entire negative real axis in the $\nu$-plane consist of spectrum, that is, all of the inequalities in the sequence \eqref{eq:oscillate} corresponding to negative $\nu$ must degenerate to equalities.  
\end{proof}

\begin{corollary}  All time-periodic and $x$-independent solutions of the Klein-Gordon equation \eqref{eqnlKG} that are of rotational type are spectrally unstable.
\label{corollary:rotation-infinite-speed-unstable}
\end{corollary}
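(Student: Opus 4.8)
The plan is to reduce the statement to a single application of Theorem~\ref{theorem:infinite-speed} together with the classical fact, recalled in Remark~\ref{remark:Hill-gaps}, that a non-constant periodic potential always produces at least one gap in its Hill spectrum. Let $f=f(t)$ be a time-periodic, $x$-independent solution of \eqref{eqnlKG} of rotational type. By Remark~\ref{remark:infinite-speed}, after identifying $t$ with the galilean variable, $f$ is a superluminal rotational wave with $c^2=2$, so $E>1$ and the relevant spectral problem is Hill's equation \eqref{eq:Hill-infinite-speed} with potential $P(t)=V''(f(t))$ and spectral parameter $\nu=-k^2$ (here $c^2-1=1$, consistent with the normalization of $P$ in \S\ref{section-Hill-spectrum}). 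By Theorem~\ref{theorem:infinite-speed} it then suffices to exhibit a gap in the negative part of $\Sigma^\mathrm{H}$, i.e.\ to show $\Sigma^\mathrm{H}\cap\R_-\subsetneq\R_-$.

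First I would record the two structural facts about $\Sigma^\mathrm{H}$ in the rotational case that are already in hand from \S\ref{section-Hill-spectrum}. Differentiating the energy law \eqref{eq:pendulum}, $\tfrac{1}{2}f_t^2=E-V(f)$, twice shows that $f_t$ is a nowhere-vanishing $T$-periodic solution of Hill's equation at $\nu=0$; by oscillation theory this forces $\nu_0^{(0)}=0$, and hence $\Sigma^\mathrm{H}\subset(-\infty,0]$, the closed negative half-line. Second, I would check that the potential $P(t)=V''(f(t))$ is \emph{not} constant: since $f$ is rotational, $f_t$ has fixed sign, so $f$ is strictly monotone and over one period sweeps an interval of length $2\pi$; therefore $V''(f(t))$ attains the values $V''(\underline{u})>0$ and $V''(\overline{u})<0$ at the minimizer and maximizer of $V$ met in that interval, and in particular is non-constant. (Even without non-degeneracy of the critical points this is immediate, because a periodic function with identically constant second derivative must be constant, which is incompatible with Assumption~\ref{assumptionsV}.)

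With these two facts the conclusion follows at once. If, to the contrary, $\Sigma^\mathrm{H}\cap\R_-=\R_-$ held, then, using $0\in\Sigma^\mathrm{H}$ and closedness of $\Sigma^\mathrm{H}$, we would obtain $\Sigma^\mathrm{H}=(-\infty,0]$, a Hill spectrum with no gaps whatsoever. But by the theorem of Magnus and Winkler quoted in Remark~\ref{remark:Hill-gaps} (\cite[Theorem 7.12]{MW66}), the only periodic potentials whose Hill spectrum has no gaps are the constant ones, contradicting the non-constancy of $P(t)=V''(f(t))$ established above. Hence $\Sigma^\mathrm{H}\cap\R_-\subsetneq\R_-$, and Theorem~\ref{theorem:infinite-speed} shows that $f$ is spectrally unstable.

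I do not anticipate a genuine obstacle. The only point needing care is to identify the correct Hill equation, namely \eqref{eq:Hill-infinite-speed} with $P(t)=V''(f(t))$, and to confirm that this potential is non-constant before invoking the gap theorem; both are handled by the short monotonicity observation that a rotational wave sweeps a full period of $V$. An alternative would be to argue directly via oscillation theory that a band edge strictly below $\nu=0$ is genuinely separated from it, but that is exactly the content of \cite[Theorem 7.12]{MW66}, so citing the Magnus--Winkler result is the cleanest route.
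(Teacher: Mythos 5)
Your proof is correct and follows essentially the same route as the paper: invoke Theorem~\ref{theorem:infinite-speed} to reduce spectral stability to the absence of gaps in $\Sigma^\mathrm{H}\cap\R_-$, note that for rotational waves $\Sigma^\mathrm{H}\subset\R_-$ so this means no gaps at all, and then cite the Magnus--Winkler no-gap theorem. The one place you are actually a bit more careful than the paper is the step establishing non-constancy: the paper writes only ``$f$ is necessarily non-constant, so $\Sigma^\mathrm{H}$ has at least one gap,'' whereas what the gap theorem actually requires is that the Hill potential $P(t)=V''(f(t))$ be non-constant, which does not follow from non-constancy of $f$ alone; your observation that a rotational $f$ sweeps a full period of $V$, so that $V''(f(t))$ assumes both the value $V''(\underline{u})>0$ and $V''(\overline{u})<0$, supplies exactly the missing link.
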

\begin{proof}
In the rotational case, $\Sigma^\mathrm{H}=\Sigma^\mathrm{H}\cap\R_-$, so the criterion of Theorem~\ref{theorem:infinite-speed} reduces to the condition that $\Sigma^\mathrm{H}$ should have
no gaps for spectral stability.  But $f$ is necessarily non-constant, so $\Sigma^\mathrm{H}$ has at least one gap.  
\end{proof}

\subsection{Relating spectra} \label{sec:relate} 
Recall \eqref{eq:Hill-spectrum-pull-back} defining the set $\sigma^\mathrm{H}$ in the complex $\lambda$-plane related to the Hill's spectrum $\Sigma^\mathrm{H}$.  It was pointed out in Remark~\ref{remark:Hill-spectrum-pull-back} that $\sigma\neq\sigma^\mathrm{H}$ despite the simple transformation \eqref{eq:transform} relating the differential equations \eqref{eq:spectral} (to which $\sigma$ is associated) and \eqref{eq:hill} (to which $\sigma^\mathrm{H}$ is associated).  In particular $\sigma^\mathrm{H}$ is necessarily the union of an unbounded subset of the imaginary axis and a bounded subset (possibly just $\{0\}$) of the real axis, but
as we will show in \S\ref{secmodinst}, $\sigma$ can contain points not on either the real or imaginary axes.  However, we have also shown that $0\in\sigma\cap\sigma^\mathrm{H}$, and hence the two sets are not disjoint.  

Parallel with the development leading up to \eqref{eq:monodromy}, we write Hill's equation \eqref{eq:hill} as a first-order system by introducing the vector unknown $\by:=(y,y_z)^\mathsf{T}\in\mathbb{C}^2$.  Letting $\bF^\mathrm{H}(z,\nu)$ denote the corresponding fundamental solution matrix normalized by the initial condition $\bF^\mathrm{H}(0,\nu)=\Id$, the monodromy matrix for Hill's equation is defined by $\bM^\mathrm{H}(\nu):=\bF^\mathrm{H}(T,\nu)$.  Just as the Floquet multipliers $\mu$ were defined as the eigenvalues of $\bM(\lambda)$, so are the multipliers
$\mu^\mathrm{H}$ defined as the eigenvalues of $\bM^\mathrm{H}(\nu)$.  Note that as both monodromy matrices are $2\times 2$, the multipliers are roots of a quadratic equation in each case, and hence there are two of them (counted with multiplicity).  The Hill's spectrum $\Sigma^\mathrm{H}$ can be characterized as the set of values of $\nu$ for which at least one of the Floquet multipliers $\mu^\mathrm{H}(\nu)$ lies on the unit circle in the complex plane.  When $\nu\in\mathbb{R}$, the Floquet multipliers $\mu^\mathrm{H}(\nu)$ either are real numbers or form a complex-conjugate pair, and hence if $\nu\in\Sigma^\mathrm{H}\subset\mathbb{R}$, the multipliers are either a non-real conjugate pair of unit modulus, or they coalesce at $\mu^\mathrm{H}=1$ (for periodic eigenvalues $\nu$) or $\mu^\mathrm{H}=-1$ (for antiperiodic eigenvalues $\nu$).  

We first establish a result that shows how the Floquet multipliers $\mu$ are transformed under an exponential mapping of $w$ that generalizes the one given in \eqref{eq:transform}.  Our purpose here is two-fold:  firstly we use this result to relate the Floquet multipliers $\mu$ and $\mu^\mathrm{H}$, and then we will apply it to complete the proof of Proposition~\ref{prop:spectral-symmetry} (the part asserting that $\sigma=-\sigma$).  Let $w$ be a solution of the differential equation \eqref{eq:spectral}, and let a function $r=r(z)$ be defined by the invertible transformation
\begin{equation}\label{eq:gentran}
r(z) = e^{-\alpha(\lambda) z} w(z),
\end{equation}
where $\alpha$ depends on $\lambda$ (and possibly other parameters in the system) but not on $z$.  Then $r$ satisfies 
\begin{equation}\label{eq:requation}
r_{zz} + \left( 2 \alpha(\lambda) - \frac{2 \lambda c}{c^2-1} \right)r_z + \left(\alpha(\lambda)^2 - \frac{2 \lambda c}{c^2-1} \alpha(\lambda) + \frac{ \lambda^2 + V''(f(z))}{c^2-1} \right)r = 0,
\end{equation}
which is easily written as a first-order system:
\begin{equation}
\label{eq:rsys}
\br_z
 = \bA^\alpha(z,\lambda)\br,\quad \br:=
\begin{pmatrix} r \\ r_z \end{pmatrix},
\end{equation}
with coefficient matrix
\begin{equation}
\bA^\alpha(z,\lambda):=\begin{pmatrix}  0 & 1 \\ 
 \displaystyle \frac{2 \lambda c}{c^2-1} \alpha(\lambda) - \alpha(\lambda)^2 - \frac{ \lambda^2 + V''(f(z))}{c^2-1} & \displaystyle  \frac{2 \lambda c}{c^2-1} -  2 \alpha(\lambda) 
\end{pmatrix}. 
\end{equation}
(Notice that if we choose $\alpha(\lambda) = \lambda c/(c^2-1)$, then \eqref{eq:gentran} reduces to \eqref{eq:transform} and therefore \eqref{eq:requation} reduces to \eqref{eq:hill}.)  Let $\bF^\alpha(z,\lambda)$ denote the fundamental solution matrix of \eqref{eq:rsys} with initial condition $\bF^\alpha(0,\lambda)=\Id$, let $\bM^\alpha(\lambda):=\bF^\alpha(T,\lambda)$ denote the corresponding monodromy matrix, and let $\mu^\alpha$ denote a Floquet multiplier of \eqref{eq:rsys}, i.e., an eigenvalue of $\bM^\alpha(\lambda)$.
\begin{lemma} \label{lem:floquetrelation}
The following identities hold:
\begin{equation}
\begin{aligned}
\bF^\alpha(z,\lambda)&=e^{-\alpha(\lambda)z}\bB(\lambda)\bF(z,\lambda)\bB(\lambda)^{-1}\\
\bM^\alpha(\lambda)&=e^{-\alpha(\lambda)T}\bB(\lambda)\bM(\lambda)\bB(\lambda)^{-1},
\end{aligned}\quad
\text{where}\quad
\bB(\lambda):=\begin{pmatrix}1 & 0 \\ -\alpha(\lambda) & 1\end{pmatrix}.
\label{eq:F-Falpha}
\end{equation}
Furthermore, to each Floquet multiplier $\mu=\mu(\lambda)$ there corresponds a Floquet multiplier $\mu^\alpha=\mu^\alpha(\lambda)$ via the relation $\mu^\alpha(\lambda)=e^{-\alpha(\lambda)T}\mu(\lambda)$.
\end{lemma}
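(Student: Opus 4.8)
The plan is to realize the substitution $w\mapsto r$ of \eqref{eq:gentran} at the level of first-order systems as a $z$-independent linear conjugation combined with scalar multiplication by $e^{-\alpha(\lambda)z}$, and then to read off all three assertions essentially for free. First I would translate \eqref{eq:gentran} into a statement about the phase vectors. Writing $\bw=(w,w_z)^\top$ for a solution of \eqref{eq:firstorder} and $\br=(r,r_z)^\top$ for the associated solution of \eqref{eq:rsys}, differentiation of $r(z)=e^{-\alpha(\lambda)z}w(z)$ gives $r_z(z)=e^{-\alpha(\lambda)z}\bigl(w_z(z)-\alpha(\lambda)w(z)\bigr)$, hence
\begin{equation*}
\br(z)=e^{-\alpha(\lambda)z}\,\bB(\lambda)\bw(z),\qquad
\bB(\lambda):=\begin{pmatrix}1&0\\-\alpha(\lambda)&1\end{pmatrix},\qquad
\bB(\lambda)^{-1}=\begin{pmatrix}1&0\\\alpha(\lambda)&1\end{pmatrix}.
\end{equation*}
For each fixed $z$ this is an invertible relation between $\bw(z)$ and $\br(z)$.

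Next I would use this to identify $\bF^\alpha$. Given an arbitrary $\br_0\in\C^2$, put $\bw_0:=\bB(\lambda)^{-1}\br_0$ and let $\bw(z):=\bF(z,\lambda)\bw_0$ be the corresponding solution of \eqref{eq:firstorder}. By the displayed relation, $\br(z):=e^{-\alpha(\lambda)z}\bB(\lambda)\bF(z,\lambda)\bB(\lambda)^{-1}\br_0$ solves \eqref{eq:rsys} and satisfies $\br(0)=\br_0$. Since $\br_0$ was arbitrary and the matrix-valued function $z\mapsto e^{-\alpha(\lambda)z}\bB(\lambda)\bF(z,\lambda)\bB(\lambda)^{-1}$ reduces to $\Id$ at $z=0$, uniqueness of the identity-normalized fundamental solution matrix of \eqref{eq:rsys} gives the first formula in \eqref{eq:F-Falpha}; setting $z=T$ yields the second. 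Alternatively, one verifies directly that $X(z):=e^{-\alpha(\lambda)z}\bB(\lambda)\bF(z,\lambda)\bB(\lambda)^{-1}$ satisfies $X_z=\bA^\alpha X$ by differentiating and invoking the algebraic identity $\bA^\alpha(z,\lambda)=-\alpha(\lambda)\Id+\bB(\lambda)\bA(z,\lambda)\bB(\lambda)^{-1}$, a one-line matrix computation which is exactly what makes the conjugation produce \eqref{eq:requation}.

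Finally, the statement about multipliers is immediate: $\bM^\alpha(\lambda)=e^{-\alpha(\lambda)T}\bB(\lambda)\bM(\lambda)\bB(\lambda)^{-1}$ is $e^{-\alpha(\lambda)T}$ times a matrix conjugate to $\bM(\lambda)$, so its eigenvalues are exactly $e^{-\alpha(\lambda)T}$ times those of $\bM(\lambda)$; thus each Floquet multiplier $\mu(\lambda)$ of \eqref{eq:firstorder} yields $\mu^\alpha(\lambda)=e^{-\alpha(\lambda)T}\mu(\lambda)$, the eigenvector being $\bB(\lambda)$ applied to the corresponding eigenvector of $\bM(\lambda)$.

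There is no serious obstacle here --- the lemma is bookkeeping --- so the only things to watch are conventions: that the conjugating matrix appears as $\bB\bF\bB^{-1}$ rather than $\bB^{-1}\bF\bB$, and that the scalar is $e^{-\alpha(\lambda)z}$ rather than its reciprocal. Both are pinned down unambiguously by the normalization $\bF^\alpha(0,\lambda)=\Id$. The one computation worth double-checking against \eqref{eq:requation} (equivalently, against the displayed form of $\bA^\alpha$) is the identity $\bA^\alpha=-\alpha(\lambda)\Id+\bB(\lambda)\bA\bB(\lambda)^{-1}$, since it is the structural fact behind the whole argument.
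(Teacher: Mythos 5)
Your proof is correct and follows essentially the same route as the paper: derive the phase-vector relation $\br=e^{-\alpha(\lambda)z}\bB(\lambda)\bw$ from \eqref{eq:gentran}, fix the $z=0$ normalization with the trailing $\bB(\lambda)^{-1}$, evaluate at $z=T$, and read off the multipliers from the fact that $\bM^\alpha$ is a scalar multiple of a conjugate of $\bM$. The only addition you make is to spell out the algebraic identity $\bA^\alpha=-\alpha(\lambda)\Id+\bB(\lambda)\bA\bB(\lambda)^{-1}$ as an alternative verification; this is a harmless (and correct) elaboration, not a different argument.
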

\begin{proof}
The relation \eqref{eq:gentran} connecting solutions $w$ of \eqref{eq:spectral} with solutions $r$ of \eqref{eq:requation} induces a corresponding relation on
the vectors $\bw$ and $\br$ solving the first-order systems \eqref{eq:firstorder} and \eqref{eq:rsys} respectively:
$ \br= 
e^{-\alpha(\lambda)z} \bB(\lambda)\bw$. 
The first identity in \eqref{eq:F-Falpha} then follows immediately (the final factor of $\bB(\lambda)^{-1}$ fixes the initial condition for $\bF^\alpha$).  Evaluating at $z=T$ yields the second identity in \eqref{eq:F-Falpha}, from which the statements concerning the  multipliers follow by computation of the eigenvalues.
\end{proof}
By taking $\alpha(\lambda):=\lambda c/(c^2-1)$, Lemma~\ref{lem:floquetrelation} yields the following immediate corollary.  Let 
\begin{equation}
q:=\frac{cT}{c^2-1}\in\mathbb{R}.
\label{eq:q-def}
\end{equation}
\begin{corollary}\label{corollary:floquetpandq} 
The multipliers $\mu(\lambda)$ and $\mu^\mathrm{H}(\nu)$ are related as follows:
\begin{equation}
\mu^\mathrm{H}(\nu(\lambda))=e^{-q\lambda}\mu(\lambda), \quad
\nu(\lambda)=\left(\frac{\lambda}{c^2-1}\right)^2.
\end{equation}
\end{corollary}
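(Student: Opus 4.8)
The plan is to obtain the corollary as a direct specialization of Lemma~\ref{lem:floquetrelation} with the particular choice $\alpha(\lambda):=\lambda c/(c^2-1)$. First I would invoke the parenthetical remark following the definition of $\bA^\alpha$: with this choice of $\alpha$, the general exponential substitution \eqref{eq:gentran} collapses to Scott's transformation \eqref{eq:transform}, the scalar ODE \eqref{eq:requation} becomes precisely Hill's equation \eqref{eq:hill} with $\nu=\nu(\lambda)=(\lambda/(c^2-1))^2$, and correspondingly the first-order system \eqref{eq:rsys} has coefficient matrix $\bA^\alpha(z,\lambda)$ equal to the companion matrix of \eqref{eq:hill}. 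Consequently the identity-normalized fundamental solution matrices coincide, $\bF^\alpha(z,\lambda)=\bF^\mathrm{H}(z,\nu(\lambda))$, and hence so do the monodromy matrices, $\bM^\alpha(\lambda)=\bM^\mathrm{H}(\nu(\lambda))$, and their eigenvalues: each Floquet multiplier $\mu^\alpha(\lambda)$ of \eqref{eq:rsys} is a Floquet multiplier $\mu^\mathrm{H}(\nu(\lambda))$ of Hill's equation.

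Next I would compute the exponential prefactor. By the definition \eqref{eq:q-def} of $q$, one has $\alpha(\lambda)T=\lambda cT/(c^2-1)=q\lambda$. Substituting this into the multiplier relation furnished by Lemma~\ref{lem:floquetrelation}, namely $\mu^\alpha(\lambda)=e^{-\alpha(\lambda)T}\mu(\lambda)$, yields exactly $\mu^\mathrm{H}(\nu(\lambda))=e^{-q\lambda}\mu(\lambda)$, which is the claimed identity.

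There is essentially no serious obstacle here; the statement is a bookkeeping corollary. The only points requiring care are purely notational: first, verifying that $\bA^\alpha$ with $\alpha=\lambda c/(c^2-1)$ is indeed the companion form of \eqref{eq:hill}, which is a one-line substitution already flagged in the text; and second, keeping straight that $\bM^\mathrm{H}$ is naturally regarded as a function of $\nu$ while $\bM^\alpha$ is a function of $\lambda$, the two being identified through $\nu=\nu(\lambda)$. Since $\lambda\mapsto\nu(\lambda)$ is two-to-one this identification is not a bijection, but that is harmless: we only ever evaluate $\mu^\mathrm{H}$ at the specific value $\nu(\lambda)$ determined by $\lambda$, and the formula is consistent for both preimages (the two choices of $\lambda$ giving the same $\nu$ simply produce reciprocal prefactors $e^{\mp q\lambda}$, matching the fact that the two Floquet multipliers of \eqref{eq:hill} multiply to $1$). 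I would therefore present the proof in two or three sentences, citing Lemma~\ref{lem:floquetrelation} and the definition \eqref{eq:q-def} of $q$.
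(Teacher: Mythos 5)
Your proof is correct and is precisely the argument in the paper: Corollary~\ref{corollary:floquetpandq} is obtained by specializing Lemma~\ref{lem:floquetrelation} to $\alpha(\lambda)=\lambda c/(c^2-1)$, under which the system \eqref{eq:rsys} becomes the companion form of Hill's equation and $\alpha(\lambda)T=q\lambda$. The extra remark about the two-to-one map $\lambda\mapsto\nu(\lambda)$ is a harmless sanity check and does not alter the argument.
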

As an application of this result, we now show that $\sigma$ and $\sigma^\mathrm{H}$ agree (only) on the imaginary axis.
\begin{proposition}\label{prop:imag} $\sigma \cap i \R = \sigma^\mathrm{H} \cap i \R$, and if $\lambda \in \sigma \cap \sigma^\mathrm{H}$ then $\lambda \in i \R$. 
\end{proposition}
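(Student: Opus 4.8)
The plan is to read everything off Corollary~\ref{corollary:floquetpandq} together with the two membership criteria: $\lambda\in\sigma$ iff the monodromy matrix $\bM(\lambda)$ has an eigenvalue of modulus one (Proposition~\ref{prop:deteq0}), while $\lambda\in\sigma^\mathrm{H}$ iff $\nu(\lambda)\in\Sigma^\mathrm{H}$ by \eqref{eq:Hill-spectrum-pull-back}, i.e.\ iff the Hill monodromy matrix $\bM^\mathrm{H}(\nu(\lambda))$ has an eigenvalue of modulus one. By Lemma~\ref{lem:floquetrelation} (applied with $\alpha(\lambda)=\lambda c/(c^2-1)$), $\bM^\mathrm{H}(\nu(\lambda))$ is similar to $e^{-q\lambda}\bM(\lambda)$, so its eigenvalues are precisely $e^{-q\lambda}\mu$ as $\mu$ runs over the eigenvalues of $\bM(\lambda)$, where $q=cT/(c^2-1)\in\R$ as in \eqref{eq:q-def}; taking moduli, the paired multipliers obey $|\mu^\mathrm{H}|=e^{-q\,\Re\lambda}|\mu|$. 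For the first identity I would then argue: if $\lambda\in i\R$ then $\Re\lambda=0$, so $\bM(\lambda)$ and $\bM^\mathrm{H}(\nu(\lambda))$ have eigenvalues of equal moduli, whence $\lambda\in\sigma\iff\lambda\in\sigma^\mathrm{H}$; intersecting with $i\R$ gives $\sigma\cap i\R=\sigma^\mathrm{H}\cap i\R$.

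For the second assertion I would assume $c\neq0$, so that $q\neq0$ (when $c=0$ the transformation \eqref{eq:transform} degenerates to the identity and $\sigma=\sigma^\mathrm{H}$ outright, so only the case $c\neq0$ requires argument). Let $\lambda\in\sigma\cap\sigma^\mathrm{H}$ and let $\mu_1,\mu_2$ be the eigenvalues of $\bM(\lambda)$ counted with multiplicity. Abel's formula for the first-order system \eqref{eq:firstorder}, whose coefficient matrix $\bA(z,\lambda)$ has $\tr\bA=2c\lambda/(c^2-1)$, gives $\det\bM(\lambda)=e^{2q\lambda}$, so $|\mu_1|\,|\mu_2|=e^{2q\,\Re\lambda}$. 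Since $\lambda\in\sigma$ we may take $|\mu_1|=1$; since $\lambda\in\sigma^\mathrm{H}$, one of the Hill multipliers $e^{-q\lambda}\mu_j$ has modulus one, i.e.\ $|\mu_j|=e^{q\,\Re\lambda}$ for some $j\in\{1,2\}$. If $j=1$ this says $e^{q\,\Re\lambda}=1$; if $j=2$ then $e^{2q\,\Re\lambda}=|\mu_1|\,|\mu_2|=e^{q\,\Re\lambda}$, again forcing $e^{q\,\Re\lambda}=1$. In either case $q\neq0$ forces $\Re\lambda=0$, i.e.\ $\lambda\in i\R$; the computation is unchanged if $\mu_1=\mu_2$.

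The hard part is conceptual rather than technical: once Corollary~\ref{corollary:floquetpandq} is in hand the proof is very short, and the only thing to be careful about is not getting bogged down tracking which multiplier lies on the unit circle. The clean device is to combine the modulus relation $|\mu^\mathrm{H}|=e^{-q\,\Re\lambda}|\mu|$ with the Wronskian identity $\mu_1\mu_2=e^{2q\lambda}$, which collapses both branches of the case analysis to $e^{q\,\Re\lambda}=1$. The one genuine caveat, which I would flag explicitly, is the degenerate wave speed $c=0$: there the mediating exponential $e^{-q\lambda}$ disappears, $\sigma$ and $\sigma^\mathrm{H}$ simply coincide, and the argument above does not apply.
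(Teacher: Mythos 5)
Your proof is correct, and for the second assertion it takes a genuinely different route from the paper's. The paper first observes that $\nu(\lambda)\in\Sigma^\mathrm{H}\subset\R$ (self-adjointness of Hill's operator) forces $\lambda^2\in\R$, hence $\lambda\in\R\cup i\R$, and then disposes of the real nonzero case by arguing that the exponential prefactor $e^{-c\lambda z/(c^2-1)}$ in the transformation \eqref{eq:transform} converts all bounded-or-linearly-growing solutions $y$ of Hill's equation into exponentially unbounded solutions $w$, so that $\lambda\notin\sigma$. Your argument bypasses both the reality of $\Sigma^\mathrm{H}$ and any analysis of solution growth: it stays entirely at the level of Floquet multipliers, combining the modulus relation from Corollary~\ref{corollary:floquetpandq} with Abel's identity $\mu_1\mu_2=e^{2q\lambda}$, and the two-case analysis collapses tidily to $e^{q\Re\lambda}=1$. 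This is cleaner and slightly more robust (it would work even if $P$ were complex-valued); the trade-off is that the paper's route makes the geometric fact $\sigma^\mathrm{H}\subset\R\cup i\R$ explicit, which is used implicitly later (e.g., in Propositions~\ref{prop:pabel} and \ref{prop:sum}).

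Your flag on $c=0$ is a genuinely sharp observation, and in fact sharper than the parenthetical phrasing suggests: when $c=0$ and $f$ is librational, $\Sigma^\mathrm{H}$ has positive spectrum, so $\sigma=\sigma^\mathrm{H}$ contains real nonzero points and the second assertion of the proposition actually fails. The paper's own proof also tacitly requires $c\neq 0$ (the exponential factor in \eqref{eq:transform} degenerates to $1$ otherwise), but unlike the neighboring Propositions~\ref{prop:pabel}, \ref{prop:nonzero-real}, and \ref{prop:sum}, this proposition does not state the hypothesis $c\neq 0$ explicitly. You would do well to state it; your parenthetical ``so only the case $c\neq 0$ requires argument'' reads as if the $c=0$ case were covered, when it is in fact the case in which the conclusion can break down.
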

\begin{proof} The first statement follows immediately from Corollary~\ref{corollary:floquetpandq}. To prove the second statement, suppose that $\lambda \in \sigma^H$, which implies that either $\lambda$ is real and nonzero, or purely imaginary. We show that the case that $\lambda$ is real and nonzero is inconsistent with $\lambda\in\sigma$.  Indeed, since $\lambda\in\sigma^\mathrm{H}$,
the Floquet multipliers $\mu^\mathrm{H}(\nu(\lambda))$ both have unit modulus, and it follows that all solutions of Hill's equation \eqref{eq:hill} are either bounded and quasi-periodic or exhibit linear growth (the latter only for $\nu(\lambda)$ in the periodic or antiperiodic partial spectra).  
Accounting for the assumption that $\Re\lambda\neq 0$, applying the transformation \eqref{eq:transform} (a bijection between the solution spaces of the differential equations \eqref{eq:spectral} and \eqref{eq:hill}) shows that every nonzero solution $w$ of \eqref{eq:spectral} is exponentially unbounded for large $|z|$.  It follows that $\lambda\not\in\sigma$. 
\end{proof}

The proof of Proposition~\ref{prop:imag} used the fact that \emph{both} Floquet multipliers $\mu^\mathrm{H}(\nu)$ of Hill's equation have unit modulus when $\nu\in\Sigma^\mathrm{H}$.  Quite a different phenomenon can occur for the spectrum $\sigma$, as the following result shows.
\begin{proposition}\label{prop:pabel} Let $c\neq 0$, and suppose that $\lambda \in \sigma$ with $\Re(\lambda) \neq 0$. Then the Floquet multipliers $\mu(\lambda)$ are distinct, and exactly one of them has unit modulus.
\end{proposition}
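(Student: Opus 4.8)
The plan is to pin down the \emph{product} of the two Floquet multipliers using Abel's formula and then to use the hypothesis $\lambda\in\sigma$, which forces at least one multiplier onto the unit circle; these two facts turn out to be incompatible with both multipliers being unimodular or with the multipliers coinciding.

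First I would compute $\det\bM(\lambda)$. Since the trace of the coefficient matrix is $\tr\bA(z,\lambda) = 2c\lambda/(c^2-1)$, independent of $z$, Abel's (Liouville's) theorem gives $\det\bF(z,\lambda) = \exp\bigl(2c\lambda z/(c^2-1)\bigr)$, and hence, with $q := cT/(c^2-1)$ as in \eqref{eq:q-def},
\[
\det\bM(\lambda) = \det\bF(T,\lambda) = e^{2q\lambda}.
\]
As a consistency check one can instead read this off from Corollary~\ref{corollary:floquetpandq} together with the fact that $\det\bM^\mathrm{H}(\nu)\equiv 1$ (the system \eqref{eq:rsys} for $\alpha(\lambda) = \lambda c/(c^2-1)$, which reduces to Hill's equation, has a trace-free coefficient matrix). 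Consequently the two Floquet multipliers satisfy $\mu_1(\lambda)\mu_2(\lambda) = e^{2q\lambda}$, so that $|\mu_1(\lambda)\mu_2(\lambda)| = e^{2q\,\Re\lambda}$. Under the standing hypotheses $c\neq 0$ and $c\neq\pm 1$ (and $T>0$) we have $q\neq 0$, and since $\Re\lambda\neq 0$ by assumption, this common modulus is different from $1$.

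Next I would invoke Proposition~\ref{prop:deteq0}: membership $\lambda\in\sigma$ means at least one multiplier, say $\mu_1(\lambda)$, satisfies $|\mu_1(\lambda)| = 1$. Then $|\mu_2(\lambda)| = |\mu_1(\lambda)\mu_2(\lambda)| = e^{2q\,\Re\lambda}\neq 1$, so exactly one of the two multipliers has unit modulus. Since their moduli differ, $\mu_1(\lambda)\neq\mu_2(\lambda)$; equivalently, if the multipliers coincided their common value would have modulus $e^{q\,\Re\lambda}\neq 1$, contradicting the existence of a unimodular multiplier. This establishes both assertions of the Proposition.

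I do not expect a genuine obstacle here: the content is essentially the Abel-formula computation of $\det\bM(\lambda)$ together with the spectral characterization of Proposition~\ref{prop:deteq0}. The only points deserving care are getting the constant in the exponential right (hence the cross-check against Corollary~\ref{corollary:floquetpandq}) and noting explicitly that $q\neq 0$ — precisely the feature of the non-selfadjoint pencil \eqref{eq:spectral} that is absent for Hill's equation \eqref{eq:hill} and that is ultimately responsible for $\sigma$ leaving the imaginary axis.
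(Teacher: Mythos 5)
Your proof is correct and follows essentially the same route as the paper: compute $\det\bM(\lambda)=e^{2q\lambda}$ via Abel's theorem, note $|\det\bM(\lambda)|\neq 1$ when $q\neq 0$ and $\Re\lambda\neq 0$, and combine with the fact that $\lambda\in\sigma$ forces at least one multiplier onto the unit circle. The only cosmetic difference is that you deduce distinctness from the moduli being unequal, whereas the paper leaves that implicit; the substance is identical.
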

\begin{proof}
According to Abel's Theorem, the product of the Floquet multipliers is
\begin{equation}
\det(\bM(\lambda)) = \exp\left(\int_0^T\tr(\bA(z,\lambda))\,dz\right) = e^{2q\lambda}.
\label{eq:Abels-Identity}
\end{equation}
By assumption, at least one of the multipliers has unit modulus because $\lambda\in\sigma$.
If also the other multiplier has unit modulus, then $|\det(\bM(\lambda))|=1$ and therefore $\Re\lambda=0$ because $q\neq 0$.  Hence we arrive at a contradiction, and it follows that the second multiplier cannot have unit modulus.
\end{proof}
A result that is more specialized but of independent interest is the following:
\begin{proposition} Let $c\neq 0$ and suppose that $\lambda\in\sigma$ is a nonzero real number.  Then $\lambda$ is either a periodic eigenvalue or an antiperiodic eigenvalue, and the Floquet multipliers $\mu(\lambda)$ are distinct.
\label{prop:nonzero-real}
\end{proposition}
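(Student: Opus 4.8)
The plan is to exploit two structural facts: that the monodromy matrix $\bM(\lambda)$ has \emph{real} entries when $\lambda\in\R$, and that Proposition~\ref{prop:pabel} already pins down the moduli of the two Floquet multipliers when $\Re\lambda\neq 0$. First I would observe that for real $\lambda$ the coefficient matrix $\bA(z,\lambda)$ in \eqref{eq:coeffA} is real (since $V''(f(z))$ and $c$ are real), so the identity-normalized fundamental solution matrix $\bF(z,\lambda)$ and hence $\bM(\lambda)=\bF(T,\lambda)$ are real matrices. Consequently the characteristic polynomial $\det(\bM(\lambda)-\mu\Id)$ has real coefficients, so its two roots $\mu_1(\lambda)$ and $\mu_2(\lambda)$ are either both real or form a complex-conjugate pair.

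Next, since $\lambda\in\sigma$ is real and nonzero we have $\Re\lambda\neq 0$, and with $c\neq 0$ Proposition~\ref{prop:pabel} applies: the multipliers are distinct, and exactly one of them---say $\mu_1$---satisfies $|\mu_1|=1$, while $|\mu_2|\neq 1$. (Alternatively, one recovers $|\mu_2|$ directly from Abel's identity \eqref{eq:Abels-Identity}: $\mu_1\mu_2=e^{2q\lambda}$ with $q$ as in \eqref{eq:q-def}, so $|\mu_2|=e^{2q\lambda}\neq 1$ because $q\lambda\neq 0$.) The complex-conjugate-pair alternative is now impossible, for it would force $|\mu_1|=|\mu_2|$, contradicting $|\mu_1|=1\neq|\mu_2|$. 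Therefore both multipliers are real, and since $|\mu_1|=1$ we must have $\mu_1=1$ or $\mu_1=-1$; that is, $\lambda$ lies in the periodic partial spectrum $\sigma_0$ or the antiperiodic partial spectrum $\sigma_\pi$, so $\lambda$ is a periodic or an antiperiodic eigenvalue in the sense of Remark~\ref{remark:periodic}. Distinctness of $\mu_1$ and $\mu_2$ is then immediate from $\mu_2=e^{2q\lambda}\mu_1^{-1}=\pm e^{2q\lambda}$ together with $e^{2q\lambda}\neq 1$ (or one may simply quote Proposition~\ref{prop:pabel} again).

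There is no serious obstacle here; the entire argument hinges on the single observation that reality of $\lambda$ makes $\bM(\lambda)$ a real matrix, which is precisely what forbids one multiplier from having modulus $1$ and the other not unless both are in fact real. The only point requiring care is to invoke $c\neq 0$ (equivalently $q\neq 0$) so that $e^{2q\lambda}\neq 1$ for $\lambda\neq 0$; without this hypothesis the dichotomy between the real and complex-conjugate cases cannot be resolved this way.
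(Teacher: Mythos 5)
Your argument is correct and follows essentially the same route as the paper's: reality of $\bM(\lambda)$ for real $\lambda$ forces the multipliers to be real or a conjugate pair, Proposition~\ref{prop:pabel} rules out the conjugate case, and Abel's identity \eqref{eq:Abels-Identity} (together with $q\lambda\neq 0$) pins down that one multiplier is $\pm 1$ while the other is not. The only cosmetic difference is that you invoke Proposition~\ref{prop:pabel} up front and then discard the conjugate-pair case by comparing moduli, whereas the paper first assumes the conjugate-pair case and derives a contradiction with that proposition; the content is identical.
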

\begin{proof}  Since $\lambda\in\mathbb{R}$, the differential equation \eqref{eq:spectral} has real coefficients and hence the multipliers either form a complex-conjugate pair or are both real.  Furthermore, since $\lambda\in\sigma$, in the conjugate-pair case both multipliers lie on the unit circle, a contradiction with Proposition~\ref{prop:pabel} because $\Re\lambda\neq 0$.  Therefore, the multipliers are both real, and $\lambda\in\sigma$ implies that one of them is $\pm 1$.
According to Abel's Theorem \eqref{eq:Abels-Identity}, the product of the Floquet multipliers is a positive number not equal to $1$ and hence the second multiplier can be neither $1$ nor $-1$.  (That the multipliers are distinct also follows from Proposition~\ref{prop:pabel}.)
\end{proof}
Next, we consider situations in which the Floquet multipliers of either equation \eqref{eq:spectral} or Hill's equation \eqref{eq:hill} fail to be distinct.
\begin{proposition} \label{prop:sum} Let $c\neq 0$, and suppose that for some $\lambda \in \C$ both Floquet multipliers $\mu(\lambda)$ coincide.  Then 
\begin{itemize}
\item[(i)] both Floquet multipliers $\mu^\mathrm{H}(\nu(\lambda))$ also coincide, and the common values are $\mu^\mathrm{H}(\nu(\lambda)) = \pm 1$ and $\mu(\lambda) = \pm e^{q\lambda}$;
\item[(ii)] either $\lambda$ is real and nonzero and $\lambda\not\in\sigma$, or $\lambda$ is imaginary  and $\lambda\in\sigma\cap\sigma^\mathrm{H}$. 
\end{itemize}
\end{proposition}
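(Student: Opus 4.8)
The plan is to read off part (i) from Abel's Theorem applied to the two monodromy matrices together with the similarity relation of Lemma~\ref{lem:floquetrelation}, and then to deduce part (ii) by combining the resulting fact that $\nu(\lambda)$ is a periodic or antiperiodic eigenvalue of Hill's equation with the structural Propositions~\ref{prop:deteq0}, \ref{prop:imag}, and \ref{prop:pabel}.

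For part (i), I would denote by $\mu_0$ the common value of the two coincident multipliers $\mu(\lambda)$. Abel's identity \eqref{eq:Abels-Identity} gives $\mu_0^2 = \det\bM(\lambda) = e^{2q\lambda}$, so $\mu_0 = \pm e^{q\lambda}$. Specializing \eqref{eq:F-Falpha} to $\alpha(\lambda) = \lambda c/(c^2-1)$ shows that $\bM^\mathrm{H}(\nu(\lambda)) = e^{-q\lambda}\bB(\lambda)\bM(\lambda)\bB(\lambda)^{-1}$ is a scalar multiple of a matrix conjugate to $\bM(\lambda)$; hence its two eigenvalues also coincide, with common value $e^{-q\lambda}\mu_0 = \pm 1$. (A quicker bookkeeping of the same fact: Hill's equation in first-order form has traceless coefficient matrix, so $\det\bM^\mathrm{H}(\nu)=1$, and a repeated eigenvalue of $\bM^\mathrm{H}$ must then square to $1$.) Either way this gives (i), with the two sign choices matched as $\mu^\mathrm{H}(\nu(\lambda)) = e^{-q\lambda}\mu(\lambda)$ dictates.

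For part (ii), I would start from the observation that $\mu^\mathrm{H}(\nu(\lambda)) = \pm 1$ lies on the unit circle, so $\nu(\lambda) \in \Sigma^\mathrm{H} \subset \R$; since $(c^2-1)^2 > 0$ this forces $\lambda^2 \in \R$, i.e. $\lambda$ is real or purely imaginary. In the real nonzero case I would argue by contradiction: $\Re\lambda \neq 0$, so $\lambda \in \sigma$ would imply via Proposition~\ref{prop:pabel} that the multipliers $\mu(\lambda)$ are distinct, contradicting the hypothesis; hence $\lambda \notin \sigma$. In the purely imaginary case, $\nu(\lambda) \in \Sigma^\mathrm{H}$ means $\lambda \in \sigma^\mathrm{H}$ by the defining relation \eqref{eq:Hill-spectrum-pull-back}, and then Proposition~\ref{prop:imag} yields $\lambda \in \sigma^\mathrm{H} \cap i\R = \sigma \cap i\R$, so $\lambda \in \sigma \cap \sigma^\mathrm{H}$; alternatively one notes directly that $|\mu(\lambda)| = |{\pm}e^{q\lambda}| = 1$ because $q\lambda \in i\R$, so $\lambda \in \sigma$ by Proposition~\ref{prop:deteq0}. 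The degenerate value $\lambda = 0$ is harmlessly covered by this second alternative, in accordance with the already-established $0 \in \sigma \cap \sigma^\mathrm{H}$.

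I do not anticipate a genuine obstacle: everything reduces to chaining Abel's Theorem in its two incarnations, the conjugation-and-scaling identity of Lemma~\ref{lem:floquetrelation}, and the earlier structural propositions. The only points requiring care are the case split on whether $\lambda$ is real or imaginary and the correct invocation of the ``distinct multipliers'' dichotomy of Proposition~\ref{prop:pabel} in the real case, plus remembering that $\lambda=0$, though not excluded, lands in the imaginary alternative.
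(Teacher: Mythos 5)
Your proof is correct and takes essentially the same route as the paper's: Abel's theorem (applied to the Hill system via $\det\bM^\mathrm{H}=1$ or equivalently to $\bM(\lambda)$ via $\det\bM(\lambda)=e^{2q\lambda}$) combined with the conjugation-and-scaling relation of Lemma~\ref{lem:floquetrelation}/Corollary~\ref{corollary:floquetpandq} for part (i), and Propositions~\ref{prop:imag} and \ref{prop:pabel} with the real/imaginary dichotomy for part (ii). The minor stylistic difference is the order in which you extract the two facts of part (i), and you are a bit more explicit than the paper about the boundary case $\lambda=0$; both are sound.
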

\begin{proof}
The fact that the two multipliers $\mu(\lambda)$ of equation \eqref{eq:spectral} coincide exactly when the two multipliers $\mu^\mathrm{H}(\nu(\lambda))$ coincide is a simple consequence of Corollary~\ref{corollary:floquetpandq}.  Applying Abel's Theorem \eqref{eq:Abels-Identity} to the system \eqref{eq:rsys} with $\alpha(\lambda)=\lambda c/(c^2-1)$ shows that the product of the multipliers $\mu^\mathrm{H}$ is always $1$ and therefore if they coincide they must take the values $1$ or $-1$ (together).  Again using Corollary~\ref{corollary:floquetpandq} shows that
the corresponding coincident value of the multipliers $\mu$ must be either $e^{q\lambda}$ or $-e^{q\lambda}$, which proves (i).

According to (i), $\nu(\lambda)\in\Sigma^\mathrm{H}\subset\mathbb{R}$ (being a periodic or antiperiodic eigenvalue), and it follows that $\lambda\in\sigma^\mathrm{H}$.  Because
$\nu(\lambda)\in\mathbb{R}$, either $\lambda$ is either purely imaginary, in which case Proposition~\ref{prop:imag} shows that $\lambda\in\sigma$, or $\lambda$ is a nonzero real number.
In the latter case, the fact that the multipliers $\mu(\lambda)$ are not distinct shows that $\lambda\not\in\sigma$ according to Proposition~\ref{prop:pabel}.  This completes the proof of (ii).
\end{proof}
\begin{remark}\label{rem:negsim} 
As a second application of Lemma~\ref{lem:floquetrelation}, we now complete the proof of 
Proposition~\ref{prop:spectral-symmetry} by showing that $\lambda\in\sigma$ implies $-\lambda\in\sigma$.
The transformation \eqref{eq:gentran} with the choice
\begin{equation}
\alpha(\lambda)=\frac{2\lambda c}{c^2-1}
\end{equation}
connects the differential equation \eqref{eq:spectral} with the equation (cf. \eqref{eq:requation})
\begin{equation}\label{eq:lamplus}
r_{zz} + \frac{2 c \lambda}{c^2-1} r_z + \frac{\lambda^2 + V''(f(z))}{c^2-1} r = 0,
\end{equation}
which is simply \eqref{eq:spectral} with $\lambda$ replaced by $-\lambda$.  Lemma~\ref{lem:floquetrelation} therefore implies that the Floquet multipliers $\mu(\pm\lambda)$ are related as follows:
\begin{equation}
\mu(-\lambda)=e^{-2q\lambda}\mu(\lambda).
\label{eq:mu-lambda-minus-lambda}
\end{equation}
Since $\tr(\bM(\lambda))$ is the sum of the multipliers, using \eqref{eq:Abels-Identity} shows that the function $\tilde{D}(\lambda,\mu)$ related to $D(\lambda,\mu)$ defined by \eqref{eq:det0} by
\begin{equation}
\tilde{D}(\lambda,\mu):=\frac{e^{-q\lambda}}{\mu}D(\lambda,\mu) \quad\text{satisfies}\quad
\tilde{D}(-\lambda,\mu)=\tilde{D}(\lambda,1/\mu).
\label{eq:tilde-D}
\end{equation}
It follows easily that $\lambda\in\sigma_\theta$ implies $-\lambda\in\sigma_{-\theta\pmod{2\pi}}$ and hence $\lambda\in\sigma$ implies $-\lambda\in\sigma$.
\myendrmk
\end{remark}

Finally, we note that Propositions~\ref{prop:pabel}, \ref{prop:nonzero-real}, and  \ref{prop:sum} have all required the condition $c\neq 0$, but much more information is available if $c=0$:
\begin{proposition}
If $c=0$, then $\sigma=\sigma^\mathrm{H}$.
\label{prop:zero-velocity-agree}
\end{proposition}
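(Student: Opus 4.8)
The plan is to exploit the fact that the exponential substitution \eqref{eq:transform} relating the spectral problem \eqref{eq:spectral} to Hill's equation \eqref{eq:hill} becomes trivial when $c=0$, so that the two problems literally coincide. Concretely, with $c=0$ the constant $q=cT/(c^2-1)$ of \eqref{eq:q-def} vanishes, and the change of spectral variable $\nu(\lambda)=(\lambda/(c^2-1))^2$ reduces to $\nu(\lambda)=\lambda^2$; Corollary~\ref{corollary:floquetpandq} then asserts simply that $\mu^\mathrm{H}(\nu(\lambda))=e^{-q\lambda}\mu(\lambda)=\mu(\lambda)$. In other words, for $c=0$ the Floquet multipliers of \eqref{eq:spectral} and those of Hill's equation \eqref{eq:hill} are the very same functions of $\lambda$.

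Granting this, I would assemble the conclusion from the multiplier characterizations already in hand. By Proposition~\ref{prop:deteq0}, $\lambda\in\sigma$ if and only if at least one multiplier $\mu(\lambda)$ satisfies $|\mu(\lambda)|=1$. On the Hill side, as recalled in \S\ref{sec:relate}, $\nu\in\Sigma^\mathrm{H}$ if and only if at least one multiplier $\mu^\mathrm{H}(\nu)$ satisfies $|\mu^\mathrm{H}(\nu)|=1$, so by the definition \eqref{eq:Hill-spectrum-pull-back} of $\sigma^\mathrm{H}$ we have $\lambda\in\sigma^\mathrm{H}$ if and only if some $\mu^\mathrm{H}(\nu(\lambda))$ has unit modulus. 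Since for $c=0$ the sets $\{\mu(\lambda)\}$ and $\{\mu^\mathrm{H}(\nu(\lambda))\}$ are identical, the conditions ``some $\mu(\lambda)$ has unit modulus'' and ``some $\mu^\mathrm{H}(\nu(\lambda))$ has unit modulus'' coincide, whence $\lambda\in\sigma\Leftrightarrow\lambda\in\sigma^\mathrm{H}$, i.e., $\sigma=\sigma^\mathrm{H}$.

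As an equivalent and perhaps more transparent route, I would instead note directly that when $c=0$ the coefficient matrix $\bA(z,\lambda)$ in \eqref{eq:firstorder}, given by \eqref{eq:coeffA}, equals the coefficient matrix of the first-order form of Hill's equation \eqref{eq:hill} evaluated at $\nu=\lambda^2$: using $c^2-1=-1$ and hence $P(z)=-V''(f(z))$, both become $\left(\begin{smallmatrix} 0 & 1 \\ \lambda^2+V''(f(z)) & 0 \end{smallmatrix}\right)$. Consequently the identity-normalized fundamental solution matrices agree, $\bM(\lambda)=\bM^\mathrm{H}(\lambda^2)$, and equality of the multiplier sets is immediate. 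In either presentation I do not expect a real obstacle: the proposition simply records that the discrepancy between $\sigma$ and $\sigma^\mathrm{H}$ explained in Remark~\ref{remark:Hill-spectrum-pull-back} is produced \emph{solely} by the mediating factor $e^{-c\lambda z/(c^2-1)}$ failing to be unimodular, and this factor is identically $1$ precisely when $c=0$. The only step demanding any care is the bookkeeping of the sign conventions ($c^2-1=-1$, $P=-V''(f)$, $\nu=\lambda^2$), to confirm that the two first-order systems genuinely match.
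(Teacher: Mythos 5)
Your proposal is correct, and your ``more transparent route'' at the end is essentially the paper's own proof: the paper simply observes that the transformation \eqref{eq:transform} becomes the identity when $c=0$, so that the differential equations \eqref{eq:spectral} and \eqref{eq:hill} coincide, and the two spectra are therefore equal. Your first route via Corollary~\ref{corollary:floquetpandq} and Proposition~\ref{prop:deteq0} is a slightly more roundabout way of reaching the same point (one must additionally invoke the unit-modulus characterization of $\Sigma^\mathrm{H}$ from \S\ref{sec:relate}, as you do), but it is also valid; the sign bookkeeping ($c^2-1=-1$, $P=-V''(f)$, $\nu=\lambda^2$) that you flag as the delicate part indeed checks out.
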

\begin{proof}
If $c=0$, then the relation \eqref{eq:transform} reduces to the identity, and hence the differential equations \eqref{eq:spectral} and \eqref{eq:hill} --- whose spectra are, respectively, $\sigma$ and $\sigma^\mathrm{H}$ (the latter pulled back to the $\lambda$-plane as in  \eqref{eq:Hill-spectrum-pull-back}) --- are identical.
\end{proof}

\section{Analysis of the monodromy matrix}\label{sec:monodromy}
In this section we study the monodromy matrix $\bM(\lambda)$ of equation \eqref{eq:spectral}  with the goal of analyzing the $L^2$ spectrum of the problem \eqref{eq:spectral} in a neighborhood of the origin $\lambda = 0$. When $\lambda = 0$, equation \eqref{eq:spectral} coincides with  Hill's equation \eqref{eq:hill}, and hence $\bM(0)=\bM^\mathrm{H}(\nu(0))=\bM^\mathrm{H}(0)$.  We start by setting $\lambda=0$ and explicitly finding the fundamental solution matrix $\bF(z,0)=\bF^\mathrm{H}(z,0)$.  Then we compute the power series expansion of  $\bF(z,\lambda)$ based at $\lambda=0$.  Since $\bF(z,\lambda)$ is entire for bounded $z$, this series has an infinite radius of convergence and setting $z=T$ then gives the corresponding power series expansion of the monodromy matrix $\bM(\lambda)$.  This series is also an asymptotic series in the limit $\lambda\to 0$, and hence a finite number of terms suffice to approximate the spectrum $\sigma$ in a vicinity of the origin.

\subsection{Solutions for $\lambda = 0$}
\label{secsolsat0}
Let us assume that the periodic traveling wave solution $u=f$ of the Klein-Gordon equation \eqref{eqnlKG} has been made unique given parameter values $(E,c)\in\region$ by the imposition of either \eqref{eq:sub-normalize} (for subluminal waves) or \eqref{eq:super-normalize} (for superluminal waves).  The functions $u_0:\region\to\R$ and $v_0:\region\to\R$ given by
\begin{equation}
u_0(E,c):=f(0)\quad\text{and}\quad v_0(E,c):=f_z(0)
\label{eq:u0v0define}
\end{equation}
are then well-defined on $\region$. Furthermore, $v_0>0$, and $u_0$ is constant on 
$\region_<^\mathrm{lib}\cup\region_<^\mathrm{rot}$ and also on $\region_>^\mathrm{lib}\cup\region_>^\mathrm{rot}$, with different values both satisfying $V'(u_0)=0$.  It follows from \eqref{eqnlp} that $f_{zz}(0)=0$, and by periodicity, $f_{zz}(T)=0$ also.
\subsubsection{First construction of $\bF(z,0)$ and $\bM(0)$}
\label{section:first-construction}
\begin{lemma}
\label{lem:solat0}
Suppose that $\lambda=0$.  The two-dimensional complex vector space of solutions to the first-order system \eqref{eq:firstorder} is spanned by
\begin{equation}
 \label{eq:defY0Y1}
\bw^z(z) := \begin{pmatrix}
          f_z \\ f_{zz}
         \end{pmatrix} \quad \text{and} \quad \bw^E(z) := \begin{pmatrix} f_E \\ f_{Ez} \end{pmatrix}.
\end{equation}
\end{lemma}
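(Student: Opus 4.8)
The plan is to use that at $\lambda=0$ the first-order system \eqref{eq:firstorder} is linear, with coefficient matrix $\bA(z,0)$ obtained from \eqref{eq:coeffA} by deleting the $\lambda$-dependent entries, so that it is exactly the first-order form of Hill's equation \eqref{eq:hill} at $\nu=0$; its solution space is two-dimensional, and it therefore suffices to check that the two vectors in \eqref{eq:defY0Y1} solve the system and are linearly independent. The two candidates are precisely the solutions one expects from the two ``degrees of freedom'' of the profile equation \eqref{eqnlp}: its invariance under translations $z\mapsto z+z_0$ yields $\bw^z=(f_z,f_{zz})^\top$, and the smooth dependence of the profile on the energy $E$ (Lemma~\ref{lemfC2E}) yields $\bw^E=(f_E,f_{Ez})^\top$.

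To check that $\bw^z$ solves the system I would rewrite \eqref{eqnlp} as the autonomous system $\mathbf f_z=G(\mathbf f)$ with $\mathbf f:=(f,f_z)^\top$ and $G(p,q):=(q,-V'(p)/(c^2-1))^\top$, and differentiate in $z$: this gives $\mathbf f_{zz}=DG(\mathbf f)\,\mathbf f_z$, which is $(\bw^z)_z=\bA(z,0)\bw^z$ since $\bw^z=\mathbf f_z$ and $DG(\mathbf f)=\bA(z,0)$. For $\bw^E$ I would differentiate the same system with respect to $E$. Since $V\in C^2$ the vector field $G$ is of class $C^1$, so by the classical theorem on $C^1$-dependence of solutions on initial data and parameters, $\mathbf f(z;E,c)$ is jointly $C^1$ in $(z,E)$ and $\partial_E\mathbf f$ solves the variational equation $\partial_z(\partial_E\mathbf f)=DG(\mathbf f)\,\partial_E\mathbf f$. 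The first scalar component of this identity is $\partial_z\partial_E f=\partial_E\partial_z f$, which both justifies writing $f_{Ez}=f_{zE}$ and identifies $\partial_E\mathbf f=(f_E,f_{Ez})^\top=\bw^E$; the variational equation then reads $(\bw^E)_z=\bA(z,0)\bw^E$, as desired. This is valid for every $z$, including the turning points $f_z=0$ of librational waves, because the autonomous system $\mathbf f_z=G(\mathbf f)$ is regular there.

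For linear independence I would use that $\tr\bA(z,0)\equiv 0$, so that by Liouville's formula the Wronskian $W(z):=f_z(z)f_{Ez}(z)-f_{zz}(z)f_E(z)$ is constant in $z$; it then suffices to evaluate it at $z=0$. By the normalizations \eqref{eq:sub-normalize}--\eqref{eq:super-normalize}, $f(0)=u_0$ is a critical point of $V$, so $f_{zz}(0)=0$ by \eqref{eqnlp}, while $f_E(0)=0$ because $u_0$ depends only on the sign of $c^2-1$ and not on $E$; hence $\bw^z(0)=(v_0,0)^\top$, $\bw^E(0)=(0,\partial_E v_0)^\top$, and $W\equiv v_0\,\partial_E v_0$. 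Differentiating the energy relation \eqref{eq:waveeq} at $z=0$, namely $\tfrac12(c^2-1)v_0^2=E-V(u_0)$, with respect to $E$ and using that $V(u_0)$ is an $E$-independent constant gives $(c^2-1)v_0\,\partial_E v_0=1$, whence $W\equiv(c^2-1)^{-1}\neq 0$ since $c\neq\pm1$. Thus $\bw^z$ and $\bw^E$ are linearly independent and span the solution space. The only step requiring any care is the differentiation with respect to $E$: the variational-equation viewpoint makes both the interchange of the $z$- and $E$-derivatives and the identification of the equation satisfied by $\bw^E$ transparent under the minimal hypothesis $V\in C^2$, while everything else reduces to short computations.
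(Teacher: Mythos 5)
Your proof is correct and reaches the same conclusion as the paper's, but it differs in two technical respects that are worth noting. To show that $\bw^z$ and $\bw^E$ solve the system at $\lambda=0$, the paper simply differentiates the scalar profile equation \eqref{eqnlp} once with respect to $z$ and once with respect to $E$ (citing Lemma~\ref{lemfC2E} for the legitimacy of the $E$-derivative), whereas you recast \eqref{eqnlp} as an autonomous first-order system $\mathbf f_z=G(\mathbf f)$ and invoke the variational equation together with the classical $C^1$-dependence theorem; the two are equivalent in content, but your version makes the interchange of $\partial_z$ and $\partial_E$ more transparent under the minimal hypothesis $V\in C^2$. For linear independence, the paper computes the Wronskian pointwise at general $z$ by combining \eqref{eqnlp} with the $E$-derivative of the energy relation \eqref{eqderivE}, obtaining $f_zf_{Ez}-f_Ef_{zz}=1/(c^2-1)$ directly, while you note $\tr\bA(z,0)\equiv0$, invoke Liouville's formula to conclude the Wronskian is constant, and then evaluate at the normalized base point $z=0$ where $\bw^z(0)=(v_0,0)^\top$ and $\bw^E(0)=(0,\partial_Ev_0)^\top$. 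Your Liouville-plus-evaluation argument is slightly more conceptual and avoids manipulating the identity \eqref{eqderivE} at general $z$; the paper's direct computation has the minor side benefit that it establishes the constancy of the Wronskian as a byproduct of an explicit formula (used later, e.g.\ in \eqref{inverseQ0}). Both routes are rigorous and essentially equivalent in difficulty.
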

\begin{proof}
Lemma \ref{lemfC2E} guarantees that $f$ is a $C^2$ function of $E$ and $z$. Differentiating equation \eqref{eqnlp} with respect to $z$ yields
$
(c^2 -1) f_{zzz} + V''(f) f_z = 0 
$.
This proves that $\bw = \bw^z(z)$ is a solution to \eqref{eq:firstorder} when $\lambda=0$. On the other hand, differentiating \eqref{eqnlp} with respect to $E$ one gets $
(c^2 -1) f_{zzE} + V''(f) f_E = 0$,
proving that $\bw = \bw^E(z)$ solves \eqref{eq:firstorder} for $\lambda=0$ as well. To verify independence of $\bw^z(z)$ and $\bw^E(z)$, differentiate equation \eqref{eq:waveeq} with respect to $E$:
\begin{equation}
\label{eqderivE}
(c^2 -1) f_z f_{zE} = 1 - V'(f) f_E. 
\end{equation}
Combining this with \eqref{eqnlp}, one obtains
\begin{equation}
\label{wronskian}
 \det (\bw^z(z), \bw^E(z)) = f_z f_{Ez} - f_E f_{zz} = \frac{1}{c^2 -1} \neq 0.
\end{equation}
Hence the Wronskian never vanishes and therefore $\bw^z(z)$ and $\bw^E(z)$ are linearly independent for all $z$ and for all $(E,c)\in\region$.
\end{proof}

Let us define the matrix solution 
\begin{equation}
\label{eq:defofQ}
\bQ_0(z) := \left( \bw^z(z), \bw^E(z) \right) 
\end{equation}
to the first-order system \eqref{eq:firstorder} for $\lambda=0$.  Since $\bQ_0$ is invertible by Lemma~\ref{lem:solat0}, the normalized fundamental solution matrix for \eqref{eq:firstorder}
at $\lambda = 0$ is
\begin{equation}
 \bF(z,0) = \bQ_0(z)\bQ_0(0)^{-1}.
 \label{eq:FQ0}
\end{equation}
Note that as a consequence of the Wronskian formula \eqref{wronskian}, $\bQ_0(z)^{-1}$ is given by
\begin{equation}
\label{inverseQ0}
\bQ_0(z)^{-1} = (c^2 -1) \begin{pmatrix}
                        f_{Ez}(z) & -f_E(z) \\ -f_{zz}(z) & f_z(z)
                       \end{pmatrix}.
\end{equation}
The matrix $\bQ_0(0)$ may be expressed 
in terms of the functions $u_0$ and $v_0$ defined on $\region$ by \eqref{eq:u0v0define} as
\begin{equation}
\bQ_0(0)=\begin{pmatrix}v_0 & \partial_Eu_0\\
0 & \partial_Ev_0\end{pmatrix}=\begin{pmatrix}v_0 & 0\\
0 & \partial_Ev_0\end{pmatrix},
\label{eq:Q0}
\end{equation}
where $\partial_E$ denotes the partial derivative with respect to $E$, because $u_0$ is piecewise constant on $\region$.
Similarly from \eqref{inverseQ0}, we have
\begin{equation}
\bQ_0(0)^{-1}=
\begin{pmatrix}(c^2-1)\partial_Ev_0 & 0\\
0 & (c^2-1)v_0\end{pmatrix}.
\label{eq:Q0-inverse}
\end{equation}
(The identity 
\begin{equation}
(c^2-1)v_0\partial_Ev_0=1
\label{eq:v0-partialE-identity}
\end{equation}
implied by combining \eqref{eq:Q0} and \eqref{eq:Q0-inverse}
can also be obtained by differentiation of \eqref{eq:waveeq} with respect to $E$ at $z=0$.)  It follows from \eqref{eq:defofQ}, \eqref{eq:FQ0}, and \eqref{eq:Q0-inverse} that the fundamental solution matrix $\bF(z,0)$ is
\begin{equation}
\bF(z,0)= \dfrac{1}{v_0}\begin{pmatrix}f_z(z) &(c^2-1)v_0^2f_E(z)\\
f_{zz}(z) & (c^2-1)v_0^2f_{Ez}(z)\end{pmatrix}.
\label{eq:F0-1}
\end{equation}

The corresponding monodromy matrix $\bM(0)$ is obtained  by setting $z=T$ in $\bF(z,0)$.
To simplify the resulting formula, we now express $f_z(T)$, $f_{zz}(T)$, $f_{E}(T)$ and $f_{Ez}(T)$ in terms of the functions $u_0$ and $v_0$ defined on $\region$.  Since $f_z$ and $f_{zz}$ are periodic functions with period $T$, we obviously have
\begin{equation}
f_z(T)=f_z(0)=v_0\quad\text{and}\quad f_{zz}(T)=f_{zz}(0)=0.
\label{eq:QTfirstcol}
\end{equation}
To express $f_E(T)$ and $f_{Ez}(T)$ in terms of $u_0$ and $v_0$, first note that 
since $f(T)=f(0)\pmod{2\pi}$, we may write $u_0$ as $u_0=f(T)$; differentiation 
with respect to $E$ and taking into account that the period $T$ depends on $E$ yields
\begin{equation}
\begin{split}
\partial_E u_0 &= f_E(T) + T_E f_z(T)\\
&= f_E(T) + T_E f_z(0)\quad\text{(because $f_z$ has period $T$)}\\
&= f_E(T) + T_E v_0. 
\end{split}
\end{equation}
Therefore, since $u_0$ is piecewise constant on $\region$,
\begin{equation}
f_E(T)=-T_Ev_0.
\label{eq:QT12}
\end{equation}
Similarly, since $f_z$ has period $T$, we can write $v_0=f_z(T)$, and then differentiation yields $
\partial_Ev_0=T_Ef_{zz}(T) + f_{Ez}(T)$,
and therefore as $f_{zz}(T)=0$,
\begin{equation}
f_{Ez}(T)=\partial_Ev_0.
\label{eq:QT22}
\end{equation}
Substituting \eqref{eq:QTfirstcol}, \eqref{eq:QT12}, and \eqref{eq:QT22} into $\bM(0)=\bF(T,0)$ with $\bF(z,0)$ given by \eqref{eq:F0-1}, and using the identity \eqref{eq:v0-partialE-identity}, we obtain the following.

\begin{proposition}
\label{prop:Jordan}
The monodromy matrix $\bM(0)$ is given by
\begin{equation}
\bM(0)=\begin{pmatrix} 1 & -(c^2-1)T_Ev_0^2\\0 & 1\end{pmatrix}.
\label{eq:M0-explicit}
\end{equation}
In particular, $\bM(0)$ is not diagonalizable unless $T_E=0$.
\end{proposition}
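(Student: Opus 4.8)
The plan is to read off the result by evaluating the fundamental solution matrix at a single period: since $\bM(0)=\bF(T,0)$, everything follows from formula \eqref{eq:F0-1} for $\bF(z,0)$ together with the values of $f_z(T)$, $f_{zz}(T)$, $f_E(T)$ and $f_{Ez}(T)$ already computed. Concretely, the ingredients are: periodicity of $f_z$ and $f_{zz}$, which gives $f_z(T)=v_0$ and $f_{zz}(T)=0$ as in \eqref{eq:QTfirstcol}; differentiation of $u_0=f(T)$ with respect to $E$, accounting for the $E$-dependence of $T$, which gives $f_E(T)=-T_Ev_0$ as in \eqref{eq:QT12}; differentiation of $v_0=f_z(T)$ with respect to $E$, which gives $f_{Ez}(T)=\partial_Ev_0$ as in \eqref{eq:QT22}; and the identity $(c^2-1)v_0\partial_Ev_0=1$ recorded in \eqref{eq:v0-partialE-identity}.

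First I would substitute these four boundary values into \eqref{eq:F0-1} evaluated at $z=T$. The $(1,1)$ entry is $v_0/v_0=1$; the $(2,1)$ entry is $f_{zz}(T)/v_0=0$; the $(1,2)$ entry is $(c^2-1)v_0^2f_E(T)/v_0=-(c^2-1)T_Ev_0^2$; and the $(2,2)$ entry is $(c^2-1)v_0^2f_{Ez}(T)/v_0=(c^2-1)v_0\partial_Ev_0$, which equals $1$ by \eqref{eq:v0-partialE-identity}. This produces \eqref{eq:M0-explicit} directly. For the diagonalizability assertion I would note that $\bM(0)$ is upper triangular with both eigenvalues equal to $1$, so it is diagonalizable if and only if it is the identity, i.e.\ if and only if its off-diagonal entry $-(c^2-1)T_Ev_0^2$ vanishes; since $c\neq\pm 1$ and $v_0>0$, this holds precisely when $T_E=0$, and otherwise $\bM(0)$ is a nontrivial Jordan block with eigenvalue $1$.

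There is no real obstacle at the level of the Proposition itself — the computation is a one-line substitution — since the genuine content was already extracted in the preparatory material, notably Lemma~\ref{lem:solat0} and the Wronskian formula \eqref{wronskian} that pins down the constant $1/(c^2-1)$, and the bookkeeping of $\partial_E T$ in \eqref{eq:QT12}. If presenting the argument from scratch, the only point demanding care is precisely that bookkeeping: when differentiating $u_0=f(T)$ one must retain the term $T_Ef_z(T)=T_Ev_0$, as dropping it would spuriously collapse $\bM(0)$ to the identity for every wave and obscure exactly the off-diagonal entry whose (non)vanishing controls the analysis in the sequel.
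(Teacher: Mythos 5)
Your proof is correct and follows the paper's own derivation exactly: substitute the boundary values \eqref{eq:QTfirstcol}, \eqref{eq:QT12}, \eqref{eq:QT22} into \eqref{eq:F0-1} at $z=T$, simplify the $(2,2)$ entry via \eqref{eq:v0-partialE-identity}, and read off non-diagonalizability from the off-diagonal entry. No gaps.
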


\subsubsection{Alternate construction of $\bF(z,0)$ and $\bM(0)$}
\label{sec:alternate}
We can also obtain the (unique) fundamental solution matrix for $\lambda=0$ and the corresponding (unique) monodromy matrix by an alternate method that avoids
explicit differentiation with respect to the energy $E$.  Indeed, to obtain $\bF(z,0)$, it suffices to find the particular solutions $w^1(z)$ and $w^2(z)$ of the differential equation \eqref{eq:spectral} for $\lambda=0$, namely
\begin{equation}
\label{eq:SpectralODEzero}
(c^2-1) w_{zz} + V''(f(z)) w = 0, 
\end{equation}
that satisfy the
initial conditions
\begin{equation}
\label{eq:w1w2-ICs}
w^1(0)=w^2_z(0)=1\quad\text{and}\quad w^1_z(0)=w^2(0)=0.
\end{equation}
Then 
\begin{equation}
\label{Phiat0}
 \bF(z,0) = \begin{pmatrix}
              w^1(z) & w^2(z) \\ w^1_z(z) & w^2_z(z)
             \end{pmatrix}.
\end{equation}

In order to obtain the particular solutions $w^1(z)$ and $w^2(z)$, we will first develop the general solution of the differential equation \eqref{eq:SpectralODEzero}.  
In \S\ref{section:first-construction} (cf.\@ Lemma~\ref{lem:solat0}) it was observed that one solution of this equation is given by
$w=f_z$.  The general solution of this second-order equation is readily obtained by the classical 
method of reduction of order, which is based on the substitution $w=gf_z$, where $g$ is to be regarded as a new unknown. 
Then $g$ can be found by a quadrature, and the general solution of \eqref{eq:SpectralODEzero} is
\begin{equation}
w(z)=Af_z(z) \int_0^z\frac{dy}{f_z(y)^2} + Bf_z(z),
\label{eq:wzero}
\end{equation}
where $A$ and $B$ are arbitrary constants. It follows from \eqref{eq:wzero} that the corresponding formula for the derivative is
\begin{equation}
w_z(z) = Af_{zz}(z)\int_0^z\frac{dy}{f_z(y)^2} +\frac{A}{f_z(z)} + Bf_{zz}(z). 
\end{equation}
 \begin{remark}
\label{remark:smooth}
Note that since $f_z(0)\neq 0$ according to the normalization \eqref{eq:sub-normalize}--\eqref{eq:super-normalize}, the general solution formula \eqref{eq:wzero}
makes sense in a neighborhood of $z=0$, and for rotational waves of both sub- and superluminal types one has $f_z(z)\neq 0$ for all $z$ and hence the formula \eqref{eq:wzero} requires no additional explication.
However, in the case of librational waves $f_z(z)$ will have exactly two zeros within the fundamental period interval $z\in (0,T)$, and therefore the integrand $1/f_z^2$ becomes singular near these points.  On the other hand, the zeros of $f_z$ are necessarily simple.  Indeed,  $f_z(z)$ is a nontrivial solution of the linear second-order equation \eqref{eq:SpectralODEzero}, and therefore if $f_z(z_0)=f_{zz}(z_0)=0$ for some $z_0\in\mathbb{R}$, then by the Existence and Uniqueness Theorem we would also have $f_z(z)=0$ for all $z$ in contradiction to the nontriviality of $f_z$.   This argument shows that all apparent singularities corresponding to the zeros of $f_z$ in the general solution formula \eqref{eq:wzero} for the equation \eqref{eq:SpectralODEzero} are necessarily removable.  Still, care must be taken in the use of the formula \eqref{eq:wzero} for $z$ near roots of $f_z$ as will be discussed below in detail.
\myendrmk
\end{remark}

To find the constants $(A_1,B_1)$ and $(A_2,B_2)$ corresponding to the fundamental pair of solutions $w^1(z)$
and $w^2(z)$ respectively, one simply imposes the initial conditions \eqref{eq:w1w2-ICs}.  The result of this calculation is that 
\begin{equation}
A_1=0\quad\text{and}\quad B_1=\frac{1}{v_0}, \quad \text{while} \quad A_2=v_0\quad\text{and}\quad B_2=0.
\label{eq:AB-1}
\end{equation}
With these values in hand, one obtains a formula for $\bF(z,0)$:
\begin{equation}
\bF(z,0)=\begin{pmatrix}\displaystyle \frac{f_z(z)}{v_0} & \displaystyle v_0f_z(z)\int_0^z\frac{dy}{f_z(y)^2}\\
\displaystyle\frac{f_{zz}(z)}{v_0} & \displaystyle v_0f_{zz}(z)\int_0^z\frac{dy}{f_z(y)^2} +\frac{v_0}{f_z(z)}\end{pmatrix}.
\label{eq:F0-2-near-zero}
\end{equation}

Now, \eqref{eq:F0-2-near-zero} is a valid formula for $\bF(z,0)$ for $z$ in any interval containing $z=0$ that also contains no zeros of $f_z$.  Although the matrix elements will extend continuously to the nearest zeros of $f_z$ (cf.\@ Remark~\ref{remark:smooth}), formula \eqref{eq:F0-2-near-zero} is no longer correct if $z$ is allowed to pass \emph{beyond} a zero of $f_z$.  Therefore, to allow $z$ to increase by a period to $z=T$ for the purposes of computing the monodromy matrix, we require an alternate formula for $\bF(z,0)$ that is valid for $z$ near $T$.  There is only an issue to be addressed in the librational wave case, because $f$ is monotone for rotational waves.  Hence we consider the librational case in more detail now.

Let $f$ be a librational wave, and let $z_0\in (0,T)$ denote the smallest positive zero of $f_z$.  
%
First suppose that $f_z$ is an analytic function of $z$ in some horizontal strip of the complex plane containing the real axis.  Then we must have $f_{zzz}(z_0)=0$ wherever $f_z(z_0)=0$ (by the differential equation \eqref{eq:SpectralODEzero}), and it follows by a quick computation that $dy/f_z(y)^2$ is a meromorphic differential having a double pole \emph{with zero residue} at $z_0$.  In this situation, the contour integral $\int_0^z dy/f_z(y)^2$ defines a single-valued meromorphic function near $z=z_0$ (with a simple pole).  In this analytic situation, the solution $w(z)$ may be continued to the real interval $z>z_0$ simply by choosing a path of integration from $y=0$ to $y=z$ that avoids the double pole of the integrand at $y=z_0$, and all such paths are equivalent by the Residue Theorem.  
Thus, the same solution that is given by \eqref{eq:wzero} for $z$ near $z=0$ is given for $z$ near $T$ by 
\begin{equation}
w(z)=Af_z(z)\int_{T}^z\frac{dy}{f_z(y)^2} + (B+\delta A)f_z(z),
\label{eq:wzeronearZ}
\end{equation}
where $\delta$ is defined by the formula
\begin{equation}
\delta:=\int_0^T\frac{dy}{f_z(y)^2}
\label{eq:delta-analytic}
\end{equation}
in which the integral is interpreted as a complex contour integral over an arbitrary contour in the strip of analyticity of $f_z$ that connects the specified endpoints and avoids the double-pole singularities of the integrand.  Note in particular that although $f_z(y)^{-2}$ is certainly positive for real $y\in (0,T)$, $\delta$ need not be positive because a real path of integration is not allowed if $f_z$ has any zeros.  Of course \eqref{eq:wzeronearZ} and \eqref{eq:delta-analytic} are also valid for rotational waves (in which case $f$ is monotone); in this situation a real path of integration is indeed permitted in \eqref{eq:delta-analytic} and consequently $\delta>0$.

If the rather special condition of analyticity of $f_z$ is dropped, a more general strategy is to write a different formula for the solution $w(z)$ in each maximal open subinterval of $(0,T)$ on which $f_z\neq 0$; in each such interval $I$ one writes 
\begin{equation}
w(z)=A_If_z(z)\int_{z_I}^z\frac{dy}{f_z(y)^2}+B_If_z(z),\quad z\in I,
\end{equation}
for some fixed point $z_I\in I$.  One then chooses the constants $A_I$ and $B_I$ for the different subintervals of $(0,T)$ to obtain continuity of $w$ and $w_z$ across each zero of $f_z$.  In this way, one finds after some calculation that exactly the same solution of \eqref{eq:SpectralODEzero} represented for $z$ near $z=0$ by the formula \eqref{eq:wzero} is again represented (regardless of whether $f$ is rotational or librational) for $z$ near 
$z=T$ by the formula \eqref{eq:wzeronearZ}, but
where the quantity $\delta$ is now defined as follows:
\begin{multline}
\delta:=\int_0^T\Bigg(\frac{1}{f_z(y)^2} -\mathop{\sum_{f_z(z_k)=0}}_{0<z_k<T}\frac{1}{f_{zz}(z_k)^2(y-z_k)^2}\Bigg)\,dy\\
{} -\mathop{\sum_{f_z(z_k)=0}}_{0<z_k<T}\frac{1}{f_{zz}(z_k)^2}\left(\frac{1}{z_k}+\frac{1}{T-z_k}\right).
\label{defDeltah}
\end{multline}
Here the integral is a real integral.  The quantity on the right-hand side should be regarded as the correct regularization (finite part) of the divergent integral \eqref{eq:delta-analytic} in the case that $f_z$ has zeros in $(0,T)$.

Note that there are either zero (for rotational waves) or two (for librational waves) terms in each sum.  In the case when $f_z$ is analytic, the formula \eqref{defDeltah} reduces to the (expected) simple form \eqref{eq:delta-analytic}, but this form applies in general and it also makes very clear that $\delta$ is a real quantity that can in principle have any sign. 

Given the formula \eqref{eq:wzeronearZ} and the constants \eqref{eq:AB-1}, we obtain a formula for the fundamental solution matrix $\bF(z,0)$ valid for $z$ near $T$:
\begin{equation}
\label{eq:F0-2-T}
\bF(z,0)=\begin{pmatrix}
\displaystyle \frac{f_z(z)}{v_0} & \displaystyle v_0f_z(z)\int_T^z\frac{dy}{f_z(y)^2} +\delta v_0 f_z(z)\\
\displaystyle \frac{f_{zz}(z)}{v_0} & \displaystyle v_0f_{zz}(z)\int_T^z\frac{dy}{f_z(y)^2}+\frac{v_0}{f_z(z)}
\end{pmatrix}.
\end{equation}
Substituting $z=T$ into \eqref{eq:F0-2-T} yields a second formula for the monodromy matrix $\bM(0)$:
\begin{equation}
\bM(0)=\begin{pmatrix}1 & \delta v_0^2\\0 & 1\end{pmatrix}.
\end{equation}
Comparing with Proposition~\ref{prop:Jordan} (noting the uniqueness of the monodromy matrix), and taking into account that $v_0\neq 0$, we obtain an identity:
\begin{equation}
\delta = -(c^2-1)T_E
\label{eq:delta-identity}
\end{equation}
that could also have been obtained by direct but careful computation of $T_E$ from the formulae
\eqref{periodsuplib}, \eqref{periodsublib}, \eqref{periodsuprot}, and \eqref{periodsubrot}.

\begin{remark}
As an illustration of the direct calculation of $\delta$ from \eqref{defDeltah} in practice, we suppose that $f$ is a librational traveling wave solution of \eqref{eqnlKG} with sine-Gordon potential $V(u)=-\cos(u)$.  Then as $f_z$ has exactly two zeros per fundamental period $T$, 
to calculate $\delta$ we should in general use the formula \eqref{defDeltah} representing the finite part of a divergent integral.  As will be seen shortly, however, $f_z$ is an analytic function for real $z$ in the sine-Gordon case that extends to the complex plane as a meromorphic function (in fact an elliptic function) that satisfies the functional identity $f_z(z+T)=f_z(z)$.  Therefore, it is also possible to calculate $\delta$ by contour integration using the formula \eqref{eq:delta-analytic}.  Let $\eta>0$ be a sufficiently small number.   We choose an integration contour consisting of three segments:  a segment from $y=0$ to $y=i\eta$ followed by a segment from $y=i\eta$ to $y=T+i\eta$ followed by a segment from $y=T+i\eta$ to $y=T$.
By periodicity of $f_z(\cdot)$ the contributions of the first and third arcs will cancel and we then will have
\begin{equation}
\delta=\int_{i\eta}^{T+i\eta}\frac{dy}{f_z(y)^2}.
\end{equation}

To calculate $\delta$ we now represent the integrand in terms of elliptic functions.  
First consider the superluminal case, in which $f$ will oscillate about a mean value of $f=0$.
Making the substitution
\begin{equation}
\sin(\tfrac{1}{2}f(\xi\sqrt{c^2-1}))=\sqrt{\frac{1+E}{2}}s(\xi)
\end{equation}
into equation \eqref{eq:waveeq} yields
\begin{equation}
\left(\frac{d\xi}{ds}\right)^2=\frac{1}{(1-s^2)(1-ms^2)},\quad m:=\frac{1+E}{2}\in (0,1).
\end{equation}
This equation is solved by the Jacobi elliptic function $\mathrm{sn}$ (cf.\@\cite[Chapter 22]{DLMF}):
\begin{equation}
s(\xi)=\mathrm{sn}(\xi;m)\quad\text{and hence}\quad
\sin(\tfrac{1}{2}f(z))=\sqrt{m}\,\mathrm{sn}\left(\frac{z}{\sqrt{c^2-1}};m\right).
\end{equation}
Therefore
\begin{equation}
\begin{split}
f_z(z)^2 = 2\frac{E+\cos(f(z))}{c^2-1} & =\frac{4m}{c^2-1}\left(1-\mathrm{sn}\left(\frac{z}{\sqrt{c^2-1}};m\right)^2\right) \\
&=\frac{4m}{c^2-1}\mathrm{cn}\left(\frac{z}{\sqrt{c^2-1}};m\right)^2.
\end{split}
\end{equation}
Also, the period $T$ of $f_z$ in this case is expressed in terms of the complete elliptic integral
of the first kind $K$ \cite[Chapter 19]{DLMF}:
\begin{equation}
T=2\sqrt{c^2-1}K,\quad K :=K(m).
\end{equation}
Now let us choose $\eta=\sqrt{c^2-1}K'$ where $K':=K(1-m)$.  Then using the identity
\begin{equation}
\mathrm{cn}(w+iK';m)^2=-\frac{\mathrm{dn}(w;m)^2}{\mathrm{sn}(w;m)^2}
\end{equation}
we have
\begin{equation}
\begin{split}
\delta=\int_{i\sqrt{c^2-1}K'}^{2\sqrt{c^2-1}K+i\sqrt{c^2-1}K'}
\frac{dy}{f_z(y)^2}&=\sqrt{c^2-1}\int_{iK'}^{2K+iK'}\frac{dw}{f_z(\sqrt{c^2-1}w)^2}\\ &=
\frac{(c^2-1)^{3/2}}{4m}\int_{iK'}^{2K+iK'}\frac{dw}{\mathrm{cn}(w;m)^2}\\ &=-\frac{(c^2-1)^{3/2}}{4m}\int_0^{2K}\frac{\mathrm{sn}(v;m)^2}{\mathrm{dn}(v;m)^2}\,dv
\end{split}
\end{equation}
Now since $\mathrm{dn}(v;m)$ is real and bounded away from zero for real $v$ while $\mathrm{sn}(v;m)$ is real and bounded for real $v$, it follows that
\begin{equation}
\delta<0\quad\text{for superluminal librational waves with $V(u)=-\cos(u)$}.
\end{equation}
Exactly the same calculation shows that $\delta<0$ for subluminal librational waves as well, since the latter case can be related to the former simply by the mapping $f\mapsto f+\pi$.
\myendrmk
\end{remark}

\subsection{Series expansion of the fundamental solution matrix $\bF(z,\lambda)$ about $\lambda=0$}
\label{sec:series-expansion}
The Picard iterates for the fundamental solution matrix $\bF(z,\lambda)$ converge uniformly on $(z,\lambda)\in [0,T]\times K$, where $K\subset\C$ is an arbitrary compact set.  Since the coefficient matrix $\bA(z,\lambda)$ is entire in $\lambda$ for each $z$, it follows that $\bF(z,\lambda)$ is
an entire analytic function of $\lambda\in\C$ for every $z\in [0,T]$.  Hence the fundamental solution matrix $\bF(z,\lambda)$ has a convergent Taylor expansion about every point of the complex $\lambda$-plane.  In particular, the series about the origin has the form
\begin{equation}
\bF(z,\lambda)=\sum_{n=0}^\infty \lambda^n\bF_n(z),\quad z\in [0,T]
\label{eq:F-Taylor}
\end{equation}
for some coefficient matrices $\{\bF_n(z)\}_{n=0}^\infty$, and this series has an infinite radius of convergence.  Setting $\lambda=0$ gives
$
\bF_0(z)=\bF(z,0)
$,
which has already been computed (two ways) in \S\ref{secsolsat0}.  
Our current goal is to 
explicitly compute $\bF_1(z)$ and $\bF_2(z)$.  The benefit of the latter finite computation is that as a convergent power series, the series \eqref{eq:F-Taylor} may equally well be interpreted as an asymptotic series in the Poincar\'e sense in the limit $\lambda\to 0$.  Thus, a finite number of terms are sufficient to obtain increasing accuracy in this limit, and the order of accuracy is determined by the number of retained terms.  We will also obtain the first few terms of the corresponding expansion of the monodromy matrix $\bM(\lambda)$, simply by evaluation of the terms of \eqref{eq:F-Taylor} for $z=T$.

Rather than proceed directly, we instead expand the fundamental solution matrix $\bF^\mathrm{H}(z,\nu)$ and corresponding monodromy matrix $\bM^\mathrm{H}(\nu)$ for Hill's equation \eqref{eq:hill}, and then apply Lemma~\ref{lem:floquetrelation} with $\alpha(\lambda)=\lambda c/(c^2-1)$.  
The initial-value problem satisfied by $\bF^\mathrm{H}(z,\nu)$ is
\begin{equation}
\bF_z^\mathrm{H}(z,\nu)-\bA^\mathrm{H}(z,\nu)\bF^\mathrm{H}(z,\nu)=0,\quad
\bF^\mathrm{H}(0,\nu)=\Id,
\end{equation}
where the coefficient matrix is entire in the spectral parameter $\nu$ and is given by
\begin{equation}
\bA^\mathrm{H}(z,\nu)=\begin{pmatrix}0 & 1\\\nu-P(z) & 0\end{pmatrix},\quad P(z):=\frac{V''(f(z))}{c^2-1}.
\end{equation}
Again, $\bF^\mathrm{H}(z,\lambda)$ is given by a power series with infinite radius of (uniform, for $z\in [0,T]$) convergence:
\begin{equation}
\label{eq:Hill-series}
\bF^\mathrm{H}(z,\nu)=\sum_{n=0}^\infty \nu^n\bF_n^\mathrm{H}(z).
\end{equation}
Since Hill's equation \eqref{eq:hill} coincides with the equation \eqref{eq:spectral} for $\lambda=0$, the zero-order term is given by
$\bF^\mathrm{H}_0(z)=\bF_0(z)=\bF(z,0)$, 
and it has been obtained already in \S\ref{secsolsat0}.  Setting $z=0$ in \eqref{eq:Hill-series}, one obtains
\begin{equation}
\sum_{n=1}^\infty\nu^n\bF_n^\mathrm{H}(0)=0,\quad\nu\in\C,
\end{equation}
because $\bF^\mathrm{H}(0,\nu)=\Id$ for all $\nu$.  Since $\nu$ is arbitrary, it follows that $\bF^\mathrm{H}_n(0)=0$ for all $n\ge 1$.  By collecting the terms with the same powers of $\nu$, it follows that the subsequent terms in the series \eqref{eq:Hill-series} satisfy the initial-value problems
\begin{equation}
\bF_{nz}^\mathrm{H}(z)-\bA^\mathrm{H}(z,0)\bF^\mathrm{H}_n(z)=\sigma_-\bF^\mathrm{H}_{n-1}(z),\; \bF^\mathrm{H}_n(0)=0,\;\text{where}\;\sigma_-:=\begin{pmatrix}0 & 0\\1 & 0\end{pmatrix},
\end{equation}
for $n\ge 1$.  These equations can be solved in turn by means of the method of variation of parameters, i.e., by making the substitution $\bF_n^\mathrm{H}(z)=\bF_0(z)\bG_n(z)$, where $\bG_n(z)$ is to be regarded as a new unknown satisfying the induced initial condition $\bG_n(0)=0$.
Since $\bF_{0z}(z)=\bA^\mathrm{H}(z,0)\bF_0(z)$, the terms involving the non-constant coefficient matrix $\bA^\mathrm{H}(z,0)$ cancel, and then $\bG_n(z)$ can be obtained by a quadrature.  This method results in the following explicit recursive formulae for the coefficients $\bF^\mathrm{H}_n(z)$:
\begin{equation}
\bF^\mathrm{H}_n(z)=\bF_0(z)\int_0^z\bF_0(y)^{-1}\sigma_-\bF^\mathrm{H}_{n-1}(y)\,dy,\quad n\ge 1.
\end{equation}
In particular, for $n=1$ this gives
\begin{equation}
\bF^\mathrm{H}_1(z)=\bF_0(z)\int_0^z\bF_0(y)^{-1}\sigma_-\bF_0(y)\,dy,
\end{equation}
and setting $z=T$ in \eqref{eq:Hill-series} we obtain
the corresponding series for $\bM^\mathrm{H}(\nu)$:
\begin{equation}
\bM^\mathrm{H}(\nu)=\bM(0)+\sum_{n=1}^\infty\nu^n\bM^\mathrm{H}_n,\quad \bM^\mathrm{H}_1=\bM(0)\int_0^T\bF_0(y)^{-1}\sigma_-\bF_0(y)\,dy.
\label{eq:Hill-M1}
\end{equation}

Now, invoking Lemma~\ref{lem:floquetrelation} with $\alpha(\lambda)=\lambda c/(c^2-1)$, and recalling the quadratic mapping $\lambda\mapsto\nu(\lambda)$ defined in \eqref{eq:hill}, we obtain the first two correction terms in the series \eqref{eq:F-Taylor}:
\begin{equation}
\begin{split}
\bF_1(z)&=\frac{cz\bF_0(z)}{c^2-1}+\frac{c[\sigma_-,\bF_0(z)]}{c^2-1}\\
\bF_2(z)&=\tfrac{1}{2}\frac{c^2z^2\bF_0(z)}{(c^2-1)^2} + \frac{c^2z[\sigma_-,\bF_0(z)]}{(c^2-1)^2}
-\frac{c^2\sigma_-\bF_0(z)\sigma_-}{(c^2-1)^2} \\& \quad\quad\quad{}+\frac{\bF_0(z)}{(c^2-1)^2}\int_0^z
\bF_0(y)^{-1}\sigma_-\bF_0(y)\,dy,
\end{split}
\label{eq:F1-F2}
\end{equation}
where $[\bA,\bB]:=\bA\bB-\bB\bA$ denotes the matrix commutator.
Setting $z=T$ in the series formula \eqref{eq:F-Taylor} gives the series for the monodromy matrix $\bM(\lambda)$, also an entire function of $\lambda$:
\begin{equation}
\bM(\lambda)=\sum_{n=0}^\infty \lambda^n\bM_n,\quad \bM_n:=\bF_n(T).
\label{eq:Monodromy-series}
\end{equation}
Of course $\bM_0=\bF_0(T)=\bM(0)$, and this matrix was obtained by two equivalent calculations in \S\ref{secsolsat0}.  From \eqref{eq:F1-F2} we then obtain
\begin{equation}
\begin{split}
\bM_1&=q\bM(0)+\frac{c[\sigma_-,\bM(0)]}{c^2-1}\\
\bM_2&=\tfrac{1}{2}q^2\bM(0)+\frac{cq[\sigma_-,\bM(0)]}{c^2-1}
-\frac{c^2\sigma_-\bM(0)\sigma_-}{(c^2-1)^2}\\&\quad\quad\quad{} + \frac{\bM(0)}{(c^2-1)^2}\int_0^T
\bF_0(y)^{-1}\sigma_-\bF_0(y)\,dy.
\end{split}
\label{eq:M1-M2}
\end{equation}


\section{Stability indices}
\label{section:stability-indices}
In this section, we use the asymptotic expansion of the monodromy matrix $\bM(\lambda)$ valid for small $\lambda$ that was obtained in the preceding section to determine stability properties
of the periodic wavetrain $f$ solving the Klein-Gordon equation \eqref{eqnlKG}.  These stability properties are conveniently described in terms of two \emph{indices}, or signs.  These provide tests for stability that are easy to implement in practice.
\subsection{The parity index}
\label{sec:parity-index}
Following Bronski and Johnson \cite{BrJo2}, we define a \emph{parity index} (also called \emph{orientation index}) for periodic waves, which is analogous to the stability index for solitary waves \cite{San,San3}. Basically, it compares the Evans function near $\lambda = 0$ with its asymptotic behavior for large $\lambda$ along the real line. If the signs are different, then there must be an odd number of zeroes along the positive real line, indicating the existence of real unstable eigenvalues. 

We have already seen by two different calculations in \S\ref{secsolsat0} that $\lambda=0$ belongs to the spectrum $\sigma$.  In fact, $\lambda=0$ belongs to the periodic partial spectrum $\sigma_0$, as both Floquet multipliers coincide at $\mu=1$ (with the formation of a Jordan block in the monodromy matrix $\bM(0)$ in the generic case $T_E\neq 0$).  At a physical level, this is related to the translation invariance of the periodic traveling wave present because \eqref{eqnlKG} is an autonomous equation.  Recall (cf.\@ Remark~\ref{remark:periodic}) that the periodic eigenvalues (the points of the periodic partial spectrum $\sigma_0$) are the roots of the (entire) periodic Evans function
$D(\lambda,\mu)$ with $\mu=1\in S^1$, where $D$ is defined in \eqref{eq:det0}.  Expanding out the determinant and setting $\mu=1$ gives the formula
\begin{equation}
\label{Donreal}
 D(\lambda,1) = 1 - \tr(\bM(\lambda)) + \det (\bM(\lambda)). 
\end{equation}
To define the parity index we will consider the restriction of this formula to $\lambda\in\R$.
\begin{lemma}
\label{Dreal}
The restriction of the periodic Evans function $D(\lambda,1)$ to $\lambda\in\R$ is a real-analytic function.  Moreover, for $\lambda \in \R_+$ with $\lambda \gg 1$ sufficiently large, we have $\sgn (D(\lambda,1)) = \sgn(c^2-1)$. 
\end{lemma}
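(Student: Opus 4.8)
The plan is to analyze $D(\lambda,1) = 1 - \tr(\bM(\lambda)) + \det(\bM(\lambda))$ in two regimes and to use analyticity to bridge them. For the real-analyticity claim, I would simply note that each entry of $\bM(\lambda) = \bF(T,\lambda)$ is an entire function of $\lambda$ (this is already established in \S\ref{secEvans} via uniform convergence of the Picard iterates, with the coefficient matrix $\bA(z,\lambda)$ entire in $\lambda$), and that the coefficient matrix has \emph{real} entries when $\lambda\in\R$; hence $\bF(T,\lambda)$ is a real matrix for real $\lambda$, and $D(\lambda,1)$, being a polynomial expression in the (real-analytic) entries, is real-analytic on $\R$. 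By Abel's Theorem \eqref{eq:Abels-Identity} we in fact have $\det(\bM(\lambda)) = e^{2q\lambda}$ with $q$ real, so $D(\lambda,1) = 1 + e^{2q\lambda} - \tr(\bM(\lambda))$, which reduces the problem to understanding the growth of $\tr(\bM(\lambda))$ for large real $\lambda$.

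For the sign of $D(\lambda,1)$ at large positive $\lambda$, I would study the asymptotics of $\tr(\bM(\lambda))$ as $\lambda\to+\infty$ along the real axis, treating the two cases separately since the structure of \eqref{eq:spectral} changes with the sign of $c^2-1$. Write the companion system \eqref{eq:firstorder} with $\bA(z,\lambda)$ from \eqref{eq:coeffA}; for large $|\lambda|$ the eigenvalues of $\bA(z,\lambda)$ are approximately the roots of the indicial polynomial $(c^2-1)\rho^2 - 2c\lambda\rho + \lambda^2 = 0$ (dropping the bounded term $V''(f(z))$), i.e. $\rho = \lambda/(c-1)$ and $\rho = \lambda/(c+1)$ when $c^2\neq 1$. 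In the subluminal case $c^2<1$ we have $(c-1)<0<(c+1)$, so for $\lambda>0$ one exponent grows and one decays; the Floquet multipliers behave like $e^{\lambda T/(c+1)}$ (dominant) and $e^{\lambda T/(c-1)}$ (subdominant), and since these are both positive with product $e^{2q\lambda}$, the trace is dominated by the growing one and $\tr(\bM(\lambda))\to+\infty$. Then $D(\lambda,1) = 1 + e^{2q\lambda} - \tr(\bM(\lambda))$; one must check that $\tr(\bM(\lambda))$ beats $e^{2q\lambda}$. Here $2q = 2cT/(c^2-1)$, and $\lambda T/(c+1) - 2cT\lambda/(c^2-1) = \lambda T[(c-1) - 2c]/(c^2-1) = -\lambda T(c+1)/(c^2-1) = \lambda T/(1-c)>0$, confirming the growing multiplier dominates $e^{2q\lambda}$ as well, so $D(\lambda,1)\to -\infty$, giving $\sgn D(\lambda,1) = -1 = \sgn(c^2-1)$. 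The superluminal case $c^2>1$ is handled analogously: now $\lambda/(c\pm 1)$ are both positive for $\lambda>0$, both multipliers grow, and one checks using $\det = e^{2q\lambda}$ that $\tr(\bM(\lambda)) \sim$ (larger multiplier) is subdominant relative to $1 + e^{2q\lambda}$ — specifically, with $0<c-1<c+1$ the larger growth rate is $\lambda/(c-1)$, and $2q = 2cT/(c^2-1) = T/(c-1) + T/(c+1) > T/(c-1)$, so $e^{2q\lambda}$ wins and $D(\lambda,1)\to+\infty$, giving $\sgn D(\lambda,1) = +1 = \sgn(c^2-1)$.

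The main obstacle is making the large-$\lambda$ asymptotics of $\bM(\lambda)$ rigorous: the eigenvalue-of-$\bA$ heuristic must be upgraded to an actual statement about $\bF(T,\lambda)$, controlling the effect of the bounded non-constant term $V''(f(z))/(c^2-1)$ uniformly in $z\in[0,T]$. The cleanest route is a WKB/diagonalization argument — conjugate $\bA(z,\lambda)$ by the (slowly varying, $\lambda$-independent or mildly $\lambda$-dependent) matrix of approximate eigenvectors, show the off-diagonal remainder is $O(1/\lambda)$ relative to the diagonal $O(\lambda)$ part, and conclude that $\bM(\lambda) = \bP(\lambda)\bigl(\mathrm{diag}(e^{\lambda T/(c-1)}, e^{\lambda T/(c+1)}) + o(1)\bigr)\bP(\lambda)^{-1}$ with $\bP$ bounded and boundedly invertible; then read off $\tr$ and $\det$. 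An alternative is to exploit the Hill's-equation reformulation \eqref{eq:hill} together with Corollary~\ref{corollary:floquetpandq}, since the large-$\nu$ asymptotics of the Hill discriminant $\Delta^\mathrm{H}(\nu) = \tr(\bM^\mathrm{H}(\nu))$ are classical (for $\nu\to -\infty$ one has $\Delta^\mathrm{H}(\nu)\sim 2\cosh(T\sqrt{-\nu})$ up to lower-order corrections); combined with $\tr(\bM(\lambda)) = e^{q\lambda}\,\Delta^\mathrm{H}(\nu(\lambda))$ (from $\mu^\mathrm{H} = e^{-q\lambda}\mu$ summed over the two multipliers) and $\nu(\lambda) = (\lambda/(c^2-1))^2 > 0$ for real $\lambda\neq 0$, one must instead use the large-\emph{positive}-$\nu$ oscillatory asymptotics $\Delta^\mathrm{H}(\nu)\sim 2\cos(T\sqrt{\nu})$ — but this is bounded, which would wrongly suggest $\tr(\bM(\lambda))\sim 2e^{q\lambda}\cos(\cdots)$; reconciling this with the companion-system heuristic shows one must be careful, and in fact the correct statement is that $\tr(\bM(\lambda)) = e^{q\lambda}\cdot 2\cos(T|\lambda|/|c^2-1| + o(1))$ — wait, this is bounded in the Hill picture but the companion picture gave exponential growth. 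The resolution is that $q\lambda$ itself can dominate: $e^{q\lambda}$ with $q = cT/(c^2-1)$ grows (if $cq>0$) and the product $e^{q\lambda}\cos(\cdots)$ is \emph{not} monotone, so one cannot simply say $\tr\to\pm\infty$; rather $D(\lambda,1) = 1 + e^{2q\lambda} - 2e^{q\lambda}\cos(T|\lambda|/|c^2-1|+o(1)) = (e^{q\lambda} - \cos\psi)^2 + \sin^2\psi + (1 - \cos^2\psi) \cdot(\text{corrections})$, and since $e^{2q\lambda} - 2e^{q\lambda}\cos\psi \geq e^{2q\lambda} - 2e^{q\lambda} = e^{q\lambda}(e^{q\lambda}-2) \to +\infty$ when $q\lambda\to+\infty$ (superluminal: $q>0$) making $D\to+\infty$, while when $q\lambda\to -\infty$ (subluminal: $q<0$, since $c^2-1<0$), $e^{2q\lambda}\to 0$ and $D(\lambda,1) = 1 - 2e^{q\lambda}\cos\psi + e^{2q\lambda} \to 1 > 0$ — this \emph{contradicts} the companion-system conclusion that $D\to-\infty$ in the subluminal case, so one of the two heuristics is wrong and this discrepancy is precisely the delicate point that must be resolved carefully; I would trust the direct companion-system WKB analysis and re-examine the Hill discriminant asymptotics (the subtlety being that $\sqrt{\nu(\lambda)} = \lambda/|c^2-1|$ combined with the $e^{q\lambda}$ prefactor involves two competing exponential scales and the "oscillatory" $\cos$ asymptotic for $\Delta^\mathrm{H}$ at large positive $\nu$ has exponentially small error only in a complex-analytic sense, not uniformly enough to control the product). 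Thus the safest plan is: do the WKB diagonalization of the companion system directly, extract $\tr(\bM(\lambda)) = e^{\lambda T/(c+1)}(1+o(1)) + e^{\lambda T/(c-1)}(1+o(1))$ for $\lambda\to+\infty$, and compute $\sgn D(\lambda,1)$ from $1 + e^{2q\lambda} - \tr(\bM(\lambda))$ in each case, which yields $\sgn(c^2-1)$ as claimed.
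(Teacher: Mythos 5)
Your final plan—replace system \eqref{eq:firstorder} by the constant-coefficient approximation \eqref{constcoefsyst}, argue the approximation is uniform on $z\in[0,T]$ for large $\lambda$ (WKB or citation), extract $\tr(\bM(\lambda)) \sim e^{\lambda T/(c+1)} + e^{\lambda T/(c-1)}$, and read off the sign of $D(\lambda,1)$—is exactly the paper's proof, including the same citation of \cite{San} to justify the large-$\lambda$ comparison. The only cosmetic difference is that the paper avoids your dominant-exponential case analysis by observing $D^\infty(\lambda,1) = (e^{\lambda T/(c+1)}-1)(e^{\lambda T/(c-1)}-1)$, whose sign for $\lambda\gg 1$ is $\sgn(c^2-1)$ by inspection in all subcases at once (including $c<-1$, which you skipped over with ``handled analogously'').

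The Hill-discriminant digression in the middle, however, rests on a sign error that manufactures a spurious contradiction. With the paper's convention $y_{zz} + P(z)y = \nu y$, i.e., $y_{zz} = (\nu - P(z))y$, it is the \emph{positive} $\nu$ regime that is exponential: $\Delta^\mathrm{H}(\nu)\sim 2\cosh(T\sqrt{\nu})$ as $\nu\to+\infty$, while $\Delta^\mathrm{H}(\nu)\sim 2\cos(T\sqrt{-\nu})$ as $\nu\to-\infty$. You wrote the reverse. (A sanity check: the paper shows $\Sigma^\mathrm{H}$ is unbounded \emph{below} and bounded above, which matches the discriminant oscillating in $[-2,2]$ for $\nu\to-\infty$ and escaping to $+\infty$ for $\nu\to+\infty$.) Since $\nu(\lambda) = (\lambda/(c^2-1))^2 \to +\infty$ for real $\lambda\to\infty$, the correct asymptotic is
\begin{equation*}
\tr(\bM(\lambda)) = e^{q\lambda}\,\Delta^\mathrm{H}(\nu(\lambda)) \sim e^{q\lambda}\left(e^{\lambda T/|c^2-1|} + e^{-\lambda T/|c^2-1|}\right),
\end{equation*}
and a line of algebra with $q=cT/(c^2-1)$ shows this equals $e^{\lambda T/(c+1)} + e^{\lambda T/(c-1)}$ in both the subluminal and superluminal cases—identical to the companion-system answer. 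There is no competition of exponential scales to be wary of, and no reason to distrust the Hill route; it was your asymptotic, not the method, that was wrong.
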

\begin{proof}
The system \eqref{eq:firstorder} has real coefficients whenever $\lambda \in \R$. Therefore the fundamental solution matrix $\bF(z,\lambda)$ is real for real $\lambda$ and $z\in [0,T]$.  By evaluation at $z=T$ the same is true for the elements of the monodromy matrix $\bM(\lambda)$,
and this proves the real-analyticity.   
When $\lambda$ is large in magnitude, then $\lambda^2+V''(f(z))\approx \lambda^2$, and hence
the first-order system \eqref{eq:firstorder} can be approximated by a constant-coefficient system:
\begin{equation}
\label{constcoefsyst}
\bw_z=\bA^\infty(\lambda)\bw,\quad \bA^\infty:=\begin{pmatrix}0 & 1\\ \displaystyle -\frac{\lambda^2}{c^2-1} & \displaystyle\frac{2c\lambda}{c^2-1}\end{pmatrix}.
\end{equation}
The fundamental solution matrix of this approximating system is the matrix exponential $\bF^\infty(z,\lambda)=e^{z\bA^\infty(\lambda)}$, and the corresponding monodromy matrix is 
$\bM^\infty(\lambda)=e^{T\bA^\infty(\lambda)}$.
The eigenvalues of $\bA^\infty(\lambda)$ are $\lambda/(c\pm 1)$, and hence those of $\bM^\infty(\lambda)$ are $e^{\lambda T/(c\pm 1)}$.  The periodic Evans function associated with the approximating system is therefore
\begin{equation}
D^\infty(\lambda,1)=1-\tr(\bM^\infty(\lambda))+\det(\bM^\infty(\lambda))=(e^{\lambda T/(c+1)}-1)(e^{\lambda T/(c-1)}-1),
\end{equation}
and this real-valued function of $\lambda\in\R$ clearly has the same sign as does $c^2-1$ for 
large positive $\lambda$.
The coefficient matrix $\bA^\infty(\lambda)$ of system \eqref{constcoefsyst} is an accurate approximation of that of the system  \eqref{eq:firstorder} uniformly for $z\in [0,T]$, so the respective Evans functions $D^\infty(\lambda,1)$ and $D(\lambda,1)$ are close to each other in the limit $\lambda\to\infty$ \cite{San}. This shows that $D(\lambda,1)$ has the same sign as does $c^2-1$ for $\lambda \gg 1$.
\end{proof}

As is typical for equations with Hamiltonian structure, the first derivative of the Evans function vanishes at $\lambda = 0$, and we need to compute (at least) the second derivative at the origin to obtain local information for small $\lambda$. 
\begin{lemma}
\label{lemma:Evans-small-lambda}
The periodic Evans function $ D(\cdot,1): \R \to \R$ vanishes to even order at $\lambda=0$, and satisfies
\begin{equation}
 D(0,1) =  D_\lambda(0,1) = 0, \quad 
 D_{\lambda\lambda}(0,1) =  2(q^2-\kappa),
\label{secondderiv}
\end{equation}
where $q$ is defined by \eqref{eq:q-def} and where 
\begin{equation}
\kappa:=\frac{M_{12}(0)}{(c^2-1)^2}\int_0^TF_{11}(y,0)^2\,dy.
\label{defDelta}
\end{equation}
\begin{proof}
Setting $\mu=1$ in \eqref{eq:tilde-D} shows that $e^{-q\lambda}D(\lambda,1)$ is even in $\lambda$.
Hence if $D(\lambda,1)$ vanishes at the origin it necessarily does so to even order.
Recalling \eqref{eq:Abels-Identity} (which also
shows that the approximation of $\det(\bM(\lambda))$ by $\det(\bM^\infty(\lambda))$
in the proof of Lemma~\ref{Dreal} is exact)  and expanding through order $O(\lambda^2)$ gives
\begin{equation}
\det(\bM(\lambda))=1+2q\lambda + 2q^2\lambda^2 + O(\lambda^3),\quad \lambda\to 0.
\label{eq:det-M-expansion}
\end{equation}
Taking the trace in the series  \eqref{eq:Monodromy-series} with the help of 
Proposition~\ref{prop:Jordan} and  \eqref{eq:M1-M2}, 
\begin{equation}
\tr(\bM(\lambda))=2 + 2q\lambda +\left(q^2+\kappa\right)\lambda^2 + O(\lambda^3),\quad\lambda\to 0.
\label{eq:tr-M-expansion}
\end{equation}
Substitution of these expansions into \eqref{Donreal} gives the expansion of the periodic Evans function about $\lambda=0$ as
\begin{equation}
D(\lambda,1)=\left(q^2-\kappa\right)\lambda^2 + O(\lambda^3)\,\quad\lambda\to 0,
\end{equation}
and this completes the proof of the Lemma.
\end{proof}
\end{lemma}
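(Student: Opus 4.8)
The plan is to read off the Taylor expansion of $D(\lambda,1)$ at $\lambda=0$ from the identity $D(\lambda,1)=1-\tr(\bM(\lambda))+\det(\bM(\lambda))$ of \eqref{Donreal}, using the power series \eqref{eq:Monodromy-series} for $\bM(\lambda)$ with the explicit coefficients $\bM_0=\bM(0)$ from Proposition~\ref{prop:Jordan} and $\bM_1,\bM_2$ from \eqref{eq:M1-M2}. The assertion that $D(\cdot,1)$ vanishes to even order at the origin is essentially free: putting $\mu=1$ in \eqref{eq:tilde-D}, the function $\tilde D(\lambda,1)=e^{-q\lambda}D(\lambda,1)$ satisfies $\tilde D(-\lambda,1)=\tilde D(\lambda,1)$, hence is even; multiplying by the nowhere-vanishing factor $e^{q\lambda}$ cannot change the order of a zero, so any zero of $D(\lambda,1)$ at $\lambda=0$ has even order and it suffices to compute $D(0,1)$ and $D_{\lambda\lambda}(0,1)$.

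First I would expand the determinant, which is immediate: Abel's theorem in the form \eqref{eq:Abels-Identity} gives $\det(\bM(\lambda))=e^{2q\lambda}$ identically, hence $\det(\bM(\lambda))=1+2q\lambda+2q^2\lambda^2+O(\lambda^3)$ as $\lambda\to 0$. Next comes the trace, written as $\tr(\bM(\lambda))=\tr(\bM_0)+\lambda\,\tr(\bM_1)+\lambda^2\,\tr(\bM_2)+O(\lambda^3)$. By Proposition~\ref{prop:Jordan}, $\bM_0=\bM(0)$ is unipotent, so $\tr(\bM_0)=2$; and from \eqref{eq:M1-M2}, $\bM_1=q\bM(0)+c(c^2-1)^{-1}[\sigma_-,\bM(0)]$, which (a commutator being traceless) has $\tr(\bM_1)=2q$.

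The substantive step is $\tr(\bM_2)$. In the formula \eqref{eq:M1-M2} the $\tfrac12 q^2\bM(0)$ term contributes $q^2$, the commutator term contributes nothing, and with $\bM(0)=\left(\begin{smallmatrix}1&M_{12}(0)\\0&1\end{smallmatrix}\right)$ and $\sigma_-=\left(\begin{smallmatrix}0&0\\1&0\end{smallmatrix}\right)$ one computes $\sigma_-\bM(0)\sigma_-=\left(\begin{smallmatrix}0&0\\M_{12}(0)&0\end{smallmatrix}\right)$, again traceless. Thus only the integral term survives, and by linearity of the trace
\begin{equation*}
\tr(\bM_2)=q^2+\frac{1}{(c^2-1)^2}\int_0^T\tr\!\bigl(\bM(0)\,\bF_0(y)^{-1}\sigma_-\bF_0(y)\bigr)\,dy.
\end{equation*}
Since $\det\bF_0(y)=1$ (the case $\lambda=0$ of \eqref{eq:Abels-Identity}), $\bF_0(y)^{-1}$ is the adjugate of $\bF_0(y)=\bF(y,0)$, and multiplying out $\bF_0(y)^{-1}\sigma_-\bF_0(y)$ gives a matrix with $(2,1)$ entry $F_{11}(y,0)^2$ and $(1,1),(2,2)$ entries $\mp F_{11}(y,0)F_{12}(y,0)$. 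Left-multiplying by the upper-triangular $\bM(0)$ and taking the trace, the $F_{11}F_{12}$ terms cancel and $M_{12}(0)F_{11}(y,0)^2$ remains; integrating and dividing by $(c^2-1)^2$ recovers exactly $\kappa$ from \eqref{defDelta}. Hence $\tr(\bM(\lambda))=2+2q\lambda+(q^2+\kappa)\lambda^2+O(\lambda^3)$.

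Substituting both expansions into \eqref{Donreal}: the constants give $1-2+1=0$, the linear terms give $-2q+2q=0$, and the $\lambda^2$ terms give $-(q^2+\kappa)+2q^2=q^2-\kappa$, so $D(\lambda,1)=(q^2-\kappa)\lambda^2+O(\lambda^3)$, which is precisely \eqref{secondderiv}. The only point requiring genuine care is the trace of $\bM_2$: keeping track of which of the four terms in \eqref{eq:M1-M2} actually contribute (three of them drop out of the trace), and carrying the adjugate multiplication through cleanly enough to identify the surviving integral with $\kappa$. Everything else follows mechanically from the evenness of $\tilde D(\cdot,1)$ and Abel's identity.
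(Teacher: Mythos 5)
Your proposal is correct and follows essentially the same route as the paper: evenness of $D(\cdot,1)$ from \eqref{eq:tilde-D}, Abel's identity for $\det\bM(\lambda)$, and extraction of $\tr\bM(\lambda)$ from \eqref{eq:Monodromy-series}, \eqref{eq:M1-M2}, and Proposition~\ref{prop:Jordan}, then substitution into \eqref{Donreal}. Your verification of $\tr(\bM_2)=q^2+\kappa$ (three of the four terms dropping out, the adjugate computation identifying the surviving integral with $\kappa$) is simply the detail the paper leaves implicit when it says ``with the help of Proposition~\ref{prop:Jordan} and \eqref{eq:M1-M2}.''
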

The idea of comparing the behavior of the real-valued function $D(\cdot,1)$ for large and small positive $\lambda$ as described by Lemma~\ref{Dreal} and Lemma~\ref{lemma:Evans-small-lambda} respectively suggests the definition of an index relating these two extremes:
\begin{definition}[parity index $\gamma$]
Suppose $D(\cdot,1)$ vanishes to even order $2p\ge 2$ at $\lambda=0$. The \emph{parity (or orientation) index} is given by
\begin{equation}
\label{pariryindx}
\gamma := \sgn \left( (c^2-1)  \partial_\lambda^{2p}D(0,1)\right).
\end{equation}
\end{definition}

We then have the following instability criterion.
\begin{proposition}
\label{thmparitycriterion}
If $\gamma=1$ (resp., $\gamma = -1$) then the number of positive real points in the periodic partial spectrum $\sigma_0\subset\sigma$, i.e., periodic eigenvalues, is even (resp., odd) when counted according to multiplicity. In particular, if $\gamma=-1$ there is at least one positive real periodic eigenvalue and hence the underlying periodic wave $f$ solving the Klein-Gordon equation \eqref{eqnlKG} is spectrally unstable, with the corresponding exponentially growing solution of the linearized equation \eqref{linearperturb} having the same spatial period $T$ as $f_z$.  
\end{proposition}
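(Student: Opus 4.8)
The plan is to read off the parity of the number of positive real periodic eigenvalues from the signs of the real-analytic function $\lambda\mapsto D(\lambda,1)$ at the two ends of the positive real axis; this is the classical Evans-function parity argument (cf.\@ \cite{BrJo2,San}), adapted to the present quadratic pencil. First I would assemble the two sign inputs that $\gamma$ compares. By Lemma~\ref{Dreal}, $D(\cdot,1):\R\to\R$ is real-analytic and $\sgn D(\lambda,1)=\sgn(c^2-1)$ for all sufficiently large $\lambda>0$; in particular $D(\cdot,1)\not\equiv 0$. At the origin, the hypothesis that $D(\cdot,1)$ vanishes to exact even order $2p\ge 2$ (order $\ge 2$ by \eqref{secondderiv}, even because $e^{-q\lambda}D(\lambda,1)$ is even by \eqref{eq:tilde-D}) gives $\partial_\lambda^{2p}D(0,1)\neq 0$, so $\gamma$ in \eqref{pariryindx} is well defined; Taylor's theorem then gives $D(\lambda,1)=\big(\partial_\lambda^{2p}D(0,1)/(2p)!\big)\lambda^{2p}+O(\lambda^{2p+1})$ as $\lambda\to 0$, and since $\lambda^{2p}>0$ this yields $\sgn D(\lambda,1)=\sgn\partial_\lambda^{2p}D(0,1)$ for all sufficiently small $\lambda>0$.

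Next comes the zero-count on $(0,\infty)$. Because $D(\cdot,1)$ is real-analytic and not identically zero, its real zeros are isolated and each has finite order; the large-$\lambda$ statement provides $\Lambda>0$ with $D(\lambda,1)\neq 0$ for $\lambda\ge\Lambda$, and the small-$\lambda$ expansion provides $0<\epsilon<\Lambda$ with $D(\lambda,1)\neq 0$ for $0<\lambda\le\epsilon$, so $D(\cdot,1)$ has only finitely many zeros in $(0,\infty)$, all in $(\epsilon,\Lambda)$. Since $\sigma_0$ is exactly the zero set of $\lambda\mapsto D(\lambda,1)$, let $N$ denote the number of positive real points of $\sigma_0$ counted with multiplicity, where the multiplicity of such a point is defined as the order of vanishing of $D(\cdot,1)$ there. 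A zero of even order contributes no sign change of $D(\cdot,1)$ and an even amount to $N$, while a zero of odd order contributes one sign change and an odd amount to $N$; hence the number of sign changes of $D(\cdot,1)$ from $\lambda=\epsilon$ to $\lambda=\Lambda$ is congruent to $N$ modulo $2$. Therefore $\sgn D(\Lambda,1)=(-1)^N\,\sgn D(\epsilon,1)$, i.e.\@ $\sgn(c^2-1)=(-1)^N\,\sgn\partial_\lambda^{2p}D(0,1)$, and multiplying the two $\pm1$'s gives $(-1)^N=\sgn\big((c^2-1)\partial_\lambda^{2p}D(0,1)\big)=\gamma$. Thus $\gamma=1$ forces $N$ even and $\gamma=-1$ forces $N$ odd, which is the asserted parity criterion.

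Finally, for the instability conclusion: if $\gamma=-1$ then $N$ is odd, so $N\ge 1$ and there is $\lambda_0>0$ with $\lambda_0\in\sigma_0\subset\sigma$; since $\Re\lambda_0>0$, the wave $f$ is spectrally unstable by Definition~\ref{def:spectral-instability} (indeed $-\lambda_0\in\sigma$ as well by Proposition~\ref{prop:spectral-symmetry}, but $\lambda_0$ alone already suffices). To identify the spatial period of the growing mode, note that $D(\lambda_0,1)=\det(\bM(\lambda_0)-\Id)=0$ by \eqref{Donreal}, so $\bM(\lambda_0)$ has the eigenvalue $\mu=1$; choosing a corresponding eigenvector $\bw^0\in\C^2\setminus\{0\}$ and setting $\bw(z):=\bF(z,\lambda_0)\bw^0$, the identity \eqref{eq:monodromy} gives $\bw(z+T)=\bF(z,\lambda_0)\bM(\lambda_0)\bw^0=\bw(z)$, so the first component $w$ of $\bw$ is a nontrivial $T$-periodic solution of \eqref{eq:spectral} and $v(z,t):=w(z)e^{\lambda_0 t}$ solves the linearized equation \eqref{linearperturb}, grows exponentially in $t$, and has the same spatial period $T$ as $f_z$.

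I expect the only points needing care to be the multiplicity bookkeeping — pinning down the convention that the multiplicity of a positive real periodic eigenvalue is the order of vanishing of $D(\cdot,1)$, and verifying that even-order zeros are parity-neutral — together with the routine check that $(0,\infty)$ carries only finitely many zeros of $D(\cdot,1)$; neither is a genuine obstacle, so the argument is essentially complete once Lemmas~\ref{Dreal} and \ref{lemma:Evans-small-lambda} are in hand.
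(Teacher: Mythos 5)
Your argument is exactly the paper's: compare the sign of the real-analytic function $D(\cdot,1)$ near $\lambda=0^+$ (from Lemma~\ref{lemma:Evans-small-lambda} and the order-$2p$ Taylor expansion) with its sign for large $\lambda>0$ (Lemma~\ref{Dreal}), count sign changes to deduce the parity of the number of positive real zeros weighted by multiplicity, and identify those zeros with periodic eigenvalues via Proposition~\ref{prop:deteq0}. The only difference is that you spell out the multiplicity bookkeeping and explicitly construct the $T$-periodic growing mode from the eigenvector of $\bM(\lambda_0)$, details the paper leaves implicit; the proof is correct.
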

\begin{proof}
If $\gamma=1$, then $D(\lambda,1)$ has the same sign for sufficiently small and sufficiently large strictly positive $\lambda$, while if $\gamma=-1$ the signs are opposite for small and large $\lambda$.   Since $D(\lambda,1)$ is real-analytic for real $\lambda$ it clearly has an even number of positive roots for $\gamma=1$ and an odd number of positive roots for $\gamma=-1$, with the roots weighted by their multiplicities.  By Proposition~\ref{prop:deteq0}, these roots correspond to points in the spectrum $\sigma$, and since $\mu=1$, they are periodic eigenvalues.
\end{proof}

The case $\gamma=1$ is, of course, inconclusive for stability, because it only guarantees that the number of real positive (periodic) eigenvalues is even (possibly zero). This is why $\gamma$ is  called a \emph{parity} index \cite{San3}.  The alternative name of orientation index is suggested by the way that $\gamma$ compares the asymptotic directions of the graph of $D(\cdot,1)$.
We can immediately apply Proposition~\ref{thmparitycriterion} to establish the following: 
\begin{theorem} \label{th:sublibunstable}
Let the potential $V$ satisfy Assumptions~\ref{assumptionsV} and \ref{assumptionsnormalize}.  Then, subluminal librational periodic traveling wave solutions of the Klein-Gordon equation \eqref{eqnlKG} for which $T_E< 0$ are spectrally unstable, having a positive real periodic eigenvalue $\lambda\in \sigma_0\subset\sigma$.  
\end{theorem}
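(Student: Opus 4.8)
The plan is to apply the parity index criterion of Proposition~\ref{thmparitycriterion}, showing that $\gamma=-1$ in this case. Since the waves under consideration are subluminal we have $c^2-1<0$, so by the definition of $\gamma$ it suffices to show that the first nonvanishing even derivative $\partial_\lambda^{2p}D(0,1)$ is strictly positive; I expect $p=1$, so that this reduces to checking the sign of $D_{\lambda\lambda}(0,1)=2(q^2-\kappa)$ as computed in Lemma~\ref{lemma:Evans-small-lambda}.

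The key computation is to evaluate $\kappa$ explicitly. From Proposition~\ref{prop:Jordan} the relevant entry of the monodromy matrix at $\lambda=0$ is $M_{12}(0)=-(c^2-1)T_Ev_0^2$, while from the formula \eqref{eq:F0-1} for $\bF(z,0)$ one reads off $F_{11}(y,0)=f_z(y)/v_0$. Substituting into the definition \eqref{defDelta} of $\kappa$, the factors of $v_0^2$ cancel and one obtains
\begin{equation}
\kappa=-\frac{T_E}{c^2-1}\int_0^Tf_z(y)^2\,dy.
\end{equation}
The integral $I:=\int_0^Tf_z(y)^2\,dy$ is strictly positive because $f_z$ is a nontrivial continuous function. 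Hence when $c^2-1<0$ and $T_E<0$ we get $\kappa<0$, so $q^2-\kappa\ge -\kappa>0$; in particular $D_{\lambda\lambda}(0,1)=2(q^2-\kappa)>0$, which also confirms that $D(\cdot,1)$ vanishes to order exactly $2p=2$ at $\lambda=0$.

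It then follows that $\gamma=\sgn\big((c^2-1)D_{\lambda\lambda}(0,1)\big)=\sgn(c^2-1)=-1$, and Proposition~\ref{thmparitycriterion} yields the existence of at least one positive real periodic eigenvalue $\lambda\in\sigma_0\subset\sigma$, hence spectral instability in the sense of Definition~\ref{def:spectral-instability}. I do not anticipate any real obstacle beyond the bookkeeping in the evaluation of $\kappa$; the only point requiring care is confirming that $p=1$ (i.e.\ that $q^2-\kappa\neq 0$), which is automatic here since $\kappa<0$ forces $q^2-\kappa>0$ even in the degenerate case $c=0$ where $q=0$.
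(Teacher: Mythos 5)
Your proof is correct and follows essentially the same route as the paper's own proof: invoking Proposition~\ref{thmparitycriterion} (the parity index criterion), computing $D_{\lambda\lambda}(0,1)=2(q^2-\kappa)$ via Lemma~\ref{lemma:Evans-small-lambda}, and using Proposition~\ref{prop:Jordan} together with $F_{11}(y,0)=f_z(y)/v_0$ from \eqref{eq:F0-1} to show $\kappa<0$ when $c^2-1<0$ and $T_E<0$. Your explicit evaluation $\kappa=-T_E(c^2-1)^{-1}\int_0^T f_z(y)^2\,dy$ and the remark that $\kappa<0$ forces $q^2-\kappa>0$ even when $c=0$ are exactly the points the paper's more compressed proof relies on.
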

\begin{proof}
We use Proposition~\ref{prop:Jordan} to express $M_{12}(0)$ in the formula \eqref{secondderiv} for $D_{\lambda\lambda}(0,1)$ in terms of $c$ and $T_E$.  Noting that the integral of $F_{11}(y,0)^2$ is strictly positive, it then follows from the hypotheses that $(c^2-1)T_E> 0$, and hence that $D_{\lambda\lambda}(0,1)>0$ because $\kappa<0$. Hence $D(\cdot,1)$ vanishes to precisely second order at the origin and $\gamma=-1$ as $c^2-1<0$;  instability is thus predicted according to Proposition~\ref{thmparitycriterion}.
\end{proof}

As a consequence of Remark~\ref{remark:Chicone-sine-Gordon}, we have the following.
\begin{corollary}
\label{corollary:parity-index-sine-Gordon}
All subluminal librational periodic traveling wave solutions of the sine-Gordon equation ($V(u)=-\cos(u)$) are spectrally unstable, having a positive real periodic eigenvalue $\lambda\in\sigma_0\subset\sigma$.
\end{corollary}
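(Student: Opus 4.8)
The plan is to deduce this statement directly from Theorem~\ref{th:sublibunstable} together with the sign information on $T_E$ recorded in Remark~\ref{remark:Chicone-sine-Gordon}. First I would note that the sine-Gordon potential $V(u) = -\cos(u)$ satisfies Assumptions~\ref{assumptionsV} and \ref{assumptionsnormalize} (indeed this is already observed just after Assumption~\ref{assumptionsnormalize}), so that all hypotheses of Theorem~\ref{th:sublibunstable} are in force except possibly for the condition $T_E < 0$, which must be checked for every subluminal librational wave.

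For this I would invoke Chicone's criterion (Proposition~\ref{prop:Chicone}) in the form already specialized to the sine-Gordon case in Remark~\ref{remark:Chicone-sine-Gordon}. There one computes $N^-(f) = -4(2+\cos f)\cos^4(\tfrac{1}{2}f)$, which is $\le 0$ for all $f \in \R$ and is not identically zero; hence by Proposition~\ref{prop:Chicone} the period $T$ is a strictly monotone function of $E$ for subluminal librational waves, with the sign of $T_E$ equal to that of $N^-$, i.e. $T_E < 0$. Equivalently, the invariant inequality $(c^2-1)T_E > 0$ holds for all sine-Gordon librational waves, and in the subluminal regime the factor $c^2 - 1 < 0$ forces $T_E < 0$.

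With $T_E < 0$ established, Theorem~\ref{th:sublibunstable} applies verbatim to every subluminal librational wave and yields the existence of a positive real periodic eigenvalue $\lambda \in \sigma_0 \subset \sigma$, hence spectral instability in the sense of Definition~\ref{def:spectral-instability}. Since every step is a direct citation of a result established earlier in the paper, there is no genuine obstacle; the only point requiring care is the bookkeeping of the sign of $c^2-1$ in the subluminal case, which is precisely what converts the scaling-invariant statement $(c^2-1)T_E > 0$ of Remark~\ref{remark:Chicone-sine-Gordon} into the hypothesis $T_E < 0$ demanded by Theorem~\ref{th:sublibunstable}.
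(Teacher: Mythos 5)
Your proof is correct and follows exactly the route the paper takes: the corollary is stated in the paper as an immediate consequence of Remark~\ref{remark:Chicone-sine-Gordon} (which gives $(c^2-1)T_E>0$ for all sine-Gordon librational waves via Chicone's criterion, Proposition~\ref{prop:Chicone}) combined with Theorem~\ref{th:sublibunstable}. You have simply unpacked that one-line deduction, including the correct sign bookkeeping that converts $(c^2-1)T_E>0$ into $T_E<0$ when $c^2<1$.
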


\begin{remark}
In the case of superluminal librational waves satisfying instead $T_E> 0$,  the index $\gamma$ satisfies (by similar arguments) 
$\gamma=1$.
The number of positive periodic eigenvalues is necessarily even in this case, but as this number can be zero, instability cannot be concluded.  For rotational waves of both types, as well as for subluminal (resp., superluminal) librational waves with $T_E>0$ (resp., $T_E<0$), Proposition~\ref{prop:rotational-monotone} indicates that the two terms in the sum \eqref{secondderiv} for $D_{\lambda\lambda}(0,1)$ have opposite signs, and therefore without further information we cannot even reliably calculate the index $\gamma$ for these remaining cases.
\label{remark:parity-index-troubles}
\myendrmk
\end{remark}

A parallel theory can be developed for the ``antiperiodic Evans function'' $D(\lambda,-1)$.  This function is also real-valued for real $\lambda$, however it is easy to check that it is strictly positive for $\lambda=0$ and in the limit $\lambda\to \pm\infty$, and therefore the only conclusion is that the number of positive real antiperiodic eigenvalues is always even (possibly zero).

The instability detected by the parity index corresponds to a perturbation with spatial period $T$ and with strictly positive exponential growth rate $\lambda$.
A similar phenomenon occurs in the stability analysis of solitary waves \cite{BD1,PW1}, with the corresponding Evans function having a root at some real value (growth rate) bounded away from zero.  We will turn out attention next toward the introduction of an index capable of detecting instabilities of a different type, namely those having arbitrarily small exponential growth rates.

\subsection{The modulational instability index}
\label{secmodinst}
The asymptotic analysis of the monodromy matrix $\bM(\lambda)$ in the limit $\lambda\to 0$ 
already played a role in the theory of the parity index (cf.\@ Lemma~\ref{lemma:Evans-small-lambda}).  Now we revisit the corresponding expansions to extract information about the behavior of the spectrum $\sigma$ (and not just the periodic partial spectrum $\sigma_0$) in a \emph{complex} neighborhood of the origin in the complex plane.  We will also compute expansions of the Floquet multipliers near the origin, and of the function $D(\lambda,\mu)$ in a full complex neighborhood of $(\lambda,\mu)=(0,1)$.

Recall that, given $\lambda\in\C$, the Floquet multipliers $\mu=\mu(\lambda)$ are defined as the roots of the characteristic equation $D(\lambda,\mu)=0$, i.e., they are the eigenvalues of the monodromy matrix $\bM(\lambda)$.  The quadratic formula gives the multipliers in the form
\begin{equation}
\label{valuesFloquetm}
\mu=\mu_\pm(\lambda) = \frac{1}{2} \Big( \tr (\bM(\lambda)) \pm \big((\tr (\bM(\lambda)))^2 - 4 \det( \bM(\lambda))\big)^{1/2} \Big). 
\end{equation}
To analyze the multipliers near $\lambda=0$, we first calculate the quadratic discriminant with the help of the expansions of $\det(\bM(\lambda))$ and $\tr(\bM(\lambda))$ recorded in \eqref{eq:det-M-expansion} and \eqref{eq:tr-M-expansion} respectively.  We obtain:
\begin{equation}
(\tr(\bM(\lambda)))^2-4\det(\bM(\lambda))=4\kappa\lambda^2 + O(\lambda^3),\quad\lambda\to 0,
\end{equation}
where $\kappa$ is given by \eqref{defDelta}.

It is obvious that the quadratic discriminant is an analytic function of $\lambda$ that vanishes to at least second order at $\lambda=0$.
More generally, it is easy to apply the reasoning described in Remark~\ref{rem:negsim} to show that
the quadratic discriminant necessarily vanishes to even order at $\lambda=0$.  This implies that,
by proper accounting of the square roots, the two Floquet multipliers may be regarded as analytic functions of $\lambda$ in a complex neighborhood of $\lambda = 0$.
This is an unusual situation in eigenvalue perturbation theory, given that when $\lambda=0$ the ``unperturbed'' monodromy matrix is generally not diagonalizable according to Proposition~\ref{prop:Jordan}.  
In general, the double eigenvalue $\mu(0) = 1$ of $\bM(0)$ would be expected to bifurcate under perturbation in a way that can be described by a Puiseux (fractional power) series about $\lambda = 0$ \cite[pg.\@ 65]{Kat1}. Thanks to the special structure of the differential equation behind the monodromy matrix, however, this expansion gets simplified into a standard power series in $\lambda$ in the present context.

Again using \eqref{eq:tr-M-expansion}, the first few terms in the Taylor series about $\lambda=0$ of the (analytic) Floquet multipliers are:
\begin{equation}
\label{asymptoticFloq}
\mu_\pm(\lambda) = 1 + \left(q \pm \kappa^{1/2} \right) \lambda + O(\lambda^2),\quad\lambda\to 0. 
\end{equation}
Noting that $\kappa$ is proportional to the monodromy matrix element $M_{12}(0)$ by strictly positive factors because $F_{11}(z,0)$ is a differentiable function satisfying $F_{11}(0,0)=1$, this formula motivates the definition of another instability index:
\begin{definition}[modulational instability index $\rho$]
\label{defmodinstindx}
The \emph{modulational instability index} is given by 
\begin{equation}
\label{defmodinst}
\rho := \sgn(M_{12}(0)),
\end{equation}
with the understanding that $\rho:=0$ if $M_{12}(0)=0$.
\end{definition}

Before we discuss the implications of this definition, we explain how the modulational instability index can be effectively computed.
Firstly, according to Proposition~\ref{prop:Jordan} we have the following:
\begin{proposition}
The modulational instability index $\rho$ may be explicitly expressed in terms of the wave speed $c$ and the derivative of the period $T$ with respect to energy $E$ as
\begin{equation}
\rho=\sgn(-(c^2-1)T_E),
\end{equation}
with the understanding that $\rho=0$ if $T_E=0$.
In particular, from Proposition~\ref{prop:rotational-monotone}, $\rho=1$ for rotational waves of any speed.
\label{corollary:index-period}
\end{proposition}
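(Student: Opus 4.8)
The plan is to observe that the statement is an immediate consequence of the explicit form of the monodromy matrix at $\lambda=0$ already recorded in Proposition~\ref{prop:Jordan}, together with the sign normalization fixing $v_0>0$. By the definition \eqref{defmodinst}, $\rho=\sgn(M_{12}(0))$ (with the convention $\rho:=0$ when $M_{12}(0)=0$), so the entire proof reduces to reading off the $(1,2)$ entry of $\bM(0)$ and keeping track of signs.

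First I would invoke Proposition~\ref{prop:Jordan}: the monodromy matrix at the origin has the form \eqref{eq:M0-explicit}, whence $M_{12}(0)=-(c^2-1)T_Ev_0^2$, where $v_0=f_z(0)$ as in \eqref{eq:u0v0define}. Next I would recall that the normalizations \eqref{eq:sub-normalize}--\eqref{eq:super-normalize} impose $f_z(0)>0$, so that $v_0>0$ and in particular $v_0^2>0$. Consequently $M_{12}(0)$ has the same sign as $-(c^2-1)T_E$: if $T_E\neq 0$ this gives $\sgn(M_{12}(0))=\sgn(-(c^2-1)T_E)$, while if $T_E=0$ then $M_{12}(0)=0$ and the stated convention yields $\rho=0$, consistent with $-(c^2-1)T_E=0$. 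This establishes the displayed formula for $\rho$.

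Finally, for the ``in particular'' clause I would simply apply Proposition~\ref{prop:rotational-monotone}, which asserts that $(c^2-1)T_E<0$ for rotational waves of either speed; hence $-(c^2-1)T_E>0$ and therefore $\rho=1$ in that case.

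There is no genuine obstacle here: all of the substantive work was already carried out in establishing Proposition~\ref{prop:Jordan} (the two equivalent computations of $\bM(0)$ in \S\ref{secsolsat0}, in particular the identities \eqref{eq:QT12} and \eqref{eq:delta-identity} linking the $(1,2)$ entry to $T_E$). The only matters requiring any care are the sign bookkeeping and the explicit handling of the degenerate case $T_E=0$, which the statement disposes of by the convention $\rho:=0$; I would mention these points explicitly so the reader sees the degenerate case is not an omission.
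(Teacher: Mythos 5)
Your proof is correct and takes essentially the same route as the paper, which simply invokes Proposition~\ref{prop:Jordan} to read off $M_{12}(0)=-(c^2-1)T_Ev_0^2$ and then applies Proposition~\ref{prop:rotational-monotone} for the rotational case. Your explicit attention to the normalization $v_0>0$ and to the degenerate case $T_E=0$ is the right bookkeeping and matches what the paper leaves implicit.
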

Next, recall the quantity $\delta$ defined in \S\ref{sec:alternate} in terms of the wave profile $f$ either by the contour integral \eqref{eq:delta-analytic} or more generally by the finite-part expression \eqref{defDeltah}.
Combining \eqref{eq:delta-identity} with Proposition~\ref{corollary:index-period} yields another formula for the modulational instability index $\rho$:
\begin{proposition}
The modulational instability index $\rho$ may be explicitly expressed in terms of the wave profile function $f$ alone as
\begin{equation}
\rho=\sgn(\delta),
\end{equation}
with the understanding that $\rho=0$ if $\delta=0$.  Here $\delta$ is defined in terms of $f(\cdot)$ by \eqref{defDeltah}, or in special situations, by \eqref{eq:delta-analytic}.
\label{corollary:index-profile}
\end{proposition}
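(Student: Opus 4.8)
The plan is to chain together the identifications of $\rho$ that have already been made, since this proposition is a bookkeeping consequence of the computations in \S\ref{secsolsat0} and \S\ref{sec:alternate}.  By Definition~\ref{defmodinstindx} we have $\rho=\sgn(M_{12}(0))$ (with $\rho:=0$ when $M_{12}(0)=0$), so everything reduces to reading off the $(1,2)$ entry of the monodromy matrix at $\lambda=0$.  The alternate construction carried out in \S\ref{sec:alternate} expresses this matrix \emph{directly} in terms of the wave profile, namely $\bM(0)=\left(\begin{smallmatrix}1 & \delta v_0^2\\ 0 & 1\end{smallmatrix}\right)$, where $\delta$ is the finite-part quantity defined by \eqref{defDeltah} (or by the contour integral \eqref{eq:delta-analytic} when $f_z$ extends analytically).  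Hence $M_{12}(0)=\delta v_0^2$, and since $v_0=f_z(0)>0$ by the normalization \eqref{eq:sub-normalize}--\eqref{eq:super-normalize}, the positive factor $v_0^2$ does not affect the sign: $\rho=\sgn(M_{12}(0))=\sgn(\delta)$, with $\rho=0$ precisely when $\delta=0$.

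Equivalently, one may route the argument through the energy derivative: Proposition~\ref{prop:Jordan} gives $M_{12}(0)=-(c^2-1)T_Ev_0^2$, which is Proposition~\ref{corollary:index-period} after cancelling $v_0^2>0$, and then the identity \eqref{eq:delta-identity}, $\delta=-(c^2-1)T_E$, converts the statement about $T_E$ back into one about $\delta$.  Either way there is no genuine obstacle here; the only point to keep in mind is that because $v_0\neq 0$, the three quantities $M_{12}(0)$, $(c^2-1)T_E$, and $\delta$ all vanish simultaneously, so the convention $\rho=0$ is assigned consistently in the degenerate case.
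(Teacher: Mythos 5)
Your proof is correct, and your primary route is a mild shortcut compared with the paper's. The paper establishes Proposition~\ref{corollary:index-period} first (expressing $\rho$ via $T_E$) and then invokes the identity \eqref{eq:delta-identity}, $\delta=-(c^2-1)T_E$, to translate the $T_E$-formulation into the $\delta$-formulation. You instead go straight from the alternate construction of the monodromy matrix, $\bM(0)=\left(\begin{smallmatrix}1 & \delta v_0^2\\ 0 & 1\end{smallmatrix}\right)$, and Definition~\ref{defmodinstindx}, reading off $M_{12}(0)=\delta v_0^2$ and dropping the positive factor $v_0^2$. This bypasses $T_E$ entirely and is arguably the most direct derivation, since the entire content of the proposition is encoded in the $(1,2)$ entry of $\bM(0)$ as computed in \S\ref{sec:alternate}. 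Your ``equivalently'' paragraph is exactly the paper's chain, so you have covered both; and your closing remark that $M_{12}(0)$, $(c^2-1)T_E$, and $\delta$ vanish simultaneously (because $v_0\neq 0$) correctly justifies the consistency of the $\rho=0$ convention across the three formulations.
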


It seems clear that the modulational instability index $\rho$ has something to do with the spectrum near the origin because it is the sign of $\kappa$ which appears under a radical in the small-$\lambda$ approximation \eqref{asymptoticFloq} of the Floquet multipliers.  To make this precise, we return to the notion that curves of spectrum (cf.\@ Proposition~\ref{prop:curves}) may be parametrized implicitly by $\theta\in\R$ via the equation $D(\lambda,e^{i\theta})=0$.  Since the Floquet multipliers $\mu$ bifurcate from $\mu=1$ corresponding to $\theta=0$, we will require an expansion of the analytic function $D(\lambda,e^{i\theta})$ near $(\lambda,\theta)=(0,0)$.  

\begin{lemma}\label{lem:expandevans}
The periodic Evans function $D(\lambda,e^{i\theta})$ is analytic in the variables $(\lambda,\theta)\in\C^2$ and has the following expansion in a neighborhood of $(\lambda,\theta) = (0,0)$:
\begin{equation}
\label{expansionDelta}
D(\lambda,e^{i\theta}) = - \kappa \lambda^2 + \left( i \theta - q \lambda \right)^2 + O(3), 
\end{equation}
where $O(3)$ denotes terms of order three or higher in $(\lambda,\theta)$, and where $\kappa$ and $q$ are defined by \eqref{defDelta} and \eqref{eq:q-def} respectively.
\end{lemma}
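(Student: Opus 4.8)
The plan is to begin from the characteristic polynomial of the monodromy matrix. Expanding $D(\lambda,\mu)=\det(\bM(\lambda)-\mu\Id)$ for a $2\times 2$ matrix gives $D(\lambda,\mu)=\mu^2-\tr(\bM(\lambda))\mu+\det(\bM(\lambda))$, so setting $\mu=e^{i\theta}$ yields
\[
D(\lambda,e^{i\theta})=e^{2i\theta}-\tr(\bM(\lambda))\,e^{i\theta}+\det(\bM(\lambda)).
\]
Analyticity in $(\lambda,\theta)\in\C^2$ is then immediate: the entries of $\bM(\lambda)=\bF(T,\lambda)$ are entire functions of $\lambda$ (the Picard iterates for $\bF(z,\lambda)$ converge uniformly for $z\in[0,T]$ and $\bA(z,\lambda)$ is entire in $\lambda$), the map $\theta\mapsto e^{i\theta}$ is entire, and $D(\lambda,e^{i\theta})$ is a polynomial combination of these entire functions.

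For the local expansion I would simply substitute the small-$\lambda$ information already obtained. Abel's identity \eqref{eq:Abels-Identity} gives $\det(\bM(\lambda))=e^{2q\lambda}=1+2q\lambda+2q^2\lambda^2+O(\lambda^3)$, and the expansion \eqref{eq:tr-M-expansion} of the trace (which is where $\kappa$, defined in \eqref{defDelta}, enters, via \eqref{eq:M1-M2} and Proposition~\ref{prop:Jordan}) gives $\tr(\bM(\lambda))=2+2q\lambda+(q^2+\kappa)\lambda^2+O(\lambda^3)$. Together with the elementary Taylor series $e^{i\theta}=1+i\theta-\tfrac12\theta^2+O(\theta^3)$ and $e^{2i\theta}=1+2i\theta-2\theta^2+O(\theta^3)$, one multiplies out the product $\tr(\bM(\lambda))\,e^{i\theta}$ and retains all monomials in $(\lambda,\theta)$ of total degree at most two.

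Carrying out the bookkeeping, the degree-zero term, the coefficient of $\theta$, and the coefficient of $\lambda$ all cancel --- these cancellations reflect $D(0,1)=D_\lambda(0,1)=0$ from Lemma~\ref{lemma:Evans-small-lambda} together with $\partial_\theta D(0,1)=2i-\tr(\bM(0))\,i=0$ since $\tr(\bM(0))=2$. What survives at degree two is the quadratic form $-\theta^2-2iq\lambda\theta+(q^2-\kappa)\lambda^2$, and the one-line identity $-\theta^2-2iq\lambda\theta+(q^2-\kappa)\lambda^2=-\kappa\lambda^2+(i\theta-q\lambda)^2$ then produces the claimed form. I do not anticipate a genuine obstacle here: the computation is pure bookkeeping, and the only point requiring a word of justification is the legitimacy of the $O(3)$ remainder, which follows from the joint analyticity established in the first step, so that $D(\lambda,e^{i\theta})$ differs from its degree-two Taylor polynomial about $(0,0)$ by a term that is $O((|\lambda|+|\theta|)^3)$.
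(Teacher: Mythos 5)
Your proposal is correct and follows the same route as the paper: the paper's own proof is exactly the one-line instruction to expand $e^{2i\theta}-\tr(\bM(\lambda))e^{i\theta}+\det(\bM(\lambda))$ using \eqref{eq:det-M-expansion} and \eqref{eq:tr-M-expansion}, and you have simply carried out that bookkeeping explicitly and verified the quadratic-form identity $-\theta^2-2iq\lambda\theta+(q^2-\kappa)\lambda^2=-\kappa\lambda^2+(i\theta-q\lambda)^2$, together with the (already established) joint entirety that licenses the $O(3)$ remainder.
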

\begin{proof}
This follows from the formula
$D(\lambda,e^{i\theta}) =  e^{2i\theta} - \tr (\bM(\lambda)) e^{i\theta} + \det (\bM(\lambda))$
upon expanding the exponentials in power series about $\theta=0$, substituting the expansions
\eqref{eq:det-M-expansion} and \eqref{eq:tr-M-expansion}, and using the definition \eqref{defDelta}.
\end{proof}

Using this expansion of $D(\lambda,e^{i\theta})$, we analyze how solutions to $D(\lambda,e^{i\theta}) = 0$ with $(\lambda,\theta)\in \C \times \R$ bifurcate from $(0,0)$. We will see that the modulational instability index $\rho$ determines exactly whether or not the spectral curves are tangent to the imaginary axis at the origin.  Since the spectrum is understood in the case $c=0$ by Proposition~\ref{prop:zero-velocity-agree}, and since $q=0$ if and only if $c=0$, we will assume that $q\neq 0$.
\begin{lemma}
\label{lemcrossortan}
If $\rho = 1$ but $\kappa\neq q^2>0$, then the equation $D(\lambda,e^{i\theta}) = 0$ parametrically describes (for small real $\theta$) 
two smooth curves 
passing through the origin tangent to the imaginary axis in a neighborhood of the origin the complex $\lambda$-plane.  

If $\rho = -1$ then the equation $D(\lambda,e^{i\theta})=0$ instead parametrically describes two distinct smooth curves 
that cross at the origin with tangent lines making
acute non-zero angles with the imaginary axis.
\end{lemma}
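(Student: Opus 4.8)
The plan is to bypass the Evans function entirely and argue directly with the Floquet multipliers. By the discussion preceding Definition~\ref{defmodinstindx}, the two multipliers $\mu_\pm(\lambda)$ are \emph{analytic} functions of $\lambda$ in a complex neighborhood $U$ of $\lambda=0$, with $\mu_\pm(0)=1$ and the first-order expansion \eqref{asymptoticFloq}. By Proposition~\ref{prop:deteq0}, and because on a small enough $U$ the only Floquet multipliers of \eqref{eq:firstorder} are $\mu_+(\lambda)$ and $\mu_-(\lambda)$, the set $\sigma\cap U$ consists precisely of those $\lambda\in U$ with $|\mu_+(\lambda)|=1$ or $|\mu_-(\lambda)|=1$. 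Since $\mu_\pm$ is analytic and stays close to $1$ on $U$, we may write $\mu_\pm(\lambda)=e^{R_\pm(\lambda)}$ for a unique analytic branch $R_\pm$ with $R_\pm(0)=0$, so that $\mu_\pm(\lambda)$ lies on the unit circle if and only if $\Re R_\pm(\lambda)=0$. Expanding the logarithm and inserting \eqref{asymptoticFloq} gives
\[
R_\pm(\lambda)=\bigl(q\pm\kappa^{1/2}\bigr)\lambda+O(\lambda^2),\qquad\lambda\to 0,
\]
so that $\sigma\cap U$ is the union of the two real-analytic level sets $\{\Re R_+=0\}$ and $\{\Re R_-=0\}$.

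The first key step is to verify that both linear coefficients $q\pm\kappa^{1/2}$ are nonzero, and this is exactly where the hypotheses enter. If $\rho=1$ then $\kappa>0$, so $\kappa^{1/2}\in\R$; were $q\pm\kappa^{1/2}=0$ for some sign we would have $\kappa=q^2$, which is excluded. If $\rho=-1$ then $\kappa<0$, so $\kappa^{1/2}$ is purely imaginary and nonzero while $q\in\R\setminus\{0\}$ by our standing assumption; hence $q\pm\kappa^{1/2}\neq 0$ automatically (and in particular $\kappa\neq q^2$). Consequently the real-analytic function $\Re R_\pm:U\to\R$ satisfies $\Re R_\pm(0)=0$ and has nonvanishing differential at the origin, namely the nonzero real-linear functional $\lambda\mapsto\Re\bigl((q\pm\kappa^{1/2})\lambda\bigr)$. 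By the implicit function theorem, after shrinking $U$, each $\{\Re R_\pm=0\}$ is a smooth (real-analytic) arc through $\lambda=0$ whose tangent line there is $\ker d(\Re R_\pm)(0)=\{\lambda\in\C:\Re((q\pm\kappa^{1/2})\lambda)=0\}$, i.e.\ the line spanned by $i(q\pm\kappa^{1/2})^{-1}$.

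It then remains to read off the geometry of these tangent lines in the two cases. If $\rho=1$, both $q+\kappa^{1/2}$ and $q-\kappa^{1/2}$ are nonzero reals, so $i(q\pm\kappa^{1/2})^{-1}$ is purely imaginary and both arcs are tangent to the imaginary axis at the origin, as claimed. If $\rho=-1$, write $\kappa^{1/2}=i|\kappa|^{1/2}$ with $|\kappa|^{1/2}>0$; then $i(q\pm\kappa^{1/2})^{-1}=i(q\mp i|\kappa|^{1/2})/(q^2+|\kappa|)$ has nonzero real part $\pm|\kappa|^{1/2}/(q^2+|\kappa|)$ and nonzero imaginary part $q/(q^2+|\kappa|)$ (using $q\neq 0$), so neither tangent line is the imaginary axis nor the real axis, and each makes an acute, nonzero angle with the imaginary axis (indeed $\cos\phi=|q|/\sqrt{q^2+|\kappa|}\in(0,1)$). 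Finally the two arcs are distinct: $q-\kappa^{1/2}$ and $q+\kappa^{1/2}$ are not real multiples of each other, since their ratio equals $(q-i|\kappa|^{1/2})^2/(q^2+|\kappa|)$, which is real only if $q|\kappa|^{1/2}=0$, impossible. Thus the two arcs cross transversally at the origin with the stated tangent lines.

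The main obstacle is not conceptual but a matter of keeping the degenerate sub-case quarantined: if $\kappa=q^2$ (possible only when $\rho=1$, hence excluded there by hypothesis, and automatically ruled out when $\rho=-1$) one of the coefficients $q\pm\kappa^{1/2}$ vanishes, the first-order analysis of that branch collapses, and one would need to push the expansions of $\mu_\pm$ to higher order; this is precisely the role of the ``$\kappa\neq q^2$'' caveat in the $\rho=1$ statement. A secondary point requiring care is the justification — already supplied in the text via the even order of vanishing of the quadratic discriminant $(\tr\bM(\lambda))^2-4\det\bM(\lambda)$ at $\lambda=0$ (cf.\ Remark~\ref{rem:negsim}) — that the two Floquet multipliers really do split into \emph{single-valued analytic} branches near $\lambda=0$ despite $\bM(0)$ being a nontrivial Jordan block; without this, $R_\pm$ would only be defined on a double cover and the level-set description of $\sigma\cap U$ would be correspondingly more delicate.
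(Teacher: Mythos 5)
Your proof is correct, but it follows a genuinely different route from the one in the paper. The paper attacks the zero set of $D(\lambda,e^{i\theta})$ directly through the blow-up $\lambda=is\theta$, obtaining $D(is\theta,e^{i\theta})=\theta^2\hat{D}(s,\theta)$ from \eqref{expansionDelta} and then applying the holomorphic Implicit Function Theorem to solve $\hat{D}(s,\theta)=0$ at the two simple roots $s_0^\pm=(q\pm\kappa^{1/2})^{-1}$, yielding the explicit parametrizations $\lambda=\lambda^\pm(\theta)=is_0^\pm\theta+O(\theta^2)$. You instead exploit the separately-established fact that the Floquet multipliers $\mu_\pm$ are single-valued analytic near $\lambda=0$, write $\mu_\pm=e^{R_\pm}$ with $R_\pm(\lambda)=(q\pm\kappa^{1/2})\lambda+O(\lambda^2)$, identify $\sigma$ near $0$ with the level sets $\{\Re R_\pm=0\}$, and apply the real Implicit Function Theorem to the real-analytic functions $\Re R_\pm:\R^2\to\R$, reading off the tangent directions from $\ker d(\Re R_\pm)(0)$; your verification that $q\pm\kappa^{1/2}\neq 0$ under each hypothesis, and your identification of the tangent lines and the angle $\phi=\arctan(|\kappa|^{1/2}/|q|)$, are all correct. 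Your level-set description carries the same geometric content as the paper's $\theta$-parametrization — indeed $\theta$ can be recovered along each arc as $\Im R_\pm$, which is a local coordinate on $\{\Re R_\pm=0\}$ because $R_\pm$ has nonzero derivative at the origin. The trade-offs: your argument is more transparent, reads the tangent geometry straight off the first Taylor coefficient of the multiplier, and avoids the blow-up; but it relies more heavily on the nontrivial prior fact (even-order vanishing of $(\tr\bM)^2-4\det\bM$ at $\lambda=0$, cf.\ Remark~\ref{rem:negsim}) that $\mu_\pm$ split into single-valued analytic branches despite $\bM(0)$ being a Jordan block, a dependence you rightly flag. The paper's blow-up machinery, by contrast, is what lets it handle the degenerate case $\kappa=q^2$ in Remark~\ref{remark:one-curve}, where a multiplier-Taylor-coefficient argument at first order would collapse, so there is a structural reason the paper chose the route it did.
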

\begin{proof}
The result will follow from an application of the Implicit Function Theorem. 
It is easy to check from the expansion \eqref{expansionDelta} that while $D(\lambda,e^{i\theta})$ vanishes for $\lambda=\theta=0$, so does $D_\lambda(\lambda,e^{i\theta})$. Therefore we cannot use the Implicit Function Theorem directly to solve for $\lambda$ in terms of $\theta$.
However, the issue at hand is not that there is no smooth solution $\lambda=\lambda(\theta)$ of the equation $D(\lambda,e^{i\theta})=0$, but rather that there are two such solutions which can be separated by an appropriate change of coordinates.  

Let us introduce the ``blow-up'' transformation of coordinates 
\begin{equation}
(\lambda,\theta)\mapsto (s,\theta)\quad\text{where}\quad \lambda=is\theta.  
\label{eq:monoidal-transformation}
\end{equation}
Substituting $\lambda=is\theta$ into the expansion \eqref{expansionDelta} yields
$D(is\theta,e^{i\theta})=\theta^2\hat{D}(s,\theta)$, where
\begin{equation}
\hat{D}(s,\theta):=\kappa s^2 - \left(1-qs\right)^2+\theta R(s,\theta),
\label{eq:D-zero-blown-up}
\end{equation}
where $R$ is an entire analytic function of its arguments.  Omitting the ``exceptional fiber'' associated with $\theta=0$, the equation $D=0$ becomes $\hat{D}=0$.
We want to solve for $s=s(\theta)$ near $\theta=0$; since $\kappa\neq 0$ and $\kappa\neq q^2$,  when $\theta=0$,
$\hat{D}=0$  is a quadratic equation in $s$ with two distinct roots $s=s_0^\pm:=(q\pm\kappa^{1/2})^{-1}$.
This implies that
both partial derivatives $\hat{D}_s(s_0^+,0)$ and $\hat{D}_s(s_0^-,0)$ are nonzero.
Therefore, the 
Implicit Function Theorem applies and guarantees the existence of
two unique analytic functions $s=s^\pm(\theta)$ solving $\hat{D}(s^\pm(\theta),\theta)=0$ for small $\theta$ with $s^\pm(0)=s_0^\pm$.
Recalling the change of coordinates \eqref{eq:monoidal-transformation}, 
there are two curves of spectrum $\sigma$ through the origin $\lambda=0$ having parametric form
\begin{equation}
\lambda=\lambda^\pm(\theta):=i\theta s^\pm(\theta)=is_0^\pm\theta +O(\theta^2),\quad\theta\to 0.
\end{equation}
Note that since $s_0^\pm\neq 0$, these parametric equations both define smooth curves with well-defined tangents at $\lambda=0$.  Finally, we note that 
$s_0^\pm$ are both real if and only if $\kappa>0$, in which case both curves of spectrum $\lambda=\lambda^\pm(\theta)$ passing through the origin are tangent to the imaginary axis.  (It is possible in this case that the two curves actually coincide.)
If instead $\kappa<0$, then $s_0^\pm$ form a complex-conjugate pair with nonzero real and imaginary parts: $s_0^\pm=a\pm ib$, with $a,b\neq 0$.  Therefore, the two curves of spectrum $\lambda=\lambda^\pm(\theta)$ cross at the origin with the tangent lines making acute nonzero angles with the imaginary axis of size $\phi=\arctan(|b/a|)$.
This completes the proof of the lemma.
\end{proof}
If $\rho=0$, then it also holds that $\sigma$ consists locally of two curves tangent to the imaginary axis at $\lambda=0$, with the degree of tangency increasing with the (even) order of vanishing of $(\tr(\bM(\lambda)))^2-4\det(\bM(\lambda))$ at $\lambda=0$.
The local structure of the spectrum near the origin is illustrated in Figure~\ref{fig:crossandtan}.
\begin{figure}[h]
\begin{center}
\includegraphics{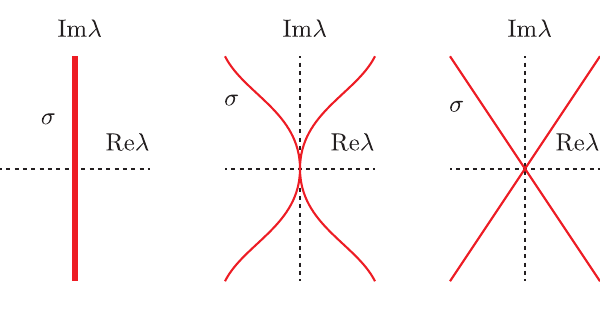}
\end{center}
\caption{A qualitative sketch of the three generic possibilities for the spectrum $\sigma$ in a neighborhood of the origin.  
Left panel:  two curves coincide exactly with the imaginary axis.  Center panel:  two distinct curves of spectrum each of which is tangent to the imaginary axis.  Right panel:  two distinct curves of spectrum crossing the imaginary axis transversely.
The left two panels illustrate possible configurations in the case $\rho=1$ with $\kappa\neq q^2$ or $\rho=0$, while the right-hand panel illustrates the configuration if $\rho=-1$.
In the terminology of Definition~\ref{def:modulational-instability}, the left-hand panel illustrates modulational stability, the central panel illustrates weak modulational instability, and the right-hand panel illustrates strong modulational instability.  
}
\label{fig:crossandtan}
\end{figure}
\begin{remark}
The special case when $\kappa=q^2$ (equivalently, $D_{\lambda\lambda}(0,1)=0$ by \eqref{secondderiv}) is more complicated.   If $D(\cdot,1)$ vanishes at $\lambda=0$ precisely to (even) order $2p>2$, then it can be shown that $D(\lambda,e^{i\theta})$ has the expansion
\begin{equation}
D(\lambda,e^{i\theta})=-\theta^2-2iq\lambda\theta + C\lambda^{2p}+O(\theta^3) + O(\lambda\theta^2)+O(\lambda^2\theta) + O(\lambda^{2p+1}),
\end{equation}
where
\begin{equation}
C:=\frac{\partial_\lambda^{2p}D(0,1)}{(2p)!}=\frac{(2q)^{2p}-\tr(\bM^{(2p)}(0))}{(2p)!}\neq 0.
\end{equation}
When $\lambda=0$, the equation $D=0$ has a double root at $\theta=0$.  To unfold this double root, let $\theta= t\lambda$ for a new unknown $t$, and obtain $D(\lambda,e^{it\lambda})=\lambda^2\check{D}(\lambda,t)$, where
\begin{equation}
\check{D}(\lambda,t)=-t^2-2iqt+C\lambda^{2p-2} + O(\lambda t^3)+O(\lambda t^2)+O(\lambda t)+O(\lambda^{2p-1}).
\end{equation}
Omitting the exceptional fiber, we solve $\check{D}(\lambda,t)=0$ for $t$ when $\lambda$ is small.
At $\lambda=0$ there are two simple roots, $t=-2iq$ and $t=0$, so by the Implicit Function Theorem there are two analytic solutions of $\check{D}(\lambda,t)=0$:  $t=t_1(\lambda)$ and $t=t_2(\lambda)$ satisfying $t_1(0)=-2iq$ and $t_2(0)=0$.  The solution $t=t_2(\lambda)$ can be seen to satisfy $t_2(\lambda)=C\lambda^{2p-2}/(2iq)+O(\lambda^{2p-1})$ as $\lambda\to 0$.  Returning to the original equation $D(\lambda,e^{i\theta})=0$ we have found two analytic solutions $\theta=\theta(\lambda)$:
\begin{equation}
\theta=\theta_1(\lambda)=-2iq\lambda+O(\lambda^2)\quad\text{and}\quad
\theta=\theta_2(\lambda)=\frac{C}{2iq}\lambda^{2p-1}+O(\lambda^{2p}).
\end{equation}
If $q\neq 0$ (i.e., $c\neq 0$), then $\theta=\theta_1(\lambda)$ can be solved for $\lambda$ to yield a curve of spectrum given by $\lambda=i\theta/(2q)+O(\theta^2)$, which is clearly tangent to the imaginary axis for $\theta\in\mathbb{R}$.  On the other hand, the equation $\theta=\theta_2(\lambda)$ is a normal form of the type described in Remark~\ref{remark:normal-forms}, and solving for $\lambda$ in terms of $\theta$ one finds $2p-1$ curves of spectrum crossing at the origin with equal angles, and with one of the curves tangent to the imaginary axis.  Since $p>1$, there exist curves of spectrum emanating from the origin into the left and right half-planes making nonzero angles with the imaginary axis.  Representative plots are shown in Figure~\ref{fig:DegenerateSpectrum}.  The mechanism for the sudden appearance,  as lower-order derivatives of $D(\cdot,1)$ tend to zero, of spectral arcs through the origin that are not tangent to $i\mathbb{R}$ is the collision of smooth arcs of unstable spectrum   with the origin.  Therefore spectral instability is expected not just for $D_{\lambda\lambda}(0,1)=0$ but also for $D_{\lambda\lambda}(0,1)$ sufficiently small.
\begin{figure}[h]
\begin{center}
\includegraphics{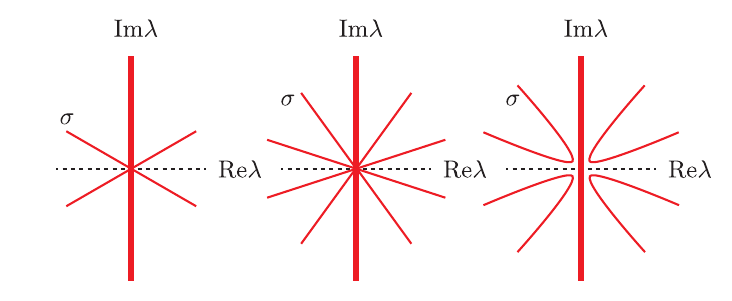}
\end{center}
\caption{The spectrum near the origin when $D(\cdot,1)$ vanishes to order $2p$ with $p=2$ (left panel) and $p=3$ (center panel).  Two curves are tangent to $i\mathbb{R}$ (illustrated as if they coincide with multiplicity two), and other curves of simple spectrum make nonzero angles with $i\mathbb{R}$.  According to Definition~\ref{def:modulational-instability}, strong modulational instability is predicted for all $p>1$.  The right panel illustrates the spectrum near the origin for $\partial^2_\lambda D(0,1)$ and $\partial^4_\lambda D(0,1)$ both small but nonzero.}
\label{fig:DegenerateSpectrum}
\end{figure}
\label{remark:one-curve}
\myendrmk
\end{remark}

Of course the presence of any spectrum $\sigma$ that is not purely imaginary implies spectral instability according to Definition~\ref{def:spectral-instability}. The particular type of instability detected by the condition $\rho=-1$ is called a \emph{modulational instability}, for which we give the following formal definition.
\begin{definition}[modulational stability and instability]
A periodic traveling wave solution $f$ of the Klein-Gordon equation \eqref{eqnlKG} is said to be
\emph{modulationally unstable} (or, to have a \emph{modulational instability}) if for every neighborhood $U$ of the origin $\lambda=0$, $(\sigma\setminus i\R)\cap U \neq\emptyset$.  Otherwise, $f$ is said to be \emph{modulationally stable}.
For an angle $\theta\in (0,\pi/2)$, let $S_\theta$ denote the union of the open sectors given by the inequalities $|\arg(\lambda)|<\theta$ or $|\arg(-\lambda)|<\theta$ (note $0\not\in S_\theta$).  A modulational instability is called \emph{weak} if for every $\theta\in (0,\pi/2)$ and for every neighborhood $U$ of the origin, $\sigma\cap U\cap S_\theta=\emptyset$.  A modulational instability that is not weak is called \emph{strong}.
\label{def:modulational-instability}
\end{definition}
Note that weak modulational instabilities correspond to the existence of curves of non-imaginary spectrum arbitrarily close to the origin that are nonetheless tangent to the imaginary axis at the origin.
There are two reasons for the terminology introduced in Definition~\ref{def:modulational-instability}.  Firstly, the unstable modes associated with the spectrum near $\lambda=0$ have Floquet multipliers very close to $1$, which in turn implies that the corresponding Floquet exponents $R(\lambda)$ (cf.\@ \eqref{eq:Bloch-form}) can be chosen to be very small imaginary numbers.   This makes the factor $e^{R(\lambda)z/T}$,  that modulates the periodic function $\bz(z,\lambda)$ in the Bloch representation of the mode, very slowly-varying.  The complex growth rate $\lambda$ of the mode is of course very small as well.  Hence the slow exponential growth of such a mode may appear to an observer as the formation of a slowly-varying modulation on the background of the underlying unstable periodic wave $f$.  (Note that the combination of slow spatial modulation and slow instability can only occur for unstable spectrum in a neighborhood of the origin, which implies that there are no additional instabilities that may appear as slow spatiotemporal modulations of the unstable wave $f$ that are not captured by Definition~\ref{def:modulational-instability}.)  Secondly, as we will show in \S\ref{secWhitham} below (cf.\@ Theorem~\ref{theorem:Whitham-mod-inst}), the presence/absence of a modulational instability turns out to be precisely correlated with the ellipticity/hyperbolicity of the system of modulation equations that arise in Whitham's fully nonlinear theory \cite{Wh} of slowly-modulated waves.  For this reason in particular, a modulational instability is sometimes called an instability of \emph{Whitham type}.   There is also a connection between the presence/absence of a modulational instability and the focusing/defocusing type of a certain nonlinear Schr\"odinger equation that arises in the weakly nonlinear modulation theory of near-equilibrium librational waves, as we will see in \S\ref{sec:NLS} (cf. Theorem~\ref{theorem:NLS}).

Observe also that a periodic traveling wave $f$ can be modulationally stable according to Definition~\ref{def:modulational-instability} without being spectrally stable in the sense of Definition~\ref{def:spectral-instability}, because $\sigma$ may coincide exactly with the imaginary axis in a neighborhood of the origin while containing values of $\lambda$ with $\Re\lambda\neq 0$ elsewhere.  For example, the parity index $\gamma$ defined in \S\ref{sec:parity-index} is designed to detect points of $\sigma$ that are real and not close to the origin, and hence that correspond to unstable modes exhibiting fairly rapid exponential growth in time. Also, in \S\ref{sec:super-rotation-unstable} we will show that superluminal rotational waves are spectrally unstable, although as we will see immediately they exhibit at worst weak modulational instability in the generic case $\kappa\neq q^2>0$ (and numerics suggest that they are in fact modulationally stable at least in the sine-Gordon case; see \cite{JMMP1}).

Proposition~\ref{corollary:index-period} and Lemma~\ref{lemcrossortan} imply the following instability result. 
\begin{theorem}\label{th:librational}
Let $V$ be a potential satisfying Assumptions~\ref{assumptionsV} and \ref{assumptionsnormalize}.
A librational periodic traveling wave solution of the Klein-Gordon equation \eqref{eqnlKG} for which $(c^2-1)T_E>0$ holds is strongly modulationally unstable.  
\end{theorem}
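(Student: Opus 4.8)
The plan is to read the modulational instability index off the hypothesis and then quote the local spectral picture of Lemma~\ref{lemcrossortan}. Since $(c^2-1)T_E>0$, Proposition~\ref{corollary:index-period} gives $\rho=\sgn(-(c^2-1)T_E)=-1$; equivalently $M_{12}(0)<0$, and because $\int_0^TF_{11}(y,0)^2\,dy>0$, the definition \eqref{defDelta} of $\kappa$ then forces $\kappa<0$. I would first treat the generic case $c\neq 0$ (so that $q\neq 0$ and Lemma~\ref{lemcrossortan} applies). In the branch $\rho=-1$, that lemma says the equation $D(\lambda,e^{i\theta})=0$ with real parameter $\theta$ describes, near $(\lambda,\theta)=(0,0)$, two distinct analytic arcs $\lambda=\lambda^\pm(\theta)=is_0^\pm\theta+O(\theta^2)$, where $s_0^\pm=(q\pm\kappa^{1/2})^{-1}=a\pm ib$ is a genuinely complex-conjugate pair with $a\neq 0$ and $b\neq 0$. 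Since these arcs are traced out by \emph{real} $\theta$, each $\lambda^\pm(\theta)$ lies in $\sigma_\theta\subset\sigma$, and because $a\neq 0$ they leave the imaginary axis; hence $(\sigma\setminus i\R)\cap U\neq\emptyset$ for every neighborhood $U$ of $\lambda=0$, which already gives modulational instability in the sense of Definition~\ref{def:modulational-instability}.

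To promote this to a \emph{strong} modulational instability I would verify that the ``weak'' clause of Definition~\ref{def:modulational-instability} fails. From $\lambda^\pm(\theta)=is_0^\pm\theta+O(\theta^2)$ one has $\arg\lambda^\pm(\theta)\to\arg(\pm is_0^\pm)$ as $\theta\to 0^\pm$; since $\Re s_0^\pm=a\neq 0$, these limiting directions make a fixed acute angle $\phi=\arctan(|b/a|)\in(0,\pi/2)$ with the imaginary axis, hence an angle $\pi/2-\phi<\pi/2$ with the real axis. Fixing any $\theta_0\in(\pi/2-\phi,\pi/2)$, it follows that $\lambda^\pm(\theta)\in S_{\theta_0}$ for all sufficiently small $\theta\neq 0$, so $\sigma\cap U\cap S_{\theta_0}\neq\emptyset$ for every neighborhood $U$ of the origin; this contradicts weakness, and the instability is therefore strong.

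Finally I would dispose of the excluded value $c=0$, a subluminal librational case. There Proposition~\ref{prop:zero-velocity-agree} gives $\sigma=\sigma^\mathrm{H}$, and the hypothesis $(c^2-1)T_E>0$ is exactly the first alternative of Remark~\ref{rem:specH}, so a nontrivial interval of the Hill spectrum $\Sigma^\mathrm{H}$ abuts $\nu=0$ from the positive side, with a gap just to the left. Pulling this back through $\nu=(\lambda/(c^2-1))^2=\lambda^2$ shows that $\sigma$ contains a real interval with $\lambda=0$ in its interior (while $\sigma$ meets the imaginary axis near $0$ only at $\lambda=0$). This real spectrum accumulates at the origin and lies in $S_\theta$ for \emph{every} $\theta\in(0,\pi/2)$, so again the instability is strong.

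I expect the only nontrivial point to be the geometric bookkeeping of the second paragraph: one must confirm that the ``acute nonzero angle with the imaginary axis'' produced by Lemma~\ref{lemcrossortan} is precisely what negates the weak-instability clause of Definition~\ref{def:modulational-instability}. This comes down to tracking $\arg\lambda^\pm(\theta)$ as $\theta\to 0$ and noting that the sectors $S_\theta$ exhaust the punctured plane as $\theta\uparrow\pi/2$; both are elementary once the leading-order parametrization and the sign $\kappa<0$ are in hand. Everything else is a direct appeal to Proposition~\ref{corollary:index-period}, Lemma~\ref{lemcrossortan}, and (for $c=0$) Proposition~\ref{prop:zero-velocity-agree} together with Remark~\ref{rem:specH}.
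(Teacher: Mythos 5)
Your proposal is correct and follows the paper's approach: read off $\rho=-1$ from Proposition~\ref{corollary:index-period} and quote the $\rho=-1$ branch of Lemma~\ref{lemcrossortan}. The paper's own proof is essentially these two sentences and stops there, leaving implicit both the translation from ``crossing curves not tangent to $i\R$'' into the sector language of Definition~\ref{def:modulational-instability}, and the case $c=0$, which is excluded from Lemma~\ref{lemcrossortan} by the standing hypothesis $q\neq 0$. You supply both. Your bookkeeping in the $c\neq 0$ case is right: with $s_0^\pm=(q\pm\kappa^{1/2})^{-1}=a\pm ib$, $a,b\neq 0$, the tangent directions $is_0^\pm$ make angle $\phi=\arctan|b/a|\in(0,\pi/2)$ with $i\R$, hence angle $\pi/2-\phi$ with $\R$, so the arcs enter $S_{\theta_0}$ for any $\theta_0>\pi/2-\phi$, defeating the weakness clause. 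Your $c=0$ argument is also sound: $\sigma=\sigma^\mathrm{H}$ by Proposition~\ref{prop:zero-velocity-agree}, $(c^2-1)T_E>0$ puts a band of $\Sigma^\mathrm{H}$ on $\nu>0$ abutting the origin (first bullet of Remark~\ref{rem:specH}), and since $\nu=\lambda^2$ when $c=0$ this pulls back to a real interval of $\sigma$ centered at $\lambda=0$, which lies in $S_\theta$ for every $\theta\in(0,\pi/2)$, so the instability is strong. Net: same route as the paper, with two genuine (if small) gaps closed --- the sector computation justifying ``strong,'' and the $c=0$ case, neither of which the paper's proof of this particular theorem addresses explicitly.
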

\begin{proof}
Computing the modulational instability index in the form given by Proposition~\ref{corollary:index-period}, one has $\rho=-1$ for all such waves.  Hence Lemma~\ref{lemcrossortan} guarantees the existence of two smooth curves of $\sigma$ crossing at $\lambda=0$, neither of which is tangent to the imaginary axis.
\end{proof}
Since modulational instability implies spectral instability, we have the following.
\begin{corollary}
All librational waves satisfying $(c^2-1)T_E>0$ are spectrally unstable.
\label{corollary:libration-spectral-instability}
\end{corollary}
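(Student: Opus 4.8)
\emph{Proof proposal.} The plan is to observe that this corollary is an immediate consequence of Theorem~\ref{th:librational} once the relevant definitions are unwound. Theorem~\ref{th:librational} asserts that every librational periodic traveling wave with $(c^2-1)T_E>0$ is \emph{strongly} modulationally unstable. By the last sentence of Definition~\ref{def:modulational-instability}, a strong modulational instability is in particular a modulational instability; hence for every neighborhood $U$ of $\lambda=0$ the set $(\sigma\setminus i\R)\cap U$ is nonempty. In particular $\sigma\not\subset i\R$, so by Definition~\ref{def:spectral-instability} the wave $f$ is spectrally unstable. This is really the entire argument, and the corollary statement's own lead-in (``Since modulational instability implies spectral instability'') already advertises it; the proof need be only a line or two.

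For completeness I would trace the chain that produces the off-axis spectrum, so the reader sees that the conclusion is not vacuous. Under the hypothesis $(c^2-1)T_E>0$, Proposition~\ref{prop:Jordan} gives $M_{12}(0)=-(c^2-1)T_Ev_0^2<0$ (recall $v_0\neq 0$), so the modulational instability index is $\rho=\sgn(M_{12}(0))=-1$, in agreement with Proposition~\ref{corollary:index-period}. Lemma~\ref{lemcrossortan} then applies in the case $\rho=-1$ and yields two distinct smooth arcs of $\sigma$ crossing at $\lambda=0$ whose tangent lines make acute, nonzero angles $\phi=\arctan(|b/a|)>0$ with the imaginary axis. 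Since these tangents are not vertical, each arc departs from $i\R$ in every punctured neighborhood of the origin, so arbitrarily close to $\lambda=0$ there are points $\lambda\in\sigma$ with $\Re\lambda\neq 0$. Invoking the Hamiltonian reflection symmetry of Proposition~\ref{prop:spectral-symmetry}, $-\lambda\in\sigma$ as well, and one of $\pm\lambda$ has $\Re>0$; this exhibits the instability in the dynamical sense of \S\ref{section:dynamics}.

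I do not anticipate any genuine obstacle: all the work has been done in Theorem~\ref{th:librational} (and, upstream, in Lemma~\ref{lemcrossortan} and Proposition~\ref{corollary:index-period}), and the corollary is purely a matter of matching the definition of modulational instability against that of spectral instability. The one point that merits an explicit sentence is precisely that ``strong modulational instability,'' despite the qualifier, still entails the presence of non-imaginary spectrum near the origin — which is immediate from Definition~\ref{def:modulational-instability}, since both the weak and the strong cases fall under ``modulational instability,'' and the latter is defined exactly by the nonemptiness of $(\sigma\setminus i\R)\cap U$ for all neighborhoods $U$ of $\lambda=0$.
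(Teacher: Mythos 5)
Your proof is correct and matches the paper's own (implicit) argument exactly: the paper treats the corollary as an immediate consequence of Theorem~\ref{th:librational}, pointing out only that modulational instability (per Definition~\ref{def:modulational-instability}) entails the existence of spectrum off the imaginary axis and hence spectral instability (per Definition~\ref{def:spectral-instability}). Your extra paragraph tracing the chain through Proposition~\ref{prop:Jordan}, $\rho=-1$, and Lemma~\ref{lemcrossortan} is a faithful recap of the work done upstream and does not constitute a different route.
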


Finally, recalling Remark~\ref{remark:Chicone-sine-Gordon}, we have the following.
\begin{corollary}
\label{corollary:sine-Gordon-libration-spectral-instability}
All librational traveling wave solutions of the sine-Gordon equation ($V(u)=-\cos(u)$) are strongly modulationally unstable and hence spectrally unstable.
\end{corollary}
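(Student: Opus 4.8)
The plan is to obtain this as an immediate specialization of Theorem~\ref{th:librational}, using the sign computation for the sine-Gordon potential already carried out in Remark~\ref{remark:Chicone-sine-Gordon}. First I would recall that in that remark we applied Chicone's criterion (Proposition~\ref{prop:Chicone}) to $V(u)=-\cos(u)$ and found $N^+(f)=4(2-\cos f)\sin^4(\tfrac12 f)\ge 0$ and $N^-(f)=-4(2+\cos f)\cos^4(\tfrac12 f)\le 0$, with neither function identically zero (the factors $2\mp\cos f$ never vanish, and the powers of $\sin(\tfrac12 f)$ or $\cos(\tfrac12 f)$ vanish only at isolated points, so $N^\pm$ cannot vanish identically on any oscillation interval). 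Chicone's theory then gives that $T$ is strictly increasing in $E$ for superluminal librational waves and strictly decreasing in $E$ for subluminal librational waves; equivalently, $(c^2-1)T_E>0$ for \emph{every} librational wave of the sine-Gordon equation.

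Then I would invoke Theorem~\ref{th:librational} directly: since the hypothesis $(c^2-1)T_E>0$ holds for all librational sine-Gordon wavetrains, each of them is strongly modulationally unstable. For completeness one can unwind the mechanism: Proposition~\ref{corollary:index-period} gives the modulational instability index $\rho=\sgn(-(c^2-1)T_E)=-1$, so Lemma~\ref{lemcrossortan} produces two distinct smooth curves of spectrum crossing at $\lambda=0$ with tangent lines making acute nonzero angles with the imaginary axis; such curves enter the sectors $S_\theta$ for every $\theta\in(0,\pi/2)$ in any neighborhood of the origin, which is precisely the definition of a strong modulational instability (Definition~\ref{def:modulational-instability}). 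Finally, since any modulational instability exhibits points of $\sigma$ with $\Re\lambda\neq 0$, spectral instability follows from Definition~\ref{def:spectral-instability} (equivalently, one simply cites Corollary~\ref{corollary:libration-spectral-instability}).

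I do not expect a genuine obstacle here, since the corollary is a direct consequence of results already established. The only point that deserves a moment of care is verifying that the sine-Gordon case falls under the hypothesis of Theorem~\ref{th:librational} for \emph{all} librational parameter values $(E,c)\in\region_<^\mathrm{lib}\cup\region_>^\mathrm{lib}$, i.e.\ that there is no exceptional energy at which $T_E=0$; but this is exactly what the strict semidefiniteness and non-triviality of $N^\pm$ guarantee through Chicone's criterion, as recorded in Remark~\ref{remark:Chicone-sine-Gordon}. Hence the proof amounts to quoting Remark~\ref{remark:Chicone-sine-Gordon} to establish $(c^2-1)T_E>0$ and then applying Theorem~\ref{th:librational} and the implication ``modulational instability $\Rightarrow$ spectral instability.''
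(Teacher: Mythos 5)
Your proposal is correct and matches the paper's proof exactly: the paper establishes the corollary by citing Remark~\ref{remark:Chicone-sine-Gordon} (which shows $(c^2-1)T_E>0$ for all librational sine-Gordon waves via Chicone's criterion) and then invoking Theorem~\ref{th:librational} together with Corollary~\ref{corollary:libration-spectral-instability}. The additional unwinding of the mechanism through Proposition~\ref{corollary:index-period} and Lemma~\ref{lemcrossortan} is a harmless elaboration of material the paper already packages inside Theorem~\ref{th:librational}.
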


\begin{remark}
By Proposition~\ref{prop:rotational-monotone}, the modulational instability index is $\rho=1$ for rotational waves of both sub- and superluminal types.  Hence in the generic case of $\kappa\neq q^2>0$, the spectrum $\sigma$ is locally tangent to the imaginary axis at the origin $\lambda=0$, but this is inconclusive for stability because these curves could fail to be confined to the imaginary axis, or because there could be other parts of the spectrum $\sigma$ with nonzero real parts far from  the origin.  In other words, there could be either a weak modulational instability, or an instability of non-modulational  type, neither of which can be detected by the modulational instability index.  
In the special case of $\kappa=q^2>0$, there is a strong modulational instability, according to Remark~\ref{remark:one-curve}.
\label{remark:modulational-instability-index-troubles}
\myendrmk
\end{remark}

\subsection{Application of the modulational instability index to the Hill discriminant}
\label{section:mod-index-Hill}
The index $\rho$ has other applications as well.  We will now use it to settle an issue left unresolved earlier (see Remark~\ref{rem:specH}).
The quantity $\Delta^\mathrm{H}(\nu):=\tr(\bM^\mathrm{H}(\nu))$ is called the \emph{Hill discriminant}.  Evaluating for $\nu=0$, where the equations \eqref{eq:spectral} and \eqref{eq:hill} coincide, gives $\Delta^\mathrm{H}(0)=\tr(\bM(0))=2$ by Proposition~\ref{prop:Jordan}.
The sign of the derivative $\Delta_\nu^{\mathrm{H}}(0)$ can be computed explicitly:
\begin{lemma}
\label{lemma:sign-of-discriminant}
The Hill discriminant satisfies $\sgn(\Delta_\nu^\mathrm{H}(0))=\rho$ (and $\Delta_\nu^\mathrm{H}(0)=0$ if $\rho=0$).
\end{lemma}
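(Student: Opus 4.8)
The plan is to compute $\Delta^{\mathrm{H}}_\nu(0)$ directly from the series expansion of the Hill monodromy matrix $\bM^{\mathrm{H}}(\nu)$ that was already established in \S\ref{sec:series-expansion}, and then compare the resulting sign with the definition of $\rho$ via $M_{12}(0)$. Recall from \eqref{eq:Hill-M1} that
\[
\bM^{\mathrm{H}}(\nu) = \bM(0) + \nu\bM_1^{\mathrm{H}} + O(\nu^2), \quad
\bM_1^{\mathrm{H}} = \bM(0)\int_0^T \bF_0(y)^{-1}\sigma_-\bF_0(y)\,dy,
\]
so that $\Delta^{\mathrm{H}}_\nu(0) = \tr(\bM_1^{\mathrm{H}})$. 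Using Proposition~\ref{prop:Jordan}, $\bM(0) = \Id + m\sigma_-^{\mathsf T}$ where I write $m := -(c^2-1)T_E v_0^2 = M_{12}(0)$ in the $(1,2)$-entry (equivalently $\bM(0)=\Id + m\,\bigl(\begin{smallmatrix}0&1\\0&0\end{smallmatrix}\bigr)$). Since $\tr(\bM(0)\bX) = \tr(\bX) + m X_{21}$ for any $2\times 2$ matrix $\bX$, we get
\[
\Delta^{\mathrm{H}}_\nu(0) = \tr\!\left(\int_0^T \bF_0(y)^{-1}\sigma_-\bF_0(y)\,dy\right) + m\left(\int_0^T \bF_0(y)^{-1}\sigma_-\bF_0(y)\,dy\right)_{\!21}.
\]
The first term vanishes because $\tr(\bF_0^{-1}\sigma_-\bF_0) = \tr(\sigma_-) = 0$ pointwise in $y$. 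Hence $\Delta^{\mathrm{H}}_\nu(0) = m\cdot(\,\cdot\,)_{21}$, and I must identify the $(2,1)$-entry of the integrated matrix.

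The key computation is then to evaluate $\bigl(\bF_0(y)^{-1}\sigma_-\bF_0(y)\bigr)_{21}$. Writing $\bF_0(y)$ in terms of its columns — or more conveniently using the explicit formula \eqref{eq:F0-1}, $\bF_0(y) = v_0^{-1}\bigl(\begin{smallmatrix} f_z & (c^2-1)v_0^2 f_E \\ f_{zz} & (c^2-1)v_0^2 f_{Ez}\end{smallmatrix}\bigr)$, together with the inverse formula coming from the Wronskian identity \eqref{wronskian} — one finds that $\sigma_-\bF_0(y)$ extracts the top row of $\bF_0$ into the bottom row, and conjugating back picks out $F_{11}(y,0)^2$. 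Concretely, for a $2\times2$ matrix $\bF$ with $\det\bF \equiv \det\bF_0(0) = 1$ one has $\bF^{-1}\sigma_-\bF = \bigl(\begin{smallmatrix}-F_{11}F_{21} & -F_{11}^2 \\ F_{21}^2 & F_{11}F_{21}\end{smallmatrix}\bigr)$, so the $(2,1)$-entry is $F_{21}(y,0)^2 = F_{11z}(y,0)^2 \ge 0$... here I must be careful: I actually want the entry whose integral multiplies $m$, and tracking it through \eqref{defDelta} should produce $\int_0^T F_{11}(y,0)^2\,dy > 0$ (this matches the quantity $\kappa$ defined in \eqref{defDelta}, consistent with $D_{\lambda\lambda}(0,1) = 2(q^2-\kappa)$ and the role of $\kappa$ in \eqref{asymptoticFloq}). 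Either way, the integral is a manifestly positive quantity, so $\Delta^{\mathrm{H}}_\nu(0)$ equals $m = M_{12}(0)$ times a strictly positive constant.

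The conclusion follows: $\sgn(\Delta^{\mathrm{H}}_\nu(0)) = \sgn(M_{12}(0)) = \rho$ by Definition~\ref{defmodinstindx}, and if $M_{12}(0)=0$ (i.e.\ $\rho=0$, equivalently $T_E=0$) then $\Delta^{\mathrm{H}}_\nu(0)=0$. The main obstacle I anticipate is bookkeeping the matrix-entry computation cleanly — making sure I extract the correct entry of $\int_0^T\bF_0(y)^{-1}\sigma_-\bF_0(y)\,dy$ and that the positive prefactor is genuinely positive (which hinges only on $F_{11}(0,0)=1$ and continuity, so the integrand is positive on a set of positive measure and the integral cannot vanish). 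An alternative, essentially equivalent route that avoids some of this is to use Corollary~\ref{corollary:floquetpandq}: the Hill multipliers satisfy $\mu^{\mathrm{H}}(\nu(\lambda)) = e^{-q\lambda}\mu(\lambda)$, and $\Delta^{\mathrm{H}}(\nu) = \mu^{\mathrm{H}}_+ + \mu^{\mathrm{H}}_-$; feeding in the expansion \eqref{asymptoticFloq} of $\mu_\pm(\lambda)$ and the relation $\nu = (\lambda/(c^2-1))^2$ gives $\Delta^{\mathrm{H}} = 2 + (\text{const})\,\kappa\,\nu + O(\nu^2)$ after the $e^{-q\lambda}$ factors cancel the linear-in-$\lambda$ terms, so that $\Delta^{\mathrm{H}}_\nu(0)$ has the sign of $\kappa$, hence of $M_{12}(0)$, hence equals $\rho$. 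I would present the second route as the streamlined argument and mention the first as a cross-check.
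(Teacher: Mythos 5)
Your first route is essentially the paper's own proof: compute $\tr(\bM_1^\mathrm{H})$ using the fact that $\bM(0)$ is upper-triangular unipotent, so $\tr(\bM(0)\bX)=\tr\bX+M_{12}(0)X_{21}$, note $\tr(\bF_0^{-1}\sigma_-\bF_0)\equiv 0$ by similarity invariance, and identify the $(2,1)$-entry of the conjugated $\sigma_-$ as $F_{11}^2$. The matrix formula you wrote down is off, though, and your own hesitation is well founded: for a determinant-one matrix $\bF$ one has $\bF^{-1}=\bigl(\begin{smallmatrix}F_{22}&-F_{12}\\-F_{21}&F_{11}\end{smallmatrix}\bigr)$ and hence
\begin{equation*}
\bF^{-1}\sigma_-\bF=\begin{pmatrix}-F_{11}F_{12} & -F_{12}^2\\ F_{11}^2 & F_{11}F_{12}\end{pmatrix},
\end{equation*}
so the $(2,1)$-entry is $F_{11}^2$, not $F_{21}^2$; you swapped $F_{12}\leftrightarrow F_{21}$ (or an $\bF$ with an $\bF^{-1}$). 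The upshot $\Delta^\mathrm{H}_\nu(0)=M_{12}(0)\int_0^TF_{11}(y,0)^2\,dy$ is exactly the paper's displayed identity, and positivity of the integral finishes the argument.

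The second route via Corollary~\ref{corollary:floquetpandq} is a genuinely different and cleaner path, but as stated there is a small gap: \eqref{asymptoticFloq} gives $\mu_\pm(\lambda)$ only through $O(\lambda)$, whereas $\nu\sim\lambda^2$, so you need $\lambda^2$ data. The fix is that you only ever need the \emph{sum} $\mu_+(\lambda)+\mu_-(\lambda)=\tr(\bM(\lambda))$, for which \eqref{eq:tr-M-expansion} supplies the $\lambda^2$ coefficient $q^2+\kappa$. Then $\Delta^\mathrm{H}(\nu(\lambda))=e^{-q\lambda}\tr(\bM(\lambda))=2+\kappa\lambda^2+O(\lambda^3)$ (the $q$-dependence cancels completely), and substituting $\lambda^2=(c^2-1)^2\nu$ gives $\Delta^\mathrm{H}_\nu(0)=(c^2-1)^2\kappa=M_{12}(0)\int_0^TF_{11}(y,0)^2\,dy$, in agreement with the first route. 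Presented this way, the second route avoids the matrix-entry bookkeeping entirely and makes the cancellation of the $q$-terms transparent, which is a modest advantage over the paper's calculation.
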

\begin{proof}
The monodromy matrix $\bM^\mathrm{H}(\nu)$ for Hill's equation was analyzed in a neighborhood of $\nu=0$ in \S\ref{sec:series-expansion}, where it was shown that the elements of $\bM^\mathrm{H}(\nu)$ admit convergent power series expansions about $\nu=0$.  By differentiation of the power series formula for $\bM^\mathrm{H}(\nu)$ given in \eqref{eq:Hill-M1}, it follows upon setting $\nu=0$ that
\begin{equation}
\bM^\mathrm{H}_\nu(0)=\bM_1^\mathrm{H}=\bM(0)\int_0^T\bF_0(y)^{-1}\sigma_-\bF_0(y)\,dy.
\end{equation}
Taking the trace and noting that $\bF_0(z)=\bF(z,0)$, one obtains
\begin{equation}
\Delta_\nu^\mathrm{H}(0)=M_{12}(0)\int_0^TF_{11}(y,0)^2\,dy.
\end{equation}
Since $\bF(z,0)$ is a fundamental solution matrix of a system of equations with real coefficients, and since $\bF(z,0)$ is normalized to the identity at $z=0$, $F_{11}(z)$ is a real-valued differentiable function satisfying $F_{11}(0,0)=1$, and hence the integral is strictly positive.
Therefore $\sgn(\Delta_\nu^\mathrm{H}(0))=\sgn(M_{12}(0))=\rho$ according to Definition~\ref{defmodinstindx}. 
\end{proof}
\begin{remark} \label{rem:mu0} 
Combining Proposition~\ref{corollary:index-period} with
Lemma~\ref{lemma:sign-of-discriminant} shows that if $f$ is a librational wave, then 
\begin{itemize}
\item
$\Delta_\nu^{\mathrm{H}}(0)<0$ if $(c^2-1)T_E>0$ and hence $\nu_2^{(0)}<\nu_1^{(0)}=0$;
\item
$\Delta_\nu^{\mathrm{H}}(0)>0$ if $(c^2-1)T_E<0$ and hence $0=\nu_2^{(0)}<\nu_1^{(0)}$;
\item
$\Delta_\nu^{\mathrm{H}}(0)=0$ if $T_E=0$, and hence $\nu_2^{(0)}=\nu_1^{(0)}=0$.
\end{itemize}
Here we recall that $\nu_1^{(0)}$ and $\nu_2^{(0)}$ are particular periodic eigenvalues of the Hill's spectrum for the librational wave $f$, as defined in \S\ref{section-Hill-spectrum}.
This concludes the characterization of the Hill's spectrum $\Sigma^\mathrm{H}$ corresponding to librational waves for $\nu$ near $\nu=0$ (cf.\@ Remark~\ref{rem:specH}).
\myendrmk
\end{remark}

\section{Stability properties of rotational waves} 
\label{secfurther}
According to Remark~\ref{remark:parity-index-troubles}, the parity index $\gamma$ is not easy to calculate for rotational waves. According to Remark~\ref{remark:modulational-instability-index-troubles}, the modulational instability index satisfies $\rho=1$ for rotational waves, but unfortunately this is inconclusive for stability.  Therefore, we have so far not been able to determine the spectral stability properties of rotational waves.  In this section, we remedy this by proving the following:
\begin{theorem} \label{th:rotational} Suppose that the potential $V$ satisfies Assumptions~\ref{assumptionsV} and \ref{assumptionsnormalize}.  Then we have the following: 
\begin{itemize}
\item[(i)] all periodic traveling waves of the Klein-Gordon equation \eqref{eqnlKG} of superluminal rotational type are spectrally unstable;
\item[(ii)] all periodic traveling waves of the Klein-Gordon equation \eqref{eqnlKG} of subluminal rotational type are spectrally stable. 
\end{itemize}
\end{theorem}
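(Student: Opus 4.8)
\section*{Proof proposal}

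The plan is to dispose of the two statements by unrelated arguments: part (ii) follows from an energy estimate that pins the spectrum to the imaginary axis, while part (i) requires a genuinely global argument producing a point of $\sigma$ with nonzero real part --- since, by Remarks~\ref{remark:parity-index-troubles} and~\ref{remark:modulational-instability-index-troubles}, neither the parity index nor the modulational instability index resolves rotational waves.

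For part (ii), suppose $\lambda\in\sigma$ with $\Re\lambda\neq 0$, so there is a nontrivial $w$ solving \eqref{eq:spectral} with the boundary condition \eqref{eq:periodicode}, hence (by uniqueness, as the coefficients are $T$-periodic) satisfying $w(z+T)=e^{i\theta}w(z)$ for all $z$, with $w\in C^2$. I would start from the real-part identity \eqref{seis} obtained in the proof of Lemma~\ref{lem:bound}, namely $(1-c^2)\|w_z\|^2+|\lambda|^2\|w\|^2+\langle w,V''(f)w\rangle=0$. Since $f$ is rotational, Proposition~\ref{cor:hnegdef} applies and gives $\langle w,\cH w\rangle=-\|w_z\|^2+(c^2-1)^{-1}\langle w,V''(f)w\rangle\le 0$; as $c^2-1<0$ in the subluminal case, this rearranges to $(1-c^2)\|w_z\|^2+\langle w,V''(f)w\rangle\ge 0$. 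Plugging this into \eqref{seis} forces $|\lambda|^2\|w\|^2\le 0$, so $\lambda=0$, contradicting $\Re\lambda\neq 0$. Therefore $\sigma\subset i\R$ and $f$ is spectrally stable.

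For part (i), assume $c>1$ (the case $c<-1$ is entirely analogous, working in the second quadrant in place of the first). Recall from Corollary~\ref{corollary:floquetpandq} that $\mu(\lambda)=e^{q\lambda}\mu^\mathrm{H}(\nu(\lambda))$ with $\nu(\lambda)=(\lambda/(c^2-1))^2$ and $q=cT/(c^2-1)>0$. Because $f$ is rotational, $\Sigma^\mathrm{H}\subset(-\infty,0]$; and because $f$ is non-constant, $z\mapsto V''(f(z))$ is non-constant (for a rotational wave $f$ sweeps all of $[0,2\pi)$, and $V''$ is not constant by Assumption~\ref{assumptionsV}), so $P=V''(f)/(c^2-1)$ is a non-constant periodic potential and $\Sigma^\mathrm{H}$ has at least one gap (Remark~\ref{remark:Hill-gaps}); fix $\nu_0<0$ interior to such a gap. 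On the slit plane $\C\setminus(-\infty,0]$ the two Hill multipliers $\mu^\mathrm{H}_\pm$ are single-valued and analytic (all their branch points, the band edges, lie in $(-\infty,0]$); let $\mu^\mathrm{H}_{\mathrm{big}}$ be the branch normalized to exceed $1$ on $(0,\infty)$, so $\log|\mu^\mathrm{H}_{\mathrm{big}}|$ is harmonic and nonnegative on $\C\setminus\Sigma^\mathrm{H}$, vanishing exactly on $\Sigma^\mathrm{H}$, with $\log|\mu^\mathrm{H}_{\mathrm{big}}(\nu)|\sim T\,\Re\sqrt{\nu}$ as $\nu\to\infty$ (standard WKB asymptotics for Hill's equation). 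Define $u(\lambda):=\log|\mu^\mathrm{H}_{\mathrm{big}}(\nu(\lambda))|-q\,\Re\lambda$; this is continuous on the closed first quadrant $\overline Q$, since $\nu(\lambda)$ stays in the closed upper half $\nu$-plane there and no branch ambiguity arises. By Proposition~\ref{prop:deteq0}, $u(\lambda_*)=0$ with $\lambda_*\in Q$ means the multiplier $e^{q\lambda_*}/\mu^\mathrm{H}_{\mathrm{big}}(\nu(\lambda_*))$ has modulus one, so $\lambda_*\in\sigma$ with $\Re\lambda_*>0$. To find such a zero: on the positive imaginary axis $u(iy)=\log|\mu^\mathrm{H}_{\mathrm{big}}(-y^2/(c^2-1)^2)|\ge 0$, and for $y_0:=(c^2-1)\sqrt{-\nu_0}$ we get $u(iy_0)>0$ (as $\nu(iy_0)=\nu_0$ lies in a Hill gap), hence $u>0$ at a nearby interior point of $Q$; whereas on the positive real axis $u(x)\sim\bigl(T/(c^2-1)-q\bigr)x=-\tfrac{T}{c+1}x\to-\infty$, so $u<0$ at some $x_1>0$. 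Joining these two points by a segment in $\overline Q$ whose interior lies in $Q$ and applying the intermediate value theorem produces $\lambda_*\in Q$ with $u(\lambda_*)=0$; thus $f$ is spectrally unstable.

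The main obstacle is part (i). The indices developed earlier detect only spectrum that is either far from the origin on the real axis or that bifurcates off the imaginary axis at $\lambda=0$, and for superluminal rotational waves the relevant unstable spectrum does neither, being created ``at a distance'' along the imaginary axis. The two facts driving the continuity argument are the delicate ones: that a single gap in $\Sigma^\mathrm{H}$ makes $u$ strictly positive somewhere on the imaginary axis --- which uses $\Sigma^\mathrm{H}\subset(-\infty,0]$, so that the gap is reached as $\nu(\lambda)$ with $\lambda$ imaginary --- and that the factor $e^{q\lambda}$, present precisely because $q\neq 0$ (automatic for superluminal waves), dominates $\log|\mu^\mathrm{H}_{\mathrm{big}}(\nu(\lambda))|$ in the correct half-plane so that $u\to-\infty$. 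One must also check that the zero of $u$ from the intermediate value theorem genuinely lies in the open quadrant (so $\Re\lambda_*\neq 0$) rather than slipping back onto $i\R$, and that the branch of $\mu^\mathrm{H}_{\mathrm{big}}$ is consistent near the negative real $\nu$-axis --- both handled by restricting attention to a single quadrant.
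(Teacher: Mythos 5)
Your argument is correct and follows essentially the same route as the paper. For part (ii), feeding the negative semidefiniteness of $\cH$ (Proposition~\ref{cor:hnegdef}) into the real-part identity \eqref{seis} is the same computation the paper carries out by solving the quadratic \eqref{eq:lambdaquadratic} for $\lambda$ and inspecting the sign under the radical in \eqref{eq:quadraticsoln}; for part (i), your test function $u(\lambda)=\log|\mu^{\mathrm{H}}_{\mathrm{big}}(\nu(\lambda))|-q\,\Re\lambda$ is (up to sign) a factor of the paper's symmetric $G(\lambda)=(q\Re\lambda)^2-\bigl(\log|\mu^{\mathrm{H}}(\nu(\lambda))|\bigr)^2$ from \eqref{eq:G-rewrite}, and both proofs exploit a gap in $\Sigma^{\mathrm{H}}$ to get one sign on the imaginary axis, superluminal asymptotics to get the opposite sign on the real axis, and then apply the intermediate value theorem.
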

The proofs of statements (i) and (ii) are quite different and will be presented below in \S\ref{sec:super-rotation-unstable} and
\S\ref{sec:sub-rotation-stable} respectively.  The proof of (i) also implies spectral instability results for certain librational waves; these will be described briefly in Remarks~\ref{rem:libstab} and \ref{remark:librational-general-speed-Hill-gap}.
\subsection{Superluminal rotational waves} 
\label{sec:super-rotation-unstable}
We will prove spectral instability of superluminal rotational waves by showing the existence of a point $\lambda\in\sigma$ with $\Re\lambda\neq 0$.  Our method is based on the introduction of
a spectrum-detecting function $G:\C\to\R$ defined by
\begin{equation}
G(\lambda):=\log|\mu_+(\lambda)|\log|\mu_-(\lambda)|,
\label{eq:G-function-define}
\end{equation}
where $\mu=\mu_\pm(\lambda)$ are the two Floquet multipliers (eigenvalues of the monodromy
matrix $\bM(\lambda)$) associated with the superluminal rotational wave.  The function $G$ is well-defined on $\C$ and continuous, even at the branching points where the two multipliers degenerate, a fact following from the symmetric fashion in which the multipliers enter.  Moreover, according to Proposition~\ref{prop:deteq0}, $G(\lambda)=0$ if and only if $\lambda\in\sigma$.  This fact makes $G$ very similar to an Evans function; however $G$ is most certainly not analytic at any point --- indeed it is a real-valued function of a complex variable.  Analyticity of the Evans function is very useful in certain applications, especially numerical computations where the Argument Principle can be used to confine the spectrum by means of evaluation of the Evans function along a system of curves, resulting in an effective reduction in dimension over an exhaustive search of the complex plane.  However,  many other applications  exploit mere continuity of the Evans function (see the argument behind the utility of the parity index $\gamma$ described in \S\ref{sec:parity-index}, for example). 
We shall use continuity of $G$ to locate points of the spectrum $\sigma$ simply by looking for sign changes and applying the Intermediate Value Theorem.

\begin{lemma}
For each rotational periodic traveling wave solution $f$ of  equation \eqref{eqnlKG}, there exists an imaginary number $\lambda_-\in i\R$ such that $G(\lambda_-)<0$.
\label{lemma:G-negative}
\end{lemma}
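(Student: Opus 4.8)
The plan is to restrict the spectrum-detecting function $G$ to the imaginary axis, where Corollary~\ref{corollary:floquetpandq} identifies the moduli of the Floquet multipliers $\mu(\lambda)$ of \eqref{eq:spectral} with those of the Hill multipliers $\mu^\mathrm{H}(\nu(\lambda))$, and then to choose $\lambda_-\in i\R$ so that $\nu(\lambda_-)$ falls in a \emph{spectral gap} of the associated Hill's equation \eqref{eq:hill}. The point is that for a rotational wave such a gap necessarily lies on the negative $\nu$-axis, which is exactly the image of $i\R$ under $\lambda\mapsto\nu(\lambda)$; in the interior of a gap one Hill multiplier has modulus $>1$ and the other modulus $<1$, forcing $G(\lambda_-)<0$.

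First I would record that for $\lambda\in i\R$ the quantity $\nu(\lambda)=(\lambda/(c^2-1))^2$ is real and non-positive. Next, by Proposition~\ref{cor:hnegdef} together with the discussion in \S\ref{section-Hill-spectrum}, for the rotational wave $f$ one has $\Sigma^\mathrm{H}\subset(-\infty,0]$ with $\max\Sigma^\mathrm{H}=0$ (the function $y=f_z$ being a nontrivial $T$-periodic solution of \eqref{eq:hill} at $\nu=0$, so $0\in\Sigma^\mathrm{H}$). Moreover the Hill potential $P(z)=V''(f(z))/(c^2-1)$ is non-constant: a rotational profile $f$ is monotone and increases by $2\pi$ per period, hence assumes every real value modulo $2\pi$, so $V''\circ f$ has the full range of $V''$ on $\R$, which cannot be constant since a constant $V''$ would make $V$ a polynomial of degree at most two, and the only periodic such function is a constant, contradicting $\min_{\R}V\neq\max_{\R}V$. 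By \cite[Theorem 7.12]{MW66} (cf.\@ Remark~\ref{remark:Hill-gaps}) it follows that $\Sigma^\mathrm{H}$ has at least one gap, i.e.\@ a bounded connected component $(\nu',\nu'')$ of $\R\setminus\Sigma^\mathrm{H}$; since the unique unbounded component of the complement is $(\max\Sigma^\mathrm{H},+\infty)=(0,+\infty)$, every such gap lies in $(-\infty,0)$. Fix $\nu_*$ in the interior of one such gap and set $\lambda_-:=i(c^2-1)\sqrt{-\nu_*}$, which lies in $i\R\setminus\{0\}$ and satisfies $\nu(\lambda_-)=\nu_*<0$.

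It then remains to evaluate $G(\lambda_-)$. Because $\nu_*\notin\Sigma^\mathrm{H}$, neither Hill multiplier $\mu^\mathrm{H}_\pm(\nu_*)$ lies on the unit circle, while their product equals $\det\bM^\mathrm{H}(\nu_*)=1$ (the Wronskian for Hill's equation is constant and equal to $1$). Hence $\{|\mu^\mathrm{H}_+(\nu_*)|,\,|\mu^\mathrm{H}_-(\nu_*)|\}=\{r,\,1/r\}$ for some $r\neq 1$, so $\log|\mu^\mathrm{H}_+(\nu_*)|\,\log|\mu^\mathrm{H}_-(\nu_*)|=-(\log r)^2<0$. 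Finally, Corollary~\ref{corollary:floquetpandq} gives $\mu^\mathrm{H}(\nu(\lambda))=e^{-q\lambda}\mu(\lambda)$ with $q\in\R$ defined by \eqref{eq:q-def}; evaluating at $\lambda=\lambda_-\in i\R$ yields $|e^{-q\lambda_-}|=1$ and therefore $|\mu_\pm(\lambda_-)|=|\mu^\mathrm{H}_\pm(\nu_*)|$, whence $G(\lambda_-)=\log|\mu_+(\lambda_-)|\,\log|\mu_-(\lambda_-)|=\log|\mu^\mathrm{H}_+(\nu_*)|\,\log|\mu^\mathrm{H}_-(\nu_*)|<0$. The only nontrivial ingredient is the existence of a gap of $\Sigma^\mathrm{H}$ in $(-\infty,0)$, which rests on the non-constancy of $P$; everything else is a direct consequence of the imaginary-axis correspondence between the two families of Floquet multipliers and of standard Hill's equation theory.
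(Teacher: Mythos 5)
Your proof is correct and follows essentially the same route as the paper's: restrict to the imaginary axis (where $\nu(\lambda)\le 0$), use the existence of a gap in the rotational Hill's spectrum on the negative half-line to find $\nu_*\notin\Sigma^\mathrm{H}$, and transfer back via Corollary~\ref{corollary:floquetpandq}. The only difference of note is presentational: the paper first derives the global identity $G(\lambda)=(q\Re\lambda)^2-(\log|\mu^\mathrm{H}(\nu(\lambda))|)^2$ and then specializes, whereas you restrict to $i\R$ from the start and work directly with the product $\mu^\mathrm{H}_+\mu^\mathrm{H}_-=1$; you also spell out the non-constancy of $P$ (which the paper merely asserts). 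Both are the same argument.
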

\begin{proof}
The product of the Floquet multipliers is given by Abel's Theorem (cf.\@ \eqref{eq:Abels-Identity}).  
Combining this with Corollary~\ref{corollary:floquetpandq}, which relates the Floquet multipliers $\mu$ with those of Hill's equation, it is easy to show that $G$ can equivalently be rendered in the form
\begin{equation}
G(\lambda)=\left(q\Re\lambda\right)^2-\left(\log|\mu^\mathrm{H}(\nu(\lambda))|\right)^2,
\label{eq:G-rewrite}
\end{equation}
where $q\in\R$ is defined by \eqref{eq:q-def} and where $\mu^\mathrm{H}(\nu(\lambda))$ denotes any Floquet multiplier of Hill's equation \eqref{eq:hill} evaluated at $\nu=\nu(\lambda)$.  
It follows immediately that $G(\lambda)\leq 0$ if $\Re\lambda=0$.  We will show that the inequality  holds strictly for certain $\lambda\in i\R$.  

As explained in \S\ref{section-Hill-spectrum}, The Hill's spectrum $\Sigma^\mathrm{H}$ contains the point $\nu=0$ and for rotational waves $f$, $\nu\in\Sigma^\mathrm{H}$ implies $\nu\le 0$.  Since the Hill potential $P(z)$ is non-constant for all rotational waves, according to Remark~\ref{remark:Hill-gaps} the Hill's spectrum necessarily contains at least one gap, i.e.\@ there exists some $\nu<0$ such that $\nu\not\in\Sigma^\mathrm{H}$.  It follows that there exists a pair $\pm\lambda_-$ of nonzero imaginary numbers such that $\nu(\pm \lambda_-)\not\in\Sigma^\mathrm{H}$, where the quadratic mapping $\lambda\mapsto\nu(\lambda)$ is defined in \eqref{eq:hill}.  Since $\nu(\pm\lambda_-)\not\in\Sigma^\mathrm{H}$, by definition neither of the corresponding Floquet multipliers $\mu^\mathrm{H}(\nu(\pm\lambda_-))$ can lie on the unit circle, from which it follows that $G(\pm\lambda_-)<0$ holds (strict inequality).
\end{proof}

\begin{lemma}
For each superluminal periodic traveling wave solution $f$ of equation \eqref{eqnlKG} there exists a non-imaginary number $\lambda_+\in\C\setminus i\R$ such that $G(\lambda_+)>0$.
\label{lemma:G-positive}
\end{lemma}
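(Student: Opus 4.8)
The plan is to exploit the rewritten form of $G$ from the previous lemma,
\begin{equation*}
G(\lambda) = (q\,\Re\lambda)^2 - \left(\log|\mu^\mathrm{H}(\nu(\lambda))|\right)^2,
\end{equation*}
and to locate a real $\lambda$ (so that $\Re\lambda = \lambda \neq 0$) at which the Hill multiplier $\mu^\mathrm{H}(\nu(\lambda))$ stays on the unit circle, i.e.\ $\log|\mu^\mathrm{H}(\nu(\lambda))| = 0$. For such a $\lambda$ we would get $G(\lambda) = q^2\lambda^2 > 0$ immediately, since $q \neq 0$ (we are in the superluminal case, $c^2 > 1$, so $c \neq 0$ and hence $q = cT/(c^2-1)$; if $c = 0$ the statement is vacuous or handled separately, but for a \emph{rotational} superluminal wave $c^2 > 1$ forces $c \neq 0$). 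The key point is that for real $\lambda$, $\nu(\lambda) = (\lambda/(c^2-1))^2 \geq 0$, so we are asking for a \emph{nonnegative} value of $\nu$ that lies in the Hill spectrum $\Sigma^\mathrm{H}$. First I would recall from \S\ref{section-Hill-spectrum} that for rotational waves $\Sigma^\mathrm{H} \subset \R_-$ — the Hill spectrum lies on the \emph{negative} half-line, with $\nu_0^{(0)} = 0$ its supremum. So the only nonnegative point of $\Sigma^\mathrm{H}$ is $\nu = 0$ itself, corresponding to $\lambda = 0$, which is excluded. This means the naive approach via real $\lambda$ does not directly work, and I must instead allow $\lambda$ to have a nonzero real part while keeping $\Im\lambda$ free.

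So the refined plan is as follows. Take $\lambda = a + ib$ with $a \neq 0$. Then $\nu(\lambda) = (a+ib)^2/(c^2-1)^2 = (a^2 - b^2 + 2iab)/(c^2-1)^2$ is genuinely complex (not real) unless $ab = 0$; since $a \neq 0$ we would need $b = 0$ to land on the real $\nu$-axis, which returns us to the previous obstruction. Therefore the argument must use a \emph{complex} value of $\nu$ outside $\Sigma^\mathrm{H} \cup \R$. For complex $\nu$, the two Hill multipliers $\mu^\mathrm{H}$ satisfy $\mu_1^\mathrm{H}\mu_2^\mathrm{H} = 1$ (Abel's theorem applied to \eqref{eq:hill}, whose coefficient matrix is traceless), so $\log|\mu_+^\mathrm{H}| = -\log|\mu_-^\mathrm{H}|$, and $G(\lambda) = (q\Re\lambda)^2 - (\log|\mu_+^\mathrm{H}(\nu(\lambda))|)^2$ is well-defined independent of the labeling. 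The strategy is to choose $\lambda$ real and \emph{large}: then $\nu = \nu(\lambda)$ is real, large, and positive, hence far above $\sup\Sigma^\mathrm{H} = 0$, so $\nu \notin \Sigma^\mathrm{H}$ and neither Hill multiplier is on the unit circle; thus $\log|\mu_+^\mathrm{H}(\nu)| \neq 0$ and we must compare its growth rate to that of $q|\lambda|$. For large real $\nu$, Hill's equation $y_{zz} + P(z)y = \nu y$ behaves like $y_{zz} = \nu y$, whose monodromy matrix over period $T$ has multipliers $e^{\pm\sqrt{\nu}\,T}$, so $\log|\mu_+^\mathrm{H}(\nu)| \sim \sqrt{\nu}\,T = |\lambda|T/|c^2-1|$ as $\lambda \to \pm\infty$. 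Meanwhile $q|\Re\lambda| = |c|\,T\,|\lambda|/(c^2-1)$. Comparing coefficients, $G(\lambda)/\lambda^2 \to T^2\big(c^2/(c^2-1)^2 - 1/(c^2-1)^2\big) = T^2/(c^2-1) > 0$ in the superluminal case $c^2 > 1$. Hence $G(\lambda) > 0$ for all sufficiently large real $\lambda$, and we may take $\lambda_+$ to be any such value (which indeed lies in $\C \setminus i\R$).

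The main obstacle I anticipate is making the asymptotic estimate $\log|\mu_+^\mathrm{H}(\nu)| = \sqrt{\nu}\,T + O(1)$ (or even $= \sqrt{\nu}\,T(1 + o(1))$, which suffices) rigorous and uniform. This is a standard WKB / large-$\nu$ asymptotic analysis of Hill's equation: one writes the fundamental solution matrix $\bF^\mathrm{H}(z,\nu)$ as a perturbation of $e^{z\bA^\infty}$ with $\bA^\infty = \left(\begin{smallmatrix}0 & 1\\ \nu & 0\end{smallmatrix}\right)$, and controls the error via a Gr\"onwall / Volterra-integral estimate, using that $P(z)$ is bounded (it is continuous and $T$-periodic) so $\max|P|$ is a fixed constant. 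The leading Floquet exponent of the exact equation then differs from $\sqrt{\nu}\,T$ by a quantity bounded uniformly in $\nu$ for $\nu$ large, which is more than enough to win the comparison against $q|\lambda|$, since both the target and the competitor grow linearly in $|\lambda| = \sqrt{\nu}\,|c^2-1|$ and the coefficients strictly differ ($c^2/(c^2-1)^2 > 1/(c^2-1)^2$). I would present this either by citing a standard reference for the large-parameter behavior of Hill's discriminant (e.g.\ \cite{MW66}) or by including the short Volterra-kernel estimate inline. An alternative, perhaps cleaner, route avoids asymptotics entirely: observe directly from \eqref{eq:Abels-Identity} that $\det\bM(\lambda) = e^{2q\lambda}$, so the product of the two multipliers $\mu_\pm(\lambda)$ of the \emph{original} system has modulus $e^{2q\Re\lambda}$; combined with $\tr\bM(\lambda) \to \infty$ for $\lambda$ real and large (since $\mu^\infty_\pm = e^{\lambda T/(c\pm 1)}$ and in the superluminal case $c > 1$ both exponents have the same sign as $\lambda$, forcing $|\mu_+(\lambda)| \gg 1$ and $|\mu_-(\lambda)|$ of comparable order, not reciprocal, so both log-moduli are large and of the same sign), one gets $G(\lambda) = \log|\mu_+|\log|\mu_-| > 0$ directly. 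I would use whichever of these two formulations turns out to need fewer new estimates, but in either case the crux is controlling $\bM(\lambda)$ for large real $\lambda$, which is exactly the constant-coefficient comparison already carried out in the proof of Lemma~\ref{Dreal}.
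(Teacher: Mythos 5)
Your proposal is correct, and your ``alternative, perhaps cleaner, route'' at the end is in fact precisely the paper's proof: the paper inserts the large-$\lambda$ asymptotics $\mu_\pm(\lambda)=e^{\lambda T/(c\pm1)}(1+o(1))$ (already established in Lemma~\ref{Dreal} via the constant-coefficient comparison with $\bA^\infty(\lambda)$) directly into the definition of $G$, obtaining $G(\lambda)\sim T^2(\Re\lambda)^2/(c^2-1)>0$ for $c^2>1$. Your first route---rewriting $G$ in the Hill form $(q\Re\lambda)^2-(\log|\mu^\mathrm{H}(\nu(\lambda))|)^2$ and then doing a large-$\nu$ WKB estimate of the Hill discriminant---gives the same asymptotic, $G(\lambda)\sim T^2\lambda^2/(c^2-1)$, but pays the cost of a new technical estimate (the Volterra/Gr\"onwall bound for the Hill monodromy at large $\nu$) that the paper sidesteps by simply recycling the estimate it has already proved for $\bM(\lambda)$ itself. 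So the first route is a genuine variant of the argument, mathematically equivalent but less economical; the second coincides with the paper's.

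Two small corrections worth noting. First, you write ``in the superluminal case $c>1$'': superluminal means $c^2>1$, so $c<-1$ is also allowed, but the argument survives because $c+1$ and $c-1$ then have the \emph{same} (negative) sign, which is all that is needed for $\log|\mu_+|$ and $\log|\mu_-|$ to have the same sign. Second, your opening exploration presumes $f$ is rotational (``for a \emph{rotational} superluminal wave\ldots''), and the obstruction you identify ($\Sigma^\mathrm{H}\subset\R_-$) only holds in the rotational case. For a superluminal \emph{librational} wave, $\Sigma^\mathrm{H}$ has a nonempty positive part, so the naive plan does work there: one can take any nonzero real $\lambda_+$ with $\nu(\lambda_+)\in\Sigma^\mathrm{H}\cap\R_+$ and get $G(\lambda_+)=(q\lambda_+)^2>0$ at once. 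This does not affect the correctness of your unified large-$\lambda$ argument, which covers both types; it just means the detour through the Hill-spectrum obstruction was unnecessary for half the cases under consideration.
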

\begin{proof}
In the proof of Lemma~\ref{Dreal} it was shown by comparing the first-order system \eqref{eq:firstorder} with its constant-coefficient form \eqref{constcoefsyst} that the Floquet multipliers $\mu(\lambda)$ satisfy 
\begin{equation}
\mu(\lambda)=\mu_\pm(\lambda)=e^{\lambda T/(c\pm 1)}(1+o(1)),\quad\text{in the limit $\lambda\to\infty$.}
\end{equation}
Inserting this asymptotic formula to the definition \eqref{eq:G-function-define} of $G$ yields
\begin{equation}
G(\lambda)=\left(\frac{T\Re\lambda}{c+1}+o(1)\right)\left(\frac{T\Re\lambda}{c-1}+o(1)\right),\quad\lambda\to\infty,
\end{equation}
from which it follows that for $|\Re\lambda|$ sufficiently large, $\sgn(G(\lambda))=\sgn(c^2-1)$.
Therefore, if $c^2>1$ there exists $\lambda_+$ with $|\Re\lambda_+|$ large, such that $G(\lambda_+)>0$.
\end{proof}

To complete the proof of statement (i) in Theorem~\ref{th:rotational}, let $C$ be a simple smooth curve with endpoints $\lambda_\pm$ for which $\Re\lambda\neq 0$ holds for all points of $C$ with the exception of the endpoint $\lambda_-$.  Let $\lambda=\lambda(t)$, $-1\le t\le 1$, be a smooth parametrization of $C$ for which $\lambda(\pm 1)=\lambda_\pm$.  Then $g(t):=G(\lambda(t))$ is a continuous function from $[-1,1]$ to $\R$, and $g(-1)<0$ by Lemma~\ref{lemma:G-negative} (because $f$ is rotational) while $g(1)>0$ by Lemma~\ref{lemma:G-positive} (because $f$ is superluminal).  Therefore, by the Intermediate Value Theorem there exists some $t_0\in (-1,1)$ for which $g(t_0)=0$, i.e., there exists some $\lambda_0=\lambda(t_0)$ with $\Re(\lambda_0)\neq 0$ for which $G(\lambda_0)=0$.  Hence $\lambda_0\in\sigma$ and by Definition~\ref{def:spectral-instability}, $f$ is spectrally unstable.

\begin{remark} \label{rem:libstab} 
According to Remark~\ref{rem:specH} and the discussion in \S\ref{section:mod-index-Hill}, the additional assumption that $(c^2-1)T_E>0$
implies that for librational waves of arbitrary finite speed, $\nu_2^{(0)}<\nu_1^{(0)}=0$, and hence there is a gap $(\nu_2^{(0)},\nu_1^{(0)})$ in the Hill's spectrum $\Sigma^\mathrm{H}$
for sufficiently small strictly negative $\nu$.  Therefore, under 
the assumption that $(c^2-1)T_E>0$, Lemma~\ref{lemma:G-negative} also holds for librational waves. By exactly the same argument, we conclude spectral instability of all superluminal librational waves for which $T_E>0$.
Of course we already have established spectral instability of such waves by different means (cf.\@ Theorem~\ref{th:librational} and Corollary~\ref{corollary:libration-spectral-instability}).
Note that even without the condition $T_E>0$ it can (and frequently does) happen that $\Sigma^\mathrm{H}$ has negative gaps, in which case spectral instability is again concluded.
\myendrmk
\end{remark}

\begin{remark}
Note that for any librational wave, the fact that the first periodic eigenvalue $\nu_0^{(0)}$ in the Hill's spectrum $\Sigma^\mathrm{H}$ is strictly positive implies, according to \eqref{eq:G-rewrite}, that $G(\lambda)>0$ holds at the two nonzero real preimages of $\nu_0^{(0)}$ under the quadratic mapping $\lambda\to \nu(\lambda)$. 

Combining this fact with the formula $\mathrm{sgn}(G(\lambda))=\mathrm{sgn}(c^2-1)$ holding for $\Re \lambda>0$ sufficiently large as shown in the proof of Lemma~\ref{lemma:G-positive} one sees that the continuous function $G(\lambda)$ has to vanish somewhere in the right half-plane whenever $f$ is a subluminal librational wave, and hence \emph{all} such waves are spectrally unstable.

Likewise, if there is also a gap in the negative part of the Hill's spectrum $\Sigma^\mathrm{H}$, then by the same argument as in the proof of Lemma~\ref{lemma:G-negative}, $G(\lambda)<0$ when $\lambda$ is the imaginary preimage under $\nu$ of a point in the gap.  Therefore again $G$ changes sign at some point with $\Re \lambda>0$ implying spectral instability of the librational wave.
Hence a negative gap in the Hill's spectrum $\Sigma^\mathrm{H}$ always indicates spectral instability of a librational wave, \emph{regardless of its speed}.  

Under some conditions it is possible to guarantee a negative gap in the Hill's spectrum of a librational wave.  Indeed, according to Remark~\ref{rem:specH}, if $f$ is a librational wave then 
$\Sigma^\mathrm{H}$ can have at most two positive gaps (and there are exactly two positive gaps if $(c^2-1)T_E<0$; see the bottom panel of Figure~\ref{fig:hillspec} for an illustration).  Furthermore, according to Remark~\ref{remark:Hill-gaps}, as long as $P(z)=V''(f(z))/(c^2-1)$ is neither constant, nor an elliptic function, nor a hyperelliptic function of genus $2$, then $\Sigma^\mathrm{H}$ has at least three gaps in total.  Hence, for such $f$, there exists a negative gap and therefore spectral instability is deduced.
\label{remark:librational-general-speed-Hill-gap}
\myendrmk
\end{remark}

\subsection{Subluminal rotational waves} 
\label{sec:sub-rotation-stable}
We will prove that all subluminal rotational waves are spectrally stable by a direct calculation
showing that $\lambda\in\sigma$ implies $\Re\lambda=0$.  Recall the boundary-value problem \eqref{eq:spectral} with boundary condition 
\eqref{eq:periodicode} parametrized by $\theta\in\R\pmod{2\pi}$ and characterizing the partial spectrum $\sigma_\theta\subset\sigma$.  The differential equation \eqref{eq:spectral} 
can be written in terms of Hill's operator $\cH$ defined in \eqref{eq:defH} as follows:
\begin{equation} \label{eq:oureqn}
(c^2-1) \cH w(z) - 2 c \lambda w_z(z) + \lambda^2 w(z) = 0.
\end{equation}
Suppose that $\lambda\in\sigma$, and therefore that there exists $\theta\in\R$ such that
$\lambda\in\sigma_\theta$.  Let $w\in C^2(\R)$ denote the corresponding nontrivial solution
of the boundary-value problem consisting of \eqref{eq:spectral} subject to \eqref{eq:periodicode}.  

Multiplying the differential equation \eqref{eq:oureqn} through by $w(z)^*$ and integrating over the fundamental period $[0,T]$ gives
\begin{equation}\label{eq:lambdaquadratic}
(c^2-1)\langle w,\cH w\rangle -2im\lambda+\|w\|^2\lambda^2=0, \quad \text{where} \quad m:=-ic\langle w,w_z\rangle\in\mathbb{R},
\end{equation}
and the notation $\langle\cdot,\cdot\rangle$ and $\|\cdot\|$ is defined in \eqref{eq:innerprod}.
That $m$ is real follows by integration by parts
using the boundary conditions \eqref{eq:periodicode} (cf.\@ \eqref{eq:ip-w-wz-imag}).
The relation \eqref{eq:lambdaquadratic} can be viewed as a quadratic equation for $\lambda$;  solving this equation for $\lambda$ in terms of $\langle w,\cH w\rangle$, $m$, and $\|w\|^2$, we obtain:
\begin{equation}
\lambda=\frac{1}{\|w\|^2}\left[im \pm\sqrt{-m^2-(c^2-1)\|w\|^2\langle w,\cH w\rangle}\right].
\label{eq:quadraticsoln}
\end{equation}
Because $m$ is real, it follows that $\Re\lambda=0$ as long as $f$ is subluminal (implying $c^2-1<0$) and $f$ is a rotational wave (implying the negative semidefiniteness condition $\langle w,\cH w\rangle\le 0$ according to Proposition~\ref{cor:hnegdef}). This completes the proof of statement (ii) of Theorem \ref{th:rotational}.

The proof of statement (ii) given above resembles a well-known result in the theory of linearized Hamiltonian systems, which concerns eigenvalue problems of the form $\cJ\cL u=\lambda u$ where
$\cJ$ is skewadjoint and $\cL$ is selfadjoint.  If $\cJ$ is invertible, this can be written as a generalized eigenvalue problem:  $\cL u=\lambda \cJ^{-1}u$.  The latter can be reformulated in terms of a linear operator pencil $\cT^\mathrm{lin}(\lambda):=\cL-\lambda\cJ^{-1}$.  Taking the inner product with $u$ yields
\begin{equation}
\langle u,\cL u\rangle = \lambda\langle u,\cJ^{-1}u\rangle.
\end{equation}
If $\cL$ is a definite operator (positive or negative), then $\langle u,\cL u\rangle\neq 0$, and hence neither
$\lambda$ nor $\langle u,\cJ^{-1}u\rangle$ can vanish.  Therefore $\lambda$ is a generalized Rayleigh quotient:
\begin{equation}
\lambda = \frac{ \langle u,\cL u\rangle}{\langle u,\cJ^{-1}u\rangle},
\label{eq:RayleighQuotient}
\end{equation}
which by selfadjointness of $\cL$ and skewadjointness of $\cJ$ (and hence of $\cJ^{-1}$) is purely imaginary.  This is the easy case of a linearized Hamiltonian eigenvalue problem; if $\cL$ is indefinite there generally will be non-imaginary eigenvalues, although the spectrum is still symmetric with respect to reflection through the imaginary axis.  

The formula \eqref{eq:quadraticsoln} is evidently a further extension of the generalized Rayleigh quotient  \eqref{eq:RayleighQuotient} to the setting of the quadratic pencil $\cT(\lambda)$ arising in the linearization of the (Hamiltonian) Klein-Gordon equation \eqref{eqnlKG}, and the positivity of $(c^2-1)\cH$ plays the role of the definiteness of $\cL$.

\section{Summary of spectral stability and instability results}
\label{section:summary}
For the convenience of the reader, we now formulate a theorem that summarizes all of the results we have obtained regarding the spectral stability  of periodic traveling waves for the nonlinear Klein-Gordon equation \eqref{eqnlKG}:
\begin{theorem}[spectral stability properties of periodic traveling waves]
\label{theorem:summary}
Let $V$ be a potential satisfying Assumptions~\ref{assumptionsV} and \ref{assumptionsnormalize}.  Then,  the families of periodic traveling waves $f=f(z;E,c)$ parametrized by $(E,c)\in\region$ have the following properties.
\begin{itemize} 
\item[(i)] Subluminal rotational waves are spectrally stable. 
\item[(ii)] Superluminal rotational waves are spectrally unstable.  Generically, however, these waves are not strongly modulationally unstable. The spectrum $\sigma$ is confined to a vertical strip in the complex plane containing the imaginary axis.
\item[(iii)] Subluminal librational waves are spectrally unstable.  If $T_E<0$ then $\gamma=-1$ so there is a positive real periodic eigenvalue in $\sigma$, and also $\rho=-1$ so there is strong modulational instability.  If $T_E\ge 0$ then generically $\rho=1$ so the wave is not strongly modulationally unstable, and although the parity index $\gamma$ is indeterminate there exist positive real points in the spectrum $\sigma$ (necessarily periodic or antiperiodic eigenvalues).
The unstable spectrum $\sigma\setminus i\R$ is bounded.
\item[(iv)] Superluminal librational waves are spectrally unstable, provided either
\begin{itemize}
\item[(a)] $T_E>0$, in which case $\rho=-1$ so there is a strong modulational instability, or
\item[(b)] $T_E\le 0$ but the associated Hill's spectrum $\Sigma^\mathrm{H}$ has a  gap in the negative half-line $\R_-$ (see Remarks \ref{remark:Hill-gaps} and \ref{remark:librational-general-speed-Hill-gap}). 
\end{itemize}
In particular a spectrally stable superluminal librational wave $f$ is necessarily either an elliptic function or a hyperelliptic function of genus $2$.
Regardless of whether case (a) or (b) holds, the spectrum $\sigma$ is confined to a vertical strip in the complex plane containing the imaginary axis.
\end{itemize}
Furthermore, periodic traveling waves of infinite speed are spectrally stable if and only if the associated Hill's spectrum $\Sigma^\mathrm{H}$ has no gaps in the negative half-line $\R_-$; in particular all infinite speed rotational waves are spectrally unstable.
\end{theorem}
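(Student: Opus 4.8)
The plan is to assemble Theorem~\ref{theorem:summary} directly from the results established in the previous sections, organizing the argument by the four subsets of $\region$ together with the infinite-speed degenerate case. No new computation is needed: the work consists in marshalling the pieces and, above all, in keeping the sign bookkeeping straight, in particular the identity $\rho=\sgn(-(c^2-1)T_E)$ from Proposition~\ref{corollary:index-period} and its interaction with the sign of $c^2-1$ in the subluminal versus superluminal cases. The standing tools are: the small-$\lambda$ expansion of the monodromy matrix $\bM(\lambda)$ and the resulting behaviour of the indices $\gamma$ and $\rho$ (\S\ref{sec:monodromy}--\S\ref{section:stability-indices}), the spectrum-detecting function $G$ of \S\ref{secfurther}, and the a priori spectral bounds of Lemma~\ref{lem:bound}.

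For part (i) I would simply quote Theorem~\ref{th:rotational}(ii). For part (ii), spectral instability is Theorem~\ref{th:rotational}(i); the statement that these waves are generically not strongly modulationally unstable follows because Proposition~\ref{prop:rotational-monotone} gives $\rho=1$ via Proposition~\ref{corollary:index-period}, and then Lemma~\ref{lemcrossortan} (in the generic case $\kappa\neq q^2>0$) shows the spectrum is locally tangent to $i\R$ at the origin, so at worst there is a weak modulational instability in the sense of Definition~\ref{def:modulational-instability} (cf.\@ Remark~\ref{remark:modulational-instability-index-troubles}); confinement of $\sigma$ to a vertical strip about $i\R$ is Lemma~\ref{lem:bound}(ii).

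For part (iii): when $T_E<0$ one has $(c^2-1)T_E>0$ since $c^2-1<0$, so Theorem~\ref{th:sublibunstable} gives $\gamma=-1$ and a positive real periodic eigenvalue, while Proposition~\ref{corollary:index-period} gives $\rho=-1$, whence strong modulational instability by Theorem~\ref{th:librational}. When $T_E\ge 0$ one has $(c^2-1)T_E\le 0$, so generically $\rho=1$ and the parity index is indeterminate (Remark~\ref{remark:parity-index-troubles}); nonetheless spectral instability still follows because the first periodic eigenvalue $\nu_0^{(0)}$ of the associated Hill's spectrum is strictly positive for every librational wave (Remark~\ref{rem:specH}), so by the $G$-argument recorded in Remark~\ref{remark:librational-general-speed-Hill-gap} the continuous function $G$ changes sign in the right half-plane, producing a nonzero real periodic (or antiperiodic) eigenvalue in $\sigma$; boundedness of $\sigma\setminus i\R$ is Lemma~\ref{lem:bound}(i). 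For part (iv)(a), $T_E>0$ with $c^2-1>0$ gives $(c^2-1)T_E>0$, hence $\rho=-1$ and strong modulational instability by Theorem~\ref{th:librational} (cf.\@ Remark~\ref{rem:libstab}). For part (iv)(b), a gap in $\Sigma^\mathrm{H}\cap\R_-$ yields, by combining the argument of Lemma~\ref{lemma:G-negative} with that of Lemma~\ref{lemma:G-positive}, a sign change of $G$ in the open right half-plane and hence spectral instability (this is the ``negative gap implies instability regardless of speed'' conclusion of Remark~\ref{remark:librational-general-speed-Hill-gap}). The structural consequence for a spectrally stable superluminal librational wave follows by contraposition: a librational wave has at most two positive gaps in $\Sigma^\mathrm{H}$ (Remark~\ref{rem:specH}), so spectral stability, forcing no negative gap, forces at most two gaps in total, and then Remark~\ref{remark:Hill-gaps} identifies $P=V''(f)/(c^2-1)$ as elliptic (one gap) or hyperelliptic of genus $2$ (two gaps), using that $P$ is non-constant for such a wave; again $\sigma$ lies in a vertical strip by Lemma~\ref{lem:bound}(ii).

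Finally, the assertion about infinite-speed waves is exactly Theorem~\ref{theorem:infinite-speed} together with Corollary~\ref{corollary:rotation-infinite-speed-unstable}. The main obstacle here is not analytical but organizational: one must check that each ``generic'' qualifier is attached to the correct nondegeneracy hypothesis ($\kappa\neq q^2>0$ for the modulational-index statements, $T_E\neq0$ for the parity-index statements), that the sign of $c^2-1$ is used consistently when converting between $(c^2-1)T_E$, $\rho$, and $\gamma$, and that in part (iv) the step from ``at most two positive gaps and no negative gap'' to the elliptic/hyperelliptic dichotomy is carried out with the gap-counting of Remark~\ref{remark:Hill-gaps} (including a remark on why $P$ is non-constant for the waves in question). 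None of these points is deep, but they are precisely where a misstatement could enter.
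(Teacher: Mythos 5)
Your proposal correctly assembles Theorem~\ref{theorem:summary} from exactly the same collection of prior results the paper itself invokes --- Theorem~\ref{th:rotational}, Theorem~\ref{th:sublibunstable}, Theorem~\ref{th:librational}, the $G$-argument of Remark~\ref{remark:librational-general-speed-Hill-gap}, Lemma~\ref{lem:bound}, and Theorem~\ref{theorem:infinite-speed} with Corollary~\ref{corollary:rotation-infinite-speed-unstable} --- and your sign bookkeeping via $\rho=\sgn(-(c^2-1)T_E)$ is carried out correctly in each of the four regimes. The one loose end you flag (the non-constancy of $P$ needed to exclude the zero-gap case from the elliptic/hyperelliptic dichotomy in (iv)) is also left implicit in the paper's own proof, which simply cites Remarks~\ref{remark:Hill-gaps} and~\ref{remark:librational-general-speed-Hill-gap}, so your argument is faithful to the paper's.
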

\begin{proof}
The basic stability results in statements (i) and (ii) were proved in \S\ref{secfurther} (cf.\@ Theorem~\ref{th:rotational}), and the part of statement (ii) concerning the potential for weak modulational instability is addressed in Remark~\ref{remark:modulational-instability-index-troubles}.  
The quickest proof of the basic instability result in statement (iii) is to appeal to the argument in the first two paragraphs of Remark~\ref{remark:librational-general-speed-Hill-gap}.
The part of statement (iii) concerning the parity index $\gamma$ indicating the presence of a non-modulational instability when $T_E<0$ follows from 
Theorem~\ref{th:sublibunstable}; for the indeterminacy of the parity index when $T_E>0$ see Remark~\ref{remark:parity-index-troubles}.  The parts of statements (iii) and (iv) concerning strong modulational instability follow from Theorem~\ref{th:librational} and Corollary~\ref{corollary:libration-spectral-instability}.  The parts of statement (iv) concerning negative gaps in the Hill spectrum follow from the argument in 
Remark~\ref{remark:librational-general-speed-Hill-gap}.  The parts of statements (ii), (iii), and (iv) concerning bounds on the unstable spectrum $\sigma$ follow from Lemma~\ref{lem:bound}.  Finally, the statement concerning the stability of waves of infinite speed follows from Theorem~\ref{theorem:infinite-speed} and Corollary~\ref{corollary:rotation-infinite-speed-unstable}.
\end{proof}

\section{The modulational instability index $\rho$ and formal modulation theory}
\label{sec:modulation}
\subsection{Whitham's fully nonlinear modulation theory}
\label{secWhitham}
Here we aim to relate the modulational instability index $\rho$ introduced in \S\ref{secmodinst} to Whitham's fully nonlinear theory of slowly modulated periodic waves \cite{Wh1,Wh}. We prove that the index $\rho$ determines the analytical nature of Whitham's system of modulation equations in terms of their suitability for determining a well-posed evolution.  This calculation justifies, in a fashion, the terminology introduced in Definition~\ref{def:modulational-instability}.  For a related analysis in the case of periodic traveling waves in the generalized Korteweg-de Vries equation, see Bronski and Johnson \cite{BrJo2} and Johnson and Zumbrun \cite{JZ1}.

\subsubsection{Derivation of Whitham's modulation equations}
Beginning with the seminal paper \cite{Wh1} in 1965 and continuing for some 20 years, G.\@ B.\@ Whitham introduced, revisited, and refined a fully nonlinear theory of modulated waves that has made a very significant impact in the field.  The theory produces results that are physically satisfying and that have been verified in experiments, and it also has a remarkable mathematical structure, especially when the underlying nonlinear wave equation is completely integrable (e.g., leading to important examples of quasilinear systems with an arbitrary number of equations that have several unusual features, including:  (i) they admit diagonalization via special variables called Riemann invariants, and (ii) general solutions can be represented in implicit form by a 
technique \cite{Tsarev91} that generalizes the classical hodograph transform for $2\times 2$ systems).

Whitham's theory is based on the supposition that nonlinear wave equations that support families of periodic traveling waves should also have other solutions that are close to different representatives of the family in different regions of space-time.  In other words, there should be solutions that, near any given $(x,t)$ can be accurately approximated by a periodic traveling wave solution, but the parameters that single out the wave from the family might be different for different points $(x,t)$.  Thus, the periodic traveling wave is spatiotemporally modulated, having different amplitude, wavelength, or frequency at different points $(x,t)$, while still somehow locally resembling a periodic wavetrain. 

The key to making this idea precise at the formal level at least is to consider it to be an asymptotic theory in the limit where the variation in the wave parameters (these are the constants $E$ and $c$ in the present context) is very gradual compared to the fluctuations of  the wave itself.  The ratio of the microscopic scale (e.g., typical wavelength) to the macroscopic scale (e.g., characteristic width of a wave packet) thus becomes a small dimensionless parameter $\epsilon$,
and this makes available a toolbox of asymptotic methods to study the dynamics of the modulating waves.  The goal of this formal analysis is to deduce effective equations (the \emph{modulation equations}) that describe how the parameters vary slowly.  Such equations should only involve quantities that vary on the macroscopic scale.  

As he developed his theory, Whitham found several different equivalent ways to derive the modulation equations.  His first approach \cite{Wh1} (we will follow this in some detail below) was based on period averaging of densities and fluxes of local conservation laws consistent with the underlying nonlinear wave equation.  He quickly found a second method based on an averaged variational principle \cite{Wh2} that has additional appeal because the ansatz used for the modulating wave includes global phase information that was not captured by his original approach.  One can also apply directly the method of multiple scales to the underlying nonlinear wave equation and obtain the modulation equations as solvability conditions for the existence of uniformly bounded corrections to the leading-order modulated wave ansatz.  This last approach can apply also to weakly dissipative systems that do not have conservation laws or a Lagrangian formulation.  

We now give a brief formal derivation of the modulation equations following Whitham's original method \cite{Wh1} (see \cite{BuM2,SCR,Wh} and the references therein for information about other approaches).  The Klein-Gordon equation \eqref{eqnlKG} implies the following local conservation laws \cite{AnBl,SanKon}:
\begin{equation}
D_1[u]_t + F_1[u]_x=0\quad\text{and}\quad D_2[u]_t+F_2[u]_x=0
\label{eq:cons-laws}
\end{equation}
where the \emph{densities} $D_j[u]$ and \emph{fluxes} $F_j[u]$ are
\begin{equation}
\begin{split}
D_1[u]&:=\tfrac{1}{2}u_t^2 +\tfrac{1}{2}u_x^2 + V(u)\\
D_2[u]&:=-u_tu_x
\end{split}\quad\text{and}\quad
\begin{split}
F_1[u]&:=-u_tu_x\\
F_2[u]&:=\tfrac{1}{2}u_t^2+\tfrac{1}{2}u_x^2-V(u).
\end{split}
\end{equation}
Whitham argues that if the wavetrain represented by the parameters $(E,c)\in\region$ is slowly modulated, so that $E$ and $c$ become slowly-varying functions of $(x,t)$, the conservation laws \eqref{eq:cons-laws} should govern (at leading order) the functions $E=E(x,t)$ and $c=c(x,t)$  after  simply replacing the densities and fluxes by their averages over one period of the wave.  The procedure is therefore the following:  for each \emph{fixed} choice of $(E,c)\in\region$, we first replace $u(x,t)$ by the exact periodic traveling wave $u=f(z;E,c)$ in $D_j[u]$ and $F_j[u]$ and therefore obtain $T$-periodic functions of $z$ that can be averaged over a period.  The corresponding averages are functions of $(E,c)\in\region$ given by
\begin{equation}
\begin{split}
\langle D_1\rangle(E,c)&:=\frac{1}{T}\int_0^T\left(\tfrac{1}{2}(c^2+1)f_z(z;E,c)^2+V(f(z;E,c))\right)\,dz\\
\langle D_2\rangle(E,c)&:=\frac{c}{T}\int_0^Tf_z(z;E,c)^2\,dz
\end{split}
\label{eq:averaged-densities}
\end{equation}
and
\begin{equation}
\begin{split}
\langle F_1\rangle(E,c)&:=\frac{c}{T}\int_0^Tf_z(z;E,c)^2\,dz\\
\langle F_2\rangle (E,c)&:=\frac{1}{T}\int_0^T\left(\tfrac{1}{2}(c^2+1)f_z(z;E,c)^2-V(f(z;E,c))\right)\,dz.
\end{split}
\label{eq:averaged-fluxes}
\end{equation}
Then, one allows $(E,c)$ to depend smoothly on $(x,t)$ and \emph{Whitham's modulation equations} are (by definition, really)
\begin{equation}
\langle D_1\rangle_t + \langle F_1\rangle_x=0\quad\text{and}\quad
\langle D_2\rangle_t +\langle F_2\rangle_x=0.
\label{eq:Whitham-system-1}
\end{equation}
The averaged densities and fluxes may be represented in terms of the function 
\begin{equation}
W(E,c):=(c^2-1)\int_0^Tf_z(z;E,c)^2\,dz.
\label{eq:W-define}
\end{equation}
Indeed, using \eqref{eq:waveeq} to eliminate $V(f)$ for constant $(E,c)$ in the definitions \eqref{eq:averaged-densities}--\eqref{eq:averaged-fluxes}, the definition \eqref{eq:W-define} yields
\begin{equation}
\langle D_1\rangle =\frac{W}{(c^2-1)T}+E,\;\; \langle F_1\rangle=\langle D_2\rangle=
\frac{cW}{(c^2-1)T},\;\;\text{and}\;\;
\langle F_2\rangle = \frac{c^2W}{(c^2-1)T}-E.
\label{eq:averages}
\end{equation}
Here we recall that the period $T$ also is a function of $(E,c)\in\region$.

The function $W=W(E,c)$ has a number of  properties useful for simplifying and analyzing the modulation equations that result from substituting \eqref{eq:averages} into \eqref{eq:Whitham-system-1}:
\begin{lemma}
\label{lemma:W-properties}
Suppose that $(E,c)\in\region$ (any of the four disjoint open components $\region_<^\mathrm{lib}$, $\region_<^\mathrm{rot}$, $\region_>^\mathrm{lib}$, or $\region_>^\mathrm{rot}$).  Then $W=W(E,c)$ defined by \eqref{eq:W-define} satisfies:
\begin{itemize}
\item[(i)] 
\begin{equation}
\sgn(W)=\sgn(c^2-1);
\label{eq:sgnW}
\end{equation}
\item[(ii)] 
\begin{equation}
W_c = \frac{cW}{c^2-1}\quad\text{and}\quad W_{cc}=-\frac{W}{(c^2-1)^2};
\label{eq:WcWcc}
\end{equation}
\item[(iii)]
\begin{equation}
W_E=T.
\label{eq:WE}
\end{equation}
\end{itemize}
\end{lemma}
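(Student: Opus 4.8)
Throughout I would work within a single one of the four disjoint open components of $\region$ from Definition~\ref{defregionsR}, so that $\sgn(c^2-1)$ is constant and, as will be seen, so is $\sgn(W)$. Part (i) is immediate: for every $(E,c)\in\region$ the profile derivative $f_z=f_z(\cdot\,;E,c)$ is a nonzero $T$-periodic function (nowhere vanishing for rotational waves, with two simple zeros per period for librational waves, so $f_z\not\equiv 0$ in either case), whence $\int_0^T f_z(z;E,c)^2\,dz>0$ strictly; combined with $c\neq\pm1$ this gives $\sgn(W)=\sgn(c^2-1)$ and in particular $W\neq 0$.

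For part (ii) the plan is to make the $c$-dependence of $W$ explicit by a rescaling of $z$. Setting $\zeta:=z/\sqrt{|c^2-1|}$ and $F(\zeta):=f(\sqrt{|c^2-1|}\,\zeta;E,c)$, one checks from \eqref{eq:waveeq} and \eqref{eqnlp} that $F$ solves the $c$-\emph{independent} initial value problem $\tfrac12\sgn(c^2-1)F_\zeta^2=E-V(F)$, equivalently $F_{\zeta\zeta}=-\sgn(c^2-1)V'(F)$, with data $F(0)=f(0)$ and $F_\zeta(0)=\sqrt{|c^2-1|}\,v_0=\sqrt{2\,\sgn(c^2-1)(E-V(f(0)))}$ that are $c$-independent (the first because $f(0)$ is fixed by the normalization \eqref{eq:sub-normalize}--\eqref{eq:super-normalize}, the second by \eqref{eq:waveeq} at $z=0$). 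Hence $F$, and with it the period $\Theta(E)=T(E,c)/\sqrt{|c^2-1|}$ (compare the period formulas \eqref{periodsuplib}--\eqref{periodsubrot}), depends on $E$ alone. Changing variables in \eqref{eq:W-define} then yields $W(E,c)=\sgn(c^2-1)\,|c^2-1|^{1/2}\Psi(E)$ with $\Psi(E):=\int_0^{\Theta(E)}F_\zeta^2\,d\zeta>0$. Therefore $\log|W|=\tfrac12\log|c^2-1|+\log\Psi(E)$, whose $c$-derivative gives $W_c/W=c/(c^2-1)$, the first identity in (ii); differentiating $W_c=cW/(c^2-1)$ once more and using it again to eliminate $W_c$ produces $W_{cc}=-W/(c^2-1)^2$.

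Part (iii), $W_E=T$, is the step that needs genuine care, since in $W=(c^2-1)\int_0^{T(E,c)}f_z(z;E,c)^2\,dz$ both the integrand and the upper limit depend on $E$. Conceptually this is just the classical action--period relation: $W=\oint (c^2-1)f_z\,df$ is the action of the planar Hamiltonian system with energy $E$ supplied by \eqref{eq:waveeq}, and $\partial(\mathrm{action})/\partial(\mathrm{energy})$ equals the period. To prove it cleanly I would differentiate directly. By Lemma~\ref{lemfC2E} and standard smooth-dependence theory for ODEs, $f_z$ and $f_{zE}$ are jointly continuous in $(z,E)$, and $T$ is $C^1$ in $E$ (e.g.\ because $T_E=-\delta/(c^2-1)$ is finite by \eqref{eq:delta-identity}), so Leibniz' rule applies; since $f_z(T;E,c)=f_z(0;E,c)=v_0$ by periodicity,
\[ W_E=(c^2-1)v_0^2\,T_E+2(c^2-1)\int_0^T f_z f_{zE}\,dz. \]
The remaining integral I would evaluate in two ways. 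Integrating by parts and using $(c^2-1)f_{zz}=-V'(f)$ from \eqref{eqnlp}, together with $f_E(0)=\partial_E u_0=0$ (constancy of $u_0$, cf.\ \eqref{eq:Q0}) and $f_E(T)=-T_E v_0$ from \eqref{eq:QT12}, gives $(c^2-1)\int_0^T f_z f_{zE}\,dz=-(c^2-1)v_0^2 T_E+\int_0^T V'(f)f_E\,dz$; directly from \eqref{eqderivE} one instead gets $(c^2-1)\int_0^T f_z f_{zE}\,dz=T-\int_0^T V'(f)f_E\,dz$. Adding these eliminates $\int_0^T V'(f)f_E\,dz$ and yields $2(c^2-1)\int_0^T f_z f_{zE}\,dz=T-(c^2-1)v_0^2 T_E$; substituting back collapses the boundary term and leaves $W_E=T$.

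The only real obstacle is the Leibniz differentiation with a moving endpoint in part (iii)—the same feature that makes $T_E$ delicate for librational waves. The device above avoids it: every integrand appearing in the computation is continuous in $z$ (no singular turning-point integrals), so the differentiation is legitimate, and the whole of (iii) reduces to the already-established identities \eqref{eqderivE} and \eqref{eq:QT12} plus the vanishing $f_E(0)=0$. The rest is routine algebra.
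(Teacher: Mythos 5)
Your proof is correct, and it takes a genuinely different route from the paper's for both (ii) and (iii). The paper converts $W$ to an integral over $f$ via the substitution $f_z^2\,dz=f_z\,df$, obtaining for instance $W=\sqrt{2}(c^2-1)\int\sqrt{(E-V(f))/(c^2-1)}\,df$ over the appropriate $f$-interval; the $c$-dependence is then manifestly a prefactor $\propto(c^2-1)\cdot|c^2-1|^{-1/2}$, giving (ii), and (iii) follows by Leibniz differentiation with respect to $E$, observing that the moving-endpoint contributions vanish because the integrand $\sqrt{E-V(f)}$ is zero there. You instead extract the $c$-dependence for (ii) by rescaling $z\mapsto\zeta=z/\sqrt{|c^2-1|}$ to produce a $c$-independent profile $F$, and prove (iii) by differentiating the $z$-integral directly and evaluating $(c^2-1)\int_0^T f_zf_{zE}\,dz$ in two ways (integration by parts against \eqref{eqnlp}, and direct integration of \eqref{eqderivE}) so that the term $\int_0^TV'(f)f_E\,dz$ cancels upon addition. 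A modest advantage of your route is that every integral encountered is over the fixed interval $[0,T]$ with a bounded, jointly continuous integrand — the turning-point singularity that makes $T_E$ delicate for librational waves never appears — whereas the paper's $f$-integral has a moving domain and an integrand whose $E$-derivative is (integrably) singular at the endpoints, so Leibniz there needs a little more justification than the paper spells out. The paper's version in turn dispatches (ii) and (iii) with a single substitution, which is slightly more economical. Both are sound; yours would serve equally well as the paper's proof.
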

\begin{proof}
The statement (i) is obvious from the definition \eqref{eq:W-define}.  To prove statements (ii) and (iii), we rewrite $W$ using the differential identity $f_z^2\,dz = f_z\,df$.  More precisely, for rotational waves where $f_z>0$ for all $z$, we solve for $f_z$ in terms of $f$ by taking a positive square root in \eqref{eq:waveeq} and therefore obtain
\begin{equation}
W(E,c)= \sqrt{2}(c^2-1)\int_0^{2\pi}\sqrt{\frac{E-V(f)}{c^2-1}}\,df,\quad (E,c)\in\region_<^\mathrm{rot}\cup\region_>^\mathrm{rot},
\end{equation}
because $f$ increases by $2\pi$ over the period $T$.  For librational waves we may represent $W$ as twice the integral over half the period and assume that $f_z>0$ holds over the half-period of integration.  Therefore,
\begin{equation}
W(E,c)=2\sqrt{2}(c^2-1)\int_{f^-(E)}^{f^+(E)}\sqrt{\frac{E-V(f)}{c^2-1}}\,df,\quad (E,c)\in\region_>^\mathrm{lib},
\end{equation}
and 
\begin{equation}
W(E,c)=2\sqrt{2}(c^2-1)\int_{f^+(E)}^{f^-(E)+2\pi}\sqrt{\frac{E-V(f)}{c^2-1}}\,df,\quad (E,c)\in\region_<^\mathrm{lib}.
\end{equation}
In each case, the identities \eqref{eq:WcWcc} are easy to establish from these formulae, which proves (ii).  To prove (iii), one differentiates with respect to $E$ (using Leibniz' formula in the librational cases, but noting that the contributions from the endpoints of the interval of integration vanish because $V(f^\pm\pmod{2\pi})=E$ by definition of $f^\pm$) and compares with \eqref{periodsuplib}, \eqref{periodsublib}, \eqref{periodsuprot}, and \eqref{periodsubrot}.
\end{proof}

We now simplify the modulation equations by the following systematic steps.  First, use \eqref{eq:WE} to eliminate the period $T$, putting them in the form
\begin{equation}
\begin{split}
\left(\frac{W(E,c)}{(c^2-1)W_E(E,c)}+E\right)_t + \left(\frac{cW(E,c)}{(c^2-1)W_E(E,c)}\right)_x&=0,\\
\left(\frac{cW(E,c)}{(c^2-1)W_E(E,c)}\right)_t +\left(\frac{c^2W(E,c)}{(c^2-1)W_E(E,c)}-E\right)_x&=0.
\end{split}
\label{eq:Whitham-system-2}
\end{equation}
Next, apply the chain rule to isolate the coefficients of the partial derivatives $E_{x,t}$ and $c_{x,t}$.  This places the system in obvious quasilinear form:
\begin{equation}
\bT(E,c)\begin{pmatrix}E\\c\end{pmatrix}_t + \bX(E,c)\begin{pmatrix}E\\c\end{pmatrix}_x=0,
\label{eq:Whitham-system-3}
\end{equation}
where, after clearing out a scalar denominator and using \eqref{eq:WcWcc} to eliminate all partial derivatives of $W$ with respect to $c$,  the coefficient matrices are given by
\begin{equation}
\bT(E,c):=\begin{pmatrix}(c^2-1)c^2(c^2W_E^2-WW_{EE}) & -2c^3WW_E\\
(c^2-1)c(W_E^2-WW_{EE}) & -(c^2+1)WW_E\end{pmatrix}
\end{equation}
and 
\begin{equation}
\bX(E,c):=\begin{pmatrix}
(c^2-1)c^3(W_E^2-WW_{EE}) & -(c^2+1)c^2WW_E\\
(c^2-1)(W_E^2-c^2WW_{EE}) & -2cWW_E\end{pmatrix}.
\end{equation}
Finally, inverting $\bT(E,c)$, the modulation equations take the form
\begin{equation}
\begin{pmatrix}E\\c\end{pmatrix}_t +\bU(E,c)\begin{pmatrix}E\\c\end{pmatrix}_x=0,
\label{eq:Whitham-system-4}
\end{equation}
with coefficient matrix $\bU(E,c)$ given by
\begin{equation}
\bU(E,c):=\frac{1}{c^2W_E^2+WW_{EE}}\begin{pmatrix}
(W_E^2+WW_{EE})c & -WW_E\\(c^2-1)^2W_EW_{EE} & (W_E^2+WW_{EE})c\end{pmatrix}.
\end{equation}

\subsubsection{Interpretation of Whitham's modulation equations.  Relation to the modulational instability index $\rho$}
The Whitham system of modulation equations, say taken in the quasilinear form \eqref{eq:Whitham-system-4}, is supposed to describe the slow variation of the parameters $(E,c)$ determining the shape and period of the periodic traveling wave profile $f=f(z;E,c)$.  If this system is to describe the evolution in time $t$ of initially given modulation profiles $E(x,0)=E_0(x)$ and 
$c(x,0)=c_0(x)$ in a suitable function space, then the correct application of the theory is to pose the Cauchy initial value problem for the system \eqref{eq:Whitham-system-4} with the given initial data.  For a general reference for the theory of such initial-value problems, see \cite{Lax06}.  

The basic tool for treating quasilinear systems like \eqref{eq:Whitham-system-4} is to
apply the method of characteristics, and the first step is the calculation of the \emph{characteristic velocities}, which are defined as the eigenvalues of the coefficient matrix $\bU(E,c)$.  The essential dichotomy that arises is that the eigenvalues of the real matrix $\bU(E,c)$ are either real  or they form a  complex-conjugate pair.  In the real (resp., complex-conjugate) case the system \eqref{eq:Whitham-system-4} is said to be \emph{hyperbolic} (resp., \emph{elliptic}) at $(E,c)$.
The key result in the theory is that the Cauchy initial-value problem is (locally) well-posed if and only if the system is hyperbolic at $(E_0(x),c_0(x))$ for all $x\in\R$.  Whitham interprets the hyperbolicity of the system \eqref{eq:Whitham-system-4} as a kind of modulational stability of the family of periodic traveling waves that is spatially modulated at $t=0$, because the modulation equations make a well-defined prediction at least locally in time for the evolution of the given initial modulation profiles $(E_0(\cdot),c_0(\cdot))$.  By contrast, in the elliptic case, there is generally no solution for $t>0$ of the initial-value problem at all, even in the roughest spaces of generalized functions (distributions).  Indeed, if one linearizes \eqref{eq:Whitham-system-4} about a constant state by ``freezing'' the coefficient matrix $\bU$, the linearized system can be treated by Fourier transforms and in the elliptic case the Fourier mode $e^{ikx}$ experiences exponential growth in time $t$ with a rate that itself grows as $k\to\infty$.  This preferentially amplifies the tails of the Fourier transform so that after an infinitesimal time, the Fourier transform of the solution is not the image of any tempered distribution.    This severe amplification effect again suggests an instability in the underlying nonlinear wave equation \eqref{eqnlKG}.  

Here, we make the interpretation of the hyperbolic/elliptic dichotomy in terms of stability precise with the following result:
\begin{theorem}
\label{theorem:Whitham-mod-inst}
Suppose $c^2W_E^2+WW_{EE}\neq 0$.  
Whitham's system of modulation equations \eqref{eq:Whitham-system-4} is hyperbolic (resp., elliptic) if and only if $\rho=1$ (resp., $\rho=-1$), where $\rho$ is the modulational instability index (cf.\@ Definition~\ref{defmodinstindx} in \S\ref{secmodinst}).
\end{theorem}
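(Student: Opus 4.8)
The plan is to determine the hyperbolic/elliptic character of \eqref{eq:Whitham-system-4} by computing the eigenvalues (characteristic velocities) of the coefficient matrix $\bU(E,c)$ and tracking the sign of the resulting discriminant. The first step exploits the fact that $\bU(E,c)$ has equal diagonal entries: writing $a:=(W_E^2+WW_{EE})c/(c^2W_E^2+WW_{EE})$ for the common diagonal entry and $\beta:=-WW_E/(c^2W_E^2+WW_{EE})$, $\delta:=(c^2-1)^2W_EW_{EE}/(c^2W_E^2+WW_{EE})$ for the off-diagonal entries, the characteristic velocities are $\Lambda_\pm=a\pm\sqrt{\beta\delta}$. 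Hence \eqref{eq:Whitham-system-4} is hyperbolic exactly when $\beta\delta\ge 0$ and elliptic exactly when $\beta\delta<0$, and I would observe that under the hypothesis $\rho=\pm 1$ one has $\beta\delta\ne 0$ (indeed $\beta\ne 0$ always, since $W\ne 0$ for $c^2\ne 1$ by Lemma~\ref{lemma:W-properties}(i) and $W_E=T>0$ by (iii), while $\delta\ne 0$ precisely when $W_{EE}=T_E\ne 0$), so the dichotomy is clean.

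Second, I would compute $\beta\delta=-(c^2-1)^2WW_E^2W_{EE}/(c^2W_E^2+WW_{EE})^2$ and note that the denominator, being a square, is positive, so that $\sgn(\beta\delta)=-\sgn(WW_{EE})$, using again $W_E=T>0$ so that $(c^2-1)^2W_E^2>0$. Third, I would bring in the remaining items of Lemma~\ref{lemma:W-properties}: part (i) gives $\sgn(W)=\sgn(c^2-1)$, and part (iii) gives $W_{EE}=\partial_E(W_E)=\partial_E T=T_E$, so $\sgn(WW_{EE})=\sgn((c^2-1)T_E)$ and therefore $\sgn(\beta\delta)=-\sgn((c^2-1)T_E)$.

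Finally, I would invoke Proposition~\ref{corollary:index-period}, which identifies $\rho=\sgn(-(c^2-1)T_E)$; combining this with the previous step gives $\sgn(\beta\delta)=\rho$, whence $\rho=1$ is equivalent to $\beta\delta>0$ (two distinct real characteristic velocities, i.e.\ hyperbolicity) and $\rho=-1$ is equivalent to $\beta\delta<0$ (a complex-conjugate pair of characteristic velocities, i.e.\ ellipticity), which is precisely the asserted equivalence.

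I do not anticipate a genuine obstacle: the argument amounts to the algebraic identity that the discriminant of the characteristic polynomial of $\bU(E,c)$ equals, up to a manifestly positive factor, $-WW_{EE}$, together with the two sign facts $\sgn(W)=\sgn(c^2-1)$ and $W_{EE}=T_E$ already supplied by Lemma~\ref{lemma:W-properties} and the identification of $\rho$ with $\sgn(-(c^2-1)T_E)$ from Proposition~\ref{corollary:index-period}. The only delicate point is to confirm that the common scalar factor $1/(c^2W_E^2+WW_{EE})$ enters the discriminant only through its square, so that its sign is irrelevant; this is exactly why the hypothesis $c^2W_E^2+WW_{EE}\ne 0$ (which is needed only so that $\bU(E,c)$ is defined) is the right one, with no positivity assumed on it.
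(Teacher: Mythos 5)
Your proposal is correct and follows essentially the same route as the paper: compute the characteristic velocities as eigenvalues of $\bU(E,c)$, reduce the sign of the discriminant to $-\sgn(WW_{EE})$ using $W_E=T>0$, and translate via Lemma~\ref{lemma:W-properties} and Proposition~\ref{corollary:index-period}. Your explicit observation that the scalar factor $1/(c^2W_E^2+WW_{EE})$ enters the discriminant only through its square (so its sign is irrelevant) is a small but useful clarification of a point the paper's displayed formula for $v$ leaves implicit.
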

\begin{proof}
The characteristic velocities $v$ are obtained as the eigenvalues of $\bU(E,c)$:
\begin{equation}
v=\frac{(W_E^2+WW_{EE})c\pm\sqrt{-(c^2-1)^2WW_E^2W_{EE}}}{c^2W_E^2+WW_{EE}}.
\end{equation}
Obviously, these are real numbers if and only if  $WW_{EE}\le 0$.  But according to Lemma~\ref{lemma:W-properties}, $\sgn(WW_{EE}) = \sgn((c^2-1)T_E)$.  Comparing with the form of the modulational instability index $\rho$ given in Proposition~\ref{corollary:index-period}, the proof is complete.
\end{proof}

Note that if $T_E=0$, then simultaneously $\rho=0$ and the Whitham system is on the borderline between the hyperbolic and elliptic cases with a double real characteristic velocity.  Also, since
$c^2W_E^2+WW_{EE}=\tfrac{1}{2}(c^2-1)^2D_{\lambda\lambda}(0,1)$, the singular case for the Whitham equations corresponds also to (strong) modulational instability according to Remark~\ref{remark:one-curve}.

Theorem~\ref{theorem:Whitham-mod-inst} analytically confirms in the case of the Klein-Gordon   equation \eqref{eqnlKG} what is (at least in full generality) mathematical folklore:  
if the Whitham modulation system is elliptic, then the underlying periodic traveling wave is spectrally unstable. The relationship between the nature of the Whitham modulation system and the linear stability properties of waves has been explored in various other contexts as well.  See, for example, \cite{BrJo2,JZ1,BangM98}.

\subsection{Weakly nonlinear modulation theory for librational waves}
\label{sec:NLS}
Consider a family of librational periodic traveling waves for the Klein-Gordon equation \eqref{eqnlKG}
for which the orbits surround a single equilibrium point $f=u^0$ in the phase portrait of \eqref{eqnlp} in the $(f,f_z)$-plane.  Such a family admits a different type of formal modulation theory based on the assumption that the orbits are close to equilibrium.  In contrast to the quasilinear system \eqref{eq:Whitham-system-4}, this weakly nonlinear modulation theory yields 
instead a model equation of nonlinear Schr\"odinger type governing a slowly-varying complex envelope of an underlying (librational) carrier wave.  Here we will deduce the nonlinear Schr\"odinger equation, discuss the properties of its solutions and how they depend on the signs of the coefficients, and then we will again make a connection with the modulational instability index $\rho$ introduced in \S\ref{secmodinst}.
\subsubsection{Derivation of the cubic nonlinear Schr\"odinger equation}
Let $u^0$ be a non-degenerate critical point of the potential $V$, and assume that $V$ has a sufficient number of continuous derivatives near $u^0$.  We suppose that the solution $u(x,t)$ of the Klein-Gordon equation \eqref{eqnlKG} is close to equilibrium by introducing an artificial small parameter $\epsilon>0$ and writing $u=u^0+\epsilon U$.  We will develop a formal asymptotic expansion for $U$ in the limit $\epsilon\to 0$ with the use of the method of multiple scales \cite[Chapter 10]{AAA}.  Specifically, we seek $U$ as a function $U=U(X_0,X_1,T_0,T_1,T_2;\epsilon)$ depending on the ``fast'' variables $X_0=x$ and $T_0=t$ as well as the ``slow'' variables $X_1=\epsilon x$, $T_1=\epsilon t$, and $T_2=\epsilon^2 t$.
By the chain rule, the Klein-Gordon equation becomes a partial differential equation in $5$ independent variables:
\begin{multline}
U_{T_0T_0}+2\epsilon U_{T_0T_1}+\epsilon^2\left[2U_{T_0T_2}+U_{T_1T_1}\right] + 2\epsilon^3U_{T_1T_2} + \epsilon^4U_{T_2T_2}\\{} -
U_{X_0X_0}-2\epsilon U_{X_0X_1}-\epsilon^2U_{X_1X_1} + \epsilon^{-1}V'(u^0+\epsilon U)=0.
\end{multline}
Expanding $V$ in a Taylor series about $u^0$ yields
\begin{multline}
\cL U + \epsilon\left[2 U_{T_0T_1}-2U_{X_0X_1}+\tfrac{1}{2}V'''(u^0)U^2\right]\\
{}+\epsilon^2\left[2U_{T_0T_2}+U_{T_1T_1}-U_{X_1X_1}+\tfrac{1}{6}V^{(4)}(u^0)U^3\right]=
O(\epsilon^3),
\label{eq:MMS-PDE}
\end{multline}
where the linear operator $\cL$ is defined by $
\cL U:=U_{T_0T_0}-U_{X_0X_0}+V''(u^0)U$.
We next suppose that $U$ has an asymptotic power series expansion in $\epsilon$ of the form
\begin{equation}
U=U^{[0]}+\epsilon U^{[1]} + \epsilon^2U^{[2]} +O(\epsilon^3),\quad\epsilon\to 0.
\end{equation}
Equating the terms in \eqref{eq:MMS-PDE} corresponding to the same powers of $\epsilon$ leads to a hierarchy of equations governing the terms $U^{[n]}$, beginning with:
\begin{equation}
\cL U^{[0]}=0,
\label{eq:epsilon-0}
\end{equation}
\begin{equation}
\cL U^{[1]}=-2U^{[0]}_{T_0T_1}+2U^{[0]}_{X_0X_1}-\tfrac{1}{2}V'''(u^0)U^{[0]2},
\label{eq:epsilon-1}
\end{equation}
\begin{multline}
\cL U^{[2]}=-2U^{[1]}_{T_0T_1}+2U^{[1]}_{X_0X_1}-V'''(u^0)U^{[0]}U^{[1]}\\
{}-2U^{[0]}_{T_0T_2}-U^{[0]}_{T_1T_1}+U^{[0]}_{X_1X_1}-\tfrac{1}{6}V^{(4)}(u^0)U^{[0]3}.
\label{eq:epsilon-2}
\end{multline}

The idea behind the method of multiple scales is to solve these equations in order, choosing the dependence on the slow scales $X_1$, $T_1$, and $T_2$ so as to ensure that $U^{[n]}$ is bounded (in the parlance of the method, we wish to avoid \emph{secular terms} in the solution).
We begin by taking a solution of  \eqref{eq:epsilon-0} in the form
\begin{equation}
U^{[0]}=Ae^{i\theta}+\circledast,\quad A=A(X_1,T_1,T_2),\quad\theta:=kX_0-\omega T_0,
\end{equation}
where the symbol $\circledast$ indicates the complex conjugate of the preceding term.
This is a wavetrain solution of the Klein-Gordon equation linearized about the equilibrium $u^0$ as long as the wavenumber $k$ and the frequency $\omega$ are linked by the linear dispersion relation
\begin{equation}
\omega^2=k^2+V''(u^0).
\label{eq:linear-dispersion-relation}
\end{equation}
With this solution in hand, we see that \eqref{eq:epsilon-1} takes the form
\begin{multline}
\cL U^{[1]}=-\tfrac{1}{2}V'''(u^0)A^2e^{2i\theta}+2i\left[\omega A_{T_1}+k A_{X_1}\right]e^{i\theta}\\{} -V'''(u^0)|A|^2-2i\left[\omega A^*_{T_1}+kA^*_{X_1}\right]e^{-i\theta}-\tfrac{1}{2}V'''(u^0)A^{*2}e^{-2i\theta}.
\end{multline}
It can be shown that $U^{[1]}$ will be bounded as a function of the fast time $T_0$ if and only if the terms proportional to the fundamental harmonics $e^{\pm i\theta}$ are not present on the right-hand side.  Noting that implicit differentiation of \eqref{eq:linear-dispersion-relation} with respect to $k$ gives $k/\omega=\omega'(k)$, we avoid secular terms by demanding that the envelope $A(X_1,T_1,T_2)$ satisfy the equation
\begin{equation}
A_{T_1} + \omega'(k)A_{X_1}=0.
\label{eq:A-T1}
\end{equation}
This equation shows that the envelope $A$ moves rigidly to the right (for fixed $T_2$) at the  \emph{linear group velocity} $\omega'(k)$ associated with the carrier wave $e^{i\theta}$ with wavenumber $k$ and frequency $\omega=\omega(k)$.  A particular solution for $U^{[1]}$ is then
\begin{equation}
\begin{split}
U^{[1]}&=\frac{\tfrac{1}{2}V'''(u^0)A^2}{4\omega^2-4k^2-V''(u^0)}e^{2i\theta}-\frac{V'''(u^0)}{V''(u^0)}|A|^2 +\frac{\tfrac{1}{2}V'''(u^0)A^{*2}}{4\omega^2-4k^2-V''(u^0)}e^{-2i\theta}\\
&=\frac{V'''(u^0)A^2}{6V''(u^0)}e^{2i\theta}-\frac{V'''(u^0)}{V''(u^0)}|A|^2 +\frac{V'''(u^0)A^{*2}}{6V''(u^0)}e^{-2i\theta},
\end{split}
\end{equation}
where we have used the dispersion relation \eqref{eq:linear-dispersion-relation} to eliminate $\omega^2-k^2$.
While it is possible to include homogeneous terms proportional to the fundamental harmonics $e^{\pm i\theta}$, these add nothing further since the amplitude $A$ is still undetermined.  Next, we apply similar reasoning to \eqref{eq:epsilon-2}, which takes the form
\begin{multline}
\cL U^{[2]}=\left[2i\omega A_{T_2}-A_{T_1T_1}+A_{X_1X_1}+\frac{5V'''(u^0)^2-3V''(u^0)V^{(4)}(u^0)}{6V''(u^0)}|A|^2A\right]e^{i\theta} \\{}+ \circledast +\text{other harmonics},
\end{multline}
where the other harmonics are terms with $(X_0,T_0)$-dependence proportional to $e^{in\theta}$ for $n\neq \pm 1$.  Such terms cannot cause $U^{[2]}$ to grow as a function of $T_0$, but the forcing terms proportional to $e^{\pm i\theta}$ must be removed if $U^{[2]}$ is to remain bounded.
We therefore insist that in addition to \eqref{eq:A-T1}, $A$ should satisfy the equation
\begin{equation}
2i\omega A_{T_2}-A_{T_1T_1}+A_{X_1X_1}+\frac{5V'''(u^0)^2-3V''(u^0)V^{(4)}(u^0)}{6V''(u^0)}|A|^2A=0.
\label{eq:NLS-1}
\end{equation}
It is best to go into a frame of reference moving with the group velocity by introducing the variables
$\zeta:=X_1-\omega'(k)T_1$, $T_1,$ and $\tau:=T_2$,
and writing $a(\zeta,T_1,\tau)=A(X_1,T_1,T_2)$.
Then, \eqref{eq:A-T1} simply reads $a_{T_1}=0$, so in fact $a=a(\zeta,\tau)$.  Also, since differentiation of \eqref{eq:linear-dispersion-relation} twice implicitly with respect to $k$ gives
$\omega'(k)^2 + \omega(k)\omega''(k)=1$, the equation \eqref{eq:NLS-1} can be written in the
form
\begin{equation}
ia_\tau + \tfrac{1}{2}\omega''(k)a_{\zeta\zeta} + \beta |a|^2a=0,\quad
\beta:=\frac{5V'''(u^0)^2-3V''(u^0)V^{(4)}(u^0)}{12\omega(k)V''(u^0)}.
\label{eq:NLS-2}
\end{equation}
Therefore, the slow modulation of the complex amplitude in the frame moving with the linear group velocity obeys the \emph{cubic nonlinear Schr\"odinger equation}  \eqref{eq:NLS-2}.  

The above derivation can be simultaneously viewed as a special case of and a higher-order generalization of Whitham's theory as developed in \S\ref{secWhitham}.  Indeed, if we consider near-equilibrium librational waves, then the phase velocity of the wavetrain is nearly constant ($c\approx \omega/k$), and so is the energy ($E\approx V(u^0)$).  Linearizing Whitham's modulational system \eqref{eq:Whitham-system-4} about these constant states amounts to replacing the matrix $\bU(E,c)$ by its constant limit $\bU(V(u^0),\omega/k)$.  In this limit, it is easy to check that $W\to 0$, and hence
\begin{equation}
\bU(V(u^0),\omega/k)=\begin{pmatrix}\omega'(k) & 0\\\displaystyle\left(\frac{\omega}{k}-\omega'(k)\right)^2\frac{T_E}{T} & \omega'(k)\end{pmatrix},
\end{equation}
where $T$ and $T_E$ are evaluated at $c=\omega/k$ and $E=V(u^0)$.  Clearly, the characteristic velocities degenerate to the linear group velocity $\omega'(k)$ in this limit.  In particular, upon linearization, the equation governing $E$ decouples:
\begin{equation}
E_t +\omega'(k)E_x=0.
\label{eq:EnergyTransport}
\end{equation}
The energy can be expressed in terms of the complex amplitude $A$ by substituting $f=u^0 + \epsilon(Ae^{i\theta}+\circledast) + O(\epsilon^2)$ into \eqref{eq:waveeq}; using the dispersion relation \eqref{eq:linear-dispersion-relation} and the substitution $c=\omega/k + O(\epsilon)$ yields
$E=V(u^0) + 2\epsilon^2V''(u^0)|A|^2 + O(\epsilon^3)$.
Therefore, \eqref{eq:EnergyTransport} implies
\begin{equation}
(|A|^2)_{T_1} +\omega'(k)(|A|^2)_{X_1}=0
\end{equation}
in the limit $\epsilon\to 0$.  Of course this equation can be obtained from \eqref{eq:A-T1} simply by multiplying by $A^*$ and taking the real part.  In this way, the statement that the envelope propagates with the linear group velocity can be seen as arising from the Whitham theory in a special limiting case.  Now, recalling that the dynamical interpretation of the Whitham system is inconclusive in the degenerate case where the characteristic velocities agree, it is natural to try to go to higher order in the linearization to attempt to determine the fate of the modulated near-equilibrium librational wave.  Going to higher order brings in a balance between nonlinearity and linear dispersion (the latter is not present in the Whitham system at all) that results in the cubic nonlinear Schr\"odinger equation \eqref{eq:NLS-2}.  Therefore, while \eqref{eq:A-T1} can be viewed as a reduction of the Whitham modulational system \eqref{eq:Whitham-system-4} in a special case, \eqref{eq:NLS-2} can be viewed as a higher-order generalization thereof in the same limiting case.

\subsubsection{Interpretation of the cubic Schr\"odinger equation.  Relation to the modulational instability index $\rho$}
The initial-value problem for the nonlinear Schr\"odinger equation \eqref{eq:NLS-2} is the search for a solution $a(\zeta,\tau)$ for $\tau>0$ corresponding to given initial data $a(\zeta,0)$.
Unlike the quasilinear Whitham modulational system \eqref{eq:Whitham-system-4}, the initial-value problem for \eqref{eq:NLS-2} can be solved globally for general initial data.  In fact, the cubic nonlinear Schr\"odinger equation is well-known to be an integrable system, and the initial-value problem can be solved by means of an associated inverse-scattering transform.  On the other hand, solutions of the equation \eqref{eq:NLS-2} behave quite differently depending on the sign of $\beta\omega''(k)$.  In the \emph{defocusing case} corresponding to $\beta\omega''(k)<0$, an initially localized wave packet will gradually spread and disperse, but for large time (i.e., large $\tau$) the solution $a(\zeta,\tau)$ will qualitatively resemble the initial condition, a situation that can be interpreted as another kind of modulational stability of the carrier wave.  On the other hand, in the \emph{focusing case} corresponding to $\beta\omega''(k)>0$, an initially localized wave packet will break apart into smaller components (``bright'' solitons) that move apart with distinct velocities, a situation that can be interpreted as a sort of modulational instability.  In the weakly nonlinear context, the fact that one has a slowly-varying envelope $A=a(\epsilon(x-\omega'(k)t),\epsilon^2t)$ modulating a carrier wave of fixed wavenumber $k$ yields a natural interpretation in the Fourier domain:  the instability corresponds to the broadening of an initially narrow Fourier spectrum peaked at the wavenumber $k$.  Such instabilities are sometimes called \emph{sideband} instabilities, or by a connection with water wave theory, instabilities of \emph{Benjamin-Feir} type \cite{BenjaminF67}.

We may easily relate this dichotomy of behavior to the modulational instability index $\rho$ introduced in \S\ref{secmodinst}.
\begin{theorem}
Let $u^0$ be a non-degenerate critical point of $V$.  The nonlinear Schr\"odinger equation \eqref{eq:NLS-2} governing weakly nonlinear modulations of librational periodic traveling waves
of the Klein-Gordon equation \eqref{eqnlKG} that are near the equilibrium $u^0$ is of focusing (resp., defocusing) type if and only if  $\rho=-1$ (resp., $\rho=1$), where $\rho$ is the modulational instability index (cf.\@ Definition~\ref{defmodinst} in \S\ref{secmodinst}).
\label{theorem:NLS}
\end{theorem}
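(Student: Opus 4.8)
The plan is to compute the sign of $\beta\omega''(k)$ in the near-equilibrium limit and compare it with the sign of $\rho$ as characterized by Proposition~\ref{proposition-near-equilibrium}. First I would recall that $\beta$ is given explicitly in \eqref{eq:NLS-2} as
\[
\beta=\frac{5V'''(u^0)^2-3V''(u^0)V^{(4)}(u^0)}{12\omega(k)V''(u^0)},
\]
and that $\omega''(k)$ can be obtained by differentiating the linear dispersion relation \eqref{eq:linear-dispersion-relation} twice with respect to $k$: from $\omega'(k)^2+\omega(k)\omega''(k)=1$ one gets $\omega''(k)=(1-\omega'(k)^2)/\omega(k)=V''(u^0)/\omega(k)^3$, using $\omega(k)^2=k^2+V''(u^0)$ once more. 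Hence
\[
\beta\omega''(k)=\frac{5V'''(u^0)^2-3V''(u^0)V^{(4)}(u^0)}{12\,\omega(k)^4}.
\]
Since $\omega(k)^4>0$, this shows that $\sgn(\beta\omega''(k))=\sgn\!\left(5V'''(u^0)^2-3V''(u^0)V^{(4)}(u^0)\right)$, which is exactly the quantity appearing on the right-hand side of \eqref{eq:TE-near-equilibrium}.

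Next I would invoke Proposition~\ref{proposition-near-equilibrium}, which asserts that for a family of librational waves oscillating about the non-degenerate equilibrium $u^0$,
\[
\sgn\!\left((c^2-1)\left.T_E\right|_{E=V(u^0)}\right)=\sgn\!\left(5V'''(u^0)^2-3V''(u^0)V^{(4)}(u^0)\right).
\]
Combining this with the display above gives $\sgn(\beta\omega''(k))=\sgn\!\left((c^2-1)T_E\right)$ in the near-equilibrium limit. Finally, Proposition~\ref{corollary:index-period} identifies the modulational instability index as $\rho=\sgn(-(c^2-1)T_E)$, so $\sgn(\beta\omega''(k))=-\rho$. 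Therefore $\beta\omega''(k)>0$ (the focusing case) corresponds precisely to $\rho=-1$, and $\beta\omega''(k)<0$ (the defocusing case) corresponds to $\rho=1$, which is the assertion of the theorem. One should also note the consistency of the borderline cases: $\beta\omega''(k)=0$ iff $5V'''(u^0)^2-3V''(u^0)V^{(4)}(u^0)=0$ iff $T_E=0$ at $E=V(u^0)$ iff $\rho=0$.

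The one genuine subtlety — really the only place where care is needed — is that $\rho$ as originally defined refers to a \emph{fixed} periodic traveling wave with given $(E,c)$, whereas $\beta$ and $\omega''(k)$ describe the weakly nonlinear modulation of a \emph{family} of librational waves parametrized by amplitude near the equilibrium. The resolution is that in the near-equilibrium regime the relevant value of $c$ for the carrier wave is $c=\omega/k$ (equivalently $c^2-1=(1-\omega'(k)^2)^{-1}V''(u^0)$ has the sign of $V''(u^0)$, consistent with superluminal waves oscillating about a minimizer and subluminal waves about a maximizer), and one must verify that the $\rho$ attached to any wave in the family sufficiently close to $u^0$ agrees with $\sgn(-(c^2-1)T_E|_{E=V(u^0)})$; this is immediate from Proposition~\ref{corollary:index-period} together with the continuity of $T_E$ in $E$ guaranteed by Lemma~\ref{lemfC2E} and the fact that $T_E|_{E=V(u^0)}\neq0$ away from the borderline. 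Once this identification of parameters is made explicit, the proof reduces to the chain of sign computations above, all of which are routine given the earlier results.
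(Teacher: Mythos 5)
Your proof is correct and follows essentially the same route as the paper's: compute $\omega''(k)=V''(u^0)/\omega(k)^3$ from the linear dispersion relation, obtain $\sgn(\beta\omega''(k))=\sgn(5V'''(u^0)^2-3V''(u^0)V^{(4)}(u^0))$, and then chain together Proposition~\ref{proposition-near-equilibrium} and Proposition~\ref{corollary:index-period} to identify this with $-\rho$. Your closing paragraph spelling out why the index $\rho$ of an individual near-equilibrium wave agrees with $\sgn(-(c^2-1)T_E|_{E=V(u^0)})$ is a welcome clarification that the paper leaves implicit, but it does not change the method.
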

\begin{proof}
Eliminating the group velocity $\omega'(k)$ between the identities $k/\omega(k)=\omega'(k)$ and $\omega'(k)^2+\omega(k)\omega''(k)=1$ yields
\begin{equation}
\omega''(k)=\frac{\omega(k)^2-k^2}{\omega(k)^3}=\frac{V''(u^0)}{\omega(k)^3}.
\end{equation}
Therefore, recalling the definition of $\beta$ in \eqref{eq:NLS-2}, 
\begin{equation}
\beta\omega''(k)=\frac{5V'''(u^0)^2-3V''(u^0)V^{(4)}(u^0)}{12\omega(k)^4}.
\end{equation}
From Proposition~\ref{proposition-near-equilibrium} it then follows that 
\begin{equation}
\sgn\left(\beta\omega''(k)\right)=\sgn\left((c^2-1)\left.T_E\right|_{E=V(u^0)}\right).
\end{equation}
Applying Proposition~\ref{corollary:index-period} then shows that indeed $\sgn(\beta\omega''(k))=-\sgn(\rho)$.
\end{proof}

\section{Extensions} 
\label{sec:extensions}
While we have made precise assumptions on the potential function $V$ in the Klein-Gordon equation \eqref{eqnlKG}, these were made primarily to keep the exposition as simple as possible.  Indeed, many of our results also carry over to potentials that violate one or more of the key assumptions.

\subsection{More complicated periodic potentials}
Suppose that $V:\R\to\R$ is a smooth $2\pi$-periodic function, but now suppose also that $V$ has more than two critical points per period.  Generically there will be as many critical values as critical points, so we may assume that the absolute maximum and minimum of $V$ (we may still choose them to be $\pm 1$ without loss of generality) are achieved exactly once per period.  The key new feature that is introduced in this situation is that there are now multiple families of librational waves,  as separatrices associated with the new critical values other than $\pm 1$ appear in the phase portrait of equation \eqref{eq:waveeq}.  At a given speed $c$, several distinct librational orbits can now coexist at the same value of the energy $E$, isolated from one another by components of the new system of separatrices, illustrated in Figure~\ref{fig:PhasePortraitPlot3} with blue curves (color online).
\begin{figure}[h]
\begin{center}
\includegraphics{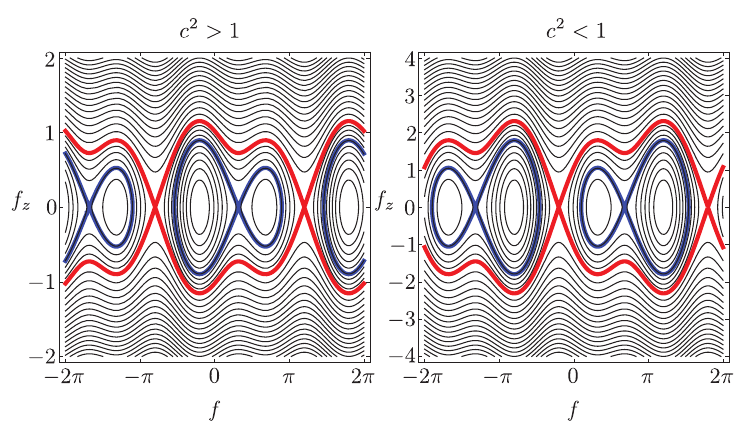}
\end{center}
\caption{Phase portraits of equation \eqref{eq:waveeq} corresponding to $c=2$ (left) and $c=1/2$ (right) for a potential $V(u)=-0.568[\cos(u) -\sin(2u)]$ that has four critical points per period.  The separatrices associated with the critical values $\pm 1$ are shown in red, and the separatrices associated with critical values lying within the open interval $(-1,1)$ are shown in blue.  (Color online.)}
\label{fig:PhasePortraitPlot2}
\end{figure}

Some of the families of librational orbits will surround neutrally stable fixed points (local minima of the effective potential), and such families are amenable to Chicone's theory of sufficient conditions for the monotonicity of the period $T$ of such orbits as a function of the energy $E$.  However, for potentials with more than two critical values per period there will now also be a family of librational orbits that are trapped between two separatrices, e.g., the red and blue curves in Figure~\ref{fig:PhasePortraitPlot3} (color online).  The period function for such a family of orbits \emph{cannot} be monotone as a function of $E$, because the period has to tend to $+\infty$ as $E$ tends to either of the distinct critical values corresponding to the enclosing separatrices.  As pointed out in Remark~\ref{remark:TE-near-separatrix} however, the librational orbits that are near the separatrix that stands between the librational and rotational orbits will necessarily satisfy $(c^2-1)T_E>0$,
and hence those results we have presented that depend on this condition will be applicable.  

The situation for librational waves is certainly made more complicated in the presence of additional critical values of $V$, as there are now many families of orbits to consider, each of which will have its own period function $T$ with its own monotonicity properties.  This is the only new feature, however, and otherwise the theory goes through unchanged. Additionally, there is no effect \emph{whatsoever} on the theory regarding rotational waves.  We summarize these results as follows:
\begin{theorem}\label{th:weaker} Let $V$ be a potential satisfying only condition (a) of Assumption~\ref{assumptionsV}, normalized for convenience according to Assumption~\ref{assumptionsnormalize}.  Then the conclusions of Theorem~\ref{theorem:summary} continue to hold provided that $T_E$ is calculated separately for each family of librational waves that may coexist at the same value of $E$.  
\end{theorem}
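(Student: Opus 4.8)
The plan is to prove Theorem~\ref{th:weaker} by auditing the proof of Theorem~\ref{theorem:summary} and checking that Assumption~\ref{assumptionsV}\eqref{assumeb} --- the restriction to exactly two critical points of $V$ per period --- is invoked only through consequences that remain valid, family by family, when $V$ has several critical values. So the work is almost entirely a systematic verification rather than a new argument; the subtlest point, discussed last, is the normalization used to compute the monodromy matrix at $\lambda=0$.

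First I would revisit the structure theory of \S\ref{secstructure}. Both dichotomies persist: the sign of $c^2-1$ still separates subluminal from superluminal waves, and the alternative $f(z+T)=f(z)$ versus $f(z+T)=f(z)\pm 2\pi$ still separates librational from rotational waves. Because the global extrema of $V$ are still $\pm 1$ and still attained once per period, the rotational parameter regions $\region_<^\mathrm{rot}=\{c^2<1,\ E<-1\}$ and $\region_>^\mathrm{rot}=\{c^2>1,\ E>1\}$, the period formulas \eqref{periodsuprot}--\eqref{periodsubrot}, and the monotonicity statement of Proposition~\ref{prop:rotational-monotone} are literally unchanged, so rotational waves exhibit no new feature at all. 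The librational region $\{c^2\neq 1,\ |E|<1\}$ is now foliated by finitely many branches: for each maximal interval $[f^-,f^+]$ of the effective potential bounded by consecutive roots of $V(f^\pm)=E$ one obtains a smooth family $f=f(z;E,c)$ whose phase-plane orbit is a simple closed curve. The facts used downstream are branch-independent. Since $E-V(f)$ (resp.\ $V(f)-E$) is strictly positive on $(f^-,f^+)$ and vanishes only at the endpoints, $f_z$ has exactly two simple zeros per period for every librational branch, no matter how many critical points of $V$ lie inside $(f^-,f^+)$, whereas $f_z$ has no zeros for rotational waves; moreover each branch carries its own period function $T=T(E,c)$, which is $C^2$ in $E$ by the proof of Lemma~\ref{lemfC2E}, so $T_E$ is a well-defined per-branch quantity (necessarily non-monotone for a branch trapped between two separatrices with distinct critical values, although only the pointwise sign of $(c^2-1)T_E$ is ever used, e.g.\ in Remark~\ref{remark:TE-near-separatrix}).

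Next I would observe that the apparatus of \S\S\ref{secspectralproblem}--\ref{secfurther} consumes only the following inputs, all of which are branch-local: that $f$ is a periodic traveling wave with $T$-periodic $f_z$; that $V\in C^2$ (for the $E$-differentiability in Lemma~\ref{lem:solat0}); the number of zeros of $f_z$ per period (which, via oscillation theory, places $\nu=0$ in $\Sigma^\mathrm{H}$); the sign of $(c^2-1)T_E$ for the branch at hand (through Proposition~\ref{prop:Jordan}, and hence the indices $\gamma$ and $\rho$); and the structure of $\Sigma^\mathrm{H}$ via the Hill potential $P(z)=V''(f(z))/(c^2-1)$, which is non-constant because $f$ and $V''$ are non-constant. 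Granting this, the rotational assertions (i)--(ii) of Theorem~\ref{theorem:summary} follow exactly as in \S\ref{secfurther}: $\Sigma^\mathrm{H}$ has a gap, so the spectrum-detecting function $G$ of \eqref{eq:G-function-define} changes sign for superluminal rotational waves (Lemmas~\ref{lemma:G-negative}--\ref{lemma:G-positive}), and $\cH$ is negative semidefinite for all rotational waves (Proposition~\ref{cor:hnegdef}), forcing $\Re\lambda=0$ in \eqref{eq:quadraticsoln} when $c^2<1$. The librational assertions (iii)--(iv) follow from Theorems~\ref{th:sublibunstable} and \ref{th:librational}, Corollary~\ref{corollary:libration-spectral-instability}, and the Hill-gap arguments of Remarks~\ref{remark:Hill-gaps} and \ref{remark:librational-general-speed-Hill-gap}, applied to each librational branch with $T_E$ read as that branch's derivative; the infinite-speed statement is untouched, resting on Theorem~\ref{theorem:infinite-speed} and Corollary~\ref{corollary:rotation-infinite-speed-unstable}.

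The one place that genuinely needs care --- and the main obstacle, such as it is --- is the normalization in \S\ref{secsolsat0}: there the explicit forms of $\bF(z,0)$ and $\bM(0)$ were obtained by placing $z=0$ at a critical point of $V$ (so that $f_{zz}(0)=0$ and $u_0=f(0)$ is constant along the family), which is not generally possible for a librational branch enclosing several stable equilibria or trapped between separatrices. The remedy is to use instead the alternate construction of \S\ref{sec:alternate}, which requires only $f_z(0)\neq 0$: carrying it through with a \emph{generic} base point (now $A_1=-f_{zz}(0)$ rather than $0$) one finds $\bM(0)=\Id+\delta\,\bigl(\begin{smallmatrix}-f_{zz}(0)v_0 & v_0^2\\ -f_{zz}(0)^2 & v_0f_{zz}(0)\end{smallmatrix}\bigr)$ with $\tr\bM(0)=2$, $\det\bM(0)=1$, and in particular $M_{12}(0)=\delta v_0^2$, where $\delta$ is the regularized integral of \eqref{defDeltah} and still equals $-(c^2-1)T_E$ by \eqref{eq:delta-identity}. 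Equivalently, one may note that $\det\bF(z,0)\equiv 1$ (Abel's identity \eqref{eq:Abels-Identity}, since $\tr\bA(z,0)=0$), so changing base point conjugates $\bM(0)$ by a determinant-one matrix, preserving $\tr\bM(0)$, $\det\bM(0)$, the non-diagonalizability, and $\sgn M_{12}(0)$; or take the manifestly base-point-independent $\Delta_\nu^\mathrm{H}(0)=\partial_\nu\tr(\bM^\mathrm{H}(\nu))\big|_{\nu=0}=M_{12}(0)\int_0^TF_{11}(y,0)^2\,dy$ as the primitive object, exactly as in the proof of Lemma~\ref{lemma:sign-of-discriminant}. Either way $\rho:=\sgn(-(c^2-1)T_E)=\sgn\delta=\sgn\Delta_\nu^\mathrm{H}(0)$ per branch, the expansion \eqref{expansionDelta} of the periodic Evans function near $(\lambda,\theta)=(0,0)$ holds with $\kappa$ of sign $\rho$, and Lemmas~\ref{lemma:Evans-small-lambda} and \ref{lemcrossortan}, Proposition~\ref{thmparitycriterion}, and Lemma~\ref{lemma:sign-of-discriminant} apply verbatim to each librational branch, completing the proof.
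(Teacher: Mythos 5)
Your audit is correct and broadly mirrors the paper's own (quite terse) justification, which simply asserts that ``the theory goes through unchanged.'' Where you add value is in explicitly flagging the normalization underlying the explicit computation of $\bM(0)$ in \S\ref{section:first-construction}, and your replacement argument --- recomputing $\bM(0)$ at a generic base point, observing $\tr\bM(0)=2$, $\det\bM(0)=1$, $M_{12}(0)=\delta v_0^2$, and falling back on the conjugation-invariance of $\Delta_\nu^\mathrm{H}(0)$ --- is sound and makes the base-point-independence of the modulational instability index $\rho$ completely transparent, which the paper does not do. That said, the obstacle you raise is actually absent: every librational closed orbit of the planar Hamiltonian system \eqref{eqnlp} encloses at least one center (the Poincar\'e index of a periodic orbit is $+1$, so enclosed centers outnumber enclosed saddles by exactly one), hence crosses the line $f=u^*$ for some critical point $u^*$ of $V$ at a $z$-value where $f_z\neq 0$; placing $z=0$ there reproduces the normalization \eqref{eq:sub-normalize}--\eqref{eq:super-normalize} with $u^*$ in place of $\underline{u}$ or $\overline{u}$, so that $f_{zz}(0)=0$ and $u_0=u^*$ is constant on the branch, and Proposition~\ref{prop:Jordan} applies verbatim. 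Your more careful computation is therefore a nice alternative rather than a necessary repair. The remaining checks you perform (two zeros of $f_z$ per period regardless of how many critical points of $V$ lie in the oscillation interval, negative semidefiniteness of $\cH$ for rotational waves, the Hill-gap arguments, and the spectral bounds of Lemma~\ref{lem:bound}) are exactly what the paper's informal discussion is implicitly relying on, and you have verified them correctly.
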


\subsection{Non-periodic potentials}
Now we consider the effect of dropping periodicity of the potential $V$ in the Klein-Gordon equation \eqref{eqnlKG}.  The most obvious feature of such non-periodic potentials is that there can no longer be any periodic traveling waves of rotational type at all (even for bounded $V$).  All periodic traveling waves are therefore of librational type in such a situation.  
Phase portraits for equation \eqref{eq:waveeq} corresponding to a quartic potential are illustrated in Figure~\ref{fig:PhasePortraitPlot3}.
\begin{figure}[h]
\begin{center}
\includegraphics{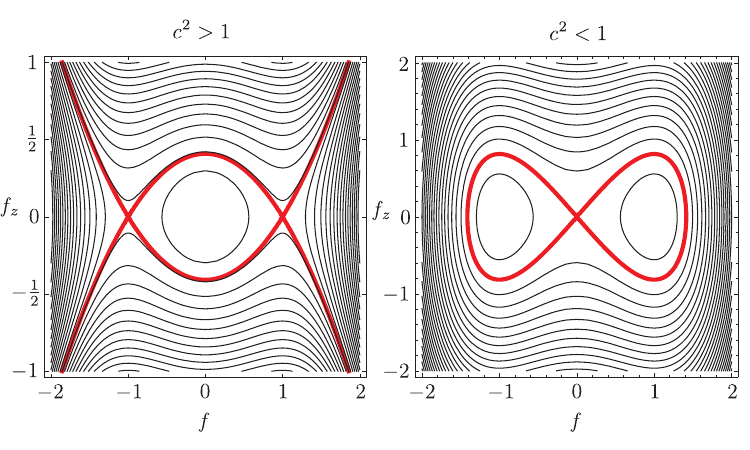}
\end{center}
\caption{Phase portraits of equation \eqref{eq:waveeq} for $c=2$ (left) and $c=1/2$ (right) for
a quartic potential $V(u)=\tfrac{1}{2}u^2-\tfrac{1}{4}u^4$.  Of course, these are just the phase portraits of different types of Duffing oscillators.  There is one family of librational waves for $c^2>1$ and there are three families of librational waves for $c^2<1$.}
\label{fig:PhasePortraitPlot3}
\end{figure}

Aside from this point, however, the stability theory of any family of librational periodic waves is exactly the same as in the case of periodic potentials, with the sole exception being that without an a priori bound on the potential $V$ (cf.\@ \eqref{eq:M-Vpp-define}) the spectral bounds established in Lemma~\ref{lem:bound} may not hold.  Therefore we have the following result.
\begin{theorem}
\label{theorem-non-periodic}
Suppose that $V:\R\to\R$ is a function of class $C^2$, and suppose that for a fixed wave speed $c\neq\pm 1$ there exists a family of librational periodic traveling wave solutions to the nonlinear Klein-Gordon equation parametrized locally by the energy $E$ on which they depend smoothly.  Then, except for the assertions that $\sigma\setminus i\R$ is bounded for subluminal waves and $\Re\sigma$ is bounded for superluminal waves, statements (iii) and (iv) of Theorem~\ref{theorem:summary} characterize the spectral stability properties of the family of  waves.  Also, librational waves of infinite speed are spectrally stable if and only if the associated Hill's spectrum $\Sigma^\mathrm{H}$ has no negative gaps.
\end{theorem}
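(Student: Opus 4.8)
The plan is to audit the development of Sections~\ref{secstructure}--\ref{secfurther} and confirm that periodicity of $V$ is never used essentially --- only periodicity of the \emph{wave profile} $f$ is --- the single exception being the a priori spectral bounds of Lemma~\ref{lem:bound}. First note that a non-periodic $V$ supports no rotational periodic traveling waves: a rotation satisfies $f(z+T)=f(z)\pm 2\pi$, and for \eqref{eq:waveeq} to yield a $T$-periodic $f_z$ one would need $V(f(z)+2\pi)=V(f(z))$ along the orbit, forcing $2\pi$-periodicity of $V$. Hence every periodic traveling wave here is librational, the rotational branch of Theorem~\ref{theorem:summary} (statements (i), (ii), Theorem~\ref{th:rotational}, Corollary~\ref{corollary:rotation-infinite-speed-unstable}) is vacuous, and only statements (iii) and (iv) remain to be established.

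I would then trace the ingredients of the proof of Theorem~\ref{theorem:summary}(iii)--(iv). The structural data needed --- that $f$ is $T$-periodic and jointly $C^2$ in $(z,E)$ on the parameter interval --- is part of the hypothesis, and matches the content of Lemma~\ref{lemfC2E}, whose proof rests on the Implicit Function Theorem applied to \eqref{eqnlp} together with $C^2$ regularity of $V$ alone. Since $f$ is $T$-periodic, the coefficient matrix $\bA(z,\lambda)$ of \eqref{eq:firstorder} is $T$-periodic in $z$; therefore the Floquet/monodromy framework and the purely-essential-spectrum statements (Lemma~\ref{lem:allcontinuous}, Proposition~\ref{prop:deteq0}) of \S\ref{secspectralproblem}, and the small-$\lambda$ expansion of $\bM(\lambda)$ in \S\ref{sec:monodromy}, carry over verbatim. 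Consequently the parity index $\gamma$ and the modulational index $\rho$ are defined exactly as in \S\ref{section:stability-indices}, Proposition~\ref{prop:Jordan} (the Jordan block of $\bM(0)$ governed by $(c^2-1)T_E$) is unchanged, and with it Lemma~\ref{lemma:Evans-small-lambda}, Lemma~\ref{lemcrossortan} (the $c=0$ case still covered by Proposition~\ref{prop:zero-velocity-agree}), Theorem~\ref{th:sublibunstable} and Theorem~\ref{th:librational}; each of these uses only $C^2$ regularity of $V$ and the sign of $T_E$, which is hypothesized in (iii)--(iv). Likewise the auxiliary Hill's equation \eqref{eq:hill} has potential $P(z)=V''(f(z))/(c^2-1)$, again $T$-periodic because $f$ is, so the Hill Floquet theory of \S\ref{section-Hill-spectrum}, the finite-gap classification in Remark~\ref{remark:Hill-gaps}, and the $G$-function argument of Remark~\ref{remark:librational-general-speed-Hill-gap} apply without change; the latter supplies the unconditional instability of subluminal librational waves, the instability of superluminal librational waves possessing a negative gap in $\Sigma^\mathrm{H}$, and the genus restriction on a spectrally stable superluminal librational wave. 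Assembling these exactly as in the proof of Theorem~\ref{theorem:summary} yields statements (iii) and (iv), minus the boundedness clauses.

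The boundedness clauses must be dropped precisely because Lemma~\ref{lem:bound} is the only place a \emph{global} feature of $V$ is used: its constant is built from $M:=\max_{u\in\mathbb{R}}|V''(u)|$ (cf.\@ \eqref{eq:M-Vpp-define}), finite for a $2\pi$-periodic potential but possibly infinite for an unbounded non-periodic one, so one can no longer conclude that $\sigma\setminus i\mathbb{R}$ is bounded (subluminal) or that $\Re\sigma$ is bounded (superluminal). One could observe that, a librational orbit being confined to a compact interval $[f^-,f^+]$, the proof of Lemma~\ref{lem:bound} still runs for each fixed wave with $M$ replaced by $\max_{[f^-,f^+]}|V''|$, giving a wave-by-wave but not family-uniform bound; this is tangential to the theorem as stated. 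For the infinite-speed assertion, $u=f(t)$ is $x$-independent and, being librational, genuinely $T$-periodic, so $V''(f(t))$ is $T$-periodic and \eqref{eq:Hill-infinite-speed} is a genuine Hill's equation; Theorem~\ref{theorem:infinite-speed} then applies directly and gives spectral stability if and only if $\Sigma^\mathrm{H}$ has no gap on $\mathbb{R}_-$.

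The only substantive step is the bookkeeping of the preceding two paragraphs --- verifying that periodicity of $V$, as distinct from periodicity of $f$, enters nowhere except Lemma~\ref{lem:bound} --- and I expect no genuinely new analytic obstacle to surface. The main thing one must be careful about is not to conflate "the orbit $\{f(z):z\in\mathbb{R}\}$ is bounded" (always true for librational waves) with a bound that is uniform across the family parametrized by $E$ (not available without periodicity or some a priori control of $V''$), which is exactly why the theorem is phrased with the boundedness conclusions excised.
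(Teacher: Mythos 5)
Your proposal is correct and follows essentially the same route as the paper, which states Theorem~\ref{theorem-non-periodic} without a formal proof environment and relies on the surrounding discussion: the only ingredient of the earlier development that uses periodicity of $V$ rather than merely $T$-periodicity of the wave profile $f$ is Lemma~\ref{lem:bound}, whose constant is built from $M=\max_{u\in\R}|V''(u)|$, so the boundedness assertions in statements (iii) and (iv) are the only ones that must be excised. Your remark that the bound of Lemma~\ref{lem:bound} still holds wave-by-wave (with $M$ replaced by $\max_{[f^-,f^+]}|V''|$, finite by compactness of the orbit) but not uniformly in $E$ is a nice refinement; the paper is slightly conservative in dropping the boundedness conclusions entirely, and your observation correctly identifies why, and in what sense, they could in fact be partially recovered.
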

For polynomial potentials $V$, Chicone's theory is particularly convenient to apply to determine whether a family of librational orbits that surround a stable fixed point (as is the case for the family of orbits in the left-hand panel of Figure~\ref{fig:PhasePortraitPlot3} and for both families of orbits in the right-hand panel of Figure~\ref{fig:PhasePortraitPlot3}).  For example, one may consider cubic nonlinearities arising from a quartic double-well potential:
\begin{equation}
V'(u) = au + bu^3 \quad\text{or}\quad V(u)=\tfrac{1}{2}au^2 + \tfrac14 bu^4 
\end{equation}
with $a,b \neq 0$.  Such potentials are important in the theory of the Klein-Gordon equation \eqref{eqnlKG} as it arises in models for magnetic fluid theory \cite{MaSi}, nonlinear meson theory \cite{Schi1}, solid state physics \cite{BiKrTr}, and in classical sine-Gordon and $\phi^4$-field theories \cite{Makh1}. The stability of periodic traveling waves for equation \eqref{eqnlKG} with this type of potential has been studied by Weissman \cite{Weissm1}, Murakami \cite{Mura1} and Parkes \cite{Prk1}, among others.

After a simple calculation one may show that 
\begin{equation}
 \frac{V(u)}{V'(u)^2} = \frac{1}{2a} - \frac{3b}{4a^2} u^2 + O(u^4),
\end{equation}
for $|u|\ll 1$. Therefore, by the criterion of Chicone \cite{Chi1}, 
the period of the librational oscillations surrounding the critical point $u=0$ (assuming that $(c^2-1)b<0$ making this point a local minimum of the effective potential) will be monotone in a neighborhood of zero inasmuch as $b \neq 0$. (The sign of the cubic term determines whether the nonlinear modification of the Hooke's law spring constant makes the spring ``harder'' or ``softer'', and it is physically reasonable that it should determine whether the period gets longer or shorter with increasing amplitude.)  In any case, knowledge of the sign of $T_E$, however it is obtained,  allows many of our results to be applied to this case immediately.  For example, according to Remark~\ref{remark:TE-near-separatrix}, one automatically has $(c^2-1)T_E>0$ near the outermost separatrix bounding each family of librational orbits.
Therefore, while we leave the details to the interested reader, it is in principle possible to 
recover, for example, the modulational instability observations of Parkes \cite{Prk1} for periodic waves near $u = 0$ for the cubic nonlinearity from a calculation of the modulational instability index $\rho$.

\section*{Acknowledgements}
RM gratefully acknowledges the partial support of the Australian Research Council on grant DP110102775.   PDM thanks the National Science Foundation for support on research grants DMS-0807653 and DMS-1206131, and benefited from a Research Fellowship from the Simons Foundation during the preparation of this work.
RGP thanks Tim Minzoni for useful conversations about Whitham's original paper \cite{Wh1}. The research of RGP was partially supported by DGAPA-UNAM, program PAPIIT, grant no. IN104814. 

\def\cprime{$'$}

%



\begin{thebibliography}{10}

\bibitem{DLMF}
{\em {NIST} {D}igital {L}ibrary of {M}athematical {F}unctions}.
\newblock \url{http://dlmf.nist.gov/}, Release 1.0.6 of 2013-05-06.

\bibitem{AGJ}
{\sc J.~Alexander, R.~A. Gardner, and C.~K. R.~T. Jones}, {\em A topological
  invariant arising in the stability analysis of travelling waves}, J. Reine
  Angew. Math. \textbf{410} (1990), pp.~167--212.

\bibitem{AnBl}
{\sc S.~C. Anco and G.~Bluman}, {\em Direct construction method for
  conservation laws of partial differential equations. {I}. {E}xamples of
  conservation law classifications}, European J. Appl. Math. \textbf{13}
  (2002), no.~5, pp.~545--566.

\bibitem{BEMS}
{\sc A.~Barone, F.~Esposito, C.~J. Magee, and A.~C. Scott}, {\em Theory and
  applications of the sine-{G}ordon equation}, Rivista del Nuovo Cimento
  \textbf{1} (1971), no.~2, pp.~227--267.

\bibitem{BenjaminF67}
{\sc T.~B. Benjamin and J.~E. Feir}, {\em Disintegration of wave trains on deep
  water. 1. {T}heory.}, J. Fluid Mech. \textbf{27} (1967), pp.~417--430.

\bibitem{BiKrTr}
{\sc A.~R. Bishop, J.~A. Krumhansl, and S.~E. Trullinger}, {\em Solitons in
  condensed matter: a paradigm}, Phys. D \textbf{1} (1980), no.~1, pp.~1--44.

\bibitem{BD1}
{\sc T.~J. Bridges and G.~Derks}, {\em Unstable eigenvalues and the
  linearization about solitary waves and fronts with symmetry}, Proc. R. Soc.
  Lond. A \textbf{455} (1999), no.~1987, pp.~2427--2469.

\bibitem{BrJo2}
{\sc J.~C. Bronski and M.~A. Johnson}, {\em The modulational instability for a
  generalized {K}d{V} equation}, Arch. Rational Mech. Anal. \textbf{197}
  (2010), no.~2, pp.~357--400.

\bibitem{BuM1}
{\sc R.~J. Buckingham and P.~D. Miller}, {\em Exact solutions of semiclassical
  non-characteristic {C}auchy problems for the sine-{G}ordon equation}, Phys. D
  \textbf{237} (2008), no.~18, pp.~2296--2341.

\bibitem{BuM2}
\leavevmode\vrule height 2pt depth -1.6pt width 23pt, {\em The sine-{G}ordon
  equation in the semiclassical limit: dynamics of fluxon condensates}, Memoirs
  AMS \textbf{225} (2013), no.~1059, pp.~1--136.
\newblock arXiv:1103.0061v1.

\bibitem{Chi1}
{\sc C.~Chicone}, {\em The monotonicity of the period function for planar
  {H}amiltonian vector fields}, J. Differential Equations \textbf{69} (1987),
  no.~3, pp.~310--321.

\bibitem{CL55}
{\sc E.~A. Coddington and N.~Levinson}, {\em Theory of ordinary differential
  equations}, McGraw-Hill Book Company, Inc., New York, 1955.

\bibitem{Cole75}
{\sc S.~Coleman}, {\em Quantum sine-{G}ordon equation as the massive {T}hirring
  model}, Phys. Rev. D \textbf{11} (1975), no.~8, pp.~2088--2097.

\bibitem{Dra0}
{\sc P.~G. Drazin}, {\em Solitons}, vol.~85 of London Mathematical Society
  Lecture Note Series, Cambridge University Press, Cambridge, 1983.

\bibitem{DB75}
{\sc B.~A. Dubrovin}, {\em Inverse problem for periodic finite-zoned potentials
  in the theory of scattering}, Functional Anal. Appl. \textbf{9} (1975),
  no.~1, pp.~61--62.

\bibitem{Eis0}
{\sc L.~P. Eisenhart}, {\em A treatise on the differential geometry of curves
  and surfaces}, Ginn and Co., Boston, 1909.

\bibitem{EFMc1}
{\sc N.~Ercolani, M.~G. Forest, and D.~W. McLaughlin}, {\em Geometry of the
  modulational instability. {III}. {H}omoclinic orbits for the periodic
  sine-{G}ordon equation}, Phys. D \textbf{43} (1990), no.~2-3, pp.~349--384.

\bibitem{FoMc1}
{\sc M.~G. Forest and D.~W. McLaughlin}, {\em Spectral theory for the periodic
  sine-{G}ordon equation: a concrete viewpoint}, J. Math. Phys. \textbf{23}
  (1982), no.~7, pp.~1248--1277.

\bibitem{FoMc2}
\leavevmode\vrule height 2pt depth -1.6pt width 23pt, {\em Modulations of
  sinh-{G}ordon and sine-{G}ordon wavetrains}, Stud. Appl. Math. \textbf{68}
  (1983), no.~1, pp.~11--59.

\bibitem{FreKo}
{\sc J.~Frenkel and T.~Kontorova}, {\em On the theory of plastic deformation
  and twinning}, Acad. Sci. U.S.S.R. J. Phys. \textbf{1} (1939), pp.~137--149.

\bibitem{GaetaRPD94}
{\sc G.~Gaeta, C.~Reiss, M.~Peyrard, and T.~Dauxois}, {\em Simple models of
  non-linear {DNA} dynamics}, Rivista del Nuovo Cimento \textbf{17} (1994),
  no.~4, pp.~1--48.

\bibitem{Grd1}
{\sc R.~A. Gardner}, {\em On the structure of the spectra of periodic
  travelling waves}, J. Math. Pures Appl. (9) \textbf{72} (1993), no.~5,
  pp.~415--439.

\bibitem{Grd3}
\leavevmode\vrule height 2pt depth -1.6pt width 23pt, {\em Instability of
  oscillatory shock profile solutions of the generalized {B}urgers-{K}d{V}
  equation}, Phys. D \textbf{90} (1996), no.~4, pp.~366--386.

\bibitem{Grd2}
\leavevmode\vrule height 2pt depth -1.6pt width 23pt, {\em Spectral analysis of
  long wavelength periodic waves and applications}, J. Reine Angew. Math.
  \textbf{491} (1997), pp.~149--181.

\bibitem{Golds2ed}
{\sc H.~Goldstein}, {\em Classical mechanics}, Addison-Wesley Publishing Co.,
  Reading, Mass., second~ed., 1980.
\newblock Addison-Wesley Series in Physics.

\bibitem{Gordon26}
{\sc W.~Gordon}, {\em Der {C}omptoneffekt nach der {S}ch\"odingerschen
  theorie}, Z. Phys. \textbf{40} (1926), pp.~117--133.

\bibitem{HaKa1}
{\sc M.~H{\u{a}}r{\u{a}}gu{\c{s}} and T.~Kapitula}, {\em On the spectra of
  periodic waves for infinite-dimensional {H}amiltonian systems}, Phys. D
  \textbf{237} (2008), no.~20, pp.~2649--2671.

\bibitem{He}
{\sc D.~Henry}, {\em Geometric Theory of Semilinear Parabolic Equations},
  no.~840 in Lecture Notes in Mathematics, Springer-{V}erlag, {N}ew {Y}ork,
  1981.

\bibitem{JZ1}
{\sc M.~A. Johnson and K.~Zumbrun}, {\em Rigorous justification of the
  {W}hitham modulation equations for the generalized {K}orteweg-de {V}ries
  equation}, Stud. Appl. Math. \textbf{125} (2010), no.~1, pp.~69--89.

\bibitem{JMMP1}
{\sc C.~K. R.~T. Jones, R.~Marangell, P.~D. Miller, and R.~G. Plaza}, {\em On
  the stability analysis of periodic sine-{G}ordon traveling waves}, Phys. D
  \textbf{251} (2013), no.~1, pp.~63--74.

\bibitem{Kat1}
{\sc T.~Kato}, {\em Perturbation Theory for Linear Operators}, Classics in
  Mathematics, Springer-{V}erlag, {N}ew {Y}ork, {S}econd~ed., 1980.

\bibitem{Klein27}
{\sc O.~Klein}, {\em Elektrodynamik und {W}ellenmechanik vom {S}tandpunkt des
  {K}orrespondenzprinzips}, Z. Phys. \textbf{41} (1927), pp.~407--442.

\bibitem{Lax06}
{\sc P.~D. Lax}, {\em Hyperbolic Partial Differential Equations}, vol.~14 of
  Courant Lecture Notes, American Mathematical Society, 2006.

\bibitem{MW66}
{\sc W.~Magnus and S.~Winkler}, {\em Hill's equation}, Dover Phoenix Editions,
  Dover Publications, Mineola, NY, 2004.
\newblock Reprint of the 1979 Dover edition. Original 1966 edition published by
  John Wiley \& Sons.

\bibitem{Makh1}
{\sc V.~G. Makhan{\cprime}kov}, {\em Dynamics of classical solitons (in
  nonintegrable systems)}, Phys. Rep. \textbf{35} (1978), no.~1, pp.~1--128.

\bibitem{MaSi}
{\sc S.~K. Malik and M.~Singh}, {\em Modulational instability in magnetic
  fluids}, Quart. Appl. Math. \textbf{43} (1985), no.~1, pp.~57--64.

\bibitem{McKMoer75}
{\sc H.~P. McKean and P.~van Moerbeke}, {\em The spectrum of {H}ill's
  equation}, Inventiones Math. \textbf{30} (1975), pp.~217--274.

\bibitem{AAA}
{\sc P.~D. Miller}, {\em Applied Asymptotic Analysis}, vol.~75 of Graduate
  Studies in Mathematics, American Mathematical Society, Providence, RI, 2006.

\bibitem{BangM98}
{\sc P.~D. Miller and O.~Bang}, {\em Macroscopic dynamics in quadratic
  nonlinear lattices}, Phys. Rev. E \textbf{57} (1998), no.~3, pp.~6038--6049.

\bibitem{Mura1}
{\sc Y.~Murakami}, {\em A note on modulational instability of a nonlinear
  {K}lein-{G}ordon equation}, J. Phys. Soc. Japan \textbf{55} (1986), no.~11,
  pp.~3851--3856.

\bibitem{Prk1}
{\sc E.~J. Parkes}, {\em The modulational instability of the nonlinear
  {K}lein-{G}ordon equation}, Wave Motion \textbf{13} (1991), no.~3,
  pp.~261--275.

\bibitem{PW1}
{\sc R.~L. Pego and M.~I. Weinstein}, {\em Eigenvalues, and instabilities of
  solitary waves}, Philos. Trans. Roy. Soc. London Ser. A \textbf{340} (1992),
  no.~1656, pp.~47--94.

\bibitem{PeSky}
{\sc J.~K. Perring and T.~H.~R. Skyrme}, {\em A model unified field equation},
  Nuclear Phys. \textbf{31} (1962), pp.~550--555.

\bibitem{San}
{\sc B.~Sandstede}, {\em Stability of travelling waves}, in Handbook of
  dynamical systems, Vol. 2, B.~Fiedler, ed., North-Holland, Amsterdam, 2002,
  pp.~983--1055.

\bibitem{San3}
\leavevmode\vrule height 2pt depth -1.6pt width 23pt, {\em The {E}vans
  function}, in Encyclopedia of Nonlinear Science, A.~C. Scott, ed., Routledge,
  Taylor \& Francis Group, New York, NY, 2005, pp.~287--279.

\bibitem{SS6}
{\sc B.~Sandstede and A.~Scheel}, {\em On the stability of periodic travelling
  waves with large spatial period}, J. Differential Equations \textbf{172}
  (2001), no.~1, pp.~134--188.

\bibitem{SanKon}
{\sc H.~Sanuki and K.~Konno}, {\em Conservation laws of sine-{G}ordon
  equation}, Phys. Lett. A \textbf{48} (1974), no.~3, pp.~221 -- 222.

\bibitem{Schi1}
{\sc L.~I. Schiff}, {\em Nonlinear meson theory of nuclear forces {I}.
  {N}eutral scalar mesons with point-contact repulsion}, Phys. Rev. \textbf{84}
  (1951), no.~1, pp.~1--9.

\bibitem{Sco2}
{\sc A.~C. Scott}, {\em A nonlinear {K}lein-{G}ordon equation}, Am. J. Phys.
  \textbf{37} (1969), no.~1, pp.~52--61.

\bibitem{Sco1}
\leavevmode\vrule height 2pt depth -1.6pt width 23pt, {\em Waveform stability
  on a nonlinear {K}lein-{G}ordon equation}, Proc. IEEE \textbf{57} (1969),
  no.~7, pp.~1338--1339.

\bibitem{SCM}
{\sc A.~C. Scott, F.~Y.~F. Chu, and D.~W. McLaughlin}, {\em The soliton: a new
  concept in applied science}, Proc. IEEE \textbf{61} (1973), no.~10,
  pp.~1443--1483.

\bibitem{SCR}
{\sc A.~C. Scott, F.~Y.~F. Chu, and S.~A. Reible}, {\em Magnetic-flux
  propagation on a {J}osephson transmission line}, J. Appl. Phys. \textbf{47}
  (1976), no.~7, pp.~3272--3286.

\bibitem{StanislavovaS12}
{\sc M.~Stanislavova and A.~Stefanov}, {\em Linear stability analysis for
  traveling waves of second order in time {PDE}'s}, Nonlinearity \textbf{25}
  (2012), no.~9, pp.~2625--2654.

\bibitem{StanislavovaS13}
\leavevmode\vrule height 2pt depth -1.6pt width 23pt, {\em Spectral stability
  analysis for special solution of second order in time {PDE}'s: the higher
  dimensional case}, Phys. D \textbf{262} (2013), pp.~1--13.

\bibitem{Tsarev91}
{\sc S.~P. Tsar\"ev}, {\em The geometry of {H}amiltonian systems of
  hydrodynamic type. {T}he generalized hodograph method.}, Math. USSR-Izv.
  \textbf{37} (1991), no.~2, pp.~397--419.

\bibitem{Weissm1}
{\sc M.~A. Weissman}, {\em Nonlinear wave packets in the {K}evin-{H}elmohltz
  instability}, Phil. Trans. R. Soc. London A \textbf{290} (1979), no.~1377,
  pp.~639--681.

\bibitem{Wh2}
{\sc G.~B. Whitham}, {\em A general approach to linear and non-linear
  dispersive waves using a {L}agrangian}, J. Fluid Mech. \textbf{22} (1965),
  pp.~273--283.

\bibitem{Wh1}
\leavevmode\vrule height 2pt depth -1.6pt width 23pt, {\em Non-linear
  dispersive waves}, Proc. Roy. Soc. Ser. A \textbf{283} (1965), pp.~238--261.

\bibitem{Wh}
\leavevmode\vrule height 2pt depth -1.6pt width 23pt, {\em Linear and Nonlinear
  Waves}, Pure and Applied Mathematics, John Wiley \& Sons Inc., New York, NY,
  1999.
\newblock Reprint of the 1974 original edition.

\end{thebibliography}

\end{document}